\makeatletter \@addtoreset{equation}{section} \makeatother
\renewcommand\thefigure{\thesection.\@arabic\c@figure}
\renewcommand\thetable{\thesection.\@arabic\c@table}
\newtheorem{theorem}{Theorem}[section]
\newtheorem{lemma}[theorem]{Lemma}
\newtheorem{proposition}[theorem]{Proposition}
\newtheorem{corollary}[theorem]{Corollary}
\newtheorem{remark}[theorem]{Remark}
\newtheorem{asser}[theorem]{Assertion}
\newcommand{\mc}[1]{{\mathcal #1}}
\newcommand{\mf}[1]{{\mathfrak #1}}
\newcommand{\mb}[1]{{\mathbf #1}}
\newcommand{\bb}[1]{{\mathbb #1}}
\definecolor{bblue}{rgb}{.2,0.2,.8}
\newcommand{\<}{\langle}
\renewcommand{\>}{\rangle}
\title[The stochastic heat equation as the limit of stirring + voter
dynamics]{The
  stochastic heat equation as the limit of a stirring 
  dynamics perturbed by a voter model}
\author{Milton Jara, Claudio Landim}
\address{IMPA, Estrada Dona Castorina 110, CEP 22460 Rio de Janeiro, Brasil\\
  e-mail: \texttt{mjara@impa.br}}
\address{IMPA, Estrada Dona Castorina 110, CEP 22460 Rio de Janeiro,
  Brasil
  and CNRS UMR 6085, Université de Rouen, France. \\
  e-mail: \texttt{landim@impa.br} }
\begin{document}

\maketitle

\begin{abstract}
We prove that in dimension $d\le 3$ a modified density field of a stirring
dynamics perturbed by a voter model converges to the stochastic heat
equation
\end{abstract}

\section{Introduction}

While the equilibrium fluctuations of the density field are well
understood since Brox and Rost \cite{BR84} and Chang \cite{c94}
(cf. references and comments in Chapter 11 of \cite{kl}),
nonequilibrium fluctuations are considered to be one of the main open
problems in the theory of hydrodynamic limit of interacting particle
systems.

For almost three decades, no progress has been made in this subject.
The few known results were restricted to one dimension and their
proofs relied either on special features of the dynamics, such as
duality or integrability of certain quantities, or required strong
estimates, as a logarithmic Sobolev inequality in Chang and Yau
\cite{cy92}.  

With the recent developments in the theory of non-linear stochastic
partial differential equations, this problem became even more
interesting. Indeed, to define solutions of these non-linear
equations, it has been proposed to smooth the noise by convolving it
with a smooth kernel and then to show that, after a renormalization,
the limit exists and does not depend on the kernel of the convolution
(cf. \cite{h14, gip15, cw17} and references therein).

Since interacting particle systems possess an in-built noise, it is
natural to expect that the density fields converge to the normalized
solutions of the SPDEs derived in the theories mentioned above. This
question has attracted much attention recently and many problems
remain unsolved \cite{bprs93, mw, hm18, sw18, zz18, mp19}.

In this article, we pursue in this direction by considering the
fluctuations of a gradient exclusion dynamics perturbed by a voter
model. One of the novelties lies in the definition of the density
field, which is not normalized by the square root of the degrees of
freedom, and on the non-conservative noise which appears in the
limiting equation. Indeed, even if the stochastic PDE which describes
the asymptotic behavior of the density fluctuation is linear, the
noise is non-conservative, in contrast with most previous results
\cite{kl}.

In both models, the exclusion process and the voter dynamics, on the
diffusive time-scale, the density of particles evolve according the
solution of a linear parabolic PDE.  The hydrodynamic behavior of the
voter model has been derived by Presutti and Spohn in \cite{ps83}, and
we refer to \cite{kl} for references on the corresponding result for
exclusion dynamics.

We consider here the exclusion process on the diffusive time-scale and
the voter model evolving in a slower time-scale.  This dynamics has
two absorbing states: the empty configuration and the full
one. Nevertheless, as the voter part evolves in a slower scale, the
global evolution can be understood as a small perturbation of the
exclusion process, and, starting from a state close to an equilibrium
state of the exclusion dynamics, the homogeneous Bernoulli product
measures, one expects that at a later time the state of process
remains close to the equilibrium state of the exclusion dynamics.
One of the main results of this article provides a quantitive estimate
for this closeness.

The main obstacle in the proof of the fluctuations lies in the
replacement of a space-time average of cylinder functions by a
space-time average of the density of particles. This is the so-called
Boltzmann-Gibbs principle.

In equilibrium, this replacement is derived using a classic bound on
the variance of an additive functional of a Markov process
\cite[Proposition A1.6.1]{kl}. In non-equilibrium this tool is not
available and one has to rely on entropy bounds of the state of the
process with respect to a reference measure. In our context, as
mentioned above, the Bernoulli product measures.

To obtain such bounds we rely on the approach introduced recently by
Jara and Menezes \cite{jm1, jm2}, which improved the estimate on the
entropy production obtained by Yau \cite{y91} in the context of
interacting particles systems.  These bounds are keen enough to permit
the derivation of Boltzmann-Gibbs principle (and the tightness of the
density fluctuation field) in dimension $d\le 3$.

\section{Notation and results}

Denote by $\bb T^d_n = (\bb Z / n \bb Z)^d$,
$n \in \bb N =\{1, 2, \dots \}$, the $d$-dimensional discrete torus
with $n^d$ points. We consider a particle system which describes
voters with a binary opinion, $0$ or $1$, evolving on $\bb T^d_n$.

Let $\color{bblue} \Omega_n = \{0,1\}^{\bb T^d_n}$ be the state
space. Elements of $\Omega_n$ are represented by the Greeck letters
$\eta = (\eta_x : x\in \bb T^d_n)$, $\xi$. Hence, $\eta_x =1$ if the
voter at $x$ for the configuration $\eta$ has the opinion $1$.

Let $L_n^V$ be the generator of the voter model in $\Omega_n$:
\begin{equation*}
(L_n^V f)(\eta)  \;=\; \sum_{x\in \bb T^d_n}  \sum_{y: \Vert y-x \Vert=1}
(\eta_y-\eta_x)^2 \,\big[\,  f(\sigma^x \eta) \,-\, f(\eta)\, \big]
\end{equation*}
for all $f: \Omega_n \to \bb R$.  In this formula, the second sum is
carried over all neighbours $y$ of $x$: $y\in \bb T^d_n$,
$\Vert x - y\Vert = 1$, and $\Vert\,\cdot\,\Vert$ stands for the
$\ell^1$ norm:
$\color{bblue} \Vert (z_1, \dots, z_d)\, \Vert = \sum_{1\le j\le d}
|\, z_j\,|$. Moreover, $\sigma^x\eta$ represents the configuration
obtained from $\eta$ by flipping the value of $\eta_x$:
\begin{equation*}
(\sigma^{x}\eta)_{z} \;=\;
\begin{cases}
\eta_{z} & z\not =x\;,\\
1\,-\, \eta_{x} & z=x\;.
\end{cases}
\end{equation*}

Denote by $\color{bblue} \{e_1,\dots,e_d\}$ the canonical basis of
$\bb R^d$.  Let $c_j: \{0,1\}^{\bb Z^d} \to \bb R$, $1\le j\le d$, be
strictly positive, cylinder functions [functions which depend only on a
finite number of variables $\eta_z$]:
\begin{equation}
\label{14}
c_j(\eta) \;\ge\; \mf c_0 \;>\; 0
\end{equation}
for all $\eta\in \{0,1\}^{\bb Z^d}$, $1\le j\le d$.  Assume that $c_j$
does not depend on the variables $\eta_0$ and $\eta_{e_j}$ and that
the following gradient conditions are in force. For each $j$, there
exist cylinder functions $h_{j,k}$, $1\le k\le d$, such that
\begin{equation}
\label{11b}
c_j (\eta) \, [\, \eta_{e_j} \,-\, \eta_{0}\,] \,=\, 
\sum_{k=1}^{d} \big\{\, (\tau_{e_k} \, h_{j,k})(\eta) \;-\;
h_{j,k}(\eta)\, \big\}\;.
\end{equation}
In this formula, $\{\tau_z : z\in \bb Z^d\}$ represents the group of
translations acting on the configurations:
\begin{equation}
\label{12}
(\tau_x \eta)_z \;=\;\eta_{x+z}\;, \quad x\,,\, z\,\in\, \bb Z^d\;,
\;\; \eta\,\in\, \{0,1\}^{\bb Z^d} \;.
\end{equation}

Denote by $L_n^S$ the generator of the speed-change, symmetric
exclusion process given by
\begin{equation}
\label{10}
(L_n^Sf)\, (\eta)\;=\; \sum_{x\in\bb T_n^d} \sum_{j=1}^d 
c_j (\tau_x \eta) \, \{f(\sigma^{x,x+e_j}\eta)-f(\eta)\}\;,
\end{equation}
In this formula, $\sigma^{ x,y}\eta$ represents the configuration of
particles obtained from $\eta$ by exchanging the values of $\eta_x$
and $\eta_y$:
\begin{equation*}
(\sigma^{x,y}\eta)_{z} \;=\;
\begin{cases}
\eta_{z} & z\not =x \,,\, y\;,\\
\eta_{x} & z=y\;, \\
\eta_{y} & z=x\;,
\end{cases}
\end{equation*}
and $\{\tau_x : x\in \bb T^d_n\}$ the translations acting on
$\Omega_n$. The summation in \eqref{10} has now to be understood
modulo $n$. We used the same notation for translations acting on
$\Omega_n$ and on $\{0,1\}^{\bb Z^d}$, but the context will clarify to
which one we are referring to.

In the special case where $c_j(\eta) = 1$ for all $1\le j\le d$, we
recover the symmetric simple exclusion proces on $\bb T^d_n$, whose
generator, denoted by $L_n^E$ to stress this particular case, can be
written as
\begin{equation*}
(L_n^E f)(\eta) \;=\; \sum_{x\in \bb T^d_n}  \sum_{y: \Vert y-x \Vert=1}
\eta_x\, (1-\eta_y)\, \big[\, f(\eta^{x,y}) \,-\, f(\eta)\, \big]\;.
\end{equation*}

Denote by $\color{bblue} \nu^n_\rho$, $0\le \rho\le 1$, the Bernoulli
product measure on $\Omega_n$ with density $\rho$. This is the product
measure whose marginals are Bernoulli distributions with parameter
$\rho$. A straightforward computation shows that these measures
satisfy the detailed balance conditions for the speed-change exclusion
process because the cylinder functions $c_j$ are assumed not to depend
on $\eta_0$, $\eta_{e_j}$. In particular, they are stationary for this
dynamics.

Fix a sequence of positive numbers $\{a_n; n \in \bb N\}$ such that
$\color{bblue} \lim_{n\to\infty} a_n = \infty$,
$\color{bblue}
\lim_{n\to\infty} a_n/n^2 = 0$.  Let
$\color{bblue} (\eta^n(t); t\geq 0)$ be the $\Omega_n$-valued,
continuous-time Markov chain whose generator, denoted by $L_n$, is
given by
\begin{equation*}
L_n \;=\; n^2\, L_n^S \;+\; a_n\, L_n^V\;.
\end{equation*}
We call this process the {\em voter model with stirring}
\cite{dfl86, dn94}. 

Denote by $\color{bblue} D([0,T], \Omega_n)$, $T>0$, the set of
right-continuous trajectories $\mf e: [0,T] \to \Omega_n$ with
left-limits, endowed with the Skorohod topology.  For a probability
measure $\mu_n$ on $\Omega_n$, denote by $\color{bblue} \bb P_{\mu_n}$
the measure on $D([0,T], \Omega_n)$ induced by the Markov chain
$\eta^n(t)$ and the initial distribution $\mu_n$.

It can be verified that the measures $\nu^n_\rho$ are not invariant
with respect to $L_n$. Actually, the only extremal measures are the
singletons supported on the empty and the full configurations.
However, as the Bernoulli product measures are stationary for the
exclusion dynamics and since the exclusion generator is accelerated by
$n^2$, while the voter one is accelerated by $a_n$, and
$a_n/n^2\to 0$, we expect the stationary state of the voter model with
stirring to be close to the Bernoulli measures.

Our aim is to study the density fluctuations of this model, when the
process starts from a measure close to a product of Bernoulli
$\nu^n_\rho$.

\subsection{The density fluctuation field}

Let $\bb T^d$ be the continuous torus of dimension $d$. Denote by
$\color{bblue} C(\bb T^d)$ the space of continous, real-valued
functions on $\bb T^d$ and by $\color{bblue} C^k(\bb T^d)$,
$1\le k\le \infty$, the space of real-valued functions on $\bb T^d$
whose $k$-th derivatives exist and are continuous. Elements of $C(\bb
T^d)$ are represented by the letters $F$, $G$. 

Denote by $\color{bblue} L^2(\bb T^d)$ the space of complex-valued,
square-integrable, measurable functions on $\bb T^d$ endowed with the
usual scalar product, represented by $\<\,\cdot\,,\, \cdot\,\>$.  Let
$\color{bblue} \mc H_r$, $r>0$, be the Hilbert space generated by
the functions in $C^\infty(\bb T^d)$ with the scalar product
$\<\,\cdot\,,\, \cdot\,\>_r$ defined by
\begin{equation*}
\< F\,,\, G\>_r \;=\; \sum_{m\in \bb Z^d}   \, \gamma_m^{r} \,
\mb F_m \, \mb G_m \;,
\end{equation*}
where $\color{bblue} \gamma_m \;:=\; 1\,+\, \Vert m\Vert^2$ and
$\mb F_m = \< F\,,\, \phi_m\>$,
$\color{bblue} \phi_m(x) = \exp\{ 2\pi i x\cdot m\}$. The sum is
finite because $F$, $G$ belong to $C^\infty(\bb T^d)$. Clearly
$\mc H_r \subset \mc H_s$ for $r\ge s$.

Denote by $\color{bblue} \mc H_{-r}$, $r>0$, the dual space of
$\mc H_r$. Elements of $\mc H_{-r}$ are represented by the letters
$X$, $Y$. For $X$ in $\mc H_{-r}$ and $F$ in $\mc H_r$, $X(F)$ can be
represented as
\begin{equation*}
X(F) \;=\; \sum_{m\in \bb Z^d} X(\phi_m) \, \mb F_m \;.
\end{equation*}

Denote by $X^n_t$ the random element of $\mc H_{-1}$ defined by
\begin{equation*}
X_t^n(F) \;=\; \frac{1}{\sqrt{n^d a_n}}
\sum_{x\in \bb T^d_n} F(x/n)\, [\, \eta_x^n(t) \,-\, \rho\, ]\;, \quad
F\,\in\, C^\infty( \bb T^d)\;.
\end{equation*}
This formula defines a $\mc H_{-r}$-valued process
$\{X_t^n; t \geq 0\}$. We call this process the {\em density
  fluctuation field}.

For a cylinder function $f: \{0,1\}^{\bb Z^d} \to \bb R$, denote by
$\tilde f\colon [0,1] \to \bb R$ the function defined by 
\begin{equation}
\label{40}
\tilde f (\rho) \;=\; E_{\nu_\rho} \big[\, f(\eta)\,\big]\;.
\end{equation}
In this formula, $\nu_\rho$ represents the Bernoulli product measure
on $\{0,1\}^{\bb Z^d}$ with density $\rho$.  Note that $\tilde f$ is a
polynomial. Its first derivative is represented by $\tilde f'$.

Recall the definition of the cylinder functions $h_{j,k}$ introduced
in \eqref{11b}. Denote by $\mc A$ the second-order linear elliptic
operator defined by
\begin{equation}
\label{41}
\mc A \, F \;=\; \sum_{j,k=1}^d  \tilde h'_{j,k} (\rho)\,
\partial^2_{x_j, x_k} F \;,
\end{equation}
for functions $F$ in $C^2(\bb T^d)$. Let
$\color{bblue} (P_t : t\ge 0)$ be the semigroup associated to the
operator $\mc A$.

The functions $\phi_m$ are eigenvectors of the operator $\mc A$,
\begin{equation}
\label{46}
\mc A \, \phi_m \;=\; -\, \lambda(m)\, \phi_m \;, \quad\text{where}\;\;
\lambda(m)\;=\; 4\,\pi^2 \, m^\dagger\, \bb H\,  m\;.
\end{equation}
In this formula, $\bb H = (\bb H_{i,j})_{1\le i,j\le d}$ stands for
the symmetric matrix whose entries are given by
$\bb H_{j,k} = (1/2) \{\, \tilde h'_{j,k} (\rho) \,+\, \tilde h'_{k,j}
(\rho)\,\}$ and $m^\dagger$ for the transpose of $m$.

Denote by $D([0,T], \mc H_{-r})$ the space of $\mc H_{-r}$-valued,
right-continuous functions with left-limits, endowed with the Skorohod
topology, and by $C([0,T], \mc H_{-r})$ the space of continuous
functions endowed with the uniform topology.

\begin{theorem}
\label{t1}
Suppose that $d=1$ or $2$, and fix $0<\rho<1$, $T>0$ and
$r>(3d+5)/2$. Assume that $(a_n:n\ge 1)$ is a sequence such that
$a_n\to\infty$, $a_n \le \sqrt{\log n}$. Let $\mu_n$ be a sequence of
probability measures on $\Omega_n$ such that
$\lim_{n\to \infty} a^{-1}_n H_n(\mu_n \,|\, \nu^n_\rho) =0$.  Then,
the sequence of probability measures
$\bb Q_n := \bb P_{\mu_n} \circ (X^n)^{-1}$ on $D([0,T], \mc H_{-r})$
converges weakly to the measure induced by the solution of the
equation
\begin{equation}
\label{58}
\left\{
\begin{aligned}
& \partial_t X_t \;=\; \mc A \, X_t   \;+\;  \sqrt{4\,d\, \chi(\rho)}
\, \xi_t \;, \\
& X_0\;=\; 0\;.
\end{aligned}
\right.
\end{equation}
In this formula, $\chi(\rho) = \rho(1-\rho)$ is the static
compressibility of the stirring dynamics and $\xi$ is a standard
space-time white noise.
\end{theorem}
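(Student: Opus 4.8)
The plan is to follow the martingale route for the density fluctuation field, concentrating the real work in a non-equilibrium Boltzmann--Gibbs principle. I would first apply Dynkin's formula to $X^n_t(F)$, writing
\[
X^n_t(F) \;=\; X^n_0(F) \;+\; \int_0^t (L_n X^n_s)(F)\, ds \;+\; M^n_t(F),
\]
with $M^n_t(F)$ a martingale, and split $L_n = n^2 L_n^S + a_n L_n^V$. Since $(L_n^V\eta_x)=\sum_{y:\Vert y-x\Vert=1}(\eta_y-\eta_x)$ is the discrete Laplacian, one summation by parts shows that the voter contribution to the drift carries the prefactor $a_n/n^2$ and is therefore negligible, the quantitative sense in which the voter part is a slow perturbation. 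For the exclusion part, write $(L_n^S\eta_x)=\sum_j(W_{j,x}-W_{j,x-e_j})$ with the instantaneous current $W_{j,x}=c_j(\tau_x\eta)(\eta_{x+e_j}-\eta_x)$; the gradient condition \eqref{11b} expresses $W_{j,x}$ as a discrete divergence of the fields $h_{j,k}(\tau_x\eta)$, so two summations by parts turn $n^2 L_n^S X^n_s(F)$ into
\[
\frac{1}{\sqrt{n^d a_n}} \sum_{x\in\bb T^d_n}\sum_{j,k=1}^d (\partial^2_{x_j,x_k} F)(x/n)\, h_{j,k}(\tau_x\eta^n_s) \;+\; o(1),
\]
the two discrete gradients producing the factor $n^{-2}$ that cancels the acceleration $n^2$. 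At this point I would invoke the Boltzmann--Gibbs principle to linearize $h_{j,k}(\tau_x\eta)\approx \tilde h_{j,k}(\rho)+\tilde h'_{j,k}(\rho)(\eta_x-\rho)$: the constant term drops after summation against the second difference of $F$ (periodicity), and the linear term reassembles $X^n_s(\mc A F)$ with $\mc A$ as in \eqref{41}.

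Next I would identify the noise through the predictable quadratic variation $\langle M^n(F)\rangle_t$, computed from the carr\'e du champ. The exclusion contribution is of order $1/a_n$ per unit time, namely $\tfrac{1}{n^d a_n}\sum_x\sum_j c_j(\tau_x\eta)(\partial_{x_j}F)^2(x/n)(\eta_{x+e_j}-\eta_x)^2$, hence negligible; this is precisely why the conservative exclusion noise does not survive and the limiting noise is \emph{non-conservative}. The voter contribution equals, per unit time,
\[
\frac{1}{n^d}\sum_{z\in\bb T^d_n} F(z/n)^2 \sum_{y:\Vert y-z\Vert=1}(\eta_y-\eta_z)^2,
\]
and replacing $(\eta_y-\eta_z)^2$ by its mean $2\chi(\rho)$ and summing over the $2d$ neighbours yields $4d\,\chi(\rho)\Vert F\Vert^2_{L^2(\bb T^d)}\,t$, matching the white-noise coefficient in \eqref{58}. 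Finally $X^n_0\to 0$ because $\Var_{\nu^n_\rho}(X^n_0(F))=O(1/a_n)$, consistent with the datum $X_0=0$; the extra factor $\sqrt{a_n}$ in the normalization is exactly what suppresses both the equilibrium initial fluctuations and the conservative noise.

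With the decomposition in hand, I would establish tightness of $\{\bb Q_n\}$ in $D([0,T],\mc H_{-r})$ via Mitoma's criterion together with Aldous' condition for each real-valued process $X^n(F)$, the drift and quadratic-variation bounds above furnishing the required control; the limit concentrates on $C([0,T],\mc H_{-r})$ since the jumps are uniformly small. Every limit point then solves the martingale problem associated with the linear equation \eqref{58}, which has a unique solution in law -- by \eqref{46} the components $X_t(\phi_m)$ are independent complex Ornstein--Uhlenbeck processes -- and uniqueness upgrades tightness to the claimed convergence.

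The main obstacle is the Boltzmann--Gibbs replacement out of equilibrium. In equilibrium one bounds the variance of the additive functional of the $h_{j,k}$ by the reversible $H_{-1}$ inequality for the stationary measure $\nu^n_\rho$, but starting from $\mu_n$ this tool is unavailable; instead I would control the relative entropy $H_n(\mu^s_n\,|\,\nu^n_\rho)$ along the flow together with its production rate, following the refinement of Yau's relative-entropy method \cite{y91} due to Jara and Menezes \cite{jm1,jm2}. It is here that the hypotheses $a_n^{-1}H_n(\mu_n\,|\,\nu^n_\rho)\to 0$ and $a_n\le\sqrt{\log n}$, and the low-dimensional restriction $d\le 2$, are consumed: the entropy-production estimates that force the replacement error to vanish only close in these dimensions.
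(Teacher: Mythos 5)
Your overall architecture matches the paper's: Dynkin decomposition of $X^n_t(F)$, the gradient condition plus two summations by parts to produce the $h_{j,k}$ field, projection onto degree-one and higher-degree parts, a non-equilibrium Boltzmann--Gibbs principle driven by the Jara--Menezes entropy method, identification of the noise through the carr\'e du champ (your computation of the coefficient $4d\chi(\rho)$ and of the $O(1/a_n)$ exclusion contribution is exactly the paper's), and vanishing of $X^n_0$. However, there is a genuine gap at the tightness step, and it is precisely the step where the restriction $d\le 2$ is actually consumed. You propose Mitoma's criterion plus Aldous' condition, ``the drift and quadratic-variation bounds above furnishing the required control.'' They do not. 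The non-equilibrium Boltzmann--Gibbs estimate (Proposition \ref{l09} in the paper) for the time integral of the degree-$\ge 2$ part $B^n$ over $[s,t]$ contains a term of order $(1+H_n(f^n_s))/a_n$ which is \emph{independent of $t-s$}; hence it cannot verify an Aldous-type small-increment condition by itself (this is exactly the obstruction discussed in Remark \ref{l33}). The paper overcomes this by an interpolation argument (Lemma \ref{l44}: writing the probability as $p^\theta p^{1-\theta}$, applying Chebyshev to each factor, and combining the $L^1$ Boltzmann--Gibbs bound with an $L^2$ entropy bound to produce a modulus $|t-s|^{\theta/2+\alpha(1-\theta)}$ with exponent exceeding $1$), followed by the Kolmogorov--\v{C}entsov theorem; the prefactor $[a_n+R_d(n)]/n^{d\theta/4}$ appearing there tends to $0$ only for $d\le 2$, which is why Theorem \ref{t1} is limited to these dimensions while the Boltzmann--Gibbs principle itself holds for $d\le 3$. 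Your closing paragraph misattributes the $d\le 2$ restriction to the entropy-production/replacement estimates.

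A second, related difficulty is the drift integral $\int_\tau^{\tau+\theta} X^n_s(\mc A F)\,ds$ in Aldous' condition: under the non-stationary dynamics, the available pointwise-in-time moment bound is $\bb E_{\mu_n}[X^n_s(G)^2]\le C\Vert G\Vert^2_\infty\,(1+H_n(f^n_s))/a_n$, and the Gronwall estimate \eqref{01} only gives $H_n(f^n_s)\le \{H_n(0)+R_d(n)\}\,n^\kappa$, so this bound diverges with $n$ (e.g.\ like $\log n\cdot n^{\kappa}$ in $d=2$). The paper avoids any pointwise-in-time control of the field by passing to the mild (semigroup) representation \eqref{52}, $X^n_t(F)=X^n_0(P_tF)+M^{n,t}_t(F)+\int_0^t R^n_s(P_{t-s}F)\,ds+\int_0^t B^n_s(P_{t-s}F)\,ds$, so that only time-integrated quantities---which the entropy and Boltzmann--Gibbs machinery can handle---ever appear, both in the tightness proof (Subsection on $\mb I^n_t$) and in the identification of finite-dimensional limits (Proposition \ref{p03b}). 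To repair your proof you would need either to import this mild representation or to reproduce the interpolation/Kolmogorov--\v{C}entsov scheme; as written, the tightness claim does not follow from the estimates you invoke.
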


In dimension $3$, we are not able to prove the convergence of the
process $X^n_t$ but only of its time integral. Let $\bb X^n_t =
\int_0^t X^n_s\, ds$.

\begin{theorem}
\label{t1b}
Suppose that $d=3$, and fix $0<\rho<1$, $T>0$ and $r>8$. Assume that
$(a_n:n\ge 1)$ is a sequence such that $a_n\to\infty$,
$a_n \le \sqrt{\log n}$. Let $\mu_n$ be a sequence of probability
measures on $\Omega_n$ such that
$\lim_{n\to \infty} a^{-1}_n H_n(\mu_n \,|\, \nu^n_\rho) =0$. Then,
under the measure $\bb P_{\mu_n}$, the finite-dimensional
distributions of $X^n$ converge to the ones of the solution of
\eqref{58}. Moreover, the sequence of probability measures
$\bb Q_n := \bb P_{\mu_n} \circ (\bb X^n)^{-1}$ on
$C([0,T], \mc H_{-r})$ converges weakly to the measure induced by the
time-integral of the solution of the equation \eqref{58}.
\end{theorem}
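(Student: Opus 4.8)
The plan is to follow the martingale route for nonequilibrium fluctuations, with the dimension $d=3$ forcing us to prove tightness only after integrating in time. Fix $F \in C^\infty(\bb T^d)$ and write $\mc X^n(F)$ for the function $\eta \mapsto (n^d a_n)^{-1/2}\sum_x F(x/n)(\eta_x - \rho)$. By Dynkin's formula,
\[
M^n_t(F) \;=\; X^n_t(F) \,-\, X^n_0(F) \,-\, \int_0^t \big(L_n \mc X^n(F)\big)(\eta^n_s)\, ds
\]
is a martingale, and the first task is to compute $L_n = n^2 L_n^S + a_n L_n^V$ on this linear functional. A direct computation gives $L_n^V \eta_x = \sum_{y:\Vert y-x\Vert=1}(\eta_y - \eta_x)$, so the voter part contributes a discrete Laplacian; after the normalization its drift is of order $a_n/n^2 \to 0$ and is negligible. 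For the stirring part, the gradient condition \eqref{11b} lets us write the instantaneous current as a discrete gradient of the $h_{j,k}$, and two summations by parts produce $(n^d a_n)^{-1/2}\sum_x \sum_{j,k}(\partial^2_{x_j,x_k}F)(x/n)\, h_{j,k}(\tau_x \eta)$ up to lower-order terms.

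The heart of the argument is the Boltzmann--Gibbs principle: I would replace $h_{j,k}(\tau_x\eta_s)$ inside the time integral by its linearization $\tilde h_{j,k}(\rho) + \tilde h'_{j,k}(\rho)(\eta_{x,s}-\rho)$. The constant part sums to zero against $\partial^2 F$, and the linear part turns the drift into $\int_0^t X^n_s(\mc A F)\, ds$, recovering the operator $\mc A$ of \eqref{41}. Showing that the replacement error vanishes is the main obstacle; I would control it through the entropy-production bounds of Jara and Menezes, fed by the hypothesis $a_n^{-1} H_n(\mu_n \mid \nu^n_\rho) \to 0$ and the constraint $a_n \le \sqrt{\log n}$. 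It is precisely the sharpness of these estimates that restricts the method to $d \le 3$ and, in $d=3$, prevents control of $X^n$ on path space.

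Next I would identify the noise through the quadratic variation. The carr\'e du champ of the voter part gives $\langle M^{n}(F)\rangle^V_t = \int_0^t n^{-d}\sum_x F(x/n)^2 \sum_{y:\Vert y-x\Vert=1}(\eta_{y,s}-\eta_{x,s})^2\, ds$, and since $E_{\nu_\rho}[(\eta_y-\eta_x)^2]=2\chi(\rho)$ over $2d$ neighbours this converges to $4 d\, \chi(\rho)\, \Vert F\Vert^2_{L^2}\, t$. The stirring contribution to the quadratic variation carries an extra factor $a_n^{-1}$ and thus vanishes, so the limiting martingale is Gaussian with variance $4 d \chi(\rho)\Vert F\Vert^2 t$, i.e. the non-conservative field $\sqrt{4d\chi(\rho)}\,\xi$ of \eqref{58}. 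Combining drift and noise, any limit point solves the linear martingale problem associated with \eqref{58}; since this is an Ornstein--Uhlenbeck equation in $\mc H_{-r}$, its law is unique.

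Finally, the two convergence statements. For the finite-dimensional distributions of $X^n$ no tightness on path space is needed: uniform $L^2$ bounds on $X^n_t(\phi_m)$, the martingale decomposition, and the computed quadratic variations yield, via a martingale central limit theorem together with the identification above, that $(X^n_{t_1}(F_1),\dots,X^n_{t_k}(F_k))$ converges to the corresponding Gaussian vector of the solution. For the time integral $\bb X^n_t = \int_0^t X^n_s\, ds$, the extra smoothing is what saves dimension three: in Fourier the integral inserts a factor $\lambda(m)^{-1}(1-e^{-\lambda(m)t}) = O(\Vert m\Vert^{-2})$, improving the spatial decay of $E[\bb X^n_t(\phi_m)^2]$ enough to obtain tightness in $C([0,T],\mc H_{-r})$ for $r>8$ by a Mitoma/Kolmogorov criterion, using that $\bb X^n$ is continuous in time with modulus of continuity controlled by $\int \Vert X^n_s\Vert^2_{-r}\,ds$. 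The finite-dimensional distributions of $\bb X^n$ are Riemann-sum limits of those of $X^n$, hence converge to those of $\int_0^\cdot X_s\, ds$; together with tightness this gives the weak convergence of $\bb X^n$ to the time-integral of the solution of \eqref{58}, completing the proof.
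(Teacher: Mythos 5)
Your overall architecture (martingale decomposition, gradient condition, Boltzmann--Gibbs replacement, quadratic variation giving $4d\chi(\rho)$) matches the paper, but the two steps that are specific to $d=3$ --- which is what this theorem is about --- contain genuine gaps. First, you identify the limit by saying ``any limit point solves the linear martingale problem associated with \eqref{58}.'' Limit points of what? In $d=3$ the process $X^n$ is precisely \emph{not} known to be tight on path space (this is why the theorem only claims finite-dimensional convergence for $X^n$), so there are no subsequential limits of $\bb P_{\mu_n}\circ (X^n)^{-1}$ to which the martingale-problem identification can be applied; the argument is circular. Moreover, the plain decomposition \eqref{e2} leaves the self-referential drift $\int_0^t X^n_s(\mc A F)\,ds$, which cannot be passed to the limit along finite-dimensional distributions alone. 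The paper's way out is the mild (Duhamel) formulation with time-dependent test functions $F_{s,t}=P_{t-s}F$: by \eqref{52} and \eqref{57}, $X^n_t(F)$ equals $M^{n,t}_t(F)$ plus terms vanishing in $L^1$, and by \eqref{56} $M^{n,t}_t(F)=M^n_t(F)+\int_0^t M^n_s(P_{t-s}\mc A F)\,ds$ is a continuous functional of the martingale process $M^n$ alone, which \emph{is} tight in every dimension (Lemma \ref{l23}); finite-dimensional convergence of $X^n$ (Proposition \ref{p03b}) then follows with no tightness of $X^n$ needed.

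Second, your tightness argument for $\bb X^n$ rests on ``uniform $L^2$ bounds on $X^n_t(\phi_m)$'' and a modulus of continuity controlled by $\int \Vert X^n_s\Vert^2_{-r}\,ds$. Such $L^2$ bounds are not available in $d=3$: the entropy inequality at fixed time gives $\bb E_{\mu_n}[X^n_t(\phi_m)^2]\lesssim a_n^{-1}\{H_n(f^n_t)+1\}$, and by \eqref{01} with $R_3(n)=a_n n$ one only gets $H_n(f^n_t)\lesssim a_n\, n^{1+\kappa}$, so the bound diverges. More structurally, the degree-$\ge 2$ part $B^n$ of the drift admits no useful second-moment estimate; the paper can only control it by interpolating the time-integrated Boltzmann--Gibbs bound (Corollary \ref{l30}) against the crude static bound (Corollary \ref{l29}), which is exactly what produces the $L^p$ bound with $p<4/3$ of Lemma \ref{l40}, and then tightness of $\bb X^n$ in $C^\vartheta([0,T],\mc H_{-r})$, $r>8$, via Kolmogorov--\v{C}entsov with these low moments (Corollary \ref{l45}). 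Your Fourier factor $\lambda(m)^{-1}(1-e^{-\lambda(m)t})$ does help the martingale contribution to $\bb X^n$, but it does nothing for the $B^n$ error term, which is where the obstruction lies. To repair the proof you would need to replace both the martingale-problem step and the $L^2$-based tightness step by the semigroup representation and the $p<4/3$ moment machinery, respectively.
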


\begin{remark}
\label{rm3}
The condition $a_n \le \sqrt{\log n}$ is not optimal. We just need
that $e^{C_0\, a_n} / \sqrt{n} \to 0$ for all $C_0>0$. We also do not
claim that the choice of $r$ is optimal.
\end{remark}

\begin{remark}
\label{rm3d}
Note that the fluctuation at time $0$, $X^n_0$, vanishes in the
limit. The process $X^n_t$ is built exclusively by the noise.
\end{remark}

\begin{remark}
\label{rm3c}
The result is restricted to dimensions $d\le 3$ for the following
reason. As long known, the crux of the proof of the convergence of the
density fluctuation fields lies in the so called Boltzmann-Gibbs
principle, which permits the replacement of average of cylinder
functions by their projections on the density field. The proof of this
result relies on a bound on the entropy production, presented in the
next subsection, which holds in all dimensions. This bound, however,
is not strong enough in dimension $d\ge 4$ to yield the Boltzmann-Gibbs
principle. 
\end{remark}

\begin{remark}
\label{rm1}
Usually the gradient condition requires that the jump rates $c_j$,
$1\le j\le d$, fulfill the following assumption. For each $j$, there
exist cylinder functions $g_{j,p}$ and finitely-supported signed
measures $m_{j,p}$, $1\le p\le n_j$, such that
\begin{equation}
\label{11}
c_j (\eta) \, [\, \eta_{e_j} \,-\, \eta_{0}\,] \,=\, 
\sum_{p=1}^{n_j} \sum_{y\in\bb Z^d}  m_{j,p}(y) \, (\tau_y \, g_{j,p}
)(\eta)\;, \quad \sum_{y\in\bb Z^d}  m_{j,p}(y) \;=\; 0
\end{equation}
for all $1\le p\le n_j$. However, if conditions \eqref{11} are in
force, then there exist cylinder functions $h_{j,k}$ for which
\eqref{11b} hold.
\end{remark}

\begin{proof}
Fix $1\le j\le d$ and consider the formula for
$c_j (\eta) \, [\, \eta_{0} \,-\, \eta_{e_j}\,]$. We omit $j$ from the
notation from now on. As $\sum_{y\in\bb Z^d} m_{p}(y) \,=\, 0$ for all
$p$, we can write this sum as
\begin{equation*}
\sum_{p=1}^{n} \sum_{y\in\bb Z^d}  m_{p}(y) \, \big\{\, (\tau_y \, g_{p}
)(\eta) \,-\, g_{p} (\eta) \, \big\}\;.
\end{equation*}
Fix $y$ such that $m_p(y) \not = 0$. Consider a path $0 = z_0, z_1,
\dots , z_{\Vert y \Vert} = y$ such that $\Vert z_{i+1} - z_i \Vert =
1$ for $0\le i < \Vert y\Vert$. With this notation, 
\begin{equation*}
\tau_y \, g_{p} \,-\, g_{p} \;=\; 
\sum_{i= 0}^{\Vert y \Vert - 1} [\, \tau_{z_{i+1}} \, g_{p}
\,-\, \tau_{z_{i}} \, g_{p}\,]\;.
\end{equation*}
Since $\Vert z_{i+1} - z_i \Vert = 1$, there exists $1\le k\le d$ such
that $z_{i+1} - z_i = \pm e_k$.

If $z_{i+1} - z_i = e_k$, let $g_{p,i} := \tau_{z_{i}} \, g_{p}$ so
that
$\tau_{z_{i+1}} \, g_{p} \,-\, \tau_{z_{i}} \, g_{p} = \tau_{e_k}
g_{p,i} - g_{p,i}$. In contrast, if $z_{i+1} - z_i \,=\, -\, e_k$, let
$g_{p,i} \,:=\, - \, \tau_{z_{i+1}} \, g_{p}$ so that
$\tau_{z_{i+1}} \, g_{p} \,-\, \tau_{z_{i}} \, g_{p} = \tau_{e_k}
g_{p,i} - g_{p,i}$. With this notation,
\begin{equation*}
\tau_y \, g_{p} \,-\, g_{p} \;=\; 
\sum_{i= 0}^{\Vert y \Vert - 1} [\, \tau_{e_{k(p,i)}}  g_{p,i}
\,-\, g_{p,i}\,]\;.
\end{equation*}
Note that $g_{p,i}$ and $k(p,i)$ depend on $y$ but this fact has been
omitted from the notation.

To complete the proof of the remark, it remains to fix
$1\le \ell\le d$ and define $h_\ell$ as
\begin{equation*}
h_\ell \;=\; \sum_{p=1}^{n} \sum_{y\in\bb Z^d}  m_{p}(y) \sum_{i}
g_{p,i}\;, 
\end{equation*}
where the sum over $i$ is carried over all indices $i$ such that
$k(p,i) = \ell$.
\end{proof}

The main tool in the proof of Theorem \ref{t1} and \ref{t1b} consists
in an {\em a priori} bound on the entropy production of the process
$\eta^n(t)$.

\subsection{The entropy estimate}

Denote by $H_n(\mu \,|\, \nu)$ the relative entropy of the probability
measure $\mu$ with respect to $\nu$:
\begin{equation*}
H_n (\mu \,|\, \nu) \;=\; \sup_f \Big\{ \int_{\Omega_n} f \, d\mu \,-\,
\log \int_{\Omega_n} e^f \, d\nu\, \Big\}\;,
\end{equation*}
where the supremum is carried over all functions $f:\Omega_n\to \bb
R$.

It is known \cite[Theorem A1.8.3]{kl} that
\begin{equation}
\label{02}
H_n (\mu \,|\, \nu) \;=\; H_n(f) \;:=\; \int f \, \log f \; d \nu
\end{equation}
if $\mu$ is absolutely continuous with respect to $\nu$ and $f$
represents the Radon-Nikodym derivative $d\mu/d\nu$. Otherwise,
$H_n (\mu \,|\, \nu) = \infty$.

Let $\color{bblue} (S^n(t) : t\ge 0)$ be the semigroup of the voter
model with stirring. Thus, $\mu S^n(t)$ represents the distribution at
time $t$ of the process $\eta^n(\cdot)$ starting from the probability
measure $\mu$.

It is well known that, for any initial distribution $\mu$, the
relative entropy of $\mu S^n(t)$ with respect to a stationary measure
decreases in time \cite{kl}.

As stressed above, the Bernoulli product measures are not stationary
for the voter model with stirring.  Nevertheless, Theorem \ref{t2}
states that the relative entropy of $\mu S^n(t)$ with respect to a
Bernoulli measure $\nu^n_\rho$ does not grow too fast. More precisely,
for a sequence of probability measures $(\mu_n : n\ge 1)$ on
$\Omega_n$, denote by $H_n(t)$ the relative entropy of $\mu_n S^n(t)$
with respect to a Bernoulli measure $\nu^n_\rho$:
\begin{equation*}
H_n(t) \;=\; H_n \big(\, \mu_n S^n(t) \,|\, \nu^n_\rho\,\big)\;.
\end{equation*}

\begin{theorem}
\label{t2}
Fix a sequence of probability measures $(\mu_n : n\ge 1)$ on
$\Omega_n$.  Then, there exists a finite constant $C_0 = C_0(\rho)$
such that
\begin{equation*}
H'_n(t) \;\leq\; C_0\, a_n\big\{\, H_n(t)
\,+\, R_d(n) \, \big\} 
\end{equation*}
for all $t\ge 0$.  In this formula, $R_d(n)$ represents the sequence
given by
\begin{equation*}
R_d(n) \;=\;
\begin{cases}
\sqrt{a_n} & \text{for $d=1$}\;, \\
a_n \, \log n & \text{for $d=2$}\;, \\
a_n \, n^{d-2} & \text{for $d\ge 3$}\;.
\end{cases}
\end{equation*}
\end{theorem}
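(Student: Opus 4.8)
The plan is to bound the entropy production $H_n'(t)$ by differentiating \eqref{02} and exploiting that the exclusion part of the dynamics is reversible and fast, while the voter part is slow and produces only a small, mean-zero drift. Write $f_t$ for the Radon--Nikodym derivative $d(\mu_n S^n(t))/d\nu^n_\rho$; it solves the forward equation $\partial_t f_t = L_n^* f_t$, where $L_n^*$ denotes the adjoint of $L_n$ in $L^2(\nu^n_\rho)$. Since $\int f_t\, d\nu^n_\rho = 1$ for all $t$, the standard computation gives the entropy production identity
\begin{equation*}
H_n'(t) \;=\; \int f_t\, \big(L_n \log f_t\big)\, d\nu^n_\rho
\;=\; n^2 \int f_t\, \big(L_n^S \log f_t\big)\, d\nu^n_\rho
\;+\; a_n \int f_t\, \big(L_n^V \log f_t\big)\, d\nu^n_\rho\;.
\end{equation*}
First I would dispose of the exclusion contribution. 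Because $\nu^n_\rho$ is reversible for $L_n^S$, summing bond by bond and using the convexity inequality $(\beta-\alpha)(\log\beta-\log\alpha)\ge 2(\sqrt\beta-\sqrt\alpha)^2$ yields the good negative term
\begin{equation*}
n^2\int f_t\,\big(L_n^S\log f_t\big)\, d\nu^n_\rho \;\le\; -\, n^2\, \mc D_S(\sqrt{f_t})\;,
\qquad
\mc D_S(g) \;:=\; \sum_{x\in\bb T^d_n}\sum_{j=1}^d \int c_j(\tau_x\eta)\,\big[\, g(\sigma^{x,x+e_j}\eta)-g(\eta)\,\big]^2\, d\nu^n_\rho\;,
\end{equation*}
which is the reservoir of negativity that will absorb everything produced by the voter dynamics.

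For the voter term, which is the heart of the matter, I would first reduce the entropy production to a linear functional of $f_t$. Applying the inequality $\alpha\log(\beta/\alpha)\le(\beta-\alpha)-(\sqrt\beta-\sqrt\alpha)^2$ to each single-site flip and discarding the non-positive quadratic remainder, one gets
\begin{equation*}
a_n\int f_t\,\big(L_n^V\log f_t\big)\, d\nu^n_\rho \;\le\; a_n\int f_t\, W_n\, d\nu^n_\rho\;,
\end{equation*}
where $W_n$ is the function determined by $\int (L_n^V f)\, d\nu^n_\rho=\int f\, W_n\, d\nu^n_\rho$, that is, the $\nu^n_\rho$-adjoint of $L_n^V$ applied to $\mathbf 1$. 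By translation invariance $W_n=\sum_{x\in\bb T^d_n}\tau_x W$ for a single local, mean-zero function $W$. The decisive structural fact is that $W_n$ is orthogonal in $L^2(\nu^n_\rho)$ not only to the constants but also to the first chaos: since the voter generator acts on occupation variables as the discrete Laplacian, $L_n^V\eta_z=\sum_{y:\,\Vert y-z\Vert=1}(\eta_y-\eta_z)$, one has $\<W_n,\eta_z-\rho\>_{\nu^n_\rho}=\int (L_n^V\eta_z)\, d\nu^n_\rho=0$. Hence $W_n$ carries no component along the conserved density and its leading part is quadratic in the centred variables $\{\eta_z-\rho\}$.

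I would then bound $a_n\int f_t W_n\, d\nu^n_\rho$ by spending a fraction of the large exclusion Dirichlet form. Using the Rayleigh--Ritz variational formula for the principal eigenvalue of $n^2 L_n^S+\theta a_n W_n$, together with the entropy inequality to control the remainder of the perturbative expansion, one obtains an estimate of the form
\begin{equation*}
a_n\int f_t\, W_n\, d\nu^n_\rho \;\le\; \tfrac12\, n^2\, \mc D_S(\sqrt{f_t})
\;+\; \frac{C\, a_n^2}{n^2}\,\big\langle\, W_n\,,\,(-L_n^S)^{-1} W_n\,\big\rangle_{\nu^n_\rho}
\;+\; C\, a_n\, H_n(t)\;.
\end{equation*}
The first term is dominated by the exclusion contribution (leaving $-\tfrac12 n^2\mc D_S(\sqrt{f_t})\le0$), the last term is already of the desired form, and everything reduces to estimating the $H_{-1}$ norm $\<W_n,(-L_n^S)^{-1}W_n\>_{\nu^n_\rho}$, which is well defined precisely because $W_n$ avoids the conserved direction.

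The hard part will be this resolvent estimate, and it is here that the dimension enters. Since $W_n$ is a translation-invariant sum of second-chaos terms, computing $\<W_n,(-L_n^S)^{-1}W_n\>_{\nu^n_\rho}$ amounts to integrating the Green's function of the two-particle exclusion process, equivalently of the difference random walk on $\bb T^d_n$, and the answer tracks its recurrence or transience. For $d\ge 3$ the difference walk is transient with summable Green's function, giving $\<W_n,(-L_n^S)^{-1}W_n\>_{\nu^n_\rho}\le C n^d$ and hence the contribution $a_n n^{d-2}$; for $d=2$ the logarithmically divergent Green's function produces $C n^2\log n$ and the contribution $a_n\log n$; for $d=1$ the walk is recurrent, the massless resolvent is too large, and one must regularise it at the mass scale of order $a_n/n^2$ dictated by the eigenvalue problem, which yields $C n^2/\sqrt{a_n}$ and the contribution $\sqrt{a_n}$. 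Collecting the three cases into $R_d(n)$ and combining with the exclusion bound gives $H_n'(t)\le C_0 a_n\{H_n(t)+R_d(n)\}$. Besides the Green's function asymptotics, the two delicate points I expect are the low-dimensional regularisation in $d=1$ and the rigorous control of the remainder in the eigenvalue expansion by $a_n H_n(t)$, which is exactly what forces the entropy term into the statement.
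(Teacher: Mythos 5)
Your first step reproduces the paper's Proposition \ref{l06} exactly: the entropy production identity, the reservoir of negativity $-n^2\mc D_S(\sqrt{f_t})$ from the reversible exclusion part, the bound of the voter contribution by $a_n\int V f_t\, d\nu^n_\rho$ with $V=L_n^{V,*}\mb 1$, and the observation that $V$ is a pure second-chaos function ($V=2\sum_j\sum_x \omega_x\omega_{x+e_j}$) are all in the paper. Your Green's-function heuristics even predict the correct $R_d(n)$ in each dimension, and your ``mass scale'' in $d=1$ corresponds to the paper's cut-off $\ell_n=n/\sqrt{a_n}$.

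The gap is in your central inequality. The Rayleigh--Ritz step bounds
$a_n\int V f\, d\nu^n_\rho-\tfrac12 n^2\mc D_S(\sqrt f)$ by the principal eigenvalue
$\lambda_{\max}\bigl(a_n V+\tfrac12 n^2 L_n^S\bigr)$, and once you have passed to this supremum over \emph{all} densities, no perturbative expansion can recover the resolvent bound $C a_n^2 n^{-2}\langle V,(-L_n^S)^{-1}V\rangle$: the eigenvalue is genuinely of order $a_n n^d$. Indeed, the exclusion dynamics conserves the particle number, so the test function $g=\chi_{\{\eta\equiv 1\}}/\sqrt{\nu^n_\rho(\eta\equiv 1)}$ satisfies $\mc D_S(g)=0$ (exchanges fix the full configuration) while $\int V g^2\, d\nu^n_\rho = 2d\,n^d(1-\rho)/\rho$; hence
\begin{equation*}
\lambda_{\max}\Bigl(a_n V+\tfrac12 n^2 L_n^S\Bigr)\;\ge\; C(\rho)\, a_n\, n^d\;,
\end{equation*}
which is far larger than $a_n^2 n^{d-2}$ (recall $a_n\ll n^2$) and only yields the trivial bound $H_n'(t)\le C a_n n^d$. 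Equivalently, the smallness condition required by eigenvalue perturbation theory, $a_n\Vert V\Vert_\infty/\mathrm{gap}\ll 1$, fails by a factor $a_n n^d$. Appending ``$+\,C a_n H_n(t)$'' afterwards cannot repair this, because the information that $f_t$ has small entropy is destroyed the moment one takes the supremum over densities; the entropy must be used \emph{before} any global spectral bound, and on quantities whose exponential moments under $\nu^n_\rho$ are controlled. This is exactly what the paper's proof does and what your route omits: $\omega_{x+e_j}$ is first replaced by the block average $\omega^\ell_{x+e_j}$ via an integration by parts along an explicit local flow (Lemmata \ref{l01}, \ref{l2}, \ref{l17}), the price being a Dirichlet-form term plus squares of \emph{bounded, local} fields $H^{(\ell)}_{j,k,x}$; these squares and the averaged term $V_\ell$ are then estimated by the entropy inequality combined with subgaussian concentration under the product measure (Lemma \ref{l04}, Corollary \ref{l27}), and finally $\ell=\ell_n$ is optimized. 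The resolvent $(-L_n^S)^{-1}V$ is non-local, so no such concentration step is available for it; as the introduction of the paper notes, the $H_{-1}$/variance machinery is precisely the equilibrium tool that is unavailable here, and its role is taken over by the flow construction, whose finite range $\ell_n$ is the rigorous substitute for your infrared regularization in every dimension, not only $d=1$.
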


It follows from the previous result and Gronwall's lemma that
\begin{equation}
\label{01}
H_n(t) \;\leq\; \big\{\, H_n(0) \,+\, R_d(n)\,  \big\} \, e^{C_0 a_nt}
\end{equation}
for all $t \geq 0$.

\begin{remark}
\label{rm5}
Assume that $a_n \le \sqrt{\log n}$ and fix $\kappa>0$, $T>0$.  There
exists $n_0 = n_0(\rho, \kappa, T)$ such that
$e^{C_0 a_nt} \le n^\kappa$ for all $n\ge n_0$. In particular,
$H_n(t) \le \big\{\, H_n(0) \,+\, R_d(n)\, \big\} \, n^\kappa$ for
all $0\le t\le T$, $n\ge n_0$.
\end{remark}

The article is organized as follows. In Section \ref{sec6}, we present
a sketch of the proofs of Theorems \ref{t1} and \ref{t1b}. In Section
\ref{sec3}, we prove Theorem \ref{t2}. The proof of this result is
independent from the rest of the paper. In Section \ref{sec4}, we
prove the Boltzmann-Gibbs principle. In Section \ref{sec5}, we prove
the tightness of the sequences $\bb P_{\nu^n_\rho} \circ (X^n)^{-1}$
in dimension $1$ and $2$, and the one of
$\bb P_{\nu^n_\rho} \circ (\bb X^n)^{-1}$ in dimension $3$. In Section
\ref{sec2}, we compute the limits of the finite-dimensional
distributions of these processes, completing the proofs of Theorems
\ref{t1} and \ref{t1b}. In Section \ref{sec9}, we present entropy
bounds used in the article, and, in Section \ref{sec7}, some general
results on continuous-time Markov chains. Finally, in Section
\ref{sec8}, we provide a decomposition of a cylinder function as the
sum of polynomials of fixed degree.

\section{Sketch of the proof of Theorems \ref{t1} and \ref{t1b}}
\label{sec6}

We present in this section the main steps of the proof of Theorem
\ref{t1} and \ref{t1b}.  We first decompose the density field as the
sum of a martingale and integral processes.

Fix $r>0$ and denote by $M^n_t$ the $\mc H_{-r}$-valued process
defined by
\begin{equation}
\label{30}
M^n_t(F) \;:=\; X^n_t(F) \;-\; X^n_0(F) \;-\; \int_0^t
L_n X^n_s(F)\, ds\;, \quad F\,\in\, C^\infty(\bb T^d)\;.
\end{equation}
By \cite[Lemma A.5.1]{kl}, the process $M^n_t(F)$ is a martingale for
each $F$ in $C^\infty(\bb T^d)$. We turn to the integral term.

\begin{asser}
\label{as1}
For every function $F$ in $C^\infty(\bb T^d)$,
\begin{equation*}
\begin{aligned}
L_n X^n(F) \; &=\; \frac{1}{\sqrt{a_n n^d}} \sum_{j,k=1}^d
\sum_{x\in\bb T_n^d}   \{\, h_{j,k} (\tau_x \eta)  \,-\, \tilde
h_{j,k}(\rho)\, \} \, (\Delta^n_{j,k} F) (x/n) 
\\
&+\; \frac{a_n}{n^2}\, \frac{1}{\sqrt{a_n n^d}}
\sum_{x\in\bb T_n^d} \{\, \eta_{x} \,-\, \rho\,\}
\, (\Delta_n F)  (x/n) \;.
\end{aligned}
\end{equation*}
In this formula, $\tilde h_{j,k} (\rho)$ has been introduced in
\eqref{40}, 
\begin{equation*}
(\Delta^n_{j,k} F) (x/n) \;=\; n^2 \, \Big\{ \,
F\Big( \frac{x+e_j}{n}\,\Big) \,-\,
F\Big(\frac{x}{n}\,\Big) \,-\,
F\Big(\frac{x+e_j-e_k}{n}\,\Big) \,+\,
F\Big(\frac{x-e_k}{n}\,\Big) \Big\}\;,
\end{equation*}
and $(\Delta_n F)  \, (x/n) = \sum_{j=1}^d (\Delta^n_{j,j} F) \, (x/n)$.
\end{asser}

\begin{proof}
An elementary computation yields that
\begin{equation}
\label{e5}
\begin{aligned}
L_n X^n(F) \; &=\; \frac{n^2}{\sqrt{a_n n^d}}
\sum_{x\in\bb T_n^d} \sum_{j=1}^d  c_j (\tau_x \eta) \,
[\, \eta_{x} \,-\, \eta_{x+e_j}\,] \, [\, F([x+e_j]/n) \,-\, F(x/n)\,]
\\
&+\; \frac{a_n}{\sqrt{a_n n^d}}
\sum_{x\in\bb T_n^d} \sum_{y}  [\, \eta_{x} \,-\, \eta_{y}\,]^2
[\,1\,-\, 2\eta_x\,]  \, F(x/n) \;,
\end{aligned}
\end{equation}
where the sum over $y$ is carried over all neighbours of $x$.

Apply the gradient condition \eqref{11b} to replace in the first sum
on the right-hand side
$c_j (\eta) \, [\, \eta_{0} \,-\, \eta_{e_j}\,]$ by
$h_{j,k} (\eta) - h_{j,k} (\tau_{e_k} \eta) $. In this difference,
replace $h_{j,k} (\eta)$ by $h_{j,k} (\eta) - \tilde h
(\rho)$. Finally, sum by parts to get that the first term on the
right-hand side is of the previous equation is equal to
\begin{equation*}
\frac{1}{\sqrt{a_n n^d}} \sum_{j,k=1}^d
\sum_{x\in\bb T_n^d}   \{\, h_{j,k} (\tau_x \eta)  \,-\, \tilde
h(\rho)\, \}\, (\Delta^n_{j,k} F) (x/n) \;.
\end{equation*}

We turn to the second sum on the right-hand side of \eqref{e5}.  Write
$1- 2\eta_x$ as $(1-\eta_x) - \eta_x$, note that
$[\, \eta_{x} \,-\, \eta_{y}\,]^2 \, (1-\eta_x) = \eta_y\, (1-\eta_x)$
and $[\, \eta_{x} \,-\, \eta_{y}\,]^2 \, \eta_x= \eta_x\, (1-\eta_y)$,
to conclude that
$[\, \eta_{x} \,-\, \eta_{y}\,]^2 [\,1\,-\, 2\eta_x\,] = \eta_y -
\eta_x$. Hence, a summation by parts yields that the second term on
the right-hand side of \eqref{e5} is equal to
\begin{align*}
\frac{a_n}{n^2}\, \frac{1}{\sqrt{a_n n^d}}
\sum_{x\in\bb T_n^d} \sum_{j=1}^d  \{\, \eta_{x} \,-\, \rho\,\}
\, (\Delta^n_{j,j} F)  (x/n) \;,
\end{align*}
This completes the proof of the assertion.
\end{proof}

Recall the definition of the differential operator $\mc A$, introduced
in \eqref{41}, and the one of the projection operators $\Pi^1_\rho$,
$\Pi^{+2}_\rho$, introduced in Assertion \ref{l42}.  Write
$L_n X^n(F)$ as
\begin{equation}
\label{42}
L_n X^n(F) \;=\; R^n(F) \;+\; B^n(F) \;+\; X^n(\mc A F)\;,
\end{equation}
where
\begin{equation*}
\begin{aligned}
R^n(F) \; &=\; \frac{1}{\sqrt{a_n n^d}} \sum_{j,k=1}^d
\sum_{x\in\bb T_n^d}   \{\, h_{j,k} (\tau_x \eta)  \,-\, \tilde
h_{j,k}(\rho)\, \} \, \big\{ \, (\Delta^n_{j,k} F)  -
(\partial^2_{x_j,x_k} F) \,\big\}(x/n) 
\\
&+\; \frac{a_n}{n^2}\, \frac{1}{\sqrt{a_n n^d}}
\sum_{x\in\bb T_n^d} \{\, \eta_{x} \,-\, \rho\,\}
\, (\Delta_n F)  (x/n) \\
&+\; \frac{1}{\sqrt{a_n n^d}} \sum_{j,k=1}^d
\sum_{x\in\bb T_n^d}
(\Pi^1_\rho h_{j,k}) (\tau_x\eta)  \, (\partial^2_{x_j,x_k} F) (x/n)  \;,
\end{aligned}
\end{equation*}
and
\begin{equation*}
B^n(F) \; =\; \frac{1}{\sqrt{a_n n^d}} \sum_{j,k=1}^d
\sum_{x\in\bb T_n^d}
(\Pi^{+2}_\rho h_{j,k}) (\tau_x\eta)  \, (\partial^2_{x_j,x_k} F) (x/n)  \;.
\end{equation*}

In view of \eqref{30} and \eqref{42}, the process $X^n_t$ can be
decomposed as 
\begin{equation}
\label{e2}
X^n_t(F) \;=\; X^n_0(F) \;+\; M^n_t(F)
\;+\;  \int_0^t R^n_s(F) \, ds
\;+\;  \int_0^t B^n_s(F)\, ds\;+\;  \int_0^t X^n_s(\mc A F)\, ds\;,
\end{equation}
for $F\in C^\infty(\bb T^d)$.

We examine each term of the decomposition \eqref{e2} separately. We
start with $X^n_0$.

\begin{lemma}
\label{l48}
Fix $0<\rho<1$, and let $\mu_n$ be a sequence of probability measures
on $\Omega_n$ such
$\lim_{n\to \infty} a^{-1}_n H_n(\mu_n \,|\, \nu^n_\rho) =0$. Then,
\begin{equation*}
\lim_{n\to\infty} \bb E_{\mu_n} \big[  \, \Vert \mb
X^n_0 \Vert^2_{-r}\, \big] \;=\; 0
\end{equation*}
provided $r>d/2$.
\end{lemma}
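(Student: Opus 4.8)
The plan is to reduce the statement to a single uniform exponential-moment estimate, after which the hypothesis $a_n^{-1}H_n(\mu_n\,|\,\nu^n_\rho)\to 0$ supplies exactly the room needed. First I would expand the dual norm in the Fourier basis $\{\phi_m\}$. Since the $\phi_m$ are orthogonal, the norm on $\mc H_{-r}$ is
\[
\Vert X^n_0\Vert^2_{-r} \;=\; \sum_{m\in\bb Z^d}\gamma_m^{-r}\,|X^n_0(\phi_m)|^2 \;=\; \frac{1}{a_n}\sum_{m\in\bb Z^d}\gamma_m^{-r}\,|\widehat\xi_n(m)|^2\;,
\]
where $\widehat\xi_n(m):=n^{-d/2}\sum_{x\in\bb T^d_n}\phi_m(x/n)\,[\eta^n_x(0)-\rho]$ and I used $X^n_0(\phi_m)=a_n^{-1/2}\,\widehat\xi_n(m)$. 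Under $\nu^n_\rho$ the variables $\eta_x-\rho$ are independent, centered and bounded with variance $\chi(\rho)$, so $E_{\nu^n_\rho}[\,|\widehat\xi_n(m)|^2\,]=\chi(\rho)$ for every $m$, and hence $E_{\nu^n_\rho}[\Vert X^n_0\Vert^2_{-r}]=a_n^{-1}\chi(\rho)\sum_m\gamma_m^{-r}$. The series converges precisely when $r>d/2$, which is the hypothesis; this already proves the claim for $\mu_n=\nu^n_\rho$ and identifies the decay rate $O(a_n^{-1})$.

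To treat a general $\mu_n$ I would apply the entropy inequality (the variational characterization of $H_n$) mode by mode. Since $X^n_0$ depends only on $\eta^n(0)$, $\bb E_{\mu_n}[\,\cdot\,]$ reduces to integration against $\mu_n$, and for a fixed $s_0>0$ and each $m$,
\[
\bb E_{\mu_n}\big[\,|\widehat\xi_n(m)|^2\,\big] \;\le\; \frac{1}{s_0}\Big\{\,H_n(\mu_n\,|\,\nu^n_\rho) \;+\; \log E_{\nu^n_\rho}\big[\,e^{s_0|\widehat\xi_n(m)|^2}\,\big]\,\Big\}\;.
\]
Multiplying by $\gamma_m^{-r}/a_n$ and summing over $m$ (all terms being nonnegative) gives
\[
\bb E_{\mu_n}\big[\,\Vert X^n_0\Vert^2_{-r}\,\big] \;\le\; \frac{1}{s_0\,a_n}\Big(\sum_{m\in\bb Z^d}\gamma_m^{-r}\Big)\Big\{\,H_n(\mu_n\,|\,\nu^n_\rho) \;+\; \sup_{m}\log E_{\nu^n_\rho}\big[\,e^{s_0|\widehat\xi_n(m)|^2}\,\big]\,\Big\}\;.
\]
Because $a_n^{-1}H_n(\mu_n\,|\,\nu^n_\rho)\to 0$ by assumption and $a_n\to\infty$, the right-hand side tends to $0$ as soon as the final supremum is bounded uniformly in $m$ and $n$.

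That uniform bound is the only genuinely quantitative point, and the step I expect to require real care. Writing $\widehat\xi_n(m)=A_m+iB_m$ with $A_m$ and $B_m$ the normalized sums $n^{-d/2}\sum_x\cos(2\pi x\cdot m/n)[\eta_x-\rho]$ and its sine analogue, each is a sum of independent, centered, uniformly bounded summands whose ranges have squares summing to at most $1$; Hoeffding's estimate then makes $A_m$ and $B_m$ sub-Gaussian with a variance proxy bounded uniformly in $m$ and $n$. Consequently $A_m^2$ and $B_m^2$ are uniformly sub-exponential, and a Cauchy--Schwarz splitting $E_{\nu^n_\rho}[e^{s_0(A_m^2+B_m^2)}]\le E_{\nu^n_\rho}[e^{2s_0A_m^2}]^{1/2}E_{\nu^n_\rho}[e^{2s_0B_m^2}]^{1/2}$ shows that for $s_0>0$ small enough $\sup_{m,n}\log E_{\nu^n_\rho}[e^{s_0|\widehat\xi_n(m)|^2}]<\infty$, which closes the argument. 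An equivalent route, which I would keep in reserve, is to apply the entropy inequality directly to $\Vert X^n_0\Vert^2_{-r}$ with parameter $\gamma=s_0a_n$ and to control the exponential moment of the positive-definite quadratic form $\sum_m\gamma_m^{-r}|\widehat\xi_n(m)|^2$ through its uniformly bounded operator and Hilbert--Schmidt norms; the mode-by-mode version is preferable since it only needs the exponential integrability of one Gaussian-type variable.
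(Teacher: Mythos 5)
Your proof is correct and follows essentially the same route as the paper: expand the $\mc H_{-r}$ norm in the Fourier modes $\phi_m$, apply the entropy inequality mode by mode (your fixed $s_0$ applied to $|\widehat\xi_n(m)|^2$ with the $1/a_n$ pulled outside is algebraically the paper's choice $A=c_0a_n/2$ applied to $a_n^{-1}X_n(m)^2$), and control the exponential moment under $\nu^n_\rho$ via Hoeffding's inequality and the subgaussian-square bound \eqref{07}, which is exactly the content of the paper's Corollary \ref{l27} specialized to $f(\eta)=\eta_0$. The only cosmetic difference is that you handle real and imaginary parts by a Cauchy--Schwarz splitting where the paper treats the cosine and sine modes separately.
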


\begin{proof}
By definition of the norm $\Vert \, \cdot\, \Vert^2_{-r}$, we have to
show that
\begin{equation*}
\lim_{n\to\infty} \sum_{m\in\bb Z^d} \gamma^{-r}_m\, \bb E_{\mu_n} \Big[  \,  \Big(
\frac{1}{\sqrt{a_n n^d}} \sum_{x\in\bb T_n^d}
\varphi_m (x/n)  \, [\eta_x - \rho] \Big)^2 \, \Big] \;=\; 0 \;,
\end{equation*}
where $\varphi_m(x) = \cos (2\pi\, x\cdot m)$,
$\sin (2\pi\, x\cdot m)$. We consider the cosine case, the other one
being identical. By the entropy inequality, the expectation in the
previous equation is bounded by
\begin{equation*}
\frac{1}{A}  H_n(\mu_n \,|\, \nu^n_\rho) \;+\; \frac{1}{A}  \log
\bb E_{\mu_n} \Big[  \,  e^{ (A/a_n) X_n(m)^2}  \, \Big]  \;,
\end{equation*}
where
$X_n(m) = n^{-d/2} \sum_{x\in\bb T_n^d} \varphi_m (x/n) \, [\eta_x -
\rho]$. Hence, by Corollary \ref{l27} below, there exist finite
constants $0<c_0<C_0<\infty$ such that
\begin{equation}
\label{68}
\bb E_{\mu_n} \big[  \,  \, \Vert \mb X^n_0 \Vert^2_{-r}\,  \, \big] \;\le\;
2\, \Big( \, \frac{1}{A}  H_n(\mu_n \,|\, \nu^n_\rho) \;+\;
\frac{C_0}{a_n}\,\Big)\, \sum_{m\in\bb Z^d} \gamma^{-r}_m
\end{equation}
provided $A<c_0 \, a_n$. Choose $A=c_0 a_n/2$ to complete the proof,
since $\gamma^{-r}_m$ is summable in $m$.
\end{proof}

Let $(\mb R^n_t : t\ge 0)$ be the $\mc H_{-r}$-valued process given by
$\mb R^n_t(F) = \int_0^t R^n_s(F)\, ds$, for $F\in C^\infty(\bb T^d)$,
$t>0$. In Lemma \ref{l26}, we prove that in dimension $d\le 3$, for
any sequence of measures $\mu_n$ such that
$H_n(\mu_n \,|\, \nu^n_\rho) \le R_d(n)$,
\begin{equation}
\label{50}
\lim_{n\to\infty} \bb E_{\mu_n} \Big[ \sup_{0\le t\le T} \Vert \mb
R^n_t \Vert^2_{-r}\, \Big] \;=\; 0
\end{equation}
provided $r>3 + (d/2)$

The next result, the so-called Boltzmann-Gibbs principle, derived by
Brox and Rost \cite{BR84} in the context of equilibrium fluctuations,
asserts that the local fields
$\{a_n\, n^ d\}^{-1/2} \sum_{x\in \bb T^d_n} G (x/n)\, [\, f(\tau_x
\eta^n(t)) - \tilde f(\rho)\,]$ are projected on the density field.
It reads as follows. Denote by $C^{j,k}(\bb R_+ \times \bb T^d)$, $j$,
$k\ge 0$, the set of continuous functions $G: \bb R_+ \times \bb T^d
\to \bb R$ which have $j$ continuous derivatives in time and $k$
continuous derivatives in space.

\begin{theorem}[Boltzmann-Gibbs principle]
\label{p01}
Assume that $d\le 3$ and fix $0<\rho<1$. Let $\mu_n$ be a sequence of
probability measures on $\Omega_n$ such
$H_n(\mu_n \,|\, \nu^n_\rho) \le R_d(n)$. Then,
\begin{equation*}
\lim_{n\to\infty} \bb E_{\mu_n} \Big[\,\Big| \int_0^t
\frac{1}{\sqrt{a_n\, n^ d}} \sum_{x\in \bb T^d_n} G (s, x/n)\, (\Xi_\rho
f) (\tau_x \eta^n(s)) \, ds \, \Big| \, \Big] \;=\; 0\;,
\end{equation*}
for all $t>0$, functions $G$ in $C^{0,1}(\bb R_+ \times \bb T^d)$, and
cylinder functions $f:\{0,1\}^{\bb Z^d} \to \bb R$. In this formula,
$\Xi_\rho$ stands for the operator $\Pi_\rho$, introduced in
\eqref{49}.
\end{theorem}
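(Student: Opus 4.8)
The plan is to reduce the claim to a static bound on the $H_{-1}$ norm of the local function $\Xi_\rho f = \Pi_\rho f$ relative to the symmetric exclusion Dirichlet form, using that, by its definition in \eqref{49}, $\Pi_\rho f$ is orthogonal in $L^2(\nu_\rho)$ to the constants and to the linear functions. Write $W_s(\eta) = (a_n n^d)^{-1/2}\sum_{x\in\bb T^d_n} G(s,x/n)\,(\Xi_\rho f)(\tau_x\eta)$ for the integrand. Since $\nu^n_\rho$ is reversible for $L_n^S$ but \emph{not} invariant for the full generator $L_n$, the Kipnis--Varadhan variance bound is not directly available; instead I would combine the entropy inequality with the Feynman--Kac variational formula, in the form of the abstract estimates gathered in Sections \ref{sec9} and \ref{sec7}, to obtain, for every $\gamma>0$,
\[
\bb E_{\mu_n}\Big[\,\Big|\int_0^t W_s\, ds\,\Big|\,\Big]
\;\le\; \frac{1}{\gamma}\, \sup_{0\le s\le t} H_n(s)
\;+\; C\,\gamma\, t\, \sup_{0\le s\le t} \Vert W_s\Vert^2_{-1,n}\;,
\]
where $\Vert\,\cdot\,\Vert_{-1,n}$ is the $H_{-1}$ norm attached to the symmetric part of $L_n$, itself dominated by $n^2 L_n^S$. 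The entropy factor is then harmless: by Theorem \ref{t2} and Remark \ref{rm5}, under $H_n(0)\le R_d(n)$ and $a_n\le\sqrt{\log n}$ one has $\sup_{0\le s\le t} H_n(s)\le 2\,R_d(n)\,n^\kappa$ for any preassigned $\kappa>0$ and all large $n$.

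The heart of the matter is the estimate of $\Vert W_s\Vert^2_{-1,n} = (a_n n^d)^{-1}\Vert\Phi_s\Vert^2_{-1,n}$, with $\Phi_s = \sum_x G(s,x/n)\,(\Xi_\rho f)(\tau_x\eta)$. Here I would decompose $\Pi_\rho f$ into its homogeneous polynomial (Fourier--Walsh) components of degree $p$, as provided by Section \ref{sec8}. By construction the degrees $0$ and $1$ are absent, and this is decisive: the degree-$1$, conserved component would produce an $H_{-1}$ norm of order $n^d$, whereas every component of degree $p\ge2$ contributes only at order $n^{d-2}$. Passing to the Fourier representation of the spatial average, the total momentum of the $p$ factors is pinned near the low modes carried by $G$, and the $H_{-1}$ weight supplies the factor $1/\{n^2(\lambda(k_1)+\cdots+\lambda(k_p))\}$; a direct computation then shows that the $p$-dependence cancels between the normalization of the $p$-particle modes and the number of free momenta, leaving $\sup_{0\le s\le t}\Vert\Phi_s\Vert^2_{-1,n}\le C(f,G)\,\mathfrak s_d(n)$, where $\mathfrak s_d(n) = \sum_{1\le\Vert k\Vert\le n}\lambda(k)^{-1}$ is bounded for $d=1$, of order $\log n$ for $d=2$, and of order $n^{d-2}$ for $d\ge3$.

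Inserting this into the displayed inequality and optimizing in $\gamma$, the right-hand side becomes of order $\{R_d(n)\,n^\kappa\,(a_nn^d)^{-1}\,\mathfrak s_d(n)\}^{1/2}$, with constants depending on $f$, $G$ and $t$. Since $R_d(n)\le C\,a_n\,\mathfrak s_d(n)$, the quantity under the square root is of order $n^\kappa\,\mathfrak s_d(n)^2\,n^{-d}$, and $\mathfrak s_d(n)^2 = o(n^{d})$ exactly when $d\le3$ (it is of order $1$, $(\log n)^2$, $n^2$, $n^4$ in dimensions $1,2,3,4$, against $n^d$). Choosing $\kappa$ small, the bound therefore tends to $0$ as $n\to\infty$ in every dimension $d\le3$, which is the origin of the restriction announced in Remark \ref{rm3c}; the hypothesis $a_n\le\sqrt{\log n}$ is precisely what keeps $n^\kappa$ from overwhelming the gain. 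The mild time dependence of $G\in C^{0,1}(\bb R_+\times\bb T^d)$ enters only through the uniform-in-$s$ control of $\Vert\Phi_s\Vert_{-1,n}$ and causes no additional difficulty.

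The main obstacle is the $H_{-1}$ estimate in this non-stationary, perturbed setting. Because $\nu^n_\rho$ is not invariant for $L_n$, one must simultaneously pay the entropy cost of Theorem \ref{t2} and control the antisymmetric (drift) contribution of the voter generator $a_n L_n^V$, which is absorbable only thanks to $a_n\ll n^2$. Moreover the degree-$\ge2$ power counting closes at the exact threshold $d\le3$, with no slack, so the delicate point is to carry the Fourier computation through with the voter perturbation and the entropy growth folded into the small parameters without disturbing this borderline balance.
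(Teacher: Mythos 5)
Your plan founders at its very first displayed inequality, and the failure is not a technicality: it is the precise point the paper is organized around. From the entropy inequality and the Feynman--Kac variational formula (Lemma \ref{l10}, Corollary \ref{l14}) one obtains, for the non-stationary measure $\nu^n_\rho$,
\begin{equation*}
\log\, \bb E_{\nu^n_\rho} \Big[ e^{\gamma \int_0^t W(s,\eta^n(s))\, ds}\, \Big] \;\le\;
\int_0^t\, \sup_{f} \Big\{ \gamma \int W(s) \, f\, d\nu^n_\rho \;+\; \frac{a_n}{2} \,
\int V\, f\, d\nu^n_\rho \;-\; n^2\, I_n(f) \,\Big\}\, ds\;,
\end{equation*}
where $I_n(f)$ is the Dirichlet form of $\sqrt f$, \emph{not} of $f$. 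To pass from this to your bound $C\,\gamma\, t\, \sup_s \Vert W_s\Vert^2_{-1,n}$ you would need to dominate the linear functional $f \mapsto \int W f\, d\nu^n_\rho$ by $\beta\, n^2 I_n(f) + C\beta^{-1}\Vert W\Vert^2_{-1,n}$. The variational characterization of the $H_{-1}$ norm gives such a bound with the Dirichlet form of $f$ itself, and converting $D(f)$ into $D(\sqrt f)$ costs a factor $\Vert f \Vert_\infty$, which is uncontrolled here: out of equilibrium the densities $f^n_s$ have entropy of order $R_d(n)\, n^\kappa$ and no sup-norm control. This is exactly the sense in which the Kipnis--Varadhan tool ``is not available'' (as stated in the introduction), and acknowledging that fact and then writing down its conclusion anyway is the gap. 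The paper's substitute is genuinely different: the flow construction of Lemma \ref{l01} and the integration by parts of Lemmata \ref{l2} and \ref{l17} convert $\int W f\, d\nu^n_\rho$ into $\beta n^2 I_n(f)$ plus terms of the form $\int (H^{(\ell)}_{k,x})^2 f\, d\nu^n_\rho$, i.e.\ squares of block averages still integrated against the unknown density $f$; these are then controlled by a second application of the entropy inequality together with the subgaussian concentration estimates (Corollary \ref{l27}, Lemmata \ref{l16}, \ref{l11}), and the scale $\ell_n$ of \eqref{19} is tuned to balance the resulting $H_n(f)$ and $(n/\ell)^d$ terms. Your Fourier power counting reproduces the correct threshold $d\le 3$, but no step of your argument actually produces the quantity $\Vert W_s \Vert^2_{-1,n}$ from the variational formula.

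Two further points. First, your inequality silently discards the drift term $\frac{a_n}{2}\int V f\, d\nu^n_\rho$ coming from the non-invariance of $\nu^n_\rho$ under the voter part; this term is itself a degree-two quantity of the same nature and size as the main one (it is $\sum_x\sum_y \omega_x\omega_y$ summed over bonds, multiplied by $a_n$), and ``absorbable thanks to $a_n \ll n^2$'' understates what is needed — its control is the content of Proposition \ref{l03} and Corollary \ref{l18} and occupies most of Section \ref{sec3}. Second, your claim that ``the degrees $0$ and $1$ are absent'' from $\Pi_\rho f$ is false: by Assertion \ref{l42} the degree-one component is present but appears in gradient form $\sum_z a_z(\eta_z - \eta_0)$, and the paper must (and does) treat it separately, by summation by parts and the entropy inequality, in Lemma \ref{l05} and estimate \eqref{17}; in your $H_{-1}$ accounting it would also have to be handled through the extra factor $1/n$ gained from the gradient structure, not by pretending it vanishes.
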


The proof of this result is given in Section \ref{sec4}, where
quantitative bounds are provided. We show in \eqref{17} that this
statement holds with the absolute value inside the time-integral for
$\Xi_\rho = \Pi^1_\rho$. This result is a simple consequence of a
summation by parts and the entropy estimate. The real challenging part
is to prove Theorem \ref{p01} for $\Xi_\rho = \Pi^{+2}_\rho$. We
stated this result with $\Xi_\rho = \Pi_\rho$ for historical reasons
and to stress that the dynamics projects averages of cylinder
functions on the density field [since
$(\Pi_\rho f)(\eta) = f(\eta) - \tilde f(\rho) - \tilde f'(\rho)\,
(\eta_0-\rho)$].

In view of \eqref{e2}, \eqref{50} and Theorem \ref{p01}, besides
tightness of the process, the proofs of Theorems \ref{t1} and
\ref{t1b} consist, essentially, in showing that the martingale part,
$M^n_t$, converges to a white-noise. This is the content of Section
\ref{sec2}.

\smallskip\noindent{\bf Concentration inequalities.}  We conclude this
section recalling some results on subgaussian random variables.  A
mean-zero random variable $X$ is said to be $\sigma^2$-subgaussian if
$E[\,\exp\{\theta X\}\,] \le \exp\{ \sigma^2 \theta^2/2\}$ for all
$\theta\in \bb R$.

By \cite[Proposition B.1]{jm1}, if $X$ is a $\sigma^2$-subgaussian random
variable,
\begin{equation}
\label{07}
E\big[\, e^{a X^2}\,\big] \;\le\; e^{8 a \sigma^2}
\end{equation}
for all $0<a< 1/4\sigma^2$.

According to Hoeffding's inequality \cite[Lemma 2.2]{BLM13}, a
mean-zero random variable taking values in the interval $[a,b]$
is $[(b-a)^2/4]$-subgaussian.

It follows from this result that if

\begin{lemma}
\label{l04}
Let $X_1, \dots, X_p$ be independent, mean-zero, random variables and
suppose that $X_j$ takes values in the interval $[a_j,b_j]$. Then
$\sum_{1\le j\le p} X_j$ is $A$-subgaussian, where
$A= (1/4) \sum_{1\le j\le p} (b_j-a_j)^2$.
\end{lemma}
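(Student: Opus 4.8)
The plan is to reduce the statement to the two facts recalled just above: Hoeffding's inequality (a mean-zero variable valued in $[a,b]$ is $[(b-a)^2/4]$-subgaussian) and the fact that the moment generating function factorizes under independence. First I would note that $\sum_{1\le j\le p} X_j$ is mean-zero, since each $X_j$ is, so that the definition of subgaussianity applies to it. By Hoeffding's inequality, each $X_j$ is $\sigma_j^2$-subgaussian with $\sigma_j^2 = (b_j-a_j)^2/4$; that is, $E[\,\exp\{\theta X_j\}\,] \le \exp\{\sigma_j^2\theta^2/2\}$ for every $\theta\in\bb R$.

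The core step is then the following chain of inequalities, valid for every $\theta\in\bb R$:
\begin{equation*}
E\Big[\, \exp\Big\{\theta \sum_{j=1}^p X_j\Big\}\,\Big]
\;=\; \prod_{j=1}^p E\big[\, \exp\{\theta X_j\}\,\big]
\;\le\; \prod_{j=1}^p \exp\Big\{\frac{\sigma_j^2\,\theta^2}{2}\Big\}
\;=\; \exp\Big\{\frac{\theta^2}{2} \sum_{j=1}^p \sigma_j^2\Big\}\;.
\end{equation*}
Here the first equality uses the independence of the $X_j$ to factorize the expectation of the product of exponentials, and the inequality inserts the subgaussian bound for each factor. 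Reading off the exponent, the sum is $A$-subgaussian with
$A = \sum_{j=1}^p \sigma_j^2 = (1/4)\sum_{j=1}^p (b_j-a_j)^2$, which is exactly the claim.

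There is no serious obstacle here: the only ingredient beyond the quoted results is the factorization of the moment generating function, which is immediate from independence. The one point worth stating explicitly is that the bound must hold for all real $\theta$ (both signs), but this is automatic since Hoeffding's bound and the subgaussian definition are both two-sided in $\theta$.
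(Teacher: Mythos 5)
Your proof is correct and is exactly the argument the paper intends: the paper states this lemma without proof, as an immediate consequence of Hoeffding's inequality (quoted just above it), and your factorization of the moment generating function under independence, followed by summing the exponents $\sigma_j^2 = (b_j-a_j)^2/4$, is precisely that one-line justification.
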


This result provides an estimate in the context of the voter model
with stirring.

\begin{corollary}
\label{l27}
Fix a cylinder function $f$ and a function $F: \bb T^d_n \to \bb
R$. Then, there exist constants $0<c_0<C_0<\infty$, depending only on
the cylinder function $f$, such that
\begin{equation*}
\log E_{\nu^n_\rho} \Big[ \, \exp a \Big\{
\frac{1}{\sqrt{n^d}} 
\sum_{x\in\bb T_n^d}   \{\, f (\tau_x \eta)  \,-\, \tilde
f(\rho)\, \} \, F_x \,\Big\}^2 \, \Big] \;\le\; C_0\, a\, \Vert F\Vert^2_\infty\;.
\end{equation*}
for all $0<a< c_0/\Vert F\Vert^2_\infty$.
\end{corollary}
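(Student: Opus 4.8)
The plan is to reduce the problem to a sum of \emph{independent} bounded random variables and then to feed that sum into the subgaussian machinery recalled just before the statement. The only genuine difficulty is that the variables $f(\tau_x\eta)$, $x\in\bb T^d_n$, are not independent under $\nu^n_\rho$: since $f$ is a cylinder function it depends only on the coordinates $\eta_z$ with $z$ in some finite set $\Lambda\subset\bb Z^d$, and $f(\tau_x\eta)$, $f(\tau_y\eta)$ fail to be independent exactly when the supports $x+\Lambda$ and $y+\Lambda$ overlap, that is, when $x-y\in\Lambda-\Lambda$.

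First I would fix such a support $\Lambda$ and consider the conflict graph on $\bb T^d_n$ in which $x$ and $y$ are joined when $x-y\in\Lambda-\Lambda$. Its maximal degree is at most $|\Lambda-\Lambda|-1$, a constant depending only on $f$, so a greedy colouring partitions $\bb T^d_n$ into $\kappa=\kappa(f)\le|\Lambda-\Lambda|$ classes $I_1,\dots,I_\kappa$ such that within each $I_\ell$ the supports $\{x+\Lambda:x\in I_\ell\}$ are pairwise disjoint. Under the product measure $\nu^n_\rho$ this makes $\{f(\tau_x\eta):x\in I_\ell\}$ a family of (jointly) independent random variables for each fixed $\ell$. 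Writing $Y=n^{-d/2}\sum_x\{f(\tau_x\eta)-\tilde f(\rho)\}\,F_x=\sum_{\ell=1}^\kappa Y_\ell$, with $Y_\ell$ the corresponding sum over $I_\ell$, each summand is mean-zero by the definition \eqref{40} of $\tilde f$ and takes values in an interval of length $\mathrm{osc}(f)\,|F_x|$, where $\mathrm{osc}(f)=\max f-\min f$ depends only on $f$.

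By Lemma \ref{l04}, each $Y_\ell$ is then $A_\ell$-subgaussian with $A_\ell=(4n^d)^{-1}\mathrm{osc}(f)^2\sum_{x\in I_\ell}F_x^2\le\sigma_0^2\,\Vert F\Vert^2_\infty$, where $\sigma_0^2=\mathrm{osc}(f)^2/4$; in particular each $Y_\ell$ is $\sigma_0^2\Vert F\Vert^2_\infty$-subgaussian. To pass from the individual $Y_\ell$ to $Y$, I would use the elementary bound $Y^2\le\kappa\sum_\ell Y_\ell^2$ followed by the generalized Hölder inequality with $\kappa$ equal exponents, which gives
\begin{equation*}
E_{\nu^n_\rho}\big[e^{aY^2}\big]\;\le\;E_{\nu^n_\rho}\Big[\prod_{\ell=1}^\kappa e^{a\kappa Y_\ell^2}\Big]\;\le\;\prod_{\ell=1}^\kappa\Big(E_{\nu^n_\rho}\big[e^{a\kappa^2 Y_\ell^2}\big]\Big)^{1/\kappa}\;.
\end{equation*}

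Each factor is then controlled by the subgaussian bound \eqref{07}, which applies as soon as $a\kappa^2<(4\sigma_0^2\Vert F\Vert^2_\infty)^{-1}$; this dictates the choice $c_0=(4\kappa^2\sigma_0^2)^{-1}$. Inserting $E_{\nu^n_\rho}[e^{a\kappa^2Y_\ell^2}]\le e^{8a\kappa^2\sigma_0^2\Vert F\Vert^2_\infty}$ into the product and taking logarithms yields the asserted inequality with $C_0=8\kappa^2\sigma_0^2$ (enlarging $C_0$ if needed to ensure $c_0<C_0$). The colouring step, which turns the dependent sum into a bounded number of independent ones, is the only non-routine point; everything downstream is a mechanical application of Lemma \ref{l04} and \eqref{07}.
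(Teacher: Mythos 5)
Your proof is correct and follows essentially the same route as the paper's: partition $\bb T^d_n$ into a bounded number of classes within which the translates $f(\tau_x\eta)$ have disjoint supports (hence are independent under $\nu^n_\rho$), reduce to a single class via Schwarz and H\"older, and conclude with Lemma \ref{l04} and \eqref{07}. The only difference is cosmetic: the paper produces the partition explicitly as residue classes modulo $q=2p+1$ (points spaced beyond the support diameter), whereas you produce it by greedy colouring of the conflict graph; both give a number of classes depending only on $f$, and the downstream subgaussian argument is identical.
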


\begin{proof}
Let $p\ge 1$ be the smallest integer such that
$\Xi_p :=\{-p, \dots, p\}^d$ contains the support of the cylinder
function $f$. In particular, under the product measure $\nu^n_\rho$,
the random variables $\tau_x f$ and $\tau_y f$ are independent if
$y-x\not\in \Xi_{2p+1}$. Let $q=2p+1$, and write
\begin{equation*}
\sum_{x\in\bb T_n^d}   \{\, f (\tau_x \eta)  \,-\, \tilde
f(\rho)\, \} \, F_x \;=\; \sum_{z\in \Xi_q} \sum_{y}   \{\, f
(\tau_{z+qy} \eta)  \,-\, \tilde f(\rho)\, \} \, F_{z+qy}\;,
\end{equation*}
where the second sum on the right-hand side is performed over all
$y\in \bb Z^d$ such that $z+qy\in \{0, \dots, n-1\}^d$.

By Schwarz and H\"older inequalities, the expression on the left-hand
side of the statement of the lemma is bounded above by 
\begin{equation*}
\frac{1}{(2q+1)^d} \sum_{z\in \Xi_q}
\log E_{\nu^n_\rho} \Big[ \, \exp a (2q+1)^{2d}  \Big\{
\frac{1}{\sqrt{n^d}}  \sum_{y}   \{\, f
(\tau_{z+qy} \eta)  \,-\, \tilde f(\rho)\, \} \, F_{z+qy}
\,\Big\}^2 \, \Big] \;.
\end{equation*}

By Lemma \ref{l04}, under the measure $\nu^n_\rho$,
$n^{-d/2} \sum_{y} \{\, f (\tau_{z+qy} \eta) \,-\, \tilde f(\rho)\, \}
\, F_{z+qy}$ is an $A$-subgaussian random variable, where
$A= \Vert f\Vert^2_\infty\, \Vert F\Vert^2_\infty$. Thus, for $a <
1/4 (2q+1)^{2d} \Vert f \Vert^2_\infty \Vert F\Vert^2_\infty$, by
\eqref{07}, the previous expression is less than or equal to
\begin{equation*}
8\, a \, \Vert f\Vert^2_\infty\, \Vert F\Vert^2_\infty \;,
\end{equation*}
as claimed.
\end{proof}

\section{Proof of Theorem \ref{t2}}
\label{sec3}

In this section, we prove Theorem \ref{t2}. The statement of the first
result requires some notation.  Denote by $I_n$ the large deviations
rate functional given by
\begin{equation}
\label{13}
I_n(f) \;:=\;  -\,  \int (\, L_n^S \sqrt{f} \,) \, \sqrt{f} \; d \nu^n_\rho\;.
\end{equation}
As $c_j$ does not depend on the variables $\eta_0$, $\eta_{e_j}$, an
elementary computation yields that
\begin{equation*}
I_n(f)  \;=\; \frac 12\, \sum_{j=1}^d \sum_{x\in\bb T^d_n} 
\int c_j(\tau_x \eta)\, \big[\,  \sqrt{f (\sigma^{x,x+e_j} \eta)} \,-\,
\sqrt{f(\eta)}\, \big]^2 \; d \nu^n_\rho\;.
\end{equation*}

Let $L^{S,*}$, $L^{V,*}_n$ be the adjoints of the generators $L^{S}$,
$L^{V}_n$ in $L^2(\nu^n_\rho)$, respectively. Thus, for all $f$, $g
\in L^2(\nu^n_\rho)$,
\begin{equation*}
\int (\, L_n^B f \,) \, g \; d \nu^n_\rho \;=\;
\int f\, (\, L_n^{B,*} g \,) \; d \nu^n_\rho \;, 
\end{equation*}
for $B = S$ and $V$.

Since the Bernoulli measures $\nu^n_\rho$ satisfy the detailed balance
conditions for the exclusion dynamics, $L^{S,*}_n = L^{S}_n$.  On the
other hand, an explicit computation yields that
\begin{align*}
(L_n^{V,*} h)(\eta)  \; &=\; \sum_{x \in \bb T^d_n}
\sum_{y : |y-x|=1} \Big\{\, \eta_x \, \eta_y \, \frac{1-\rho}{\rho}
\,+\, (1-\eta_x)\, (1-\eta_y) \,
\frac{\rho}{1-\rho}\, \Big\} \, h(\sigma^x \eta) \\
&-\; \sum_{x \in \bb T^d_n}
\sum_{y : |y-x|=1} \big(\, \eta_x \,-\, \eta_y  \big)^2 \,
h(\eta) 
\end{align*}
for all functions $h: \Omega_n \to \bb R$.  

Denote by $\mb 1: \Omega_n \to \bb R$ the function which is constant
equal to $1$, and by $V$ the function $L_n^{V,*} \mb 1$. Note that $V$
would vanish if $\nu^n_\rho$ were invariant for $L_n^{V}$ because in
this case $L_n^{V,*}$ would be the generator of a Markov chain. Thus,
in a vague sense, $V = L_n^{V,*} \mb 1$ indicates how far is
$\nu^n_\rho$ from the stationary state for $L_n^{V}$. It follows from
the explicit formula for $L_n^{V,*}$ that
\begin{equation}
\label{05}
V(\eta) \;:=\;
(L_n^{V,*} \mb 1)\, (\eta) \;=\; \sum_{x \in \bb T^d_n}
\sum_{y : |y-x|=1} \omega_x \, \omega_y
\;= \; 2 \sum_{j=1}^d \sum_{x \in \bb T^d_n}
\omega_x \, \omega_{x+e_j} \;, 
\end{equation}
where
\begin{equation*}
\omega_x \;:=\;  \frac{\eta_x-\rho}{\sqrt{\rho(1-\rho)}} \;, \quad
x\,\in\, \bb T^d_n\;.
\end{equation*}
Notice that $\{\omega_x; x\in \bb T^d_n\}$ is an
orthonormal family with respect to the measure $\nu^n_\rho$.

\begin{proposition}
\label{l06}
Fix a probability measure $\mu_n$ on
$\Omega_n$, and let $f_t^n$, $t\ge 0$, be the density of
$\mu_n S^n(t)$ with respect to $\nu^n_\rho$,
\begin{equation*}
f_t^n \;:=\; \frac{d\, \mu_n\,  S^n(t)}{d\, \nu^n_\rho}\;\cdot
\end{equation*}
Then,
\begin{equation*}
H'_n(t) \;\leq\; -\, 2\, n^2 \, I_n(f_t^n)
\;+\;   a_n \, \int  V \, f_t^n \; d \nu^n_\rho\;,
\end{equation*}
for all $t\ge 0$. 
\end{proposition}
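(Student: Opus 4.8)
The plan is to derive the bound from the standard entropy--production identity, treating the reversible (exclusion) and the non-reversible (voter) parts of the generator separately. Since $\Omega_n$ is finite, the density $f^n_t$ solves the forward equation $\partial_t f^n_t = L^*_n f^n_t$ in $L^2(\nu^n_\rho)$, where $L^*_n = n^2 L^S_n + a_n L^{V,*}_n$ is the adjoint of $L_n$ and we used that $L^{S,*}_n = L^S_n$ by reversibility. Differentiating $H_n(t) = \int f^n_t \log f^n_t \, d\nu^n_\rho$ and discarding the constant term by mass conservation, $\int \partial_t f^n_t \, d\nu^n_\rho = \partial_t \int f^n_t \, d\nu^n_\rho = 0$, I obtain
\begin{equation*}
H'_n(t) \;=\; \int (L^*_n f^n_t)\, \log f^n_t \; d\nu^n_\rho \;=\; \int f^n_t\, L_n(\log f^n_t) \; d\nu^n_\rho \;,
\end{equation*}
the last equality being the adjoint relation with test function $\log f^n_t$. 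I then split $L_n = n^2 L^S_n + a_n L^V_n$ and bound the two resulting contributions.

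For the exclusion part, reversibility gives $\int f^n_t\, L^S_n(\log f^n_t)\, d\nu^n_\rho = \int (L^S_n f^n_t)\,\log f^n_t\, d\nu^n_\rho$. Because $c_j$ does not depend on $\eta_0$, $\eta_{e_j}$, each exchange is measure preserving for $\nu^n_\rho$ and leaves the corresponding rate invariant, so symmetrizing every bond term (write $f = f^n_t$) yields
\begin{equation*}
\int (L^S_n f)\, \log f \; d\nu^n_\rho \;=\; -\, \frac12 \sum_{j=1}^d \sum_{x\in\bb T^d_n} \int c_j(\tau_x\eta)\, [\, f(\sigma^{x,x+e_j}\eta) - f(\eta)\,]\, [\, \log f(\sigma^{x,x+e_j}\eta) - \log f(\eta)\,] \; d\nu^n_\rho \;.
\end{equation*}
Inserting the elementary inequality $(a-b)(\log a - \log b) \ge 4(\sqrt a - \sqrt b)^2$, valid for $a,b>0$, and comparing with the explicit expression \eqref{13} for $I_n$, this is bounded above by $-2\, I_n(f)$ (in fact by $-4\,I_n(f)$, but the weaker bound suffices). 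Hence the exclusion contribution is at most $-2\,n^2\, I_n(f^n_t)$.

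The voter part is where some care is required, since $\nu^n_\rho$ is not reversible---indeed not even invariant---for $L^V_n$. Writing $r_x(\eta) = \sum_{y:\Vert y-x\Vert = 1}(\eta_y - \eta_x)^2 \ge 0$, so that $L^V_n(\log f) = \sum_x r_x\, [\log f(\sigma^x\cdot) - \log f]$, the concavity bound $\log u \le u-1$ in the form $f\,\log(f(\sigma^x\eta)/f(\eta)) \le f(\sigma^x\eta) - f(\eta)$ gives, using $r_x \ge 0$,
\begin{equation*}
\int f\, L^V_n(\log f) \; d\nu^n_\rho \;\le\; \sum_{x\in\bb T^d_n} \int r_x(\eta)\, [\, f(\sigma^x\eta) - f(\eta)\,] \; d\nu^n_\rho \;.
\end{equation*}
The main point is the change of variables $\eta \mapsto \sigma^x\eta$ in the gain term: the single--site flip is not $\nu^n_\rho$--preserving and produces the Radon--Nikodym factor $\nu^n_\rho(\sigma^x\eta)/\nu^n_\rho(\eta) = \eta_x\, \frac{1-\rho}{\rho} + (1-\eta_x)\, \frac{\rho}{1-\rho}$, and a direct case analysis on $\eta_x \in \{0,1\}$ shows that $r_x(\sigma^x\eta)$ times this factor reproduces exactly the gain rate of $L^{V,*}_n$ displayed before \eqref{05}. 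Summing over $x$, the gain term becomes $\int (\sum_x b_x)\, f\, d\nu^n_\rho$ while the loss term $\sum_x \int r_x f\, d\nu^n_\rho = \int (\sum_x r_x)\, f\, d\nu^n_\rho$ matches the loss part of $L^{V,*}_n \mb 1$; the right--hand side therefore collapses to $\int (L^{V,*}_n \mb 1)\, f\, d\nu^n_\rho = \int V f\, d\nu^n_\rho$.

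Multiplying the voter bound by $a_n$ and adding the exclusion bound gives the claimed inequality. I expect the only delicate step to be the voter computation, precisely the identification of the Radon--Nikodym weights that reassemble $V = L^{V,*}_n \mb 1$; the exclusion estimate is the classical convexity argument. It is worth stressing that this proposition is only the (standard) entropy--production identity: the term $\int V f^n_t\, d\nu^n_\rho$ is left untouched here, and the genuinely hard work of estimating it, producing the rates $R_d(n)$ of Theorem \ref{t2}, is carried out afterwards.
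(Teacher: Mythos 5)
Your proof is correct and follows essentially the same route as the paper: the forward equation for $f^n_t$, the entropy-production bound producing the Dirichlet-form term $-2n^2 I_n(f^n_t)$ for the exclusion part, the inequality $\log r \le r-1$ for the voter part, and the identification $\int L^V_n f^n_t\, d\nu^n_\rho = \int (L^{V,*}_n \mb 1)\, f^n_t\, d\nu^n_\rho = \int V f^n_t\, d\nu^n_\rho$. The only difference is one of self-containedness: the paper cites \cite[Theorem A1.9.2]{kl} for the entropy-production bound, which you instead re-derive via bond symmetrization and $(a-b)(\log a - \log b)\ge 4(\sqrt a - \sqrt b)^2$, and you verify the adjoint identity by an explicit change of variables rather than simply invoking the definition of $L^{V,*}_n$.
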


\begin{proof}
By \cite[equation (A1.9.1)]{kl}, the density $f^n_t$ solves the
equation
\begin{equation}
\label{03}
\frac{d}{dt} \, f_t^n \;=\; (\, n^2 \, L^{S,*}_n
\,+\, a_n \, L^{V,*}_n\,\} \, f_t^n \;,
\end{equation}
where, recall, $L^{S,*}_n$, $L^{V,*}_n$ represent the adjoints of the
generators $L^{S}_n$, $L^{V}_n$ in $L^2(\nu^n_\rho)$,
respectively. 

On the other
hand, by \eqref{02},
\begin{equation*}
H_n(t) \;=\; H_n(f^n_t) \;=\; \int f_t^n \, \log f_t^n \; d \nu^n_\rho \;.
\end{equation*}
By relative entropy bound \cite [Theorem A1.9.2]{kl} and \eqref{03}, 
\begin{equation}
\label{04}
H_n'(t) \;\leq\; -\, 2\, n^2 \, I_n(f_t^n)
\;+\;   a_n \, \int (\, L_n^V \log f_t^n\,) \, f_t^n
\; d \nu^n_\rho\;,
\end{equation}
where $I_n$ is the functional introduced in \eqref{13}.

Since $\log r \le r -1$ for $r>0$, $(\, L_n^V \log f_t^n\,) \, f_t^n
\le L_n^V  f_t^n$. The second term on the right-hand side of \eqref{04}
is thus bounded by
\begin{equation*}
a_n \, \int  L_n^V f_t^n \; d \nu^n_\rho \;=\;
a_n \, \int  (\, L_n^{V,*} \mb 1 \,) \, f_t^n \; d \nu^n_\rho
\;=\; a_n \, \int  V  \, f_t^n \; d \nu^n_\rho\;,
\end{equation*}
as claimed.
\end{proof}

Let $m_\ell$, $\ell\ge 1$, be the uniform measure on the cube
$\color{bblue} \Lambda_\ell \,:= \, \{0 \,,\, 1\,,\, \dots \,,\,
\ell-1\}^d$,
\begin{equation*}
m_\ell(z) \;:=\; \frac{1}{ \ell^{d}}
\, \chi_{\Lambda_\ell} (z)\;,
\end{equation*}
where $\chi_A$ stands for the indicator of the set $A$.

Let $m^{(2)}_\ell$ be the convolution of $m_\ell$ with itself: 
\begin{equation*}
m^{(2)}_\ell(z) = \sum_{y \in \bb T^d_n}
m_\ell(y) \, m_\ell(z-y) \;,
\end{equation*}
Notice that $m^{(2)}_\ell$ is supported on the cube
$\Lambda_{2\ell-1}$.

Denote by $\omega_x^\ell$ the average of $\omega_{x+z}$ with respect
to the measure $m^{(2)}_\ell$:
\begin{equation}
\label{20}
\omega_x^\ell
\;=\; \sum_{y \in \bb T^d_n} m^{(2)}_\ell(y) \, \omega_{x+y}
\;=\; \sum_{y \in \Lambda_{2\ell+1}} m^{(2)}_\ell(y) \, \omega_{x+y}\;,
\end{equation}
and let $V_\ell :\Omega_n \to \bb R$ be given by
\begin{equation}
\label{06}
V_\ell(\eta) \;:=\; 2\, \sum_{j=1}^d \sum_{x \in \bb T^d_n}
\omega_x \, \omega_{x+e_j}^\ell \;.
\end{equation}
A change of variables yields that
\begin{equation}
\label{16}
V_\ell(\eta) \;:=\; \sum_{j=1}^d \sum_{x \in \bb T^d_n}
\Big(\, \sum_{y \in \Lambda_\ell} m_\ell(y) \,
\omega_{x-y} \, \Big) \,
\Big(\, \sum_{z \in \Lambda_\ell} m_\ell(z)
\,\omega_{x+e_j+z}\,\Big) \;.
\end{equation}
Notice that the averages are performed over disjoint sets due to the
definition of $m_\ell$: for every $x$ and $j$, the sets
$\{x-y : y \in \Lambda_\ell\}$ and $\{x+e_j+z : z \in \Lambda_\ell\}$
are disjoints.

Let $(g_d(n) : n\ge 1)$ be the sequence defined by
\begin{equation}
\label{27}
g_d(n) =
\left\{
\begin{array}{c@{\;,\quad}l}
n & d=1\\
\log n & d=2\\
1 & d \geq 3\; . 
\end{array}
\right.
\end{equation}

\begin{proposition}
\label{l03}
There exists a finite constant $C_1(\rho)$, depending only on $\rho$
and $\mf c_0$, such that
\begin{equation*}
a_n \int \{\, V(\eta) \;-\;  V_\ell(\eta) \,\} \, f\; d\nu^n_\rho
\;\le\; \delta\, n^2 \, I_n(f) \;+\;
\frac{C_1(\rho) \, a_n^2 \, \ell^d \, g_d(\ell)}
{\delta\, n^2} \,\big\{ H_n(f) \;+\; (n/\ell)^{d} \big\}
\end{equation*}
for every $1\le \ell < n/4$, $\delta>0$ and density $f$ with respect
to $\nu^n_\rho$.
\end{proposition}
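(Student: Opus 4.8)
The plan is to write the difference $V-V_\ell$ as a sum over nearest-neighbour bonds of gradients of the field $\{\omega_x\}$, to remove these gradients through the exchange symmetry of $\nu^n_\rho$ (a discrete integration by parts) at the cost of the Dirichlet form $I_n(f)$, and to control the remaining quadratic functional by the entropy inequality together with the concentration bounds of the previous section.

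\smallskip\noindent\textbf{Flow representation.} Since $\sum_y m^{(2)}_\ell(y)=1$, we have $\omega_{x+e_j}-\omega^\ell_{x+e_j}=\sum_y m^{(2)}_\ell(y)\,[\omega_{x+e_j}-\omega_{x+e_j+y}]$. Let $u$ solve the discrete Poisson equation $\Delta u=\delta_0-m^{(2)}_\ell$ on $\bb Z^d$ and set $\Phi_k:=\nabla_k u$; this is the minimal-energy flow whose divergence is $\delta_0-m^{(2)}_\ell$. Discrete Green's identity then gives, after a translation, $\omega_{x+e_j}-\omega^\ell_{x+e_j}=\sum_{z,k}\Phi_k(z-x-e_j)\,[\omega_z-\omega_{z+e_k}]$, so that by \eqref{05}, \eqref{06} and a rearrangement,
\[
V-V_\ell=\sum_{z,k}\phi_{z,k}\,[\omega_z-\omega_{z+e_k}],\qquad \phi_{z,k}:=2\sum_{j=1}^d\sum_{x}\omega_x\,\Phi_k(z-x-e_j).
\]
The crucial point is that the optimal flow has energy $\mc E(\Phi):=\sum_{z,k}\Phi_k(z)^2$ of order $g_d(\ell)$, the growth rate of the lattice Green's function (linear for $d=1$, logarithmic for $d=2$, bounded for $d\ge3$); this is the origin of the dimension-dependent factor in \eqref{27}.

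\smallskip\noindent\textbf{Integration by parts.} I would split off from each $\phi_{z,k}$ the two terms $x\in\{z,z+e_k\}$, whose aggregate contribution is of lower order and is bounded directly, and denote by $\bar\phi_{z,k}$ the remaining part, which is $\sigma^{z,z+e_k}$-invariant. Then $\bar\phi_{z,k}[\omega_z-\omega_{z+e_k}]$ is antisymmetric under the exchange $\sigma^{z,z+e_k}$, and since $\nu^n_\rho$ is invariant under this exchange, for every $A>0$ and a constant $C_\rho$ depending only on $\rho$,
\[
\int \bar\phi_{z,k}\,[\omega_z-\omega_{z+e_k}]\,f\,d\nu^n_\rho\le C_\rho A\int \bar\phi_{z,k}^2\,f\,d\nu^n_\rho+\frac{C_\rho}{A}\int\big(\sqrt{f\circ\sigma^{z,z+e_k}}-\sqrt f\big)^2 d\nu^n_\rho.
\]
Summing over bonds, using $c_j\ge\mf c_0$ in the definition of $I_n$ to bound the sum of the last integrals by $(2/\mf c_0)\,I_n(f)$, and choosing $A$ proportional to $a_n/(\delta n^2)$, the Dirichlet contribution becomes at most $\delta\,n^2 I_n(f)$. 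It then remains to estimate $\int\Psi\,f\,d\nu^n_\rho$, where $\Psi:=C_\rho a_n A\sum_{z,k}\bar\phi_{z,k}^2$.

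\smallskip\noindent\textbf{The remainder.} Here I would apply the entropy inequality $\int\Psi f\,d\nu^n_\rho\le\gamma^{-1}H_n(f)+\gamma^{-1}\log\int e^{\gamma\Psi}d\nu^n_\rho$ with $\gamma$ of order $\delta n^2/(a_n^2\ell^d g_d(\ell))$, which produces exactly the announced prefactor in front of $H_n(f)$. The function $\Psi$ is a quadratic form $\omega^\dagger M\omega$ in the bounded, $\nu^n_\rho$-orthonormal variables $\{\omega_x\}$, which are $O(1)$-subgaussian by Lemma \ref{l04}; its expectation is $\operatorname{tr}(M)\sim a_n A\,n^d\mc E(\Phi)\sim a_n A\,n^d g_d(\ell)$, while a Fourier computation (the symbol of $M$ is $\lesssim |1-\hat m^{(2)}_\ell|^2/\lambda$) yields the operator-norm bound $\|M\|_{\mathrm{op}}\lesssim a_n A\,\ell^2\le a_n A\,\ell^d g_d(\ell)$. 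Consequently $\gamma\|M\|_{\mathrm{op}}$ stays below a fixed threshold once the constant defining $\gamma$ is taken small, and a Hanson--Wright-type exponential bound for quadratic forms of bounded variables (cf. Lemma \ref{l04} and \eqref{07}) gives $\log\int e^{\gamma\Psi}d\nu^n_\rho\lesssim\gamma\operatorname{tr}(M)\sim(n/\ell)^d$, which is the remaining term in the statement. I expect the main obstacle to be precisely this last step: securing the exponential moment of the quadratic functional $\Psi$ with the sharp scaling, which requires \emph{simultaneously} the Green's-function energy estimate $\mc E(\Phi)\sim g_d(\ell)$ and the operator-norm control of $M$. The disjointness of the two block averages in \eqref{16} is what keeps these two estimates compatible and tight.
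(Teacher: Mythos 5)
Your overall architecture coincides with the paper's proof: represent $V-V_\ell$ through a flow connecting $\delta_0$ to $m^{(2)}_\ell$, remove the resulting gradients $[\omega_z-\omega_{z+e_k}]$ by the exchange symmetry of $\nu^n_\rho$ at the cost of $\delta\, n^2 I_n(f)$ (this is Lemma \ref{l2} and Lemma \ref{l17}), then apply the entropy inequality and a concentration estimate to the quadratic remainder. The genuine gap is in your choice of flow. You take $\Phi=\nabla u$ with $\Delta u=\delta_0-m^{(2)}_\ell$, the minimal-energy flow, which for $d\ge 2$ spreads in all directions; in particular $\Phi_k(-e_j)$ and $\Phi_k(-e_k-e_j)$ are of order one. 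Consequently the ``diagonal'' terms $x\in\{z,z+e_k\}$ that you split off are \emph{not} of lower order. Indeed, summing them over $z$ on the torus and using the divergence constraint $\sum_k[\Phi_k(y)-\Phi_k(y-e_k)]=(\delta_0-m^{(2)}_\ell)(y)$ at $y=-e_j$, the terms $\sum_z\omega_z^2$ cancel exactly and the diagonal contribution equals $-2\sum_k c_k\sum_z\omega_z\,\omega_{z+e_k}$, where $c_k=\sum_j[\Phi_k(-e_j)-\Phi_k(-e_k-e_j)]=O(1)$ and only $\sum_k c_k=0$. This is precisely a linear combination, with order-one coefficients, of the directional components of $V$ itself: bounding it is the problem the proposition is meant to solve, so ``bounding it directly'' is circular. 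Quantitatively, an entropy-plus-subgaussian bound on $a_n\int\sum_z\omega_z\omega_{z+e_k}\,f\,d\nu^n_\rho$ with the value $\gamma^{-1}\sim a_n\ell^d g_d(\ell)/(\delta n^2)$ forced by the stated prefactor of $H_n(f)$ leaves an error of order $\delta n^{d+2}/(\ell^d g_d(\ell))$, vastly larger than the allowed $a_n^2\, n^{d-2} g_d(\ell)/\delta$. (In $d=1$ the Poisson flow happens to be supported in $\{0,\dots,2\ell-2\}$, so the problem is invisible there, but it is fatal in $d=2,3$.)

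The repair is exactly the paper's Lemma \ref{l01}, quoted from \cite[Theorem 3.9]{jm1}: there exists a flow connecting $\delta_0$ to $m^{(2)}_\ell$ supported on nearest-neighbour bonds of $\Lambda_{2\ell-1}$, i.e.\ of the \emph{positive quadrant}, still with energy at most $C_d\, g_d(\ell)$. Since in Lemma \ref{l17} the gradient is attached to the maximal point $\mb x_A=e_j$ of $A=\{0,e_j\}$, a quadrant-supported flow guarantees that the multiplier $H^{(\ell)}_{k,x}$ involves only sites whose $j$-th coordinate is strictly smaller than that of $x$; this gives the exchange invariance \eqref{15b}, and no diagonal terms arise at all. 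With that flow, your argument goes through. A secondary point: your Hanson--Wright treatment of the remainder (trace $\sim n^d g_d(\ell)$, operator norm $\lesssim \ell^2\le \ell^d g_d(\ell)$) is a legitimate alternative to the paper's route, which instead exploits the compact support of the flow to obtain independence of the variables $H^{(\ell)}_{j,k,x}$ at distance $2d\ell$, a blocking/H\"older step, and then \eqref{07}; but be aware that such a quadratic-form exponential bound is not a consequence of Lemma \ref{l04} and \eqref{07} alone and would have to be imported and proved for bounded independent variables.
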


The proof of this proposition is divided in several steps.

\smallskip\noindent{\bf Integration by parts.}  For $x\in \bb T^d_n$,
$1\le j\le d$, let $I_{x,x+e_j}$ be the functional $I_n$ restricted to
the bond $\{x,x+e_j\}$: 
\begin{equation*}
I_{x,x+e_j}(h) \;=\; \frac 12\, \int
c_j(\tau_x \eta)\,  \big\{\, \sqrt{ h(\sigma^{x,x+e_j}\eta)}
\,-\, \sqrt{h(\eta)} \, \big\}^2 \; d \nu^n_\rho\;,
\end{equation*}
$h: \Omega_n \to \bb R$. The proof of the next result is omitted,
being similar to the one of \cite[Lemma 3.1]{L92}. Recall the
definition of the constant $\mf c_0$ introduced in \eqref{14}.

\begin{lemma}
\label{l2}
Fix $x \in \bb T^d_n$, $1\le j\le d$ and $h: \Omega_n \to \bb R$ such
that $h(\sigma^{x,x+e_j}\eta) = h(\eta)$ for all $\eta \in
\Omega_n$. Then,
\begin{equation*}
\int h \, [\eta_y -\eta_x]\, f\; d\nu^n_\rho
\;\leq\; \frac{\beta}{2} \, I_{x,x+e_j}(f)
\;+\; \frac{1}{2\, \mf c_0\, \beta} \, \int h^2 \, f \; d\nu^n_\rho
\end{equation*}
for all $\beta >0$ and density $f: \Omega_n \to [0,\infty)$ with
respect to $\nu^n_\rho$.
\end{lemma}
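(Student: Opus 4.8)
The plan is to exploit two structural facts: the Bernoulli product measure $\nu^n_\rho$ is invariant under the bond exchange $\sigma := \sigma^{x,x+e_j}$, since swapping two coordinates of a product measure with identical marginals leaves it unchanged; and the rate $c_j(\tau_x\eta)$ does not depend on $\eta_x$, $\eta_{x+e_j}$. Writing $y=x+e_j$, as forced by the bond $\{x,x+e_j\}$, and setting $w(\eta) := h(\eta)\,[\eta_{x+e_j}-\eta_x]$, the hypothesis $h(\sigma\eta)=h(\eta)$ makes $w$ antisymmetric: $w(\sigma\eta)=-w(\eta)$.

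First I would symmetrize. Changing variables $\eta\mapsto\sigma\eta$, using the $\sigma$-invariance of $\nu^n_\rho$ and the antisymmetry of $w$, gives
\[
\int h\,[\eta_{x+e_j}-\eta_x]\, f\; d\nu^n_\rho
\;=\; \frac12 \int w(\eta)\,\big[\, f(\eta)-f(\sigma\eta)\,\big]\; d\nu^n_\rho\;.
\]
I would then factor the increment of $f$ through its square root, writing $f(\eta)-f(\sigma\eta)=\big(\sqrt{f(\eta)}-\sqrt{f(\sigma\eta)}\big)\big(\sqrt{f(\eta)}+\sqrt{f(\sigma\eta)}\big)$, so that the first factor is precisely the one entering the bond Dirichlet form $I_{x,x+e_j}$.

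The key step is a weighted Young inequality $2uv\le\beta u^2+\beta^{-1}v^2$ applied with $u=\sqrt{c_j(\tau_x\eta)}\,\big(\sqrt{f(\eta)}-\sqrt{f(\sigma\eta)}\big)$ and $v=w(\eta)\,\big(\sqrt{f(\eta)}+\sqrt{f(\sigma\eta)}\big)/\sqrt{c_j(\tau_x\eta)}$. Because $c_j(\tau_x\eta)$ is independent of $\eta_x,\eta_{x+e_j}$, the $u$-integral equals $2\,I_{x,x+e_j}(f)$, producing the term $\tfrac\beta2 I_{x,x+e_j}(f)$. For the $v$-integral I would use the lower bound $c_j\ge\mf c_0$ from \eqref{14}, the bound $[\eta_{x+e_j}-\eta_x]^2\le 1$ (so that $w^2\le h^2$), the elementary inequality $\big(\sqrt a+\sqrt b\big)^2\le 2(a+b)$, and once more the $\sigma$-invariance together with $h(\sigma\eta)^2=h(\eta)^2$ to replace $\int h^2 f(\sigma\eta)\,d\nu^n_\rho$ by $\int h^2 f\,d\nu^n_\rho$. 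This controls the $v$-integral by a numerical multiple of $\mf c_0^{-1}\int h^2 f\,d\nu^n_\rho$, and collecting the two contributions yields the asserted bound.

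There is no real obstacle here; this is the standard change-of-variables estimate for reversible exclusion dynamics. The only points demanding care are the hypothesis $h(\sigma^{x,x+e_j}\eta)=h(\eta)$, which is exactly what legitimizes the symmetrization, and the bookkeeping of the numerical constants generated by Young's inequality and by $(\sqrt a+\sqrt b)^2\le 2(a+b)$. These constants are harmless: they are absorbed into the free parameter $\beta$ and ultimately into $C_1(\rho)$ once the bond estimate is summed over $x$ and $j$ in Proposition \ref{l03}.
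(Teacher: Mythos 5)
Your argument is the intended one: the paper does not actually prove Lemma \ref{l2} (it only says the proof is ``similar to the one of \cite[Lemma 3.1]{L92}''), and the proof it points to is exactly your scheme --- symmetrization via the $\sigma^{x,x+e_j}$-invariance of $\nu^n_\rho$ and the antisymmetry of $w$, factorization of $f-f\circ\sigma$ through square roots, and a weighted Young inequality against the bond Dirichlet form. Each individual step you describe is correct, including the identification $\int u^2\,d\nu^n_\rho=2\,I_{x,x+e_j}(f)$ and the use of $h^2\circ\sigma=h^2$ to fold $\int h^2\,(f\circ\sigma)\,d\nu^n_\rho$ back into $\int h^2 f\,d\nu^n_\rho$.

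One point should be stated precisely rather than dismissed as bookkeeping. What your chain of estimates proves is
\begin{equation*}
\int h\,[\eta_{x+e_j}-\eta_x]\,f\;d\nu^n_\rho\;\le\;
\frac{\beta}{2}\,I_{x,x+e_j}(f)\;+\;\frac{1}{\mf c_0\,\beta}\int h^2\,f\;d\nu^n_\rho\;,
\end{equation*}
because Young leaves you with $\tfrac{1}{4\beta}\int v^2\,d\nu^n_\rho$ and $\int v^2\,d\nu^n_\rho\le 4\,\mf c_0^{-1}\int h^2 f\,d\nu^n_\rho$; the second constant is twice the one printed in the lemma, so your proof does not literally ``yield the asserted bound''. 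This is not a defect of your argument: the printed constant is unattainable. Indeed, take $c_j\equiv 1$ (so $\mf c_0=1$), $\rho=1/2$, let $h$ be the indicator function of the event $\{\eta_x\neq\eta_{x+e_j}\}$ (which is $\sigma^{x,x+e_j}$-invariant), and let $f$ depend only on $(\eta_x,\eta_{x+e_j})$, equal to $A$ on $\{\eta_x=0,\,\eta_{x+e_j}=1\}$, to $B$ on $\{\eta_x=1,\,\eta_{x+e_j}=0\}$, and to a constant on $\{\eta_x=\eta_{x+e_j}\}$ fixed so that $f$ is a density. Then
\begin{equation*}
\int h\,[\eta_{x+e_j}-\eta_x]\,f\;d\nu^n_\rho=\rho(1-\rho)(A-B)\;,\quad
I_{x,x+e_j}(f)=\rho(1-\rho)\big(\sqrt A-\sqrt B\big)^2\;,\quad
\int h^2 f\;d\nu^n_\rho=\rho(1-\rho)(A+B)\;,
\end{equation*}
and minimizing the stated right-hand side over $\beta>0$ gives $\rho(1-\rho)\,|\sqrt A-\sqrt B|\,\sqrt{A+B}$; the stated inequality for all $\beta$ would therefore force $\sqrt A+\sqrt B\le\sqrt{A+B}$, i.e.\ $\sqrt{AB}\le 0$, which fails whenever $A\neq B$ are both positive (e.g.\ $A=1$, $B=1/100$). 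Your constant corresponds instead to $\sqrt A+\sqrt B\le\sqrt{2(A+B)}$, which is Cauchy--Schwarz, so the bound you prove is in fact sharp. The factor $2$ is thus an error in the statement of Lemma \ref{l2}, and your closing remark is the correct disposition: the lemma is only ever invoked with $\beta$ free (Lemma \ref{l17}, Corollary \ref{l18}, Proposition \ref{l03}), so renaming $\beta$ recovers the stated form with $\beta\,I_{x,x+e_j}(f)$ in place of $(\beta/2)\,I_{x,x+e_j}(f)$, and every downstream constant is absorbed into $C_1(\rho)$.
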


\smallskip\noindent{\bf Flows.}
Let $G$ be a finite set. For probability measures $\mu$ and $\nu$ on
$G$, a function $\Phi:G\times G\to\bb R$ is called a {\it flow
  connecting $\mu$ to $\nu$} if
\begin{enumerate}
\item $\Phi(x,y)=-\Phi(y,x)$, for all $x,y\in G$;
\item $\sum_{y\in G} \Phi(x,y) = \mu(x) - \nu(x)$, for all $x\in G$.
\end{enumerate}

Next result is \cite[Theorem 3.9]{jm1}. Recall the definition of the
sequence $g_d(\ell)$ introduced in the statement of Theorem \ref{t2}. 

\begin{lemma}
\label{l01}
There exist a finite constant $C_d$, depending only on the dimension
$d$, and, for all $\ell\ge 1$, a flow $\Phi_\ell$ connecting the Dirac
measure at the origin to the measure $m^{(2)}_\ell$ which is supported
in $\Lambda_{2\ell-1}$ and on nearest-neighbour bonds:
\begin{equation*}
\Phi_\ell(x,y) \;=\;0
\end{equation*}
if $\Vert y - x \Vert \not = 1$ and if $\{x,y\} \not\subset
\Lambda_{2\ell-1}$. 
Moreover,
\begin{equation*}
\sum_{j=1}^d  \sum_{x\in\bb T_n^d}\Phi_\ell(x,x+e_j)^2
\;\le\; C_d\; g_d(\ell)\;.
\end{equation*}
\end{lemma}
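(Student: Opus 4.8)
The plan is to produce the minimal-energy flow explicitly, realizing it as the gradient of the solution of a discrete Poisson equation on the box. Set $B=\Lambda_{2\ell-1}$ and regard it as a graph with nearest-neighbour edges. Since $\delta_0-m^{(2)}_\ell$ is supported on $B$ and has zero total mass, the discrete Neumann--Poisson equation $-\Delta_B\psi = \delta_0-m^{(2)}_\ell$ admits a solution $\psi:B\to\bb R$, unique up to an additive constant; here $\Delta_B$ denotes the graph Laplacian of $B$, equivalently the generator of the random walk on $B$ reflected at $\partial B$. I would then define $\Phi_\ell(x,y)=\psi(x)-\psi(y)$ for nearest-neighbour pairs $\{x,y\}\subset B$ and $\Phi_\ell(x,y)=0$ otherwise. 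This $\Phi_\ell$ is antisymmetric and supported on nearest-neighbour bonds inside $\Lambda_{2\ell-1}$, and $\sum_y\Phi_\ell(x,y)=(-\Delta_B\psi)(x)=\delta_0(x)-m^{(2)}_\ell(x)$, so it is a flow connecting $\delta_0$ to $m^{(2)}_\ell$ with exactly the required support. (This uses $2\ell-1<n$, guaranteed by the hypothesis $\ell<n/4$ of Proposition \ref{l03}.)

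Next I would reduce the energy to a Green-function quadratic form. Summing each undirected bond once, the Dirichlet identity gives
\begin{equation*}
\sum_{j=1}^d\sum_{x\in\bb T^d_n}\Phi_\ell(x,x+e_j)^2
= \langle \psi,-\Delta_B\psi\rangle
= \langle \psi,\delta_0-m^{(2)}_\ell\rangle
= \langle \delta_0-m^{(2)}_\ell,\,\mathsf G_B(\delta_0-m^{(2)}_\ell)\rangle,
\end{equation*}
where $\mathsf G_B$ is the pseudo-inverse of $-\Delta_B$ acting on mean-zero functions. The whole problem then becomes to bound this effective-resistance-type quantity by $C_d\,g_d(\ell)$.

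The crux is this last estimate, which I would carry out through the time-integral of the reflected heat semigroup $P^B_t=e^{t\Delta_B}$, using that $\delta_0-m^{(2)}_\ell$ is mean-zero so that there is no contribution from the constant eigenfunction:
\begin{equation*}
\langle \delta_0-m^{(2)}_\ell,\,\mathsf G_B(\delta_0-m^{(2)}_\ell)\rangle
= \int_0^\infty \langle \delta_0-m^{(2)}_\ell,\,P^B_t(\delta_0-m^{(2)}_\ell)\rangle\,dt .
\end{equation*}
The integrand is essentially the on-diagonal kernel $p^B_t(0,0)$, corrected by terms involving $m^{(2)}_\ell$. Three regimes govern its size: for $t\lesssim 1$ the discrete kernel is bounded by $1$, contributing $O(1)$; for $1\lesssim t\lesssim\ell^2$ the walk from the origin has not yet reached the scale $\ell$ on which $m^{(2)}_\ell$ lives, the correction terms are negligible, and the local central limit theorem gives $p^B_t(0,0)\asymp t^{-d/2}$, whence
\begin{equation*}
\int_1^{\ell^2} t^{-d/2}\,dt \;\asymp\;
\begin{cases} \ell & d=1,\\ \log\ell & d=2,\\ 1 & d\ge3, \end{cases}
\end{equation*}
which is precisely $g_d(\ell)$; and for $t\gtrsim\ell^2$ the spreading of $m^{(2)}_\ell$ produces cancellation, leaving a tail of order at most $g_d(\ell)$. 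Summing the three regimes yields the claim.

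I expect this last estimate to be the main obstacle. Two points need care: replacing the reflected kernel $p^B_t$ by the free kernel on $\bb Z^d$ for $t\lesssim\ell^2$ (legitimate before the walk feels $\partial B$, by standard heat-kernel comparison), and controlling the cross term $\langle\delta_0,P^B_t m^{(2)}_\ell\rangle$ together with the self term $\langle m^{(2)}_\ell,P^B_t m^{(2)}_\ell\rangle$. It is here that the diameter $\asymp\ell$ and the smoothness of the convolution $m^{(2)}_\ell=m_\ell*m_\ell$ enter and supply the cancellation past $t\sim\ell^2$. The entire dimension-dependence of $g_d$ is dictated by the recurrence/transience dichotomy made transparent by $\int_1^{\ell^2}t^{-d/2}\,dt$.
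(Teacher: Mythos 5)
The paper does not actually prove this lemma: it is quoted verbatim from \cite[Theorem 3.9]{jm1}, where the flow is produced by an explicit construction (the unit mass at the origin is spread out coordinate direction by coordinate direction, and the energy of each stage is estimated by hand, the one-dimensional spreading cost being what generates the $g_d(\ell)$ dichotomy). Your route is genuinely different and is the classical potential-theoretic one: you take the optimal (Thomson) flow $\Phi_\ell(x,y)=\psi(x)-\psi(y)$ with $-\Delta_B\psi=\delta_0-m^{(2)}_\ell$ on $B=\Lambda_{2\ell-1}$ with Neumann boundary, identify its energy with the Green quadratic form $\langle \delta_0-m^{(2)}_\ell,\,\mathsf G_B(\delta_0-m^{(2)}_\ell)\rangle$, and bound that through $\int_0^\infty\langle \delta_0-m^{(2)}_\ell, P^B_t(\delta_0-m^{(2)}_\ell)\rangle\,dt$. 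The construction, the divergence identity, and the energy identity are exact on a finite connected graph, so the approach is sound; it produces the minimal-energy flow and makes the recurrence/transience origin of $g_d(\ell)$ transparent, at the price of importing Neumann heat-kernel estimates, whereas the construction in \cite{jm1} is longer but entirely elementary and self-contained.

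Three points in your sketch should be pinned down before the argument closes. First, the origin is a \emph{corner} of $\Lambda_{2\ell-1}$, so the on-diagonal bound $p^B_t(0,0)\le C_d\,t^{-d/2}$ for $t\lesssim\ell^2$ must come from unfolding/reflection of the free kernel (costing only a factor $2^d$), not from a bare local CLT in the interior. Second, no ``negligibility'' of the correction terms on $[1,\ell^2]$ is needed or true uniformly: the cross term $-2(P^B_t m^{(2)}_\ell)(0)$ is nonpositive and can simply be dropped, and the self term satisfies $\langle m^{(2)}_\ell, P^B_t m^{(2)}_\ell\rangle\le \Vert m^{(2)}_\ell\Vert_2^2\le \ell^{-d}$, which integrates over $[0,\ell^2]$ to $\ell^{2-d}\le C\,g_d(\ell)$. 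Third, the tail $t\gtrsim\ell^2$ is controlled not by ``cancellation from spreading'' but by the spectral gap $\asymp\ell^{-2}$ of the reflected walk on $B$ applied to the mean-zero datum, combined with $\Vert P^B_{\ell^2}(\delta_0-m^{(2)}_\ell)\Vert_2^2\lesssim \ell^{-d}$; this yields a tail of order $\ell^{2-d}$, which in $d=1$ is of the \emph{same} order $\ell$ as the main term (admissible, but not negligible). With these repairs your proof is complete and correct.
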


Consider the partial order $\prec$ on $\bb Z^d$ defined by
$(x_{1}, \dots, x_{d}) \prec (y_{1}, \dots, y_{d})$ if $x_j\le y_j$
for all $1\le j\le d$.  Fix a subset $A$ of $\bb Z^d$
$A = \{\mb x_k : k \in J\}$, $\mb x_k = (x_{k,1}, \dots, x_{k,d})$.  A
point $\mb x_k$ in $A$ is said to be \emph{maximal} if
$\mb x_k \prec \mb x_l$ entails that $\mb x_k = \mb x_l$.

Every finite subset $A$ of $\bb Z^d$ has at least one maximal element.
Fix a finite subset $A$ of $\bb Z^d$ with at least two elements,
$A = \{\mb x_1 , \dots , \mb x_p\}$, $p\ge 2$. Denote by $\mb x_A$ a
maximal element of $A$, and let $A_\star = A \setminus \{\mb x_A\}$.

Recall the definition of the average $\omega^\ell_{x}$ introduced in
\eqref{20}.

\begin{lemma}
\label{l17}
Fix a finite subset $A$ of $\bb Z^d$ with at least two elements, a
function $G:\bb T^d_n \to \bb R$, $b_n \in \bb R$ and $\ell\ge 1$. Let
\begin{equation*}
W \;=\; \sum_{x\in \bb T^d_n} G_x \, \omega_{x+A_\star}\,
\{\omega_{x+\mb x_A} - \omega^\ell_{x+\mb x_A}\,\}\;.
\end{equation*}
Then,
\begin{equation*}
b_n \int  W \, f\; d\nu^n_\rho \;
\le\; \frac{\beta}{2} \,  n^2\, I_{n}(f) 
\; +\; \frac{b_n^2}{2\, \mf c_0\, \beta\, n^2\, \chi(\rho)}
\sum_{k=1}^d  \sum_{x\in\bb T_n^d}
\int ( H^{(\ell)}_{k,x})^2 \, f\; d\nu^n_\rho
\end{equation*}
for all $\beta>0$, density $f$ with respect to $\nu^n_\rho$ and
$n> 4\ell$. In this formula, $\chi(\rho) = \rho (1-\rho)$ is the
static compressibility of the exclusion process, introduced in the
statement of Theorem \ref{t1}, and
\begin{equation*}
H^{(\ell)}_{k,x} \;=\; \sum_{\{y,y+e_k\} \subset \Lambda_{2\ell-1}}
\Phi_\ell (y,y+e_k) \, G(\, x-\mb x_A-y \,) \,
\omega_{x-\mb x_A-y+A_\star}
\;,
\end{equation*}
where $\Phi_\ell$ is the flow introduced in Lemma \ref{l01}.
\end{lemma}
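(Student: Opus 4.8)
The plan is to rewrite the difference $\omega_{x+\mb x_A} - \omega^\ell_{x+\mb x_A}$ as a telescoping sum along nearest-neighbour bonds by means of the flow $\Phi_\ell$ furnished by Lemma \ref{l01}, and then to apply the one-bond integration-by-parts estimate of Lemma \ref{l2} bond by bond, summing the resulting inequalities.

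First I would exploit the defining property of the flow. Since $\Phi_\ell$ connects the Dirac measure at the origin to $m^{(2)}_\ell$, one has $\sum_{z'}\Phi_\ell(z,z') = \delta_0(z) - m^{(2)}_\ell(z)$. Testing this identity against the function $z\mapsto \omega_{y+z}$ and using the antisymmetry together with the nearest-neighbour support of $\Phi_\ell$, a discrete summation by parts yields
\[
\omega_y - \omega^\ell_y \;=\; \sum_{k=1}^d \sum_{\{z,z+e_k\}\subset\Lambda_{2\ell-1}} \Phi_\ell(z,z+e_k)\,[\,\omega_{y+z} - \omega_{y+z+e_k}\,]\;,
\]
valid for $y=x+\mb x_A$ once $n>4\ell$, so that the relevant sites do not wrap around the torus. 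Substituting into $W$, setting $w=x+\mb x_A+z$ and summing over $x$ for each fixed $z$ and $k$, I reorganize the sum so that the bond variable is carried by the fixed bond $\{w,w+e_k\}$, obtaining
\[
W \;=\; \sum_{k=1}^d \sum_{w\in\bb T^d_n} H^{(\ell)}_{k,w}\,[\,\omega_w - \omega_{w+e_k}\,]\;,
\]
with $H^{(\ell)}_{k,w}$ exactly the expression in the statement.

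The crucial structural observation, and the step I expect to require the most care, is that $H^{(\ell)}_{k,w}$ does not depend on the occupation variables $\eta_w$ and $\eta_{w+e_k}$, so that it is invariant under $\sigma^{w,w+e_k}$. This is precisely where the maximality of $\mb x_A$ enters. The sites appearing in $H^{(\ell)}_{k,w}$ are of the form $w-\mb x_A-z+\mb x_\star$ with $\mb x_\star\in A_\star$ and $z\in\Lambda_{2\ell-1}$; such a site equals $w$ or $w+e_k$ only if $\mb x_\star = \mb x_A+z$ or $\mb x_\star = \mb x_A+z+e_k$, and since $z$ and $z+e_k$ have nonnegative coordinates this would force $\mb x_A \prec \mb x_\star$, contradicting the maximality of $\mb x_A$ together with $\mb x_\star\neq\mb x_A$. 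Thus the averaging performed in $\omega^\ell$ never collides with the remaining factors of $\omega_{x+A_\star}$, which is exactly the reason the maximal element was singled out.

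Finally, writing $\omega_w - \omega_{w+e_k} = \chi(\rho)^{-1/2}[\,\eta_w - \eta_{w+e_k}\,]$ and applying Lemma \ref{l2} on each bond $\{w,w+e_k\}$ with $h=\chi(\rho)^{-1/2}\,b_n\,H^{(\ell)}_{k,w}$ (using the lemma for both $\pm h$ to control the signed term) and with parameter $\beta'=\beta n^2$, I get
\[
b_n\int H^{(\ell)}_{k,w}\,[\,\omega_w - \omega_{w+e_k}\,]\, f\; d\nu^n_\rho \;\le\; \frac{\beta\, n^2}{2}\,I_{w,w+e_k}(f) \;+\; \frac{b_n^2}{2\,\mf c_0\,\beta\, n^2\,\chi(\rho)}\int (H^{(\ell)}_{k,w})^2\, f\; d\nu^n_\rho\;.
\]
Summing over $w$ and $k$ and using that $\sum_{k,w} I_{w,w+e_k}(f) = I_n(f)$ collapses the first terms into $\tfrac{\beta}{2}n^2 I_n(f)$ and produces precisely the claimed right-hand side. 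The remaining work is purely the bookkeeping of the reindexing and of the factor $n^2$ absorbed into $\beta'$.
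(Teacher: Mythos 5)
Your proof is correct and follows essentially the same route as the paper's: a summation by parts via the flow $\Phi_\ell$ to express $W$ as a sum of bond gradients $\{\omega_w-\omega_{w+e_k}\}\,H^{(\ell)}_{k,w}$, the observation (resting on the maximality of $\mb x_A$) that $H^{(\ell)}_{k,w}$ is invariant under $\sigma^{w,w+e_k}$, and then Lemma \ref{l2} applied bond by bond with parameter $\beta n^2$ and summed using $\sum_{k,w} I_{w,w+e_k}(f)=I_n(f)$. Your explicit verification of the disjoint-support step via maximality is in fact spelled out in more detail than in the paper, which simply asserts it.
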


\begin{proof}
Assume without loss of generality that $\mb x_A=0$ [Otherwise, in the
sum defining $W$ change variables as $x'=x+\mb x_A$]. This means that
the origin is a maximal point in $A$. Let $B=A_\star$, and rewrite $W$
as
\begin{equation*}
W(\eta)\; =\; \sum_{x \in \bb T^d_n} G_x \, \omega_{x+B}
\sum_{y\in \Lambda_{2\ell-1}} \omega_{x+y} \,
\{\, \delta_{0}(y)  \,-\,  m^{(2)}_\ell(y) \,\}\;,
\end{equation*}
where $\delta_0$ stands for the Dirac measure concentrated at $0$.

Denote by $\Phi_\ell$ the flow introduced in Lemma \ref{l01}. In the
previous equation, we may replace $\Lambda_{2\ell-1}$ by $\bb
Z^d$. This simplifies the summation by parts performed below. After
this replacement, since the flow connects $\delta_0$ to
$m^{(2)}_\ell$, it is anti-symmetric and supported on
nearest-neighbour bonds, the sum over $y$ becomes
\begin{align*}
& \sum_{y\in \bb Z^d} \omega_{x+y} \,
\sum_{z: \Vert z\Vert =1} \Phi_\ell (y,y+z) \\
&\quad \;=\; \sum_{k=1}^d \sum_{y\in \bb Z^d} \omega_{x+y} \,
\{\, \Phi_\ell (y,y+e_k) \,-\,
\Phi_\ell (y-e_k,y)\,\} \;.
\end{align*}
Performing a summation by parts, this last sum becomes
\begin{equation*}
\sum_{k=1}^d \sum_{y\in \Lambda_{2\ell-1}} \Phi_\ell (y,y+e_k)
\,
\{\, \omega_{x+y}  \,-\, \omega_{x+y+e_k} \,\}\;.
\end{equation*}
As $\Phi_\ell$ is supported on $\Lambda_{2\ell-1}$, we may restrict
the sum over $y$ to the set of all points in $\bb Z^d$ such that
$\{y,y+e_k\} \subset \Lambda_{2\ell-1}$.

Perform a change of variables $x'=x+y$ to conclude that
\begin{align*}
W(\eta) \; & =\; \sum_{k=1}^d  \sum_{x\in\bb T_n^d}
\{\, \omega_{x}  \,-\, \omega_{x+e_k} \,\}
\sum_{\{y,y+e_k\} \subset \Lambda_{2\ell-1}} \Phi_\ell (y,y+e_k) \,
G_{x-y}\,  \omega_{x-y+B}  \\
&= \; \sum_{k=1}^d  \sum_{x\in\bb T_n^d}
\{\, \omega_{x}  \,-\, \omega_{x+e_k} \,\} \, H^{(\ell)}_{k,x} \;,
\end{align*}
where $H^{(\ell)}_{k,x}$ has been introduced in the statement of the
lemma.  Since the origin is a maximal point of $A$, the support of
$H^{(\ell)}_{k,x}$ is disjoint from $\{x,x+e_k\}$ in the sense that
the indices $z$ of $\omega_z$ which appear in the definition of
$H^{(\ell)}_{k,x}$ are different from $x$ and $x+e_k$. In particular,
\begin{equation}
\label{15b}
H^{(\ell)}_{k,x} (\sigma^{x, x+e_k} \eta) \;=\;
H^{(\ell)}_{k,x} (\eta)\;.
\end{equation}

Fix a density $f: \Omega_n \to [0,\infty)$ with respect to
$\nu^n_\rho$. In view of the formula for $W(\eta)$, by Lemma \ref{l2}
and \eqref{15b},
\begin{align*}
b_n \int W \, f\; d\nu^n_\rho 
\; \le\; \frac{\beta\, n^2}{2} \, \sum_{k=1}^d  \sum_{x\in\bb T_n^d}
I_{x,x+e_k}(f) 
\; +\; \frac{b_n^2}{2\, \mf c_0\, \beta\, n^2\, \chi(\rho)}
\sum_{k=1}^d  \sum_{x\in\bb T_n^d}
\int ( H^{(\ell)}_{k,x})^2 \, f\; d\nu^n_\rho
\end{align*}
for all $\beta>0$, as claimed.
\end{proof}

Recall from \eqref{05}, \eqref{06} the definitions of the functions
$V$, $V_\ell$. Lemma \ref{l17} with $b_n = a_n$, $G=2$,
$A=\{0,e_j\}$, $\mb x_A = e_j$, $\beta = 2\delta n^2/d$ yields the
next result.

\begin{corollary}
\label{l18}
For all $n$ large enough and density $f$ with respect to $\nu^n_\rho$,
\begin{equation}
\label{23}
\begin{aligned}
& a_n \int \{\, V(\eta) \;-\;  V_\ell(\eta) \,\} \, f\; d\nu^n_\rho \\
&\quad \;\le\; \delta\, n^2 \, I_n(f) \;+\;
\frac{d\, a_n^2}{4\, \mf c_0\, \delta\, n^2\, \chi(\rho)}
\sum_{j,k=1}^d  \sum_{x\in\bb T_n^d}
\int ( H^{(\ell)}_{j,k,x})^2 \, f\; d\nu^n_\rho
\end{aligned}
\end{equation}
for all $\delta>0$.
where
\begin{equation*}
H^{(\ell)}_{j,k,x} \;=\; 2\, \sum_{\{y,y+e_k\} \subset \Lambda_{2\ell-1}}
\Phi_\ell (y,y+e_k) \, \omega_{x-y-e_j}  \;.
\end{equation*}
\end{corollary}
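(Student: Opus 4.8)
The plan is to obtain \eqref{23} as a direct specialization of Lemma \ref{l17}, summed over the $d$ coordinate directions. First I would record, straight from the definitions \eqref{05} and \eqref{06}, the decomposition
\[
V(\eta) - V_\ell(\eta) \;=\; 2\sum_{j=1}^d \sum_{x\in\bb T^d_n} \omega_x\,\{\omega_{x+e_j} - \omega^\ell_{x+e_j}\} \;=\; \sum_{j=1}^d W_j,
\]
where each $W_j$ is exactly the quantity $W$ of Lemma \ref{l17} associated with the set $A=\{0,e_j\}$. Indeed, since $0\prec e_j$, the point $e_j$ is the (unique) maximal element of $A$, so $\mb x_A=e_j$ and $A_\star=\{0\}$; taking the constant function $G\equiv 2$, the defining sum $\sum_x G_x\,\omega_{x+A_\star}\{\omega_{x+\mb x_A}-\omega^\ell_{x+\mb x_A}\}$ reduces to $2\sum_x \omega_x\{\omega_{x+e_j}-\omega^\ell_{x+e_j}\}=W_j$.

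Second, I would check that with these choices the functional $H^{(\ell)}_{k,x}$ of Lemma \ref{l17} collapses to the $H^{(\ell)}_{j,k,x}$ displayed in the corollary. With $G\equiv 2$, $\mb x_A=e_j$ and $A_\star=\{0\}$ we have $G(x-\mb x_A-y)=2$ and $\omega_{x-\mb x_A-y+A_\star}=\omega_{x-e_j-y}$, hence
\[
H^{(\ell)}_{k,x} \;=\; 2\sum_{\{y,y+e_k\}\subset\Lambda_{2\ell-1}} \Phi_\ell(y,y+e_k)\,\omega_{x-e_j-y} \;=\; H^{(\ell)}_{j,k,x},
\]
as required. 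The antisymmetry of $\Phi_\ell$ and its support properties from Lemma \ref{l01} are precisely what drove the summation by parts inside the proof of Lemma \ref{l17}, so nothing further is needed here.

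Third, I would apply Lemma \ref{l17} with $b_n=a_n$ to each $W_j$ and add the $d$ resulting estimates. Each application contributes one copy of $\tfrac{\beta}{2}\,n^2 I_n(f)$, so the gradient terms sum to $\tfrac{d\beta}{2}\,n^2 I_n(f)$, while the error terms assemble into the full double sum $\sum_{j,k=1}^d\sum_{x\in\bb T^d_n} \int (H^{(\ell)}_{j,k,x})^2 \, f\,d\nu^n_\rho$ with common coefficient $a_n^2/(2\,\mf c_0\,\beta\, n^2\,\chi(\rho))$. Choosing $\beta=2\delta/d$ makes $\tfrac{d\beta}{2}=\delta$, turning the first term into $\delta\,n^2 I_n(f)$ and the coefficient of the second into $d\,a_n^2/(4\,\mf c_0\,\delta\, n^2\,\chi(\rho))$, which is exactly \eqref{23} (valid for $n>4\ell$ as inherited from Lemma \ref{l17}).

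There is no genuine analytic obstacle here: the entire content has already been carried out in Lemma \ref{l17}, and what remains is bookkeeping. The one point deserving attention—and the only place an error could slip in—is that the Dirichlet budget $\tfrac{\beta}{2}\,n^2 I_n(f)$ is spent once for each of the $d$ directions, so a factor $d$ appears in front of $I_n(f)$ after summing; this must be absorbed by rescaling $\beta$ by $1/d$, which is precisely why the final coefficient of the error term carries the extra factor $d$.
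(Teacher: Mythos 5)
Your proposal is correct and is exactly the paper's argument: the paper proves this corollary by the one-line specialization of Lemma \ref{l17} with $b_n=a_n$, $G\equiv 2$, $A=\{0,e_j\}$, $\mb x_A=e_j$, summed over $j$. The only cosmetic difference is the normalization of $\beta$ — the paper's stated choice $\beta=2\delta n^2/d$ refers to the unnormalized parameter of Lemma \ref{l2} used inside the proof of Lemma \ref{l17}, which coincides with your choice $\beta=2\delta/d$ in the normalized statement of that lemma, so both yield \eqref{23}.
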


\begin{proof}[Proof of Proposition \ref{l03}]
In view of Corollary \ref{l18}, we have to estimate the second term on
the right-hand side of \eqref{23}

By the entropy inequality, the second term of this expression is
bounded by
\begin{equation}
\label{09}
\frac{1}{\gamma}\, H_n(f) \;+\;
\frac{1}{\gamma}\, \log \int \exp \Big\{
\frac{d\, a_n^2\,\gamma }{4\, \mf c_0\, \delta\, n^2\, \chi(\rho)}
\sum_{j,k=1}^d  \sum_{x\in\bb T_n^d}
\int ( H^{(\ell)}_{j,k,x})^2\, \Big\}
\; d\nu^n_\rho 
\end{equation}
for all $\gamma>0$. Under the measure $\nu^n_\rho$, the variables
$H^{(\ell)}_{j,k,x}$, $H^{(\ell)}_{l,m,y}$ are independent if
$\Vert x - y \Vert \ge 2d\ell$. Hence, rewriting the sum over $x$ as
$C_0 \ell^d$ sums of terms spaced by $2d\ell$ and applying H\"older's
inequality [see the Proof of Lemma 6.1.8 in \cite{kl} for a detailed
presentation of this step] yield that the second term of the previous
expression is bounded by
\begin{equation}
\label{08}
\frac{C_0}{\gamma\, \ell^d}\, \sum_{j,k=1}^d  \sum_{x\in\bb T_n^d} \log
\int \exp \Big\{
\frac{C_0 \, a_n^2\,\gamma\,\ell^d }{\delta\, n^2\, \chi(\rho)}
( H^{(\ell)}_{j,k,x})^2\, \Big\} \; d\nu^n_\rho
\end{equation}
for some finite constants $C_0$ depending only on the dimension.

By Lemma \ref{l04}, $H^{(\ell)}_{j,k,x}$ is a
$\sigma^2_\ell$-subgaussian random variable, where
\begin{equation*}
\sigma^2_\ell \;=\; \frac{C_0}{\chi(\rho)}\,
\sum_{\{y,y+e_k\} \subset \Lambda_{2\ell-1}}
\Phi_\ell (y,y+e_k)^2\;.
\end{equation*}
By Lemma \ref{l01}, $\sigma^2_\ell \le C_0 \, g_d(\ell)/\chi(\rho)$.
Therefore, by \eqref{07}, the sum \eqref{08} is bounded by
\begin{equation*}
C_1(\rho) \, \delta^{-1} \, a_n^2\, n^{d-2}  \, g_d(\ell) \;.
\end{equation*}
for all $\gamma$ such that $C_1(\rho) (a_n/n)^2 \ell^d g_d(\ell)
\gamma<\delta/4$.

Choose $\gamma^{-1} = C_1(\rho) (a_n/n)^2 \ell^d g_d(\ell)
\delta^{-1}$ to conclude that \eqref{09} is bounded above by
\begin{align*}
& \frac{C_1(\rho) \, a_n^2 \, \ell^d \, g_d(\ell)}
{\delta\, n^2} \,  H_n(f) \;+\; C_1(\rho)
\, \delta^{-1} \,a_n^2\, n^{d-2}  \, g_d(\ell) \\
&\quad =\; \frac{C_1(\rho) \, a_n^2 \, \ell^d \, g_d(\ell)}
{\delta\, n^2} \,\big\{ H_n(f) \;+\; (n/\ell)^{d}\, \big\} \;,
\end{align*}
as claimed.
\end{proof}

Next result is the main step in the estimation of $V_\ell$. Recall
that $\chi(\rho) = \rho\, (1-\rho)$. For a finite subset $B$ of
$\bb Z^d$, $\ell \ge 1$ and a function $G:\bb T^d_n \to \bb R$, let
\begin{equation*}
M_\ell(x) \;=\; \sum_{w \in \Lambda_\ell} m_\ell(y) \, G(x-y)\, \omega_{x-y+B}\;.
\end{equation*}

\begin{lemma}
\label{l16}
There exists a finite constant $C_0$, depending only on the dimension,
such that
\begin{equation*}
\int \sum_{x \in \bb T^d_n} M_\ell(x) ^2 \, f\; d\nu^n_\rho
\;\le\; C_0 \, \frac{ \Vert G \Vert^2_\infty }
{\chi(\rho)^{|B|}} \,\big\{ H_n(f) \;+\; (n/\ell)^d \, \big\}
\end{equation*}
for all density $f$ with respect to $\nu^n_\rho$ and all $n> 4
\ell\ge 4$.
\end{lemma}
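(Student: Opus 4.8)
The plan is to apply the entropy inequality to reduce the estimate to an exponential moment bound for a single block variable, the crux being a sharp subgaussian bound for $M_\ell(x)$ \emph{despite} its being a polynomial chaos of degree $|B|$ in the variables $\omega_z$. Throughout I read $M_\ell(x) = \ell^{-d}\sum_{y\in\Lambda_\ell} G(x-y)\,\omega_{x-y+B}$, where $\omega_{v+B}=\prod_{z\in B}\omega_{v+z}$, and I use that, under $\nu^n_\rho$, the family $\{\omega_z\}$ is an independent, mean-zero family with $|\omega_z|\le \max\{\rho,1-\rho\}/\sqrt{\chi(\rho)}\le \chi(\rho)^{-1/2}$. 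By the entropy inequality, for every $\gamma>0$,
\begin{equation*}
\int \sum_{x\in\bb T^d_n} M_\ell(x)^2\, f\, d\nu^n_\rho
\;\le\; \frac1\gamma\, H_n(f) \;+\; \frac1\gamma\, \log \int \exp\Big\{\gamma \sum_{x} M_\ell(x)^2\Big\}\, d\nu^n_\rho\;.
\end{equation*}
Since $M_\ell(x)$ depends only on the variables $\omega_w$ with $w\in x+(B-\Lambda_\ell)$, the variables $M_\ell(x)$ and $M_\ell(x')$ are independent under $\nu^n_\rho$ once $\Vert x-x'\Vert > \ell + \diam(B)$. Hence, exactly as in the proof of Proposition \ref{l03} (splitting the sum over $x$ into $C_0\ell^d$ groups of pairwise independent terms and applying H\"older's inequality), the logarithmic term is bounded by $\frac{1}{C_0\ell^d}\sum_x \log\int \exp\{C_0\ell^d\gamma\, M_\ell(x)^2\}\, d\nu^n_\rho$.

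The main step is therefore to show that each $M_\ell(x)$ is $A$-subgaussian under $\nu^n_\rho$ with $A=C_0\,\Vert G\Vert^2_\infty\,\chi(\rho)^{-|B|}/\ell^d$. To this end I peel off the factor of highest index. Fix the lexicographic total order on $\bb Z^d$ (which extends the partial order $\prec$) and let $\mb b$ be the lexicographic maximum of $B$. Grouping each product in $M_\ell(x)$ according to its lexicographically maximal site $w=v+\mb b$, where $v$ ranges over $x-\Lambda_\ell$, one finds $M_\ell(x)=\sum_w c_w\,\omega_w$ with
\begin{equation*}
c_w \;=\; \frac{1}{\ell^d}\, G(v)\, \prod_{z\in B,\ z<\mb b}\omega_{v+z}\;, \qquad v=w-\mb b\;.
\end{equation*}
Every site $v+z$ entering $c_w$ is lexicographically smaller than $w$, so $c_w$ is measurable with respect to $\mc{F}_{<w}$, the $\sigma$-algebra generated by the $\omega$'s at sites below $w$, while $\omega_w$ is independent of $\mc{F}_{<w}$ and mean-zero; thus $\sum_w c_w\omega_w$ is a sum of martingale differences in the lexicographic filtration. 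The coefficient obeys the \emph{deterministic} bound $c_w^2\le \Vert G\Vert^2_\infty\,\chi(\rho)^{-(|B|-1)}/\ell^{2d}$. Revealing the variables in increasing lexicographic order and using the conditional Hoeffding estimate (the conditional version of Lemma \ref{l04} applied to the single variable $\omega_w$, of conditional range $|c_w|/\sqrt{\chi(\rho)}$), $E_{\nu^n_\rho}[e^{\theta c_w\omega_w}\mid \mc{F}_{<w}]\le \exp\{\theta^2 c_w^2/(8\chi(\rho))\}$, and chaining these bounds over the at most $\ell^d$ relevant sites $w$ gives
\begin{equation*}
E_{\nu^n_\rho}\big[\exp\{\theta M_\ell(x)\}\big]
\;\le\; \exp\Big\{ \frac{\theta^2\, \Vert G\Vert^2_\infty}{8\,\ell^d\,\chi(\rho)^{|B|}}\Big\}\;,
\end{equation*}
which is the desired subgaussian bound.

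Granting this, \eqref{07} yields $\log E_{\nu^n_\rho}[\exp\{C_0\ell^d\gamma\, M_\ell(x)^2\}]\le C_0\,\gamma\,\Vert G\Vert^2_\infty\,\chi(\rho)^{-|B|}$ whenever $\gamma$ is smaller than a fixed constant multiple of $\chi(\rho)^{|B|}/\Vert G\Vert^2_\infty$; summing over the $n^d$ sites $x$ and dividing by $C_0\ell^d$ bounds the logarithmic term by $C_0\,\gamma\,(n/\ell)^d\,\Vert G\Vert^2_\infty\,\chi(\rho)^{-|B|}$. Inserting this into the entropy inequality and choosing $\gamma$ equal to a small constant multiple of $\chi(\rho)^{|B|}/\Vert G\Vert^2_\infty$ makes the two contributions of the same order and yields the claim. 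The hard part is precisely the subgaussian estimate for $M_\ell(x)$: a crude bound treating $M_\ell(x)$ as a sum of bounded but dependent products would only give a sub-exponential tail, for which $E_{\nu^n_\rho}[e^{\theta M_\ell(x)^2}]$ diverges; it is the martingale decomposition along a total order extending $\prec$ — which converts the chaos into genuine martingale differences of the correct variance $\sim \ell^{-d}\chi(\rho)^{-|B|}\Vert G\Vert^2_\infty$ — that makes the scaling work.
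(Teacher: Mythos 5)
Your proposal is correct, and its outer skeleton (entropy inequality, splitting the sum over $x$ into $C_0\,\ell^d$ groups of variables with disjoint supports plus H\"older, a subgaussian bound of order $\Vert G\Vert^2_\infty\,\chi(\rho)^{-|B|}/\ell^d$, then \eqref{07} and the choice $\gamma \sim \chi(\rho)^{|B|}/\Vert G\Vert^2_\infty$) coincides with the paper's. The key technical step, however, is handled by a genuinely different device. The paper does \emph{not} prove that the full block average $M_\ell(x)$ is subgaussian in one stroke: it performs a second spatial decomposition, writing $M_\ell(x)=\sum_{z\in\Xi_p}M_\ell(z,x)$ with $p$ chosen so that, within each sub-sum, the translates of $B$ are disjoint and the products $\omega_{\cdot+B}$ are genuinely independent; it then applies H\"older once more to separate the finitely many sub-sums and invokes Lemma \ref{l04} (Hoeffding for independent sums) on each, exactly as in Corollary \ref{l27}. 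You instead convert the dependent chaos into a sum of martingale differences $\sum_w c_w\,\omega_w$ along a total (lexicographic) order extending $\prec$, with $c_w$ predictable and deterministically bounded by $\Vert G\Vert_\infty\chi(\rho)^{-(|B|-1)/2}/\ell^d$, and chain conditional Hoeffding bounds (Azuma--Hoeffding). Both routes give the same $\sigma^2_\ell$. What each buys: the paper's block trick reuses its stated Lemma \ref{l04} verbatim and is the same decomposition deployed elsewhere (Corollary \ref{l27}, Lemma \ref{l11}), at the price of one extra H\"older step and some spacing bookkeeping; your martingale argument avoids the second decomposition entirely, bounds the full $M_\ell(x)$ directly, and would extend to random, predictable coefficients --- but it relies on a conditional version of Hoeffding's lemma, which is standard yet not literally the paper's Lemma \ref{l04}. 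Two cosmetic points: within each H\"older group you need \emph{mutual} (not merely pairwise) independence of the $M_\ell(x)$'s, which your spacing does give since the supports are disjoint under the product measure; and your closing remark that only the martingale decomposition ``makes the scaling work'' overstates the case --- the paper's independent-blocks decomposition achieves the same scaling.
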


\begin{proof}
Let
\begin{equation*}
W(\eta) \;=\; \sum_{x \in \bb T^d_n} M_\ell(x)^2\;.
\end{equation*}
By the entropy inequality,
\begin{equation*}
\int W \, f\; d\nu^n_\rho
\;\le\; \frac{1}{\gamma} \, H_n(f) \;+\; \frac{1}{\gamma} \log
\int e^{\gamma\, W } \; d\nu^n_\rho
\end{equation*}
for all $\gamma>0$. Repeating the argument presented below \eqref{09},
we bound the second term of this expression by
\begin{equation}
\label{21}
\frac{C_0}{\gamma\, \ell^d} \sum_{x \in \bb T^d_n} \log
\int \exp\Big\{  C_0 \, \gamma \, \ell^d \,
M_\ell(x) ^2 \Big\} \; d\nu^n_\rho\;.
\end{equation}

If $B$ were a singleton, under the measure $\nu^n_\rho$, the variables
$\omega_{x-y+B}$ would be independent. Since this may not be the case,
we divide the sum further. Let $p\ge 1$ be the smallest integer such
that $B\subset \Xi_p :=\{-p, \dots, p\}^d$, and rewrite $M_\ell(x)$ as
\begin{equation*}
M_\ell(x) \;=\; \sum_{z\in \Xi_p} \sum_{w \in \Lambda_\ell(z)} m_\ell(z +
p w) \, G(x-z - pw)\, \omega_{x-z - pw +B} \;:=\;
\sum_{z\in \Xi_p}  M_\ell(z,x) \;,
\end{equation*}
where the second sum is performed over all $w\in \bb Z^d$ such that
$z+pw\in \Lambda_\ell$. Now, for each fixed $x$, $z$, the variables
$\{ \omega_{x-z - pw +B} : w \in \Lambda_\ell(z)\}$ are
independent. Apply H\"older's inequality once more to bound \eqref{21}
by
\begin{equation}
\label{22}
\frac{C_0}{\gamma\, \ell^d} \sum_{x \in \bb T^d_n} \sum_{z\in \Xi_p} \log
\int \exp\Big\{  C_0 \, \gamma \, \ell^d \, M_\ell(z,x) ^2 \Big\}
\; d\nu^n_\rho\;,
\end{equation}
where the value of the constant $C_0$ has changed.

By definition of $m_\ell$,
$\sum_{y \in \Lambda_\ell} m_\ell(y)^2 \le C_0 \ell^{-d}$ for some
finite constant $C_0$. Hence, by Lemma \ref{l04}, under the measure
$\nu^n_\rho$, $M_\ell(z,x)$ is a $\sigma^2_\ell$-sub\-gauss\-i\-an
random variable, where
$\sigma^2_\ell = C_0 \, \Vert G \Vert^2_\infty /\chi(\rho)^{|B|}
\ell^{d}$ for some finite contant $C_0$.

Therefore, taking $\gamma = c_0 \chi(\rho)^{|B|} / \Vert G
\Vert^2_\infty$ for some positive constant $c_0$, depending only on
the dimension, by \eqref{07}, the sum \eqref{22} is bounded by
\begin{equation*}
C_0 \, \frac{ \Vert G \Vert^2_\infty }
{\chi(\rho)^{|B|}} \,\frac{n^d}{\ell^d}\;\cdot
\end{equation*}
To complete the proof of the lemma, it remains to recollect the
previous estimates.
\end{proof}

\begin{corollary}
\label{l07}
There exists a finite constant $C_1(\rho)$, depending only on $\rho$
and on the dimension, such that
\begin{equation*}
\int V_\ell(\eta) \, f\; d\nu^n_\rho
\;\le\; C_1(\rho)  \,\big\{ H_n(f) \;+\; (n/\ell)^d \, \big\}
\end{equation*}
for all density $f$ with respect to $\nu^n_\rho$.
\end{corollary}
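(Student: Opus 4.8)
The plan is to bound $V_\ell$ from above by a sum of squares of box-averages of a single $\omega$-variable and then apply Lemma \ref{l16}. Starting from the representation \eqref{16}, write
\begin{equation*}
V_\ell(\eta) \;=\; \sum_{j=1}^d \sum_{x\in\bb T^d_n} u_x \, v_{x,j}\;,
\qquad
u_x \,=\, \sum_{y\in\Lambda_\ell} m_\ell(y)\,\omega_{x-y}\;,\quad
v_{x,j} \,=\, \sum_{z\in\Lambda_\ell} m_\ell(z)\,\omega_{x+e_j+z}\;,
\end{equation*}
so that both factors are weighted averages of a single $\omega$ over a box of side $\ell$. Since we only seek an upper bound on $\int V_\ell\, f\, d\nu^n_\rho$, I would use the elementary inequality $u_x v_{x,j} \le \tfrac12(u_x^2 + v_{x,j}^2)$ --- which sidesteps the need to exploit the disjointness of the two averaging boxes --- to get
\begin{equation*}
\int V_\ell\, f\; d\nu^n_\rho \;\le\;
\frac12 \sum_{j=1}^d \int \Big(\sum_{x\in\bb T^d_n} u_x^2\Big)\, f\; d\nu^n_\rho
\;+\; \frac12 \sum_{j=1}^d \int \Big(\sum_{x\in\bb T^d_n} v_{x,j}^2\Big)\, f\; d\nu^n_\rho\;.
\end{equation*}

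First I would treat the backward average. The quantity $u_x$ is exactly $M_\ell(x)$ in the notation of Lemma \ref{l16}, with the singleton set $\{0\}$ in place of $B$ and the constant $G\equiv 1$; hence $|B|=1$ and $\Vert G\Vert_\infty = 1$. Lemma \ref{l16} then bounds $\int \sum_x u_x^2\, f\, d\nu^n_\rho$ by $C_0\,\chi(\rho)^{-1}\{H_n(f) + (n/\ell)^d\}$, uniformly in $j$ since $u_x$ does not depend on $j$. For the forward average, I would first use the invariance of $\sum_{x\in\bb T^d_n}$ under the relabeling $x\mapsto x-e_j$ to replace, pointwise in $\eta$, the integrand $\sum_x v_{x,j}^2$ by $\sum_x \big(\sum_{z\in\Lambda_\ell} m_\ell(z)\,\omega_{x+z}\big)^2$, which no longer depends on $j$. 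Writing $\check m_\ell(y) := m_\ell(-y)$, the uniform measure on $-\Lambda_\ell$, this average equals $\sum_y \check m_\ell(y)\,\omega_{x-y}$. The proof of Lemma \ref{l16} uses only that the averaging measure is uniform on a box of side $\ell$ (through $\sum_y m_\ell(y)^2 \le C_0\,\ell^{-d}$) and that the index set is a singleton, so it applies verbatim with $\check m_\ell$ in place of $m_\ell$, yielding the same bound $C_0\,\chi(\rho)^{-1}\{H_n(f) + (n/\ell)^d\}$.

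Collecting the two contributions and summing the $d$ identical terms over $j$ gives $\int V_\ell\, f\, d\nu^n_\rho \le d\,C_0\,\chi(\rho)^{-1}\{H_n(f)+(n/\ell)^d\}$, which is the claim once $d$ and $C_0$ are absorbed into $C_1(\rho)$. I do not expect a genuine obstacle here: the argument is a direct application of Lemma \ref{l16}. The only points demanding care are the reindexing of the forward average $v_{x,j}$ so that Lemma \ref{l16} becomes applicable, and the observation that the index set being a singleton keeps the power of $\chi(\rho)^{-1}$ equal to one rather than growing with the size of the support.
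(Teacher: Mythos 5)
Your proposal is correct and follows essentially the same route as the paper's proof: the paper likewise starts from \eqref{16}, applies Young's inequality $ab\le\tfrac12(a^2+b^2)$ to reduce $V_\ell$ to sums of squares of box averages, and then invokes Lemma \ref{l16} with $G=1$ and a singleton set. Your extra care with the reindexing of the forward average (and the observation that the proof of Lemma \ref{l16} only uses $\sum_y m_\ell(y)^2\le C_0\ell^{-d}$) fills in a detail the paper leaves implicit, but it is the same argument.
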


\begin{proof}
The proof is similar to the one of Proposition \ref{l03}. In view of
\eqref{16}, by Young's inequality $(a+b)^2 \le (1/2) a^2 + (1/2) b^2$,
$V_\ell(\eta)$ is bounded by
$V^{(1)}_\ell(\eta) + V^{(2)}_\ell(\eta)$, where
\begin{equation*}
V^{(1)}_\ell(\eta) \;:=\; \frac{1}{2}\, \sum_{j=1}^d \sum_{x \in \bb T^d_n}
\Big(\, \sum_{y \in \Lambda_\ell} m_\ell(y) \,
\omega_{x-y} \, \Big)^2 \;.
\end{equation*}
The term $V^{(2)}_\ell$ is similar to $V^{(1)}_\ell$, with the average
inside the square replaced by
$\sum_{z \in \Lambda_\ell} m_\ell(z) \,\omega_{x+e_j+z}$.

To complete the proof, it remains to apply Lemma \ref{l16} with $G=1$.
\end{proof}

Next result follows from Proposition \ref{l03} and Corollary
\ref{l07}.

\begin{lemma}
\label{l12}
There exists a finite constant $C_1(\rho)$, depending only on $\rho$,
$\mf c_0$ and the dimension, such that
\begin{equation*}
a_n\, \int V \, f\; d\nu^n_\rho
\;\le\; \delta\, n^2 \, I_n(f) \;+\;
C_1(\rho)  \, a_n \,  \Big\{ 1 \;+\;
\frac{a_n \, \ell^d \, g_d(\ell)}{\delta\, n^2} \,\Big\}
\,\big\{ H_n(f) \;+\; (n/\ell)^{d} \, \big\} 
\end{equation*}
for all $\delta>0$ and density $f$ with respect to $\nu^n_\rho$.
\end{lemma}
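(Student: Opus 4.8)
The plan is to prove Lemma \ref{l12} as an immediate algebraic combination of the two estimates established above, Proposition \ref{l03} and Corollary \ref{l07}. Since the genuine analytic work (the integration-by-parts via the flow $\Phi_\ell$ in Lemma \ref{l17}, and the subgaussian concentration estimate in Lemma \ref{l16}) has already been carried out upstream, at this stage I only need to split $V$ appropriately and collect terms. Concretely, I would write, for an arbitrary $\ell$ with $1\le \ell<n/4$ (the regime where both cited results are valid),
\[
a_n \int V \, f\; d\nu^n_\rho \;=\; a_n \int (V-V_\ell)\, f\; d\nu^n_\rho \;+\; a_n \int V_\ell\, f\; d\nu^n_\rho \;.
\]

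For the first term on the right-hand side I would apply Proposition \ref{l03} verbatim, which bounds it by $\delta\, n^2\, I_n(f) + C_1(\rho)\, \delta^{-1} a_n^2 \, \ell^d\, g_d(\ell)\, n^{-2}\,\{H_n(f)+(n/\ell)^d\}$. For the second term I would simply multiply the estimate of Corollary \ref{l07} by $a_n$, obtaining $a_n \int V_\ell\, f\; d\nu^n_\rho \le C_1(\rho)\, a_n\, \{H_n(f)+(n/\ell)^d\}$. Adding the two bounds and factoring out the common expression $\{H_n(f)+(n/\ell)^d\}$ then yields
\[
a_n \int V \, f\; d\nu^n_\rho \;\le\; \delta\, n^2\, I_n(f) \;+\; C_1(\rho)\, a_n\, \Big\{1 + \frac{a_n\, \ell^d\, g_d(\ell)}{\delta\, n^2}\Big\}\,\big\{H_n(f)+(n/\ell)^d\big\}\;,
\]
which is exactly the asserted inequality.

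There is no real obstacle in this step; it is pure bookkeeping. The only point requiring minor care is that the two cited results each introduce a constant denoted $C_1(\rho)$ (one depending on $\rho$ and $\mf c_0$, the other on $\rho$ and the dimension), so in the final line I would enlarge $C_1(\rho)$ to a single constant dominating both, depending on $\rho$, $\mf c_0$ and the dimension, exactly as in the statement. The parameter $\ell$ remains free throughout and will be optimized later, in the proof of Theorem \ref{t2}, against the entropy term $H_n(f)$ and the volume term $(n/\ell)^d$.
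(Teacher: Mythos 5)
Your proof is correct and is exactly the paper's argument: the paper states Lemma \ref{l12} with the single remark that it ``follows from Proposition \ref{l03} and Corollary \ref{l07}'', and your decomposition $V=(V-V_\ell)+V_\ell$, with Proposition \ref{l03} applied to the first term and Corollary \ref{l07} (multiplied by $a_n$) to the second, is precisely that combination. The bookkeeping, including enlarging the constant to depend on $\rho$, $\mf c_0$ and the dimension, and leaving $\ell$ free for the later optimization in the proof of Theorem \ref{t2}, matches the paper.
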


\begin{proof}[Proof of Theorem \ref{t2}]
By Proposition \ref{l06} and Lemma \ref{l12} with $\delta =1$ and
$f=f^n_t$, $H'_n(t)$ is bounded by
\begin{align*}
C_1(\rho) \, a_n \, \Big\{ 1 \;+\;
\frac{a_n \, \ell^d \, g_d(\ell)}{n^2} \,\Big\}
\,\Big\{ H_n(f^n_t) \;+\; (n/\ell)^{d} \, \Big\} 
\end{align*}
for some finite constant $C_1(\rho)$.

At this point, the natural choice is $\ell = \ell_n$ so that
$\ell^d_n \, g_d(\ell_n) = n^2/a_n$. Thus, define the sequence
$(\ell_n : n\ge 1)$ by
\begin{equation}
\label{19}
\ell^d_n \;=\;
\begin{cases}
n/\sqrt{a_n} & \text{in } d\,=\, 1\;, \\
n^2/[\,a_n \, \log n\,]  & \text{in } d\,=\, 2\;, \\
n^2/a_n & \text{in } d\,\ge \, 3\;.
\end{cases}
\end{equation}
With these choices, the previous expression is bounded by
\begin{equation*}
C_1(\rho) \, a_n \, 
\,\big\{ H_n(f^n_t) \;+\; R_d(n) \, \big\} \;,
\end{equation*}
as claimed.
\end{proof}

Next result will be needed in the proof of the Boltzmann-Gibbs
principle in Section \ref{sec4}. Its proof is similar to the one of
Corollary \ref{12}.

\begin{lemma}
\label{l13}
There exists a finite constant $C_1(\rho)$, depending only on $\rho$,
$\mf c_0$ and the dimension, such that
\begin{equation*}
a_n\, \int V \, f\; d\nu^n_\rho
\;\le\; \delta\, n^2 \, I_n(f) \;+\;
C_1(\rho)  \, a_n \,  \Big\{ 1 \;+\;
\frac{a_n \, \ell^d \, g_d(\ell)}{\delta\, n^2} \,\Big\}
\,\big\{ H_n(f) \;+\; (n/\ell)^{d} \, \big\} 
\end{equation*}
for all $\delta>0$ and density $f$ with respect to $\nu^n_\rho$.
\end{lemma}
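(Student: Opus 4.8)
The plan is to obtain the bound by combining the two estimates already in hand for the pieces of $V$, exactly along the lines of the (identically stated) Lemma \ref{l12}. I would begin from the decomposition $V = (V - V_\ell) + V_\ell$, which splits the quantity $a_n \int V f \, d\nu^n_\rho$ into $a_n \int (V - V_\ell) f \, d\nu^n_\rho$ and $a_n \int V_\ell f \, d\nu^n_\rho$. Each summand is then controlled by one of the preceding results.

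For the difference $V - V_\ell$, I would invoke Proposition \ref{l03} directly: it bounds $a_n \int (V - V_\ell) f \, d\nu^n_\rho$ by $\delta n^2 I_n(f) + \frac{C_1(\rho) a_n^2 \ell^d g_d(\ell)}{\delta n^2}\{H_n(f) + (n/\ell)^d\}$. This is the only term producing the Dirichlet form $I_n(f)$ and the only one carrying the factor $\delta^{-1}$, and it ultimately rests on the flow estimate of Lemma \ref{l01} fed through Corollary \ref{l18}. For the smoothed field $V_\ell$, I would use Corollary \ref{l07}, which gives $\int V_\ell f \, d\nu^n_\rho \le C_1(\rho)\{H_n(f) + (n/\ell)^d\}$; multiplying by $a_n$ contributes $C_1(\rho) a_n \{H_n(f) + (n/\ell)^d\}$, that is, the term attached to the constant $1$ inside the bracket.

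Adding the two bounds and extracting the common factor $C_1(\rho) a_n \{H_n(f) + (n/\ell)^d\}$ yields precisely the bracket $1 + a_n \ell^d g_d(\ell)/(\delta n^2)$, and hence the asserted inequality, after relabelling the two constants $C_1(\rho)$ as one. Because both ingredients are already established, there is no genuine obstacle here: the argument is essentially bookkeeping. The only points deserving care are that the single constant $C_1(\rho)$ in the statement must absorb the constants coming from both Proposition \ref{l03} and Corollary \ref{l07}, and that one should check that the $\delta$-dependence enters only through the $V - V_\ell$ contribution, the $V_\ell$ contribution being $\delta$-independent.
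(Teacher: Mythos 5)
Your proposal is correct, and it is precisely the argument the paper itself uses: the statement of Lemma \ref{l13} is verbatim identical to Lemma \ref{l12}, and the paper disposes of that lemma with the single sentence ``Next result follows from Proposition \ref{l03} and Corollary \ref{l07}'' --- exactly your decomposition $V = (V - V_\ell) + V_\ell$, with Proposition \ref{l03} supplying the Dirichlet-form term and the $\delta^{-1}$ factor, and Corollary \ref{l07} (multiplied by $a_n$) supplying the $\delta$-independent term. Your bookkeeping (taking the maximum of the two constants and extracting the common factor $H_n(f) + (n/\ell)^d$) is also sound.

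One discrepancy worth flagging, though it is the paper's fault rather than yours: the text the paper actually prints as the ``proof'' of Lemma \ref{l13} is not a proof of the stated inequality at all. It only introduces the notion of a finite set being \emph{bounded by the origin} and observes that any finite subset of $\bb Z^d$ with two or more elements is a translate of $\{0\}\cup A$ with $A$ bounded by the origin. This is a preparatory remark for a generalization of the estimate in which the pair product structure of $V$ (supported on $\{0,e_j\}$) is replaced by a general monomial field $\sum_x G_x\,\omega_{x+A}$, which is what is really needed in the Boltzmann--Gibbs argument of Section \ref{sec4}; the statement of Lemma \ref{l13} as printed appears to be a copy of Lemma \ref{l12} by mistake. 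For the statement as given, your argument is the correct and complete one; the genuinely new content hiding behind Lemma \ref{l13} (reducing a general set $A$ to the maximal-point decomposition so that Lemma \ref{l17} applies) is not addressed by your proof, but it is also not demanded by the statement you were asked to prove.
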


\begin{proof}
We say that a non-empty subset $A = \{\mb x_1 , \dots , \mb x_p\}$ of
$\bb Z^d$, $\mb x_k = (x_{k,1}, \dots, x_{k,d})$, $1\le k\le p$, is
\emph{bounded by the origin} if each element of $A$ has a negative
coordinate: for each $1\le k\le p$, there exists $1\le j\le d$ such
that $x_{k,j}<0$.

Clearly, any finite subset $B$ of $\bb Z^d$ with two or more elements
can be written as a translation of a set $\{0\} \cup A$, where $A$ is
bounded by the origin.
\end{proof}

\section{The Boltzmann-Gibbs principle}
\label{sec4}

We prove in this section Theorem \ref{p01}. We start proving this
result for $\Xi_\rho = \Pi^1_\rho$, and then turn to the case
$\Xi_\rho = \Pi^{+2}_\rho$. Throughout this section, $f^n_t$,
$t\ge 0$, represents the density of the measure $\mu_n S^n(t)$ with
respect to $\nu^n_\rho$, where $\mu_n$ is a probability measure on
$\Omega_n$.

\begin{lemma}
\label{l05}
Fix a function $G$ in $C^{0,1}(\bb R_+ \times \bb T^d)$ and a sequence
of measures $\mu_n$ on $\Omega_n$. Then,
\begin{align*}
& \bb E_{\mu_n} \Big[\,
\int_0^t \frac{1}{\sqrt{a_n\, n^ d}} \,\Big|\, \sum_{x\in \bb T^d_n}
G (s, x/n)\,
[\, \eta^n_x(s) - \eta^n_{x+e_j}(s)\,]  \,\Big| \,  \, ds \, \Big] \\
&\qquad \le\;
\frac{1}{\gamma} \int_0^t H_n(f^n_s) \; ds \;+\; \frac{t}{\gamma}
\log 2 \;+\;
\frac{3\, \gamma\, t}{a_n\, n^2}\,
\, \sup_{0\le s\le t} \Vert \partial_{x_j} G(s) \Vert^2 _\infty
\end{align*}
for every $1\le j\le d$, $t>0$,
$0< \gamma \le \, \sqrt{a_n\, n^ {d+2}}/
\sup_{0\le s\le t} \Vert \partial_{x_j} G(s) \Vert_\infty$, $n\ge 1$.
\end{lemma}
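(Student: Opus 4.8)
The plan is to treat the time variable $s$ as a parameter, bound for each fixed $s$ the mean under $\mu_n S^n(s)$ of the single random variable
\[
W_s \;:=\; \frac{1}{\sqrt{a_n\, n^d}} \sum_{x\in\bb T^d_n} G(s,x/n)\,[\,\eta_x - \eta_{x+e_j}\,]\;,
\]
and then integrate the resulting bound over $[0,t]$ (Fubini). The first move is a summation by parts in the $e_j$-direction: reindexing the $\eta_{x+e_j}$ term on the torus rewrites $W_s$ as $\frac{1}{\sqrt{a_n n^d}}\sum_x [\,G(s,x/n)-G(s,(x-e_j)/n)\,]\,\eta_x$. Two features of this expression are decisive. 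First, the discrete gradient of $G$ is of order $n^{-1}$, namely $|G(s,x/n)-G(s,(x-e_j)/n)|\le n^{-1}\Vert\partial_{x_j}G(s)\Vert_\infty$; this is the source of the crucial factor $n^{-2}$ in the variance. Second, since $\sum_x [\,G(s,x/n)-G(s,(x-e_j)/n)\,]=0$ by periodicity, I may subtract $\rho$ freely and write $W_s = \frac{1}{\sqrt{a_n n^d}}\sum_x [\,G(s,x/n)-G(s,(x-e_j)/n)\,]\,(\eta_x-\rho)$, recasting $W_s$ as a sum of independent, mean-zero, bounded variables under $\nu^n_\rho$.

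Next I would invoke the entropy inequality to transfer the expectation from $\mu_n S^n(s)$, whose density with respect to $\nu^n_\rho$ is $f^n_s$, to the reference measure $\nu^n_\rho$: for every $\gamma>0$,
\[
\bb E_{\mu_n}\big[\,|W_s|\,\big] \;=\; \int |W_s|\, f^n_s\; d\nu^n_\rho \;\le\; \frac{1}{\gamma}\,H_n(f^n_s) \;+\; \frac{1}{\gamma}\,\log \int e^{\gamma\,|W_s|}\; d\nu^n_\rho\;.
\]
Under $\nu^n_\rho$ the variables $\{\eta_x-\rho\}$ are independent and mean-zero, each taking values in an interval of length $1$, so Lemma \ref{l04} shows that $W_s$ is $\sigma^2$-subgaussian with
\[
\sigma^2 \;=\; \frac{1}{4\, a_n\, n^d}\sum_{x\in\bb T^d_n} [\,G(s,x/n)-G(s,(x-e_j)/n)\,]^2 \;\le\; \frac{\Vert\partial_{x_j}G(s)\Vert_\infty^2}{4\, a_n\, n^2}\;,
\]
the last inequality using the $n^{-1}$ gradient bound and that there are $n^d$ summands.

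Finally, to estimate the exponential moment I would symmetrise, $e^{\gamma|W_s|}\le e^{\gamma W_s}+e^{-\gamma W_s}$, and apply the defining subgaussian bound to each term, obtaining $\int e^{\gamma|W_s|}\,d\nu^n_\rho \le 2\,e^{\sigma^2\gamma^2/2}$. Hence $\frac1\gamma\log\int e^{\gamma|W_s|}\,d\nu^n_\rho \le \frac{\log 2}{\gamma} + \frac{\gamma\,\sigma^2}{2} \le \frac{\log 2}{\gamma} + \frac{\gamma\,\Vert\partial_{x_j}G(s)\Vert_\infty^2}{8\,a_n\,n^2}$; integrating over $s\in[0,t]$ and bounding the gradient by its supremum yields exactly the three terms in the statement (with the harmless constant $1/8$ in place of the stated $3$, and the displayed range of $\gamma$ being the one in which this exponential-moment estimate is the dominant contribution). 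I do not expect a genuine obstacle here — this is the \emph{soft} half of the Boltzmann--Gibbs principle, and the only point requiring care is extracting the gain $n^{-2}$ from the discrete gradient together with the centering by $\rho$; the truly hard case $\Xi_\rho=\Pi^{+2}_\rho$ is treated separately.
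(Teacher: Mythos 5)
Your proof is correct, and it follows the paper's own argument through its first three steps: the summation by parts producing the discrete gradient of $G$ (with the recentering by $\rho$, legitimate since that gradient sums to zero over the torus), the entropy inequality with respect to $f^n_s$ at each fixed time, and the symmetrization $e^{\gamma|W|}\le e^{\gamma W}+e^{-\gamma W}$, which is the source of the $\log 2$ term. Where you diverge is the exponential-moment estimate. The paper exploits the product structure of $\nu^n_\rho$ by hand: it factors the exponential over sites and uses $e^a\le 1+a+a^2 e^{|a|}$ together with $E_{\nu^n_\rho}[\eta_x-\rho]=0$ and $\log(1+b)\le b$; the crude bound $e^{|a|}\le e$ is exactly where the hypothesis $\gamma\le \sqrt{a_n n^{d+2}}/\sup_{s}\Vert \partial_{x_j}G(s)\Vert_\infty$ is consumed, and this route produces the constant $3$. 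You instead invoke Hoeffding's inequality through Lemma \ref{l04} --- the same tool the paper itself uses for Corollary \ref{l27} --- to conclude that $W_s$ is $\sigma^2$-subgaussian with $\sigma^2\le \Vert\partial_{x_j}G(s)\Vert^2_\infty/(4 a_n n^2)$, and then apply the defining subgaussian bound to $\pm\gamma W_s$. This buys you two things: the restriction on $\gamma$ disappears entirely, since the subgaussian bound holds for every $\theta\in\bb R$, so you in fact prove a slightly stronger statement valid for all $\gamma>0$; and the constant improves from $3$ to $1/8$. The paper's computation is more self-contained at this one point, but your route is cleaner and reveals that the constraint on $\gamma$ in the statement is an artifact of the paper's method of proof rather than a genuine requirement.
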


\begin{proof}
Let
$W (s,\eta) = \{a_n\, n^ d\}^{-1/2} \sum_{x\in \bb T^d_n} [\, G (s,x/n)
\,-\, G(s,x-e_j/N)\,] \, [\, \eta^n_x - \rho\,]$.  By the entropy
inequality, the expectation appearing in the statement of the lemma is
bounded by
\begin{equation*}
\frac{1}{\gamma} \int_0^t H_n(f^n_s) \, ds \;+\;
\int_0^t \frac{1}{\gamma}  \log \int e^{\gamma\, |  \, W(s) \,|} \;
d\nu^n_\rho \, ds
\end{equation*}
for every $\gamma>0$.

As $\exp\{\, |a|\,\} \le e^a + e^{-a}$ and
$e^b + e^{-b} \le 2 \max \{ e^b , e^{-b}\}$, by linearity of the
expectation,
\begin{equation*}
\frac{1}{\gamma} \log \int e^{\gamma\,  |\,W(s)\,|} \; d\nu^n_\rho \;\le\;
\frac{\log 2}{\gamma} \;+\; \max_{b=\pm 1} \frac{1}{\gamma}
\log \int e^{b\, \gamma\,  W(s)} \; d\nu^n_\rho\;.
\end{equation*}
We estimate the second term with $b=1$, as the argument applies to
$b=-1$. As $\nu^n_\rho$ is a product measure,
\begin{equation*}
\frac{1}{\gamma} \log \int e^{\gamma\,  W(s)} \; d\nu^n_\rho \;=\;
\frac{1}{\gamma} \sum_{x\in \bb T^d_n} \log \int e^{b_n(s)\,
  (\eta_x-\rho) } \; d\nu^n_\rho
\end{equation*}
where
$b_n(s) = \gamma [\, G (s,x/n) \,-\, G(s,x-e_j/N)\,]/\sqrt{a_n\, n^ d}$.
Since $e^a \le 1 + a + a^2 e^{|a|}$,
$E_{\nu^n_\rho}[\eta_x - \rho]=0$, and $\log (1+b) \le b$, the
previous expression is bounded by
\begin{equation*}
\frac{3\, \gamma\, \Vert \partial_{x_j} G(s) \Vert^2 _\infty}{a_n\, n^2}
\end{equation*}
provided $\gamma \, \Vert \partial_{x_j} G(s) \Vert_\infty / \sqrt{a_n\, n^
  {d+2}} \le 1$.

To complete the proof of the lemma, it remains to recollect
the previous estimates.
\end{proof}

Assume that $a_n \le \sqrt{\log n}$ and that the sequence of measures
$\mu_n$ on $\Omega_n$ satisfies the bound
$H_n(\mu_n \,|\, \nu^n_\rho) \le R_d(n)$. Fix $\kappa>0$ and $T>0$. By
Remark \ref{rm5}, there exists $n_0$ such that
$H_n(f^n_t) \; ds \le R_d(n) \, n^\kappa$ for all $0\le t\le T$,
$n\ge n_0$.

Choose
$\gamma_n = \sqrt{R_d(n)\, n^{2+\kappa} \, a_n /
\sup_{0\le s\le t} \Vert \partial_{x_j}
  G (s) \Vert_\infty}$. By definition of $R_d(n)$, $\gamma_n$ satisfies
the bound required in the lemma for all $n\ge n_0$. With this choice,
\begin{equation}
\label{17}
\begin{aligned}
& \bb E_{\mu_n} \Big[\,
\int_0^t \frac{1}{\sqrt{a_n\, n^ d}} \,\Big|\, \sum_{x\in \bb T^d_n} G (s,x/n)\,
[\, \eta^n_x(s) - \eta^n_{x+e_j}(s)\,]  \,\Big| \,  \, ds \, \Big] \\
&\qquad \;\le\;
C_0\, t\, \sqrt{\frac{R_d(n)\, n^{\kappa}}{a_n\, n^{2}}}
\, \sup_{0\le s\le t} \Vert \partial_{x_j}  G(s) \Vert_\infty
\end{aligned}
\end{equation}
for every $1\le j\le d$, $0<t\le T$, $n\ge n_0$.  By definition of
$R_d(n)$, this expression vanishes as $n\to\infty$ provided $d\le 3$.

We turn to the proof of Theorem \ref{p01} in the case where
$\Xi_\rho = \Pi^{+2}_\rho$.

\begin{proposition}
\label{l09}
Fix $0<\rho<1$ and a finite subset $A$ of $\bb Z^d$ with at least two
elements.  Then, there exists a finite constant $C_1 = C_1(\rho, A)$,
depending only on the dimension, the set $A$ and the density $\rho$
such that
\begin{align*}
\bb E_{\mu_n} \Big[\, \Big|\,
\int_s^t \, \sum_{x\in \bb T^d_n} G_x (u)\,
\omega_{A+x}(u)  \, du \, \Big| \,\Big] 
\;\le\; C_1\, \Big\{ \frac{1 + H_n(f^n_s)}{a_n} \;+\;
\mf c (G)\, \bb H_n(s,t) \, \Big\} 
\end{align*}
for all $t>s \ge 0$, $G: \bb R \times \bb T^d_n \to \bb R$,
probability measure $\mu_n$ and $n\ge 1$.  On the right-hand side,
$\mf c (G) = 1 + \sup_{s\le u\le t} \Vert G(u) \Vert^2_\infty$,
\begin{equation*}
\bb H_n(s,t) \;=\; \int_s^t H_n(f^n_u )\, du\; +\; (t-s)\,
(n/\ell_n)^d\;,
\end{equation*}
and $(\ell_n : n\ge 1)$ is the sequence introduced in \eqref{19}.
\end{proposition}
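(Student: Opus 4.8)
The object to control is $\bb E_{\mu_n}[\,|\int_s^t W_u\,du|\,]$ with $W_u=\sum_{x\in\bb T^d_n} G_x(u)\,\omega_{A+x}$ and $\omega_{A+x}=\prod_{z\in A}\omega_{x+z}$, a monomial of degree $|A|\ge 2$ in the orthonormal variables $\omega_x$. The plan is to combine two ingredients: a non-equilibrium reduction of the space--time average to a static variational problem governed by the exclusion Dirichlet form $n^2 I_n$, in the spirit of the proof of Theorem \ref{t2} and of the $\mc H_{-1}$ bounds of \cite{jm1}; and a reduction of the degree of $\omega_{A+x}$ by one unit using the flow of Lemma \ref{l01}, which is the only place where $|A|\ge 2$ is used.

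First I would carry out the dynamic reduction. Restarting the chain at time $s$, so that the relative entropy of the two path laws on $[s,t]$ equals $H_n(f^n_s)$, and inserting a parameter $\gamma$, the entropy inequality together with a Feynman--Kac estimate reduce the left-hand side to the sum of $\gamma^{-1}\{\log 2+H_n(f^n_s)\}$ and the time integral over $[s,t]$ of a static functional of the form $\sup_{g}\{\gamma\int W_u\,g\,d\nu^n_\rho-n^2 I_n(g)\}$, the supremum being over densities $g$; the non-reversible part $a_n L^{V,*}_n$ of the generator contributes only a lower-order correction, treated as the term $a_n\int V f\,d\nu^n_\rho$ was in Section \ref{sec3}. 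Taking $\gamma$ of order $a_n$ already yields the announced $(1+H_n(f^n_s))/a_n$; everything then rests on estimating $\gamma\int W_u\,g\,d\nu^n_\rho$ with $n^2 I_n(g)$ as a budget, and the residual entropy generated along the trajectory is what produces the $\int_s^t H_n(f^n_u)\,du$ in $\bb H_n(s,t)$.

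For the static bound, fix a maximal element $\mb x_A$ of $A$ for $\prec$, put $A_\star=A\setminus\{\mb x_A\}$, and split
\[
\omega_{A+x}\;=\;\omega_{x+A_\star}\,\omega^\ell_{x+\mb x_A}\;+\;\omega_{x+A_\star}\,\{\omega_{x+\mb x_A}-\omega^\ell_{x+\mb x_A}\}.
\]
The replacement error is handled by Lemma \ref{l17} with $b_n=\gamma$: choosing $\beta$ so that the Dirichlet contribution $\tfrac{\beta}{2}n^2 I_n(g)$ is absorbed by the $-n^2 I_n(g)$ of the variational problem leaves the quadratic term $\tfrac{\gamma^2}{\mf c_0\,n^2\,\chi(\rho)}\sum_{k,x}\int(H^{(\ell)}_{k,x})^2\,g\,d\nu^n_\rho$, to which Lemma \ref{l16} applies with $|B|=|A|-1$, bounding it by $C(\rho)\,\gamma^2 n^{-2}\,\Vert G\Vert^2_\infty\,g_d(\ell)\,\{H_n(g)+(n/\ell)^d\}$. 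The smoothed term $\omega_{x+A_\star}\,\omega^\ell_{x+\mb x_A}$ carries the box-average $\omega^\ell$, whose $L^2(\nu^n_\rho)$ size gains a factor $\ell^{-d/2}$; I would bound its contribution through one more entropy inequality and the subgaussian estimates of Lemma \ref{l04} and Corollary \ref{l27}, producing again a term of type $H_n(g)+(n/\ell)^d$ up to the prefactor $\mf c(G)$. Summing and optimizing the free scale as $\ell=\ell_n$ in \eqref{19}, so that $\ell_n^d\,g_d(\ell_n)=n^2/a_n$, turns $\gamma^2 n^{-2}g_d(\ell)$ with $\gamma\asymp a_n$ into an $O(1)$ factor and the error $(n/\ell_n)^d$ into the corresponding piece of $\bb H_n(s,t)$. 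Finally the time integral of the Dirichlet form is converted back into entropy via Proposition \ref{l06} and Lemma \ref{l12}, bounding $\int_s^t n^2 I_n(f^n_u)\,du$ by $H_n(f^n_s)$ plus controllable terms.

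The main obstacle is closing the variational problem without a logarithmic Sobolev inequality. Lemma \ref{l16} inevitably produces the full entropy $H_n(g)$, not merely a Dirichlet form, so one cannot simply maximize $\lambda H_n(g)-n^2 I_n(g)$ over densities; this forces the argument to keep the true density $f^n_u$ along the trajectory and to rely on the entropy--production bound of Theorem \ref{t2} rather than on a spectral gap or LSI, which is the whole point of the method. This is also where dimension enters: the admissible budget $n^2/a_n$ must dominate $\ell_n^d g_d(\ell_n)$ while the error $(n/\ell_n)^d$ stays summable against it, which holds precisely for $d\le 3$; in $d\ge 4$ a degree-two monomial is not regularized enough and the error no longer vanishes. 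A secondary, purely bookkeeping difficulty is to verify that the adjoint $L^{V,*}_n$ contributes only at the order already absorbed in Section \ref{sec3}.
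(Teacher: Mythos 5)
Your outline is, in substance, the paper's own proof: the same splitting of $\omega_{A+x}$ through a maximal element $\mb x_A$ and the flow-smoothed average $\omega^\ell_{x+\mb x_A}$, the same restart at time $s$ with entropy inequality plus Feynman--Kac (Corollary \ref{l14}), the same integration by parts via Lemma \ref{l17}, the same concentration machinery (Lemma \ref{l04}, Corollary \ref{l27}, Lemma \ref{l16}), and the same parameter choices $\gamma\asymp a_n$ and $\ell=\ell_n$ from \eqref{19}, with each term of the final bound traced to its correct source.

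There is, however, one gap at the crux, which you half-identify but do not repair correctly. As you note, the quadratic flow terms $\tfrac{\gamma^2}{n^2}\sum_{k,x}\int (H^{(\ell)}_{k,x})^2 g\,d\nu^n_\rho$, the $V_\ell$-type terms and the smoothed term cannot be estimated \emph{inside} the supremum over densities $g$, since the entropy inequality there produces $H_n(g)$, which can be of order $n^d$. The paper's resolution is not to bound these terms inside the variational problem at all: it subtracts the explicit counterterm $\Psi_x$ (built from $(H^{(\ell)}_{k,x})^2$, $(H^{(\ell)}_{j,k,x})^2$ and $\omega_x\omega^\ell_{x+e_j}$, with prefactor $a_n/n^2$) from the exponent \emph{before} applying Feynman--Kac, so that by Lemma \ref{l17} (with $\beta=1$, $b_n=a_n$) and Corollary \ref{l18} (with $\delta=1$) the variational expression is $\le 0$ identically (this is Lemma \ref{l19b}); the expectation of $\int_s^t\Psi\,du$ is then estimated along the true trajectory, against $f^n_u$, in Lemmas \ref{l15}, \ref{l11} and equations \eqref{24}, \eqref{39}. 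Your phrase ``keep the true density $f^n_u$ along the trajectory'' points exactly there, but the concrete mechanism you propose instead --- converting $\int_s^t n^2 I_n(f^n_u)\,du$ back into entropy via Proposition \ref{l06} and Lemma \ref{l12} --- is neither needed nor sufficient: in the paper's scheme no Dirichlet form of $f^n_u$ ever appears along the trajectory, and trading Dirichlet form for entropy does nothing to remove the uncontrollable $H_n(g)$ inside the supremum. Two smaller slips: the bound on $\sum_x (H^{(\ell)}_{k,x})^2$ is not Lemma \ref{l16} (which concerns box averages $m_\ell$) but its flow analogue, Lemma \ref{l11}; and the correct factor there is $\ell^d g_d(\ell)$, not $g_d(\ell)$ --- indeed it is precisely $\ell^d g_d(\ell)$ that cancels against $a_n/n^2$ under the choice \eqref{19}, as your own ``$O(1)$ factor'' claim implicitly requires.
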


In this proposition, replacing the function $G$ by $\gamma G$,
dividing the corresponding estimate by $\gamma$ and optimizing over
$\gamma>0$ yield the next result.

\begin{corollary}
\label{l30}
Fix $0<\rho<1$ and a finite subset $A$ of $\bb Z^d$ with at least two
elements.  Then, there exists a finite constant $C_1 = C_1(\rho, A)$,
depending only on the dimension, the set $A$ and the density $\rho$
such that
\begin{align*}
& \bb E_{\mu_n} \Big[\, \Big|\,
\int_s^t \, \sum_{x\in \bb T^d_n} G_x (u)\,
\omega_{A+x}(u)  \, du \, \Big| \,\Big] \\
&\qquad\quad  \;\le\; C_1\, \sup_{s\le u\le t} \Vert G(u) \Vert_\infty\,
\sqrt{ \Big( \frac{1+ H_n(f^n_s)}{a_n} \;+\;
\bb H_n(s,t)\Big) \, \bb H_n(s,t) } \;, 
\end{align*}
for all $t>s\ge 0$, $G: \bb R_+\times \bb T^d_n \to \bb R$,
probability measure $\mu_n$, $n\ge 1$.
\end{corollary}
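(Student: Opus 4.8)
The plan is to exploit the positive homogeneity of the left-hand side together with the quadratic dependence of the bound in Proposition~\ref{l09} on $\sup_u \Vert G(u)\Vert_\infty$, following the scaling-and-optimization recipe announced just before the statement. Write $M = \sup_{s\le u\le t} \Vert G(u)\Vert_\infty$ and abbreviate the quantity to be estimated by
\[
L(G) \;:=\; \bb E_{\mu_n}\Big[\,\Big|\int_s^t \sum_{x\in\bb T^d_n} G_x(u)\,\omega_{A+x}(u)\,du\Big|\,\Big]\;.
\]
Since the absolute value and the expectation are positively homogeneous in $G$, one has $L(\gamma G) = \gamma\, L(G)$ for every $\gamma > 0$.

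First I would apply Proposition~\ref{l09} to the function $\gamma G$ in place of $G$. Because $\mf c(\gamma G) = 1 + \gamma^2 M^2$, this gives
\[
\gamma\, L(G) \;=\; L(\gamma G) \;\le\; C_1\Big\{ \frac{1 + H_n(f^n_s)}{a_n} \;+\; (1 + \gamma^2 M^2)\,\bb H_n(s,t)\Big\}\;.
\]
Dividing through by $\gamma > 0$ and regrouping the terms that do and do not carry a factor of $\gamma^2 M^2$ yields
\[
L(G) \;\le\; C_1\Big\{ \frac{A}{\gamma} \;+\; B\,\gamma\Big\}\;, \qquad A \;:=\; \frac{1+H_n(f^n_s)}{a_n} + \bb H_n(s,t)\;, \quad B \;:=\; M^2\,\bb H_n(s,t)\;.
\]

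The final step is to optimize over $\gamma > 0$. By the arithmetic--geometric mean inequality, $A/\gamma + B\gamma \ge 2\sqrt{AB}$, with the minimum attained at $\gamma = \sqrt{A/B}$; substituting back and renaming the constant produces the claimed bound
\[
L(G) \;\le\; 2\,C_1\, M\,\sqrt{\Big(\frac{1+H_n(f^n_s)}{a_n} + \bb H_n(s,t)\Big)\,\bb H_n(s,t)}\;.
\]
I do not expect any genuine obstacle here; the only points to verify are that the optimizing $\gamma$ is admissible and strictly positive. Both $A$ and $B$ are strictly positive whenever $M>0$: indeed $\bb H_n(s,t) = \int_s^t H_n(f^n_u)\,du + (t-s)(n/\ell_n)^d \ge (t-s)(n/\ell_n)^d > 0$ for $t>s$, so $A \ge \bb H_n(s,t) > 0$ and $B>0$, making $\gamma = \sqrt{A/B}$ a legitimate positive choice; when $M=0$ the estimate is trivial since then $G\equiv 0$ forces $L(G)=0$. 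Since Proposition~\ref{l09} holds for arbitrary $G$ with no smallness restriction, the substitution $G\mapsto \gamma G$ is unconstrained, and the argument is complete.
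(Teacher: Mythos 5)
Your proof is correct and is essentially the paper's own argument: the paper derives Corollary~\ref{l30} from Proposition~\ref{l09} precisely by replacing $G$ with $\gamma G$, dividing by $\gamma$, and optimizing over $\gamma>0$, which is exactly your scaling-and-AM--GM computation. Your added checks (positivity of $\bb H_n(s,t)$, the trivial case $M=0$, and that $C_1$ in Proposition~\ref{l09} does not depend on $G$) are sound and fill in details the paper leaves implicit.
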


The proof of Proposition \ref{l09} is divided in several
steps. Recall, from Lemma \ref{l17}, the decomposition of a finite
subset $A$ of $\bb Z^d$ as $A = A_\star \cup\{\mb x_A\}$.  Recall,
furthermore, the definition of the functions $V$, $V_\ell$ introduced
in \eqref{05}, \eqref{06}, respectively, and the ones of
$H^{(\ell)}_{k,x}(u)$, $H^{(\ell)}_{j,k,x}$ presented in Lemma
\ref{l17} and Corollary \ref{l18}, respectively. Note that
$H^{(\ell)}_{k,x} (u)$ depends on time, because so does $G$. Let
$\Upsilon = \Upsilon^n_{G,A,\ell,\rho} \colon \bb R_+ \times \bb T^d_n
\times \Omega_n \to \bb R$ and
$\Psi = \Psi^n_{G,A,\ell,\rho} \colon \bb R_+ \times \bb T^d_n \times
\Omega_n \to \bb R$ be given by
\begin{equation*}
\Upsilon_x(u,\eta) \; =\; G_x(u) \, \omega_{x+A_\star}\,
\{\omega_{x+\mb x_A} - \omega^\ell_{x+\mb x_A}\,\}\;,
\end{equation*}
\begin{equation*}
\Psi_x(u,\eta) \;: =\;  \frac{a_n}{2\, \mf c_0\, n^2\, \chi(\rho)}
\sum_{k=1}^d  ( H^{(\ell)}_{k,x}(u))^2 
\; +\;  \sum_{j=1}^d \omega_x\, \omega^\ell_{x+e_j}
\;+\; \frac{d\, a_n}{8\, \mf c_0\, n^2\, \chi(\rho)}
\sum_{j,k=1}^d   ( H^{(\ell)}_{j,k,x})^2 \;.
\end{equation*}

In Lemma \ref{l19b} below, we estimate the expectation of
\begin{equation*}
\Big|\;
\int_s^t \sum_{x\in \bb T^d_n} \Upsilon_x (u,\eta^n(u)) \, du \, \Big|
\,-\, \int_s^t \sum_{x\in \bb T^d_n} \Psi_x (u,\eta^n(u)) \, du \;,
\end{equation*}
in Lemma \ref{l15} and equation \eqref{24} the one of
\begin{equation*}
\int_s^t  \Big\{ \,\Big|\, \sum_{x\in \bb T^d_n} G_x (u)\,
\omega_{x+A_\star} (u) \, \omega^\ell_{x+ \mb x_A} (u) \,\Big|
\;+\; \sum_{j=1}^d \sum_{x\in \bb T^d_n}  \omega_x(u) \,
\omega^\ell_{x+e_j} (u) \,\Big\} \; du \,,
\end{equation*}
and in Lemma \ref{l11} and equation \eqref{39} the one of
\begin{equation*}
\int_s^t \sum_{x\in \bb T^d_n} \Big\{ \sum_{j=1}^d 
\big[\, H^{(\ell)}_{k,x} \, (u,\eta^n(u)) \,\big]^2 \,+\,
\sum_{j,k=1}^d  \big[\, H^{(\ell)}_{j,k,x} (\eta^n(u)) \big]^2
\,\Big\} \, du \;.
\end{equation*}
Proposition \ref{l09} follows from these bounds

\begin{lemma}
\label{l19b}
For all $t>s>0$ and  $n\ge 1$,
\begin{equation*}
\bb E_{\mu_n} \Big[\,
\Big|\;
\int_s^t \sum_{x\in \bb T^d_n} \Upsilon_x (u,\eta^n(u)) \, du \, \Big|
\,-\, \int_s^t \sum_{x\in \bb T^d_n} \Psi_x (u,\eta^n(u)) \, du 
\,\Big] \;\le\; \frac{\log 2}{a_n} \;+\; 
\frac{1}{a_n} \, H_n(f^n_s) \;.
\end{equation*}
\end{lemma}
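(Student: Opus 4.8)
The plan is to pass from the expectation under $\bb P_{\mu_n}$ to an exponential moment under $\nu^n_\rho$, to peel off the absolute value, and then to reduce everything to a single Feynman--Kac variational inequality that the function $\Psi$ has been engineered to close. Throughout, write $\mathcal U = \int_s^t \sum_x \Upsilon_x(u,\eta^n(u))\, du$ and $\mathcal P = \int_s^t \sum_x \Psi_x(u,\eta^n(u))\, du$, and set $f^n_s = d(\mu_n S^n(s))/d\nu^n_\rho$.

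First I would use the Markov property at time $s$ to replace the initial law by $\mu_n S^n(s) = f^n_s\, \nu^n_\rho$, and then the entropy inequality on path space. Since the two processes share the generator $L_n$, the relative entropy of the two path measures equals $H_n(f^n_s)$, so that for every $\theta>0$
\begin{equation*}
\bb E_{\mu_n}\big[\, |\mathcal U| - \mathcal P \,\big] \;\le\;
\frac{1}{\theta}\,\log \bb E_{\nu^n_\rho}\big[\, e^{\theta(|\mathcal U| - \mathcal P)}\,\big]
\;+\; \frac{1}{\theta}\, H_n(f^n_s)\;.
\end{equation*}
Taking $\theta = a_n$ produces the term $a_n^{-1} H_n(f^n_s)$, and it remains to prove $\bb E_{\nu^n_\rho}[\, e^{a_n(|\mathcal U| - \mathcal P)}\,] \le 2$, which yields the constant $a_n^{-1}\log 2$. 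Using $e^{|z|}\le e^{z}+e^{-z}$, I would bound this moment by $\bb E_{\nu^n_\rho}[e^{a_n\mathcal U - a_n\mathcal P}] + \bb E_{\nu^n_\rho}[e^{-a_n\mathcal U - a_n\mathcal P}]$, reducing the task to showing that each of the two terms is at most $1$.

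The core is a Feynman--Kac bound: each term is the exponential moment of a time integral of the potential $\mathcal V^\pm(u,\cdot) = \pm a_n\sum_x\Upsilon_x(u,\cdot) - a_n\sum_x\Psi_x(u,\cdot)$, hence bounded by $\exp\int_s^t\lambda^\pm(u)\,du$, where $\lambda^\pm(u)$ is the supremum over densities $f$ of $\int \mathcal V^\pm(u,\cdot) f\, d\nu^n_\rho + \langle L_n\sqrt f,\sqrt f\rangle_{\nu^n_\rho}$ (only the symmetric part of $L_n$ enters this quadratic form). So it suffices to prove $\lambda^\pm(u)\le 0$, i.e.\ that for every density $f$ and every $u$, writing $W = \sum_x\Upsilon_x(u,\cdot)$,
\begin{equation*}
\pm a_n\!\int\! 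W\, f\, d\nu^n_\rho \,-\, a_n\!\int\!\textstyle\sum_x\Psi_x\, f\, d\nu^n_\rho
\,+\, n^2\langle L_n^S\sqrt f,\sqrt f\rangle
\,+\, a_n\langle L_n^V\sqrt f,\sqrt f\rangle \;\le\; 0\;.
\end{equation*}
By definition \eqref{13}, $n^2\langle L_n^S\sqrt f,\sqrt f\rangle = -\,n^2 I_n(f)$. For the voter part, with $g=\sqrt f$ the elementary bound $g(\eta)g(\sigma^x\eta)\le \tfrac12 g(\eta)^2+\tfrac12 g(\sigma^x\eta)^2$ gives $\langle L_n^V\sqrt f,\sqrt f\rangle \le \tfrac12\int L_n^V(f)\,d\nu^n_\rho = \tfrac12\int (L_n^{V,*}\mb 1)\, f\, d\nu^n_\rho = \tfrac12\int V f\, d\nu^n_\rho$, by \eqref{05}. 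It then remains to absorb $\pm a_n\int W f + \tfrac{a_n}{2}\int V f$ into $a_n\int\sum_x\Psi_x f$ at the cost of $n^2 I_n(f)$, and here the constants in $\Psi$ are read off from the previous section. Lemma \ref{l17} (valid for both signs, since only $b_n^2=a_n^2$ enters) with $\beta=1$ bounds $\pm a_n\int W f\, d\nu^n_\rho$ by $\tfrac12 n^2 I_n(f)$ plus exactly the first term of $a_n\int\sum_x\Psi_x f\, d\nu^n_\rho$, which cancels. Since $\sum_x\sum_j\omega_x\omega^\ell_{x+e_j} = \tfrac12 V_\ell$, the second term of $\Psi$ converts $\tfrac{a_n}{2}\int V f$ into $\tfrac{a_n}{2}\int(V-V_\ell)f\, d\nu^n_\rho$; bounding this by Corollary \ref{l18} with $\delta=1$ gives a further $\tfrac12 n^2 I_n(f)$ plus exactly the third term of $\Psi$, which again cancels. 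The two halves $\tfrac12 n^2 I_n(f)+\tfrac12 n^2 I_n(f)$ are precisely offset by $-n^2 I_n(f)$, so $\lambda^\pm(u)\le 0$.

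The main obstacle is the generator step together with the bookkeeping of constants: one must control the \emph{non-reversible} voter contribution $a_n\langle L_n^V\sqrt f,\sqrt f\rangle$ by $\tfrac{a_n}{2}\int V f\, d\nu^n_\rho$, and then check that the three explicit coefficients in $\Psi$ are exactly those for which Lemma \ref{l17} (with $\beta=1$) and Corollary \ref{l18} (with $\delta=1$) make both the Dirichlet-form remainders and the $H^{(\ell)}$-square remainders cancel with nothing left over. Once this matching is verified, the outer layer (Markov property, path-space entropy inequality, the split $e^{|z|}\le e^z+e^{-z}$, and the Feynman--Kac spectral bound from the general Markov-chain results of Section \ref{sec7}) is routine.
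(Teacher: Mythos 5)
Your proposal is correct and follows essentially the same route as the paper's own proof: Markov property at time $s$, path-space entropy inequality with $\gamma=a_n$, the split $e^{|z|}\le e^{z}+e^{-z}$, the Feynman--Kac variational bound (your inline derivation of the voter contribution $\tfrac{a_n}{2}\int V f\,d\nu^n_\rho$ is exactly the content of the paper's Corollary \ref{l14}), and the closing cancellation of the three coefficients of $\Psi$ via Lemma \ref{l17} with $\beta=1$, $b_n=\pm a_n$ and Corollary \ref{l18} with $\delta=1$. The only deviations are cosmetic (fixing $\theta=a_n$ before peeling off the absolute value, and proving each exponential moment is at most $1$ rather than using the $2\max$ bound).
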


\begin{proof}
Rewrite the expectation on the left-hand side as
\begin{equation*}
\bb E_{\mu_n(s)} \Big[\,
\Big|\;
\int_0^{t-s} \sum_{x\in \bb T^d_n} \Upsilon_x (u,\eta^n(u)) \, du \, \Big|
\,-\, \int_o^{t-s} \sum_{x\in \bb T^d_n} \Psi_x (u,\eta^n(u)) \, du 
\,\Big] \;,
\end{equation*}
where $\mu_n(s) = \mu_n S^n(s)$. By the entropy inequality,
this expression is bounded above by
\begin{equation*}
\begin{aligned}
\frac{1}{\gamma} \, H_n(f^n_s) \;+\; 
\frac{1}{\gamma} \, \log \bb E_{\nu^n_\rho} \Big[\,
\exp \, \gamma \,  \Big\{\, \Big|\,
\int_0^{t_s} & \sum_{x\in \bb T^d_n} \Upsilon_x (s+u,\eta^n(u)) \, du \,
\Big| \\
&
\,-\, \int_0^{t_s} \sum_{x\in \bb T^d_n} \Psi_x (s+u,\eta^n(u)) \, du \,\Big\}
\,\Big] 
\end{aligned}
\end{equation*}
for all $\gamma>0$.  Here, $t_s = t-s$. As $e^{|a|} \le e^a + e^{-a}$
and $e^b + e^{-b}\le 2 \max\{e^b , e^{-b}\}$, by the linearity of the
expectation, the second term of this expression is less than or equal
to
\begin{equation*}
\frac{1}{\gamma} \, \log 2 \;+\; \max_{b=\pm 1}
\frac{1}{\gamma} \, \log \bb E_{\nu^n_\rho} \Big[\,
\exp \,\gamma\, \Big\{\, 
\int_0^{t_s} \sum_{x\in \bb T^d_n} \big\{\,  b\,  \Upsilon_x
(s+u,\eta^n(u))
\,-\, \Psi_x (s+u, \eta^n(u)) \,\big\}\, du \,\Big\} \,\Big]\;.
\end{equation*}

We estimate the second term for $b=1$. The same argument applies to
$b=-1$. By Corollary \ref{l14} below, the second term of this
expression is bounded by
\begin{equation*}
\int_0^{t_s}\, \sup_{f} \Big\{ \int W(s+u)\, f\, d\nu^n_\rho
\;+\; \frac{a_n}{2\,\gamma} \, \int V\, f\, d\nu^n_\rho
\;-\; \frac{n^2}{\gamma} \, I_n(f)\,\Big\}\, du \;,
\end{equation*}
where the supremum is carried over all densities $f$ with respect to
$\nu^n_\rho$, and
$W(s+u)\,=\, \sum_{x\in \bb T^d_n} \{ \Upsilon_x(s+u) - \Psi_x(s+u)\}$.
Choosing $\gamma = a_n$, the previous supremum becomes
\begin{equation}
\label{18}
\frac{1}{a_n} \, \int_0^t\, \sup_{f} \Big\{ \,
a_n \int \big[\, W(s+u) \, +\, (1/2)\, V\, \big]
f\, d\nu^n_\rho \;-\; n^2 \, I_n(f)\,\Big\}\, ds \;.
\end{equation}

By definition of $W(s+u)$, $a_n \, \{ \, W(s+u) \;+\; (1/2) \, V \, \}$ is
equal to
\begin{equation*}
\begin{aligned}
&  a_n \, \sum_{x\in \bb T^d_n} G_x(s+u) \, \omega_{x+A_\star}\,
\{\omega_{x+\mb x_A} - \omega^\ell_{x+\mb x_A}\,\}
\;-\; \frac{a_n^2}{2\, \mf c_0\, n^2\, \chi(\rho)}
\sum_{k=1}^d  \sum_{x\in\bb T_n^d} ( H^{(\ell)}_{k,x}(s+u))^2 \\
& \quad +\; \frac{1}{2} \,\Big\{\, 
a_n  \,\big[ \, V(\eta) \;-\;  V_\ell(\eta) \,\big]
\;-\; \frac{d\, a_n^2}{4\, \mf c_0\, n^2\, \chi(\rho)}
\sum_{j,k=1}^d  \sum_{x\in\bb T_n^d}  ( H^{(\ell)}_{j,k,x})^2\,\Big\}
\;.
\end{aligned}
\end{equation*}
Thus, by Lemma \ref{l17}, with $\beta = 1$ and $b_n = a_n$, and
Corollary \ref{l18}, with $\delta=1$, the expression inside braces in
\eqref{18} is less than or equal to $0$.  This completes the proof of
the lemma.
\end{proof}

\begin{remark}
\label{rm4}
The previous result holds in any dimension and for any sequence
$a_n$. It is a consequence of Feynman-Kac formula and the integration
by parts stated in Lemma \ref{l2}.
\end{remark}

In view of the decomposition carried out just after the statement of
Proposition \ref{l09}, it remains to estimate sums involving the
cylinder functions $\omega_{x+A_\star} \, \omega^\ell_{x+ \mb x_A}$
and sums of squares of averages with respect to the flow $\Phi$. Next
lemma handles the first type of terms.

\begin{lemma}
\label{l15}
Fix $0<\rho<1$ and a finite subset $A$ of $\bb Z^d$ with at least two
elements. Then, there exists a constant $C_1= C_1(A,\rho)$, depending
only on the density $\rho$, the dimension and the set $A$, such that
\begin{align*}
& \bb E_{\mu_n} \Big[\,
\int_s^t \,\Big|\, \sum_{x\in \bb T^d_n} G_x(u) \,
\omega_{x+A_\star} (u) \, \omega^\ell_{x+ \mb x_A} (u)\,   \Big| \,
\, du \, \Big] \\
&\quad \;\le \; C_1(\rho) \, \sup_{s\le u\le t} \Vert G(u) \Vert_\infty\,
\Big\{ \int_s^t H_n(f^n_u )\, du\; +\; (t-s) \, (n/\ell)^d\,\Big\}
\end{align*}
for all functions $G: \bb R_+ \times \bb T^d_n \to \bb R$, $t>s\ge 0$,
measure $\mu_n$ on $\Omega_n$ and $n\ge \ell \ge 1$. 
\end{lemma}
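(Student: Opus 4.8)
The plan is to reduce to a fixed-time estimate and then exhibit
$W_u:=\sum_x G_x(u)\,\omega_{x+A_\star}\,\omega^\ell_{x+\mb x_A}$ (where $\omega_{x+A_\star}=\prod_{\mb z\in A_\star}\omega_{x+\mb z}$) as a sum of conditionally subgaussian martingale increments whose conditional variances are governed by Lemma \ref{l16}. Since $\bb E_{\mu_n}[\int_s^t|W_u|\,du]=\int_s^t\!\int|W_u|\,f^n_u\,d\nu^n_\rho\,du$, it suffices to prove, for a fixed density $f$ and fixed $G=G(u)$, that $\int|W|\,f\,d\nu^n_\rho\le C_1(\rho)\,\|G\|_\infty\{H_n(f)+(n/\ell)^d\}$. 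I would choose $\mb x_A$ to be the \emph{lexicographically} maximal element of $A$; such a point is in particular maximal for $\prec$, so this is an admissible choice in the sense of Lemma \ref{l17}. Expanding $\omega^\ell_{x+\mb x_A}=\sum_y m^{(2)}_\ell(y)\,\omega_{x+\mb x_A+y}$ and substituting $x''=x+\mb x_A+y$ rewrites $W=\sum_{x''}\omega_{x''}\,N^\ell_{x''}$, where $N^\ell_{x''}=\sum_y m^{(2)}_\ell(y)\,G_{x''-\mb x_A-y}\,\omega_{x''-\mb x_A-y+A_\star}$ is a local average, over a window of size $O(\ell)$, of $G$ times a degree-$(|A|-1)$ product of spins. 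The point of the lexicographic choice is that, since $\mb z-\mb x_A<_{\mathrm{lex}}0$ for $\mb z\in A_\star$ and $y\succeq0$, every site entering $N^\ell_{x''}$ is strictly below $x''$ in the (translation-invariant) lexicographic order; hence $N^\ell_{x''}$ is measurable with respect to the spins preceding $x''$, while $\omega_{x''}$ is a fresh, mean-zero variable which is $\tfrac1{4\chi(\rho)}$-subgaussian by Hoeffding's inequality (Lemma \ref{l04}), its range being $\chi(\rho)^{-1/2}$.

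Next I would pass from $f\,d\nu^n_\rho$ to $\nu^n_\rho$ by the entropy inequality, together with $e^{|a|}\le e^{a}+e^{-a}$, reducing matters to an exponential moment: for $0<\gamma\le c'/\|G\|_\infty$,
\[
\int|W|\,f\,d\nu^n_\rho\;\le\;\frac{H_n(f)+\log2}{\gamma}+\frac1\gamma\,\max_{\pm}\log\int e^{\pm\gamma W}\,d\nu^n_\rho .
\]
To control $\log\int e^{\gamma W}\,d\nu^n_\rho$ I would use the martingale structure. Revealing the spins in lexicographic order, the process $\exp\{\gamma\sum_{x''\preceq z}\omega_{x''}N^\ell_{x''}-\tfrac{\gamma^2}{8\chi(\rho)}\sum_{x''\preceq z}(N^\ell_{x''})^2\}$ is a supermartingale, because each increment obeys $E[e^{\gamma\omega_{x''}N^\ell_{x''}}\mid\mathcal F_{<x''}]\le\exp\{\tfrac{\gamma^2}{8\chi(\rho)}(N^\ell_{x''})^2\}$ by conditional subgaussianity (here $N^\ell_{x''}$ is a constant under the conditioning). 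Hence $\int\exp\{\gamma W-\tfrac{\gamma^2}{8\chi(\rho)}S\}\,d\nu^n_\rho\le1$, where $S=\sum_{x''}(N^\ell_{x''})^2$. Writing $e^{\gamma W}=e^{\gamma W-\tfrac{\gamma^2}{4\chi(\rho)}S}\,e^{\tfrac{\gamma^2}{4\chi(\rho)}S}$ and applying Cauchy--Schwarz, the first factor is controlled by the supermartingale bound at parameter $2\gamma$, giving
\[
\int e^{\gamma W}\,d\nu^n_\rho\;\le\;\Big(\int e^{\gamma^2 S/2\chi(\rho)}\,d\nu^n_\rho\Big)^{1/2}.
\]
The exponential moment of $S$ is exactly what the proof of Lemma \ref{l16} supplies (with $B=A_\star$, so $|B|=|A|-1$, and $m^{(2)}_\ell$ in place of $m_\ell$, which only changes constants since $\sum_y m^{(2)}_\ell(y)^2\le\ell^{-d}$): for suitable constants, $\log\int e^{\lambda S}\,d\nu^n_\rho\le C\,\lambda\,\|G\|_\infty^2\,(n/\ell)^d$ for all $\lambda\le c/\|G\|_\infty^2$. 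Taking $\lambda=\gamma^2/2\chi(\rho)$ yields the subgaussian estimate $\log\int e^{\gamma W}\,d\nu^n_\rho\le C'\,\gamma^2\,\|G\|_\infty^2\,(n/\ell)^d$, valid for $|\gamma|\le c'/\|G\|_\infty$.

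Finally I would optimize. With $V=C'\|G\|_\infty^2(n/\ell)^d$ and $b=H_n(f)+\log2$, the two displays give $\int|W|\,f\,d\nu^n_\rho\le b/\gamma+V\gamma$ on $(0,c'/\|G\|_\infty]$. If $b\le V(c'/\|G\|_\infty)^2$ the unconstrained minimizer $\gamma=\sqrt{b/V}$ is admissible and yields $2\sqrt{bV}\le C''\|G\|_\infty\{H_n(f)+(n/\ell)^d\}$ by the arithmetic--geometric inequality and $(n/\ell)^d\ge1$; otherwise $H_n(f)\gtrsim(n/\ell)^d$ and the boundary choice $\gamma=c'/\|G\|_\infty$ already gives a bound of order $\|G\|_\infty H_n(f)$. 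Integrating over $u\in[s,t]$ with $\|G\|_\infty=\sup_{s\le u\le t}\|G(u)\|_\infty$ then produces the claimed estimate. I expect the delicate point to be the passage from \emph{conditional} to genuine subgaussianity of $W$: the deterministic bound $S\le C\|G\|_\infty^2 n^d$ is far too weak, since it ignores the variance reduction of the average $N^\ell$, so one must feed the \emph{exponential} control of $S$ from Lemma \ref{l16} into the supermartingale estimate, and this is what fixes both the correct scale $(n/\ell)^d$ and the admissible range $|\gamma|\lesssim\|G\|_\infty^{-1}$ of the optimization. A secondary technical wrinkle is that the lexicographic order is only globally consistent on a fundamental domain; the $O(\ell\,n^{d-1})$ sites near the boundary where an $N^\ell_{x''}$ wraps around the torus should be split off and handled by the block decomposition already used in the proof of Lemma \ref{l16}.
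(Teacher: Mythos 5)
Your argument is sound in its essentials, but it takes a genuinely different route from the paper. The paper never proves an exponential-moment bound for $W$ itself: it exploits the convolution structure $m^{(2)}_\ell = m_\ell * m_\ell$ to rewrite each summand, after the change of variables already used in \eqref{16}, as a product $M^{(1)}_\ell(u,x)\, M^{(2)}_\ell(x)$ of two block averages (one carrying $G$ and $\omega_{\cdot+A_\star}$, the other just $\omega$), bounds $|M^{(1)}_\ell M^{(2)}_\ell| \le \tfrac{\gamma}{2}(M^{(1)}_\ell)^2 + \tfrac{1}{2\gamma}(M^{(2)}_\ell)^2$ pointwise by Young's inequality, applies Lemma \ref{l16} twice (once with $B=A_\star$, once with $B$ a singleton and $G\equiv 1$), and optimizes over $\gamma$. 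This disposes of the absolute value trivially and requires no ordering of the sites. You instead isolate the single ``fresh'' spin $\omega_{x''}$, run an Azuma-type supermartingale estimate conditionally on the past, and feed the exponential control of $S=\sum_{x''}(N^\ell_{x''})^2$ --- which is indeed exactly what the interior of the proof of Lemma \ref{l16} supplies, with $m^{(2)}_\ell$ in place of $m_\ell$, since only $\sum_y m^{(2)}_\ell(y)^2\le \ell^{-d}$ and the support size enter --- into a genuine subgaussian bound for $W$, then conclude by the entropy inequality and a constrained optimization. Your route yields a stronger intermediate statement (an exponential-moment bound for $W$, uniform over densities) at the cost of the filtration machinery; the paper's route is shorter and entirely order-free.

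Two caveats on your proof. First, the lexicographic choice of $\mb x_A$ is not cosmetic: for a general $\prec$-maximal element no consistent revelation order exists at all. Take $d=2$ and $A_\star - \mb x_A = \{(5,-1),(-1,5)\}$; both points have a negative coordinate, so $\mb x_A$ is $\prec$-maximal, yet the increments $(5,-1)-(4,4)$ and $(-1,5)-(0,0)$ both occur among the displacements entering $N^\ell_{x''}$ (for $\ell\ge 3$) and sum to zero, producing a cycle in the precedence relation. Since the lexicographic maximum is itself $\prec$-maximal, and the decomposition $A=A_\star\cup\{\mb x_A\}$ in Lemma \ref{l17} and Proposition \ref{l09} may be fixed with that choice throughout, this restriction is harmless for the paper's purposes, but it should be stated as part of the construction, not as a convenience. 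Second, the torus wrap-around you flag is real, but the remedy is not the block decomposition of Lemma \ref{l16} (that device handles the dependence among the variables $\omega_{\cdot+A_\star}$, not the failure of the order near the boundary). The clean fix is to cover $\bb T^d_n$ by a bounded number of shifted fundamental domains (say shifted by $\lfloor n/2\rfloor$ in each coordinate), assign each site $x''$ to a domain in whose bulk it lies, run the supermartingale bound separately for each piece with the corresponding order, and combine the exponential moments by H\"older; this requires $n\ge C(\ell+\diam A)$, which is the regime where the lemma is actually used, since $\ell=\ell_n\ll n$.
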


\begin{proof}
Assume, without loss of generality, that $\mb x_A =0$, so that the
origin is a maximal element of $A$, and let $B=A_\star$.  A change of
variables, similar to the one performed in \eqref{16}, yields that
\begin{equation*}
\sum_{x\in \bb T^d_n} G_x(u) \, \omega_{x+B}  \, \omega^\ell_{x}
\;=\;
\sum_{x\in \bb T^d_n}  M^{(1)}_\ell (u,x) \, M^{(2)}_\ell(x) \;,
\end{equation*}
where
\begin{equation*}
M^{(1)}_\ell (u,x) \;=\;  \sum_{y\in \Lambda_\ell} m_\ell(y) \,
G_{x-y}(u) \, \omega_{x-y+B} \;, \quad
M^{(2)}_\ell(x) \;=\; \sum_{z\in \Lambda_\ell} m_\ell(z) \, \omega_{x+z}\;.
\end{equation*}

The expectation appearing in the statement of the lemma is equal to
\begin{align*}
& \int_s^t du\; \int\, \Big|\, 
\sum_{x\in \bb T^d_n}  M^{(1)}_\ell (u,x) \, M^{(2)}_\ell(x)\, \Big|\,
\, f^n_u \, d\nu^n_\rho  \\
& \quad \le\; \frac{1}{2}\,
\sum_{i=1}^2 \gamma^{3-2i}\,
\int_s^t du\; \int \sum_{x\in \bb T^d_n}  M^{(i)}_\ell (u,x)^2 
\, f^n_u \, d\nu^n_\rho 
\end{align*}
for all $\gamma>0$. We applied here Young's inequality
$2ab \le \gamma a^2 + \gamma^{-1} b^2$.

By Lemma \ref{l16} and optimizing over $\gamma>0$, the previous
expression is bounded by
\begin{equation*}
C_1 \, \sup_{s\le u\le t} \Vert G (u) \Vert_\infty\, 
\Big\{ \int_s^t H_n(f^n_u )\, du\; +\; (t-s) \, (n/\ell)^d\,\Big\}
\end{equation*}
for some finite constant $C_1= C_1(A,\rho)$, as claimed. 
\end{proof}

The same argument yields that
\begin{equation}
\label{24}
\begin{aligned}
\bb E_{\mu_n} \Big[\,
\int_s^t 
\,\big|\, V_\ell (\eta^n(u)) \,   \big| \, \, du \, \Big]
\;\le \; C_1(\rho)\,
\Big\{ \int_s^t H_n(f^n_u )\, du\; +\; (t-s)\, (n/\ell)^d\,\Big\}\;.
\end{aligned}
\end{equation}

Recall the definition of $H^{(\ell)}_{k,x}(u,\eta)$, introduced in the
statement of Lemma \ref{l17}.

\begin{lemma}
\label{l11}
Fix $0<\rho<1$, and a finite subset $A$ of $\bb Z^d$ with at least two
elements. Then, there exists a finite constant $C_1=C_1(A,\rho)$,
depending only on the dimension, the density $\rho$ and the number of
elements of the set $A$, such that
\begin{align*}
& \bb E_{\mu_n} \Big[\,
\int_s^t \, 
\sum_{x\in \bb T^d_n} \big[\, H^{(\ell)}_{k,x} \, (u,\eta^n(u)) \,\big]^2
\, du \, \Big]  \\
& \quad \le\;  C_1 \, \sup_{s\le u\le t} \Vert G(u)\Vert^2_\infty\,
\ell^d \, g_d(\ell) \, 
\, \Big\{\, \int_s^t H_n(f^n_u)\; du  \;+\; 
(t-s) \, (n/\ell)^d\,\Big\}
\end{align*}
for all $1\le k\le d$, $t>s\ge 0$, function
$G: \bb R_+\times \bb T^d_n \to \bb R$, probability measure $\mu_n$ on
$\Omega_n$ and $n\ge 1$.
\end{lemma}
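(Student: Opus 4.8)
The plan is to mimic the proof of Lemma \ref{l16}, the only structural change being that the averaging weights $m_\ell$ are replaced by the flow weights $\Phi_\ell$, whose $\ell^2$-mass is controlled by the flow estimate $\sum_{\{y,y+e_k\}\subset\Lambda_{2\ell-1}}\Phi_\ell(y,y+e_k)^2\le C_d\,g_d(\ell)$ of Lemma \ref{l01} rather than by the elementary bound $\sum_y m_\ell(y)^2\le C_0\ell^{-d}$. First I would rewrite the expectation as a space-time integral against the density: by definition of $f^n_u$,
\[
\bb E_{\mu_n}\Big[\int_s^t \sum_{x\in\bb T^d_n}\big(H^{(\ell)}_{k,x}(u,\eta^n(u))\big)^2\,du\Big]
\;=\;\int_s^t\Big(\int \sum_{x\in\bb T^d_n}\big(H^{(\ell)}_{k,x}(u)\big)^2\,f^n_u\,d\nu^n_\rho\Big)\,du\;,
\]
so it suffices to bound, for each fixed $u$, the inner integral. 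For this I would apply the entropy inequality with a parameter $\gamma>0$, producing $\gamma^{-1}H_n(f^n_u)$ plus the log-moment $\gamma^{-1}\log\int\exp\{\gamma\sum_x(H^{(\ell)}_{k,x}(u))^2\}\,d\nu^n_\rho$.

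To estimate the exponential moment I would repeat the two successive H\"older subdivisions used below \eqref{09} and in Lemma \ref{l16}. After the reduction $\mb x_A=0$ as in Lemma \ref{l17}, the variable $H^{(\ell)}_{k,x}(u)$ depends, through the $\omega_z$, only on sites in a box of radius $O(\ell+\diam A)$ around $x$; hence the family $\{H^{(\ell)}_{k,x}(u)\}$ indexed by $x$ in a sublattice of spacing $2d\ell$ is independent under $\nu^n_\rho$, and writing the sum over $x$ as $C_0\ell^d$ such independent families reduces the log-moment to $\frac{C_0}{\gamma\ell^d}\sum_x\log\int\exp\{C_0\gamma\ell^d(H^{(\ell)}_{k,x}(u))^2\}\,d\nu^n_\rho$. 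Because $\omega_{z+A_\star}$ is a product of $|A_\star|$ factors, a further subdivision of the internal (translate) sum into the $p^d$ classes on which the blocks $z+A_\star$ are pairwise disjoint, $p$ being the smallest integer with $A_\star\subset\Xi_p$, makes each contribution a sum of independent, bounded, mean-zero variables.

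By Lemma \ref{l04} each such contribution is then $\sigma^2_\ell$-subgaussian, with $\sigma^2_\ell\le C_1(A,\rho)\,\|G(u)\|^2_\infty\,g_d(\ell)$, where the factor $g_d(\ell)$ — and not $\ell^{-d}$ as in Lemma \ref{l16} — is exactly the flow bound of Lemma \ref{l01}, and the factor $\chi(\rho)^{-|A_\star|}$ coming from $\|\omega_{A_\star}\|_\infty$ is absorbed into $C_1(A,\rho)$. Applying \eqref{07} and choosing
\[
\gamma\;=\;\frac{c_0\,\chi(\rho)^{|A_\star|}}{\ell^d\,g_d(\ell)\,\sup_{s\le u\le t}\Vert G(u)\Vert^2_\infty}\;,
\]
which is admissible since it keeps $C_0\gamma\ell^d\sigma^2_\ell$ below $1/4$, the entropy term becomes $C_1\Vert G\Vert^2_\infty\,\ell^d g_d(\ell)\,H_n(f^n_u)$ while the log-moment term becomes $C_1\Vert G\Vert^2_\infty\,g_d(\ell)\,n^d=C_1\Vert G\Vert^2_\infty\,\ell^d g_d(\ell)\,(n/\ell)^d$. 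Integrating over $u\in[s,t]$ and bounding $\Vert G(u)\Vert_\infty$ by its supremum yields the announced estimate.

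The step I expect to require the most care is the bookkeeping of the double H\"older subdivision, and in particular verifying that the resulting subgaussian variance scales as $\ell^d g_d(\ell)$: the whole power counting hinges on replacing the $L^2$-bound for $m_\ell$ by the flow bound of Lemma \ref{l01}, and on checking that the admissibility constraint $C_0\gamma\ell^d\sigma^2_\ell<1/4$ is compatible with the very choice of $\gamma$ that produces exactly the prefactor $\ell^d g_d(\ell)$ in front of both $\int_s^t H_n(f^n_u)\,du$ and $(t-s)(n/\ell)^d$.
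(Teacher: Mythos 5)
Your proposal is correct and follows essentially the same route as the paper's own proof: the entropy inequality at each fixed time, the two H\"older subdivisions (spacing proportional to $\ell$ in $x$, then the $\Xi_p$-classes making the blocks $z+A_\star$ independent), Lemma \ref{l04} combined with the flow bound of Lemma \ref{l01} to get $\sigma^2_\ell\le C_1(A,\rho)\,\Vert G(u)\Vert^2_\infty\,g_d(\ell)$, and the choice $\gamma\sim \chi(\rho)^{|A_\star|}/\big(\ell^d g_d(\ell)\sup_u\Vert G(u)\Vert^2_\infty\big)$ compatible with the constraint in \eqref{07}. The power counting you flag as delicate is exactly the one the paper performs, with the $\gamma$-dependence cancelling in the exponential-moment term to give $n^d g_d(\ell)=\ell^d g_d(\ell)\,(n/\ell)^d$.
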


\begin{proof}
Let
$U (u,\eta) = \sum_{x\in \bb T^d_n} \big[\, H^{(\ell)}_{k,x} \, (u,\eta)
\,\big]^2$.  By the entropy inequality, the expectation appearing in
the statement of the lemma is bounded by
\begin{equation}
\label{25}
\int_s^t \Big\{ \, \frac{1}{\gamma} \, H_n(f^n_u)  \;+\;
\frac{1}{\gamma} \,
\log \int e^{\gamma\,  U(u)} \; d\nu^n_\rho \,\Big\}\; du
\end{equation}
for every $\gamma>0$.

We turn to the second term. Fix $s\le u\le t$, and recall the
definition of $H^{(\ell)}_{k,x}(u)$ and let $B=A_\star$. We repeat
here the decomposition performed in the proof of Lemma \ref{l16}.  If
$B$ were a singleton, under the measure $\nu^n_\rho$, the variables
$\{\omega_{y+B} : y\in\Lambda_{2\ell-1}\}$ would be independent. Since
this may not be the case, we divide the sum further. Let $p\ge 1$ be
the smallest integer such that $B\subset \Xi_p :=\{-p, \dots, p\}^d$,
and rewrite $H^{(\ell)}_{k,x} = H^{(\ell)}_{k,x} (u)$ as
\begin{equation*}
H^{(\ell)}_{k,x} \;=\; \sum_{z\in \Xi_p} \sum_{w \in \Lambda_\ell(z)}
\Phi_\ell(z + p w \,,\, z + p w +e_k) \, G_s(x- \mb x_A - z - pw)\,
\omega_{x-\mb x_A - z - pw +B} \;,
\end{equation*}
where the second sum is performed over all $w\in \bb Z^d$ such that
$z+pw$ and $z+pw + e_k$ belong to $\Lambda_{2\ell - 1}$. Now, for each
fixed $x$, $z$, the variables
$\{ \omega_{x- \mb x_A - z - pw +B} : w \in \Lambda_\ell(z)\}$ are
independent. Rewrite this sum as
\begin{equation*}
H^{(\ell)}_{k,x} (u) \;=:\; \sum_{z\in \Xi_p} M_\ell(u,z,x)  
\end{equation*}

Applying the arguments presented in the proof of Proposition \ref{l03}
[after equation \eqref{09}] and Lemma \ref{l16} [after equation
\eqref{21}], we obtain that the second term inside braces of
\eqref{25} is bounded by
\begin{equation}
\label{26}
\frac{C_0}{\gamma\, \ell^d} \sum_{x \in \bb T^d_n} \sum_{z\in \Xi_p} \log
\int \exp\Big\{  C_0 \, \gamma \, \ell^d \, M_\ell(u,z,x) ^2 \Big\}
\; d\nu^n_\rho\;,
\end{equation}
for some finite constant $C_0$.

At this point, the proof of the lemma is similar to the one of
Proposition \ref{l03}.  By Lemma \ref{l04}, $M_\ell(u,z,x)$
is a $\sigma^2_\ell$-subgaussian random variable, where
\begin{equation*}
\sigma^2_\ell \;=\; \frac{\Vert G(u)\Vert^2_\infty}{\chi(\rho)^{|B|}}\,
\sum_{\{y,y+e_k\} \subset \Lambda_{2\ell-1}}
\Phi_\ell (y,y+e_k)^2\;.
\end{equation*}
By Lemma \ref{l01},
$\sigma^2_\ell \le \Vert G(u)\Vert^2_\infty\,
g_d(\ell)/\chi(\rho)^{|B|}$. Let $\gamma = \gamma_n$ be given by the
identity
\begin{equation*}
\frac{ C_0 \, \sup_{0\le s\le t} \Vert G(u)\Vert^2_\infty}{\chi(\rho)^{|B|}}
\, \ell^d \, g_d(\ell) \, \gamma \;=\; \frac{1}{4}\;\cdot
\end{equation*}
By \eqref{07}, the sum \eqref{26} is bounded by
\begin{equation*}
\frac{C_0\, \Vert G(u) \Vert^2_\infty}{\chi(\rho)^{|B|}} \,
n^{d}  \, g_d(\ell) \;\le\;
\frac{C_0\, \sup_{s\le u\le t} \Vert G(u) \Vert^2_\infty}{\chi(\rho)^{|B|}} \,
n^{d}  \, g_d(\ell)
\end{equation*}
for some finite constant $C_0$.

Hence, \eqref{25} is less than or equal to
\begin{equation*}
C_1(A,\rho)\, \sup_{s\le u\le t} \Vert G(u) \Vert^2_\infty\,
\ell^d \, g_d(\ell) \, 
\int_s^t H_n(f^n_u)\; du  \;+\;
C_1(A,\rho)\, \sup_{s\le u\le t} \Vert G(u) \Vert^2_\infty\, t \,
n^d  \, g_d(\ell) 
\end{equation*}
for some finite constant $C_1(\rho)$, depending only on the dimension
and on the density $\rho$, as claimed.
\end{proof}

The same arguments exposed in the proof of Lemma \ref{l11} yield that
there exists a finite constant $C_1(\rho)$, depending only on
the dimension, the density $\rho$ and the number of elements of the
set $A$, such that
\begin{equation}
\label{39}
\begin{aligned}
& \bb E_{\mu_n} \Big[\,
\int_s^t 
\sum_{x\in \bb T^d_n} \big[\, H^{(\ell)}_{j,k,x} \, (\eta^n(u)) \,\big]^2
\, du \, \Big]   \\
& \quad \le\;  C_1(\rho)\, 
\ell^d \, g_d(\ell) \, 
\, \Big\{\, \int_s^t H_n(f^n_u)\; du  \;+\; 
(t-s) \, (n/\ell)^d\,\Big\}
\end{aligned}
\end{equation}
for all $1\le j,k\le d$, $t> s\ge 0$, probability measure $\mu_n$ on
$\Omega_n$ and $n\ge 1$. 

\begin{proof}[Proof of Proposition \ref{l09}]
The result follows from the decomposition presented just below the
statement of the proposition, Lemmata \ref{l19b}, \ref{l15},
\ref{l11}, equations \eqref{24}, \eqref{39}, and the definitions of
$g_d(\ell)$, $\ell_n$ given in \eqref{27} and \eqref{19},
respectively. The dependence on $G$ of the constant provided by the
proof is $1 + \mf c + \mf c^2$, where
$\mf c = \sup_{s\le u\le t} \Vert G(u)\Vert_\infty$, and this quantity
is bounded by $2\, (1 + \mf c^2)$.

The term $\ell^d \, g_d(\ell)$ which appears in Lemma \ref{l11} and
equation \eqref{39} cancels with the term $a_n/n^2$ which appears in
the definition of $\Psi_x(s)$ provide we choose $\ell_n$ as in \eqref{19}.
\end{proof}

\begin{proof}[Proof of Theorem \ref{p01}]
In view of the decomposition of $\Pi_\rho f$ presented in Assertion
\ref{l42}, the Boltzmann-Gibbs principle follows from \eqref{17},
Cororollary \ref{l09} with $s=0$, estimate \eqref{01} and the
definition \eqref{19} of the sequence $\ell_n$.
\end{proof}

\section{Tightness}
\label{sec5}

Throughout this section, $0<\rho<1$ and $T>1$ are fixed. Recall that
we denote by $\bb Q_n$ the probability measure on
$D([0,T], \mc H_{-r})$, $r\ge 1$, induced by the process $X^n_t$ and
the initial measure $\mu_n$. The first main result of this section
reads as follows.

\begin{proposition}
\label{l34}
Assume that $d=1$ or $2$, and fix $r> (3d+5)/2$.  Let $\mu_n$ be a
sequence of probability measures on $\Omega_n$ such
$\lim_{n\to \infty} a^{-1}_n H_n(\mu_n \,|\, \nu^n_\rho) =0$.  Then,
the sequence of probability measures $\bb Q_{n}$ is tight. Morover,
every limit point is concentrated on continuous paths.
\end{proposition}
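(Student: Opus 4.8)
The plan is to reduce tightness in $D([0,T],\mc H_{-r})$ to two statements via a Mitoma-type criterion for Hilbert-space-valued processes: tightness of each scalar process $\{X^n_\cdot(\phi_m)\}_n$ in $D([0,T],\bb R)$ for every $m$, and a uniform energy bound
\[
\sup_{n\ge 1}\bb E_{\mu_n}\Big[\sup_{0\le t\le T}\Vert X^n_t\Vert^2_{-r'}\Big]\;<\;\infty
\]
for an intermediate index $r'$ with $3+(d/2)<r'<r-(d/2)$; such an $r'$ exists precisely because $r>(3d+5)/2\ge 3+d$. Since the embedding $\mc H_{-r'}\hookrightarrow\mc H_{-r}$ is Hilbert--Schmidt when $r-r'>d/2$, this bound supplies the compact containment and the smallness of the high-frequency modes needed to upgrade scalar tightness to tightness of $\{X^n\}$, and it will also control the drift increments in the scalar Aldous estimate below.

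To obtain the energy bound I would \emph{not} estimate $\bb E_{\mu_n}[|X^n_t(\phi_m)|^2]$ directly: the only tool for this, the entropy inequality combined with Corollary \ref{l27}, forces one to divide the entropy $H_n(t)$ by $a_n$, and by \eqref{01} and Remark \ref{rm5} this ratio is not bounded (it grows like $\log n$ when $d=2$). Instead I would exploit the dissipativity of $\mc A$ through the decomposition \eqref{e2}. Since $\mc A\phi_m=-\lambda(m)\phi_m$ by \eqref{46}, projecting \eqref{e2} onto $\phi_m$ yields a scalar linear equation whose variation-of-constants (Duhamel) solution is
\[
X^n_t(\phi_m)=e^{-\lambda(m)t}X^n_0(\phi_m)+\int_0^t e^{-\lambda(m)(t-u)}\big[\,dM^n_u(\phi_m)+R^n_u(\phi_m)\,du+B^n_u(\phi_m)\,du\,\big].
\]
Each of the four contributions is controlled separately. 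The first vanishes after summation against $\gamma_m^{-r'}$ by Lemma \ref{l48}. For the stochastic convolution, an integration by parts rewrites it as $M^n_t(\phi_m)-\lambda(m)\int_0^t e^{-\lambda(m)(t-u)}M^n_u(\phi_m)\,du$, whose supremum is at most $2\sup_t|M^n_t(\phi_m)|$; Doob's inequality then bounds its second moment by $\bb E_{\mu_n}[\langle M^n(\phi_m)\rangle_T]$, which an explicit computation of the carr\'e du champ shows to be of order $T$ (converging to $4d\,\chi(\rho)\,T$), up to a correction of order $\Vert m\Vert^2/a_n$ from the vanishing exclusion part. The same integration by parts against the monotone kernel reduces the $R$- and $B$-convolutions to $2\sup_t|\mb R^n_t(\phi_m)|$ and $2\sup_t|\mb B^n_t(\phi_m)|$, where $\mb B^n_t(\phi_m):=\int_0^t B^n_u(\phi_m)\,du$, controlled by \eqref{50} and by the quantitative Boltzmann--Gibbs bound of Corollary \ref{l30}, respectively. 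Summing against $\gamma_m^{-r'}$, which is summable for $r'>3+d/2$ and absorbs the polynomial factors in $\Vert m\Vert$ produced by $\lambda(m)$ and by the two spatial derivatives $\partial^2_{x_j,x_k}\phi_m$ hidden in $B^n$, yields the energy bound.

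With the energy bound in hand, scalar tightness follows from Aldous' criterion applied to the same projection of \eqref{e2}. For a stopping time $\tau\le T$ and $\theta\in[0,\delta]$, the martingale increment has second moment $\bb E_{\mu_n}[\langle M^n(\phi_m)\rangle_{\tau+\theta}-\langle M^n(\phi_m)\rangle_\tau]\le C(m)\,\theta$; the $R$- and $B$-increments are dominated by $2\sup_t|\mb R^n_t(\phi_m)|$ and $2\sup_t|\mb B^n_t(\phi_m)|$, which vanish in $L^1$; and the drift increment $\lambda(m)\int_\tau^{\tau+\theta}X^n_s(\phi_m)\,ds$ has expectation at most $\lambda(m)\,\theta\,\sup_s\bb E_{\mu_n}[|X^n_s(\phi_m)|]$, bounded by $\theta$ times a constant through the energy bound. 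All four pieces are small for $\delta$ small, so Aldous' condition holds; together with the uniform second moment this gives tightness of $\{X^n_\cdot(\phi_m)\}$. Finally, every limit point is concentrated on continuous paths because every jump of $\eta^n$ modifies one or two coordinates, so $\Vert X^n_t-X^n_{t^-}\Vert^2_{-r}\le (n^d a_n)^{-1}\sum_m\gamma_m^{-r}$ deterministically, a quantity that tends to $0$.

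I expect the main obstacle to be the energy bound, and within it the treatment of the $B^n$ term. Corollary \ref{l30} controls the time-integral of $B^n$ only with the absolute value outside the integral and at a fixed endpoint, whereas I need a supremum over $t$ and a summable-in-$m$ control after inserting the derivatives $\partial^2_{x_j,x_k}\phi_m$ of size $\Vert m\Vert^2$. Upgrading Corollary \ref{l30} to a maximal-in-time estimate, by applying it on successive subintervals and using that $\mb B^n_t(\phi_m)$ is itself a time integral, and then verifying that the resulting bound, which carries the factor $(a_n n^d)^{-1/2}$ from the definition of $B^n$ together with $(n/\ell_n)^d=R_d(n)$, still vanishes under $a_n\le\sqrt{\log n}$ in dimensions $d\le 2$, is the delicate quantitative point on which the whole argument rests.
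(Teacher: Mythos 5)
Your architecture (Mitoma-type reduction, Duhamel projection on $\phi_m$, Aldous for the scalar processes) runs parallel to the paper's strategy in several places --- the paper also works with the mild formulation \eqref{51}--\eqref{52}, and your treatment of the initial field, the martingale convolution and the $R$-convolution matches Lemma \ref{l48}, Lemmata \ref{l22}--\ref{l23} and Lemma \ref{l26} --- but there is a genuine gap exactly where you flag it: the degree-$\ge 2$ part $\mb B^n$. Your energy bound requires a control of $\bb E_{\mu_n}\big[\sup_{0\le t\le T}|\mb B^n_t(\phi_m)|^2\big]$ that is summable against $\gamma_m^{-r'}\Vert m\Vert^4$, and no estimate at your disposal delivers this. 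Corollary \ref{l30} is an $L^1$ bound, at deterministic endpoints only, whose small-time behavior is $\sqrt{|t-s|}$ (through the term $\big[(1+H_n(f^n_s))/a_n\big]\,\bb H_n(s,t)$ under the square root), so a chaining or ``successive subintervals'' argument based on it alone diverges: exponent $1/2$ is below the threshold $1$ required by any Kolmogorov-type maximal estimate. On the other hand, the crude second-moment bound (Lemma \ref{l28}, Corollary \ref{l29}) carries the factor $n^{d}\,(1+H_n(f^n_t))$, which after the prefactor $(a_n n^d)^{-1}$ coming from squaring $B^n$ leaves $(1+H_n(f^n_t))/a_n$; by \eqref{01} and Remark \ref{rm5} this is of order $R_d(n)\,n^\kappa/a_n$, which diverges (like $n^\kappa \log n$ in $d=2$). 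So neither estimate alone, nor a naive subdivision of $[0,T]$, produces the maximal-in-time $L^2$ control your energy bound needs, and the same obstruction hits the $\mb B^n$ increments in your Aldous step, since stopping times are not compatible with the entropy/Feynman--Kac bound behind Corollary \ref{l30}.

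The paper closes precisely this gap with an interpolation that your sketch lacks. In Lemma \ref{l44} the deviation probability is written as $p=p^\theta\, p^{1-\theta}$; Chebyshev is applied to the first factor with the Boltzmann--Gibbs $L^1$ bound of Corollary \ref{l30} (good $n$-dependence, poor time exponent $\theta/2$) and to the second factor with the crude $L^\alpha$ bound of Corollary \ref{l29} (bad powers of $n$, good time exponent $\alpha(1-\theta)$). Taking $\alpha=2$ and $\theta$ small makes the total exponent $(\theta/2)+\alpha(1-\theta)$ exceed $1$, while the bad powers of $n$ are absorbed by the factor $(a_n n^d)^{-\theta/2}$; this cancellation works only when $[a_n+R_d(n)]/n^{d\theta/4}\to 0$, i.e.\ in $d\le 2$, which is exactly where the dimension restriction of the proposition enters. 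The resulting increment bound, with fractional moments $\upsilon=2(1-\theta)<2$, feeds the Kolmogorov--\v{C}entsov criterion (Theorem \ref{l46}, Corollaries \ref{l31} and \ref{l32}) and gives tightness of $\mb B^n$ in $C([0,T],\mc H_{-r})$ without ever producing an $L^2$ maximal bound. Once this ingredient is available, your Mitoma/Aldous packaging becomes unnecessary: tightness of each summand of \eqref{e2} (Lemma \ref{l26}, Corollary \ref{l32}, Lemmata \ref{l23} and \ref{l25}) already yields the proposition, and continuity of the limit points follows since $M^n$ is the only discontinuous summand.
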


The proof of Proposition \ref{l34} is carried out by showing that each
of the processes on the right-hand side of \eqref{e2} is tight. Note
that the only one which is not continuous in time is the martingale
process $M^n_t$.

Recall the definition of $R^n$ and $B^n$ given below \eqref{42}. The
process $R^n$ can be written as the sum of $R^{\Delta, n}$,
$R^{j,k,n}$ and $\Pi^{1, j,k,n}(F)$, $1\le j,k\le d$, where
$R^{j,k,n}(F)$, $R^{\Delta, n}(F)$, $\Pi^{j,k,n}(F)$,
$F\in C^\infty(\bb T^d)$, are given by
\begin{gather*}
R^{j,k,n}(F) \;=\; \frac{1}{\sqrt{a_n n^d}} 
\sum_{x\in\bb T_n^d}   \{\, h_{j,k} (\tau_x \eta)  \,-\, \tilde
h_{j,k}(\rho)\, \} \, \big\{ \, (\Delta^n_{j,k} F)  -
(\partial^2_{x_j,x_k} F) \,\big\}(x/n) \;,\\
R^{\Delta, n}(F) \;=\;  \frac{a_n}{n^2}\, \frac{1}{\sqrt{a_n n^d}}
\sum_{x\in\bb T_n^d} \{\, \eta_{x} \,-\, \rho\,\}
\, (\Delta_n F)  (x/n)\;,
\end{gather*}
\begin{equation*}
\Pi^{j,k,n}(F) \; =\; \frac{1}{\sqrt{a_n n^d}} 
\sum_{x\in\bb T_n^d}
(\Pi^1_\rho h_{j,k}) (\tau_x\eta)  \, (\partial^2_{x_j,x_k} F) (x/n)\;.
\end{equation*}

On the other hand, the process $B^n$ can be written as the sum of the
components $B^{j,k,n}$, where $B^{j,k,n} (F)$ is given by
\begin{gather*}
B^{j,k, n}(F) \; =\; \frac{1}{\sqrt{a_n n^d}} 
\sum_{x\in\bb T_n^d}
(\Pi^{+2}_\rho h_{j,k}) (\tau_x\eta)  \, (\partial^2_{x_j,x_k} F)
(x/n) \;,
\end{gather*}
for $F\in C^\infty(\bb T^d)$. The projections $\Pi^{1}_\rho$ and
$\Pi^{+2}_\rho$ have been introduced in Assertion \ref{l42}.

Denote by $\mb R^n_t$, $\mb B^n_t$ the $\mc H_{-r}$-valued process
given by
\begin{gather*}
\mb R^n_t(F) \;=\; \int_0^t
\Big\{\, R^{\Delta, n}_s (F)  \,+\, \sum_{j,k=1}^d R^{j,k,n}_s (F)
\,+\, \sum_{j,k=1}^d \Pi^{j,k,n}_s (F) \, \big\}\; ds \; ,\\
\mb B^n_t(F) \;=\; \sum_{j,k=1}^d  \int_0^t B^{j,k,n}_s (F) \; ds \;.
\end{gather*}
Note that $\mb B^n_t$ contains terms of degree two or larger in the
terminology of Section \ref{sec8}.

\subsection{Tightness of the process $\mb R^n_t$}

The process $\mb R^n_t$ has been expressed as the sum of three terms.
Consider the last one and fix $1\le j,k\le d$. By definition of
$\Pi^1_\rho$ and by a summation by parts, 
\begin{equation}
\label{63}
\Pi^{j,k,n}(F) \; =\; \frac{1}{n}  \, \frac{1}{\sqrt{a_n n^d}} 
\sum_{x\in\bb T_n^d} \sum_{z\in A_{j,k}} c_{j,k}(z)
\, (D^n_z \partial^2_{x_j,x_k} F) (x/n) \, [\eta_x - \rho] 
\end{equation}
for some finite subset $A_{j,k}$ of $\bb Z^d$ and real numbers
$c_{j,k}(z)$, $z\in A_{j,k}$, which depend only on the cylinder
function $h_{j,k}$. In this formula, for a function $J$ in
$C(\bb T^d)$,
\begin{equation*}
(D^n_z J) (x/n) \;=\; n\, \big[ \, J([x-z]/n) \,-\, J(x/n) \, \big]\;. 
\end{equation*}
Note the additional factor $1/n$ in \eqref{63} which appeared from the
summation by parts.

For $k\ge 0$ and a function $F$ in $C^\infty(\bb T^d)$, denote by
$\Vert F \Vert_{C^k(\bb T^d)}$ the $C^k(\bb T^d)$-norm of $F$:
\begin{equation*}
\Vert F \Vert_{C^k(\bb T^d)} \;=\; \sum_{|i|\le k} \Vert D_i F
\Vert_\infty\;, 
\end{equation*}
where the sum is carried over multi-indices
$i = (i_1, \dots, i_d) \in\bb N^d$ and $D_iF = \partial^{i_1}_{x_1}
\cdots \partial^{i_d}_{x_d} F$. 

By the explicit formula for $R^n$, \eqref{63} and Lemma \ref{l41b},
there exists a finite constant $C_0$, depending only on the cylinder
functions $h_{j,k}$ such that
\begin{equation}
\label{64}
\bb E_{\mu_n} \Big[ \, \int_0^t\, 
R^{n}_s (G_s)^2 \; ds \, \Big] \;\le\; 
\frac{C_0}{a_n n^2} \, \Big\{  \int_0^t H_n(f^n_s) \, ds
\;+\; \, t\, \Big\} \, \sup_{0\le s\le t} \Vert G_s\Vert^2_{C^3(\bb T^d)}\;,
\end{equation}
for all $0<t\le T$, smooth function
$G:[0,T] \times \bb T^d \to \bb R$, probability measure $\mu_n$ and
$n\ge 1$.

In the next lemma, we assume that $H_n(\mu_n \,|\, \nu^n_\rho) \le a_n
+ R_d(n)$. In dimension $d\ge 2$, the term $a_n$ is insignificant
being much smaller than $R_d(n)$, but in dimension $1$ it is much
larger than $R_d(n)$.

\begin{lemma}
\label{l26}
Assume that $d\le 3$.  Fix $r>3 + (d/2)$ and a sequence of measures
$\mu_n$ such that $H_n(\mu_n \,|\, \nu^n_\rho) \le a_n + R_d(n)$ for
all $n\ge 1$. Then,
\begin{equation*}
\lim_{n\to\infty} \bb E_{\mu_n} \Big[ \sup_{0\le t\le T} \Vert \mb
R^n_t \Vert^2_{-r}\, \Big] \;=\; 0\;.
\end{equation*}
\end{lemma}

\begin{proof}
By definition of the norm $\Vert \,\cdot\, \Vert_{-r}$, the
expectation appearing in the statement of the lemma is bounded by
\begin{equation*}
\sum_{m\in \bb Z^d} \gamma^{-r}_m\,
\bb E_{\mu_n} \Big[ \sup_{0\le t\le T} \Vert \mb
R^{n}_t (\phi_m) \Vert^2 \, \Big]\;.
\end{equation*}
By Schwarz inequality, this sum is bounded by
\begin{equation*}
T\, \sum_{m\in \bb Z^d} \gamma^{-r}_m\,
\bb E_{\mu_n} \Big[ \int_0^T\, \Vert\,
R^{n}_s (\phi_m)\,\Vert^2 \; ds \, \Big]\;.
\end{equation*}
By \eqref{64}, this expression is less than or equal to
\begin{equation*}
\frac{C_0\, T}{a_n\, n^2} \, 
\Big\{\int_0^T H_n(f^n_s) \, ds \;+\; T  \, \Big\}\,
\sum_{m\in \bb Z^d} \gamma^{-r}_m\,  \Vert m\Vert^6 \;.
\end{equation*}
To complete the proof, it remains to recall the estimate \eqref{01}.
\end{proof}

\subsection{H\"older spaces and Kolmogorov-\v{C}entsov
theorem}

The proof of the tightness of the process $\mb B^n_t$ is based on the
Kolmogorov-\v{C}entsov theorem stated below for convenience. We refer
to \cite[Problem 4.11]{ks91} or \cite[Proposition 7]{RacSuq01} for a
proof.

Fix $T_0>0$ and $r>0$. Denote by $C^\vartheta([0,T_0], \mc H_{-r})$,
$0<\vartheta<1$, the H\"older space of $\mc H_{-r}$-valued, continuous
functions endowed with the norm
\begin{equation*}
\Vert X \Vert_{C^\vartheta} \;=\;
\sup_{0\le t \le T_0} \Vert X_t \Vert_{-r} \;+\; 
\sup_{0\le s <t \le T_0}
\frac{\Vert X_t - X_s\Vert_{-r}}{|t-s|^\vartheta}\;\cdot
\end{equation*}
Note that this topology is stronger than the uniform topology of
$C([0,T_0], \mc H_{-r})$.

\begin{theorem}
\label{l46}
Fix $T_0>0$, $0<\vartheta<1$ and $r>0$. A sequence of probability measures
$\bb M_n$ on $C^\vartheta([0,T_0], \mc H_{-r})$ is tight if
\begin{enumerate}
\item[(i)] There exist a constant $a>0$ such that
$\sup_{n\ge 1} \bb M_{n} \big[ \, \Vert \, X_0 \,
\Vert^a_{-r} \,\big] \;<\, \infty$;
\item[(ii)] There exist a finite constant $C_0$ and positive constants
$b>0$, $c>0$ such that $c/b > \vartheta$ and 
$\sup_{n\ge 1} \bb M_n \big[ \, \Vert \, X_t \,-\,
X_s \, \Vert^b_{-r} \,\big] \;\le\, C_0 \, |t-s|^{1+c}$ for
all $0\le s, t\le T_0$.
\end{enumerate}
\end{theorem}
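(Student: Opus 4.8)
The plan is to convert the pointwise increment control (ii) into a uniform bound on a H\"older seminorm of order \emph{strictly} larger than $\vartheta$ by means of the Garsia--Rodemich--Rumsey (GRR) inequality, and then to derive tightness from the Arzel\`a--Ascoli-type compactness criterion for H\"older spaces; the two cited results, \cite[Problem 4.11]{ks91} and \cite[Proposition 7]{RacSuq01}, furnish respectively the regularity engine and this compactness criterion. Write $E = \mc H_{-r}$, a separable Hilbert space. The gain of a strictly higher exponent is what matters: a continuous path with finite $\vartheta'$-H\"older seminorm for some $\vartheta'>\vartheta$ lies in the separable \emph{little} H\"older space $C^\vartheta_o([0,T_0], E)$, a closed subspace of $C^\vartheta([0,T_0], E)$ on which Prokhorov's theorem applies and whose compact sets admit a clean description.

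Here the hypothesis $c/b>\vartheta$ enters decisively. Fix $\vartheta'$ with $\vartheta<\vartheta'<c/b$ and apply GRR with $\Psi(u)=u^{b}$, $p(u)=u^{\vartheta'+2/b}$. With
\[
\Gamma(X)\;=\;\int_0^{T_0}\!\!\int_0^{T_0}\Big(\frac{\Vert X_t-X_s\Vert_{-r}}{p(|t-s|)}\Big)^{b}\,ds\,dt\;,
\]
the inequality gives $\Vert X_t-X_s\Vert_{-r}\le C(b,\vartheta')\,\Gamma(X)^{1/b}\,|t-s|^{\vartheta'}$ for all $s\le t$, the defining integral converging because $\vartheta'>0$. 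Taking $\bb M_n$-expectations and inserting (ii),
\[
\bb M_n[\,\Gamma\,]\;=\;\int_0^{T_0}\!\!\int_0^{T_0}\frac{\bb M_n[\Vert X_t-X_s\Vert_{-r}^{\,b}]}{|t-s|^{\vartheta' b+2}}\,ds\,dt\;\le\;C_0\int_0^{T_0}\!\!\int_0^{T_0}|t-s|^{\,c-\vartheta'b-1}\,ds\,dt\;,
\]
and the last double integral is finite precisely because $\vartheta'<c/b$ forces the exponent $c-\vartheta'b-1$ to exceed $-1$. Hence $\sup_n\bb M_n[\Gamma]<\infty$, so the seminorm $[X]_{\vartheta'}:=\sup_{s<t}\Vert X_t-X_s\Vert_{-r}/|t-s|^{\vartheta'}$ obeys $\sup_n\bb M_n[\,[X]_{\vartheta'}^{\,b}\,]<\infty$. (If $0<b<1$, where $u^{b}$ is not convex, one replaces GRR by the dyadic chaining argument of \cite[Problem 4.11]{ks91}, which yields the same seminorm bound from (ii) under identical exponent arithmetic.)

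Together with (i), the family now has, uniformly in $n$, a bounded $a$-th moment of $\Vert X_0\Vert_{-r}$ and a bounded $b$-th moment of $[X]_{\vartheta'}$. By Markov's inequality, for each $\varepsilon>0$ there is $R_\varepsilon$ such that the event $\{\Vert X_0\Vert_{-r}\le R_\varepsilon,\ [X]_{\vartheta'}\le R_\varepsilon\}$ has $\bb M_n$-probability $\ge 1-\varepsilon$ for every $n$. On this event the $\vartheta$-H\"older modulus is uniformly small at small scales, since $\Vert X_t-X_s\Vert_{-r}/|t-s|^\vartheta\le R_\varepsilon\,|t-s|^{\vartheta'-\vartheta}$; this is exactly the equi-little-H\"older condition feeding the compactness criterion of \cite[Proposition 7]{RacSuq01}. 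Granting, in addition, relative compactness in $E$ of the time sections $\{X_t\}$, that criterion produces a set compact in $C^\vartheta([0,T_0],E)$ carrying $\bb M_n$-mass $\ge 1-2\varepsilon$ uniformly in $n$, and Prokhorov's theorem yields tightness with limit points supported on $C^\vartheta_o([0,T_0],E)$, in particular on continuous paths.

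The main obstacle is precisely this marginal precompactness. In finite dimensions it is automatic from Arzel\`a--Ascoli, but for the infinite-dimensional target $E=\mc H_{-r}$ it does not follow from the norm-moment bounds alone: a uniform bound on $\bb M_n[\Vert X_t\Vert_{-r}^{\,b}]$ controls only the total weighted mass and not its distribution across Fourier modes, so relative compactness of the sections in $E$ is a genuinely separate requirement of the criterion. The plan is to verify it as the one remaining point: one shows that for each $\varepsilon$ there is a fixed finite mode-level $N$, with $P_N$ the corresponding finite-rank projection, such that $\sup_n\bb M_n[\sup_t\Vert(I-P_N)X_t\Vert_{-r}]<\varepsilon$; this uniform tail control, combined with the finite-dimensional tightness of $P_N X$ (scalar GRR/chaining applied to each coefficient $X_t(\phi_m)$) and the equi-little-H\"older modulus of Step~3, confines the mass to a compact subset of $C^\vartheta_o([0,T_0],E)$. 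Establishing this tail estimate, rather than the GRR regularity, which is routine, is the technically delicate heart of the argument, and it is exactly the ingredient the H\"older-space tightness criterion of \cite[Proposition 7]{RacSuq01} is designed to package.
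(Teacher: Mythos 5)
Your Garsia--Rodemich--Rumsey step is correct, and it is the standard engine behind the two references the paper cites in lieu of a written proof (\cite[Problem 4.11]{ks91}, \cite[Proposition 7]{RacSuq01}): from hypothesis (ii) and any $\vartheta<\vartheta'<c/b$ your exponent arithmetic indeed yields $\sup_n\bb M_n\big[\,[X]_{\vartheta'}^{\,b}\,\big]<\infty$, and with (i) and Markov's inequality all but $\varepsilon$ of the mass is confined to a $\vartheta'$-H\"older ball, which controls the $\vartheta$-modulus uniformly at small scales; your caveat about replacing GRR by dyadic chaining when $0<b<1$, and your routing through the separable little H\"older space, are both sound. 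Since the paper offers no proof of Theorem \ref{l46}, there is nothing to compare line by line; up to your last step, your route is the intended one.

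That last step is where the genuine gap sits, and while you diagnose it correctly, you leave it unfilled --- and it \emph{cannot} be filled from (i)--(ii) alone: with hypotheses and conclusion in the same space $\mc H_{-r}$, the statement is actually false for the infinite-dimensional target. Take $(e_n)$ orthonormal in $\mc H_{-r}$ and $X^n_t=t\,e_n$, so that $\bb M_n$ is the Dirac mass at the path $t\mapsto t\,e_n$. Then (i) holds trivially, and $\Vert X^n_t-X^n_s\Vert_{-r}=|t-s|$ gives (ii) with $b=1+c$ for any $c>\vartheta/(1-\vartheta)$, hence $c/b>\vartheta$; yet the sequence is not tight, since tightness of Dirac masses amounts to relative compactness of the underlying points and $\sup_{0\le t\le T_0}\Vert t\,e_n-t\,e_m\Vert_{-r}=\sqrt{2}\,T_0$ for $n\neq m$. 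Consequently your proposed uniform tail bound $\sup_n\bb M_n[\sup_t\Vert(I-P_N)X_t\Vert_{-r}]<\varepsilon$ is not a consequence of (i)--(ii); it is genuinely additional information. The standard repair --- and the reading under which the theorem is actually applied in this paper --- is to verify (i)--(ii) in a strictly stronger norm $\Vert\cdot\Vert_{-r'}$ with $r'<r$: the embedding $\mc H_{-r'}\subset\mc H_{-r}$ is compact, so the set $\{\Vert X_0\Vert_{-r'}\le R,\ [X]_{\vartheta'}^{(-r')}\le R\}$ has all its time sections in a fixed compact subset of $\mc H_{-r}$, and the vector-valued Arzel\`a--Ascoli theorem together with the interpolation bound $[f]_\vartheta\le (2\sup_t\Vert f_t\Vert)^{1-\vartheta/\vartheta'}\,[f]_{\vartheta'}^{\vartheta/\vartheta'}$ makes it compact in $C^\vartheta([0,T_0],\mc H_{-r})$; no per-mode tail analysis is needed. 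This index gap is always available in the paper's applications (the increment estimates behind Corollaries \ref{l32} and \ref{l45} hold for every exponent above an explicit threshold strictly smaller than the final $r$). Alternatively, one adds marginal tightness in $\mc H_{-r}$ as an explicit hypothesis, which is how Banach-valued H\"older tightness criteria of the Ra\v{c}kauskas--Suquet type are formulated.
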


We used here the same notation $\bb M_n$ to represent a probability on
$C^\vartheta([0,T_0], \mc H_{-r})$ and the expectation with respect to
this probability.

\subsection{Tightness of the process $\mb B^n_t$ for $d=1$, $2$}
\label{ss5.2}

Recall the definition of the $\mc H_{-r}$-valued process $\mb B^n_t$
introduced at the beginning of this section. Note that this process is
continuous in time.

Fix $1\le j,k\le d$.  By definition of the operator $\Pi^{+2}_\rho$,
there exists finite collection $\mc E_{j,k}$ of subsets of $\bb Z^d$
with at least two elements such that
\begin{gather*}
(\Pi^{+2}_\rho h_{j,k})(\eta)
\;=\; \sum_{D \in \mc E_{j,k}} c_{j,k,D}(\rho) \,\omega_D \;.
\end{gather*}

In particular, to prove the tightness of the process $\mb B^n_t$, it
is enough to prove this property for the processes
$\mb B^{i,j,A,n}_t$, $A$ a finite subset of $\bb Z^d$ with at least
two elements, where
\begin{equation*}
\mb B^{i,j,A,n}_t (F) \;:=\; \int_0^t B^{A,n}_s (\partial^2_{x_i,x_j} F) \; ds
\end{equation*}
and 
\begin{equation}
\label{65}
B^{A,n}_t (G) \;:=\; \frac{1}{\sqrt{a_n n^d}}\,
\sum_{x\in \bb T^d_n} G_x  \, \omega_{x+A}(t) 
\end{equation}
for $G:\bb T^d_n \to \bb R$.

Fix $1\le i,j\le d$ until the end of this subsection. We omit these
indices from the notation hereafter and represent $\mb B^{i,j,A,n}_t$,
$B^{i,j,A,n}_t$ simply as $\mb B^{A,n}_t$, $B^{A,n}_t$, respectively.
The tightness of the process $\mb B^{A,n}_t$ relies on the next
estimate.

\begin{lemma}
\label{l28}
Fix a finite subset $A$ of $\bb Z^d$. Then, there exists a finite
constant $C_0 = C_0(\rho, A)$ such that
\begin{equation*}
\bb E_{\mu_n} \Big[ \, \Big|\, \sum_{x\in \bb T^d_n} F_x \,
\omega_{x+A}(t) \,\Big|^\alpha\,\Big] \;\le\; C_0 \,
n^{\alpha d/2} \, \Vert F\Vert^\alpha _\infty \, \big( \, 1 \,+\,
H_n(f^n_t)\,\big)^{\alpha /2} 
\end{equation*}
for all $1\le \alpha\le 2$, $t>0$, $F:\bb T^d_n \to\bb R$, probability
measure $\mu_n$ and $n\ge 1$. Here, $f^n_t$ stands for density of
$\mu_n S^n(t)$ with respect to a Bernoulli measure $\nu^n_\rho$.
\end{lemma}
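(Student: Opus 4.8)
The plan is to reduce the statement to the case $\alpha=2$ and then read it off from the subgaussian concentration bound of Corollary \ref{l27}. First recall that $\bb E_{\mu_n}[\,\phi(\eta^n(t))\,] = \int \phi\, f^n_t\, d\nu^n_\rho$, so the left-hand side equals $\int |Y|^\alpha\, f^n_t\, d\nu^n_\rho$, where $Y := \sum_{x\in\bb T^d_n} F_x\, \omega_{x+A}$. Since $f^n_t\, d\nu^n_\rho$ is a probability measure and $1\le\alpha\le 2$, the monotonicity of $L^p$-norms yields $\int |Y|^\alpha f^n_t\, d\nu^n_\rho \le \big(\int Y^2 f^n_t\, d\nu^n_\rho\big)^{\alpha/2}$. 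It therefore suffices to prove the bound $\int Y^2 f^n_t\, d\nu^n_\rho \le C_1\, n^d\, \Vert F\Vert^2_\infty\, (1+H_n(f^n_t))$: raising it to the power $\alpha/2\le 1$ produces the factors $n^{\alpha d/2}$, $\Vert F\Vert^\alpha_\infty$ and $(1+H_n(f^n_t))^{\alpha/2}$, with a constant $C_1^{\alpha/2}$ that stays bounded for $\alpha\le 2$.

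Next I observe that $\omega_{x+A} = (\tau_x f)(\eta)$ for the cylinder function $f(\eta) := \prod_{a\in A}\omega_a$. Since $A$ is nonempty and its elements are distinct, the factors $\omega_a$, $a\in A$, are independent and mean-zero under $\nu^n_\rho$, so that $\tilde f(\rho) = E_{\nu_\rho}\big[\prod_{a\in A}\omega_a\big] = 0$. Hence $f-\tilde f(\rho)=f$ and $Y = \sum_{x} \{f(\tau_x\eta) - \tilde f(\rho)\}\, F_x$, which is precisely the object controlled by Corollary \ref{l27}: there exist constants $0<c_0<C_0<\infty$, depending only on $\rho$ and $A$ through $f$, such that $\log E_{\nu^n_\rho}\big[\exp\{(a/n^d)\, Y^2\}\big]\le C_0\, a\, \Vert F\Vert^2_\infty$ for all $0<a<c_0/\Vert F\Vert^2_\infty$.

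It then remains to run the entropy inequality at the right scale. For every $\gamma>0$, $\int Y^2 f^n_t\, d\nu^n_\rho \le \gamma^{-1} H_n(f^n_t) + \gamma^{-1}\log\int e^{\gamma Y^2}\, d\nu^n_\rho$. Choosing $\gamma = a/n^d$ with $a = c_0/(2\Vert F\Vert^2_\infty)$, which lies in the admissible range of Corollary \ref{l27}, the second term is bounded by $\gamma^{-1} C_0\, a\, \Vert F\Vert^2_\infty = C_0\, n^d\, \Vert F\Vert^2_\infty$, while $\gamma^{-1} = 2\, n^d\, \Vert F\Vert^2_\infty/c_0$. Adding the two contributions gives $\int Y^2 f^n_t\, d\nu^n_\rho \le C_1\, n^d\, \Vert F\Vert^2_\infty\,(1+H_n(f^n_t))$ with $C_1 = \max\{2/c_0,\, C_0\}$, which is the required $\alpha=2$ bound; combined with the reduction above this proves the lemma with $C_0(\rho,A) = \max\{1,C_1\}$.

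There is essentially no hard step here, since the estimate is a corollary of the subgaussian bound already proved in Corollary \ref{l27}. The only points that require care are the verification that the centered cylinder function $f=\prod_{a\in A}\omega_a$ has vanishing mean (so that Corollary \ref{l27} applies verbatim, without a deterministic remainder), and the choice of scale $\gamma\asymp n^{-d}$, which is dictated by the normalization $n^{-d/2}$ built into the subgaussian estimate and is exactly what converts the exponential-moment bound into the linear-in-$n^d$ second-moment bound.
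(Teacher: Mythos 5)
Your proof is correct and follows essentially the same route as the paper's: H\"older (Lyapunov) to reduce to $\alpha=2$, the entropy inequality, the subgaussian bound of Corollary \ref{l27}, and the choice $\gamma \asymp (n^d \Vert F\Vert^2_\infty)^{-1}$. The only difference is a small streamlining: you apply Corollary \ref{l27} directly to the single cylinder function $\omega_A$ after checking $\tilde f(\rho)=0$, whereas the paper first writes $\omega_A$ as a linear combination of centered cylinder functions and removes the resulting sum with Schwarz and H\"older inequalities; your shortcut is legitimate since $\omega_A$ is itself an admissible, mean-zero cylinder function (for $A\neq\varnothing$, which is the only case in which the lemma is used or true).
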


\begin{proof}
Fix $1\le \alpha \le 2$.  By H\"older inequality, the expectation is
bounded by
\begin{equation*}
\bb E_{\mu_n} \Big[ \, \Big\{\, \sum_{x\in \bb T^d_n} F_x \,
\omega_{x+A}(t) \,\Big\}^2\,\Big]^{\alpha /2}\;.
\end{equation*}
By the entropy inequality, this expectation is less than or equal to
\begin{equation}
\label{45}
\frac{1}{\gamma}\, H_n(f^n_t) \;+\;
\frac{1}{\gamma} \log E_{\nu^n_\rho} \Big[ \exp \gamma \,
\Big\{ \sum_{x\in \bb T^d_n} F_x \, \omega_{x+A} \, \Big\}^2\, \Big]
\end{equation}
for all $\gamma>0$.

Consider the second term. The function $\omega_{A}$ can be written as
$\sum_{1\le j\le p} C_j (\rho) \, \{ \, f_j(\eta) - \tilde
f_j(\rho)\,\}$, where each $f_j$ is a cylinder function. Hence, by
Schwarz inequality [to move the sum over $j$ out of the square],
H\"older's inequality [to move the sum over $j$ out of the
exponential], and Corollary \ref{l27}, the second term of the previous
displayed equation is bounded above by
\begin{equation*}
C_0(\rho, A) \, \Vert F\Vert^2_\infty\, n^d
\end{equation*}
provided $\gamma n^d \Vert F\Vert^2_\infty < c_0(\rho, A)$. Here
$0<c_0 < C_0 < \infty$ are constants which depend on $A$ and $\rho$
only. Therefore, setting $\gamma n^d \Vert F\Vert^2_\infty = c_0/2$
yields that \eqref{45} is bounded by
\begin{equation*}
C_0 \, n^d\, \Vert F\Vert^2_\infty \, \Big\{ 1\;+\; H_n(f^n_t)
\,\Big\}\;, 
\end{equation*}
which completes the proof of the lemma because $\alpha \le 2$ [so that
$C_0^{\alpha/2} \le C_0$].
\end{proof}

Next result is a simple consequence of Lemma \ref{l28} and H\"older's
inequality. 

\begin{corollary}
\label{l29}
Fix a finite subset $A$ of $\bb Z^d$. Then, there exists a finite
constant $C_0 = C_0(\rho, A)$ such that
\begin{align*}
& \bb E_{\mu_n} \Big[ \, \Big|\, \int_s^t \sum_{x\in \bb T^d_n} F_x(u) \,
\omega_{x+A}(u) \; du \,\Big|^\alpha\,\Big] \\
&\qquad \;\le\; C_0 \,
n^{\alpha d/2} \, |t-s|^\alpha \,
\sup_{s\le u\le t} \Vert F(u) \Vert^\alpha _\infty \,
\sup_{s\le u\le t} \big( \, 1 \,+\,
H_n(f^n_u)\,\big)^{\alpha /2} 
\end{align*}
for all $1\le \alpha\le 2$, $0<s<t$, $F:\bb T^d_n \to\bb R$,
probability measure $\mu_n$ and $n\ge 1$.
\end{corollary}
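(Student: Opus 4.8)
The plan is to control the $\alpha$-th moment of the time integral by the time integral of the $\alpha$-th moments and then to invoke Lemma \ref{l28} pointwise in the time variable. Throughout, abbreviate $g(u) = \sum_{x\in \bb T^d_n} F_x(u)\, \omega_{x+A}(u)$, so that the expression inside the expectation in the statement is $\big|\int_s^t g(u)\, du\big|^\alpha$.

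First I would apply H\"older's inequality in the time variable $u$. Since $1\le \alpha\le 2$, writing $|g(u)| = |g(u)|\cdot 1$ and using H\"older with conjugate exponents $\alpha$ and $\alpha/(\alpha-1)$ gives
$$\Big| \int_s^t g(u)\, du \Big|^\alpha \;\le\; (t-s)^{\alpha-1} \int_s^t |g(u)|^\alpha \, du\;,$$
the case $\alpha = 1$ reducing to the triangle inequality $|\int_s^t g\, du|\le \int_s^t |g|\, du$. Taking the expectation $\bb E_{\mu_n}$ of both sides and applying Tonelli's theorem to interchange it with the $u$-integral --- legitimate since the integrand is nonnegative --- I obtain
$$\bb E_{\mu_n}\Big[ \Big| \int_s^t g(u)\, du \Big|^\alpha \Big] \;\le\; (t-s)^{\alpha-1} \int_s^t \bb E_{\mu_n}\big[\, |g(u)|^\alpha\, \big]\, du\;.$$

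Next I would insert Lemma \ref{l28}, which bounds precisely $\bb E_{\mu_n}[|g(u)|^\alpha]$ for each fixed $u$ by $C_0\, n^{\alpha d/2}\, \Vert F(u)\Vert^\alpha_\infty\, (1 + H_n(f^n_u))^{\alpha/2}$. Replacing $\Vert F(u)\Vert_\infty$ and $1 + H_n(f^n_u)$ by their suprema over $u\in[s,t]$ turns the integrand into a constant, and integrating that constant over $[s,t]$ yields an extra factor $t-s$. Combined with the prefactor $(t-s)^{\alpha-1}$ this produces $(t-s)^\alpha$, which is exactly the claimed estimate.

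There is no real obstacle in this argument: the only things to check are that $\alpha\ge 1$ (so that H\"older in time applies, guaranteed by $1\le\alpha\le 2$) and that Lemma \ref{l28} may be applied for every $u\in[s,t]$, which it can since its hypotheses hold uniformly in time. In particular the admissible range $1\le\alpha\le 2$ is inherited unchanged from Lemma \ref{l28} and no new constraint on $\alpha$ appears.
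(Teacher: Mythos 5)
Your proof is correct and is exactly the argument the paper intends: the paper states Corollary \ref{l29} as ``a simple consequence of Lemma \ref{l28} and H\"older's inequality,'' and your write-up (H\"older in the time variable to get the factor $(t-s)^{\alpha-1}$, Tonelli to pass the expectation inside, Lemma \ref{l28} pointwise in $u$, then bounding by the suprema and integrating to collect the remaining factor $t-s$) is precisely that argument, carried out in full detail.
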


The proof of the tightness of the process $\mb B^{A,n}_t$ relies on
the following estimate. Recall the definition of the process
$B^{A,n}_t$ introduced in \eqref{65}. 

\begin{lemma}
\label{l44}
Fix $T>0$, a finite subset $A$ of $\bb Z^d$ with at least two
elements and $\delta>0$.  Fix also $0<\theta<1$ and $1<\alpha \le
2$. Let $(\mu_n : n\ge 1)$ be a sequence of probability measures such
that $H_n(\mu_n \,|\, \nu^n_\rho) \le a_n + R_d(n)$. Then, there exist
a finite constant $C_1 = C_1(\rho, A, T)$ and an integer
$n_0=n_0(\delta, T)$ such that
\begin{equation*}
\bb P_{\mu_n}
\Big[ \, \Big | \int_s^t \, B^{A,n}_u (F(u)) \; du \, \Big |
\,>\, \lambda \, \Big] \;\le\;
C_1 \, \frac{1}{\lambda^{\beta}}  \; \Omega^n_{s,t}\, 
\sup_{0\le u\le T} \Vert F(u) \Vert^{\beta}_\infty
\end{equation*}
for all $\lambda>0$, $0\le s<t\le T$, $n\ge n(\delta,T)$ and
$F:[0,T]\times \bb T^d_n \to \bb R$. In this formula,
\begin{equation*}
\Omega^n_{s,t} \;=\; \frac{n^{\delta \gamma} \, \{a_n + R_d(n)\}^{\gamma}}
{(a_n\, n^d)^{\theta/2}}\,
|\,t-s\,|^{(\theta/2) + \alpha (1-\theta)} 
\end{equation*}
and $\beta = \theta + (1-\theta) \alpha$,
$\gamma = \theta + \alpha (1-\theta) /2$ and $R_d(n)$ is the sequence
introduced in Theorem \ref{t2}.
\end{lemma}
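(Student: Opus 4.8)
The plan is to reduce the tail estimate to a moment bound via Markov's inequality, and to obtain the required moment by interpolating between the sharp $L^1$ bound of Corollary \ref{l30} and the $L^\alpha$ bound of Corollary \ref{l29}. Writing
\begin{equation*}
Z \;:=\; \int_s^t B^{A,n}_u(F(u))\, du \;=\; \frac{1}{\sqrt{a_n n^d}}\, W\;, \qquad
W \;:=\; \int_s^t \sum_{x\in\bb T^d_n} F_x(u)\, \omega_{x+A}(u)\, du\;,
\end{equation*}
Markov's inequality at the exponent $\beta = \theta + (1-\theta)\alpha$ gives
\begin{equation*}
\bb P_{\mu_n}\big[\, |Z| > \lambda\,\big]
\;=\; \bb P_{\mu_n}\big[\, |W| > \lambda\, (a_n n^d)^{1/2}\,\big]
\;\le\; \frac{1}{\lambda^\beta\, (a_n n^d)^{\beta/2}}\, \bb E_{\mu_n}\big[\, |W|^\beta\,\big]\;,
\end{equation*}
so it suffices to estimate $\bb E_{\mu_n}[|W|^\beta]$.

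Since $0<\theta<1$ and $1<\alpha\le 2$, the exponent $\beta$ lies between $1$ and $\alpha$. Writing $|W|^\beta = |W|^{\theta}\, |W|^{\alpha(1-\theta)}$ and applying H\"older's inequality with conjugate exponents $1/\theta$ and $1/(1-\theta)$ yields
\begin{equation*}
\bb E_{\mu_n}\big[\, |W|^\beta\,\big] \;\le\; \bb E_{\mu_n}\big[\, |W|\,\big]^{\theta}\; \bb E_{\mu_n}\big[\, |W|^\alpha\,\big]^{1-\theta}\;.
\end{equation*}
I would bound the first factor through Corollary \ref{l30} and the second through Corollary \ref{l29}. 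The whole point of invoking the $L^1$ estimate of Corollary \ref{l30}, rather than the crude case $\alpha=1$ of Corollary \ref{l29}, is that the former carries the time factor $(t-s)^{1/2}$ and, crucially, \emph{no} explicit factor $n^{d/2}$; it is this missing power that survives the normalization by $(a_n n^d)^{\beta/2}$ and produces a genuine decay in $n$.

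Next I would insert the entropy bound. Set $K_n := \{a_n + R_d(n)\}\, n^\delta$. Under the hypothesis $H_n(\mu_n\,|\,\nu^n_\rho)\le a_n + R_d(n)$ together with $a_n\le\sqrt{\log n}$, the a priori estimate \eqref{01} and Remark \ref{rm5} give $\sup_{0\le u\le T} H_n(f^n_u)\le C\,K_n$ for $n\ge n_0(\delta,T)$. Recalling that $(n/\ell_n)^d = R_d(n)$ by \eqref{19}, one obtains $\bb H_n(s,t)\le C\, K_n\,(t-s)$ and $(1+H_n(f^n_s))/a_n\le C\, K_n$ for $n$ large; hence Corollary \ref{l30} yields $\bb E_{\mu_n}[|W|]\le C\, K_n\,(t-s)^{1/2}\sup_{0\le u\le T}\Vert F(u)\Vert_\infty$, while Corollary \ref{l29} yields $\bb E_{\mu_n}[|W|^\alpha]\le C\, n^{\alpha d/2}(t-s)^\alpha K_n^{\alpha/2}\sup_{0\le u\le T}\Vert F(u)\Vert^\alpha_\infty$.

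Finally I would collect the exponents. Substituting the last two bounds into the interpolation inequality produces the power $\gamma = \theta + \alpha(1-\theta)/2$ on $K_n$, the power $\beta$ on $\sup\Vert F\Vert_\infty$, the power $(\theta/2) + \alpha(1-\theta)$ on $(t-s)$, and an explicit factor $n^{\alpha d(1-\theta)/2}$. Dividing by $(a_n n^d)^{\beta/2}$ and using $\beta/2 = \theta/2 + \alpha(1-\theta)/2$, the explicit $n$-powers combine as
\begin{equation*}
\frac{n^{\alpha d(1-\theta)/2}}{(a_n n^d)^{\beta/2}}
\;=\; \frac{1}{(a_n n^d)^{\theta/2}}\;\frac{1}{a_n^{\alpha(1-\theta)/2}}
\;\le\; \frac{1}{(a_n n^d)^{\theta/2}}\;,
\end{equation*}
since $a_n\to\infty$. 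Writing $K_n^\gamma = \{a_n+R_d(n)\}^\gamma\, n^{\delta\gamma}$ and matching against the definition of $\Omega^n_{s,t}$ delivers exactly the claimed bound $C_1\,\lambda^{-\beta}\,\Omega^n_{s,t}\,\sup_{0\le u\le T}\Vert F(u)\Vert^\beta_\infty$. The only real subtlety, as stressed above, is recognizing that one must exploit both the time scaling $(t-s)^{1/2}$ and the absence of an $n^{d/2}$ factor in Corollary \ref{l30}; once this is in place, the remainder is bookkeeping of the exponents $\beta$ and $\gamma$.
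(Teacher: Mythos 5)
Your proof is correct and follows essentially the same route as the paper: interpolation between the $L^1$ bound of Corollary \ref{l30} and the $L^\alpha$ bound of Corollary \ref{l29}, combined with the entropy estimate \eqref{01} and the choice \eqref{19} of $\ell_n$, with the key observation (which you identify) that Corollary \ref{l30} carries no $n^{d/2}$ factor. The only immaterial difference is that you apply Markov at exponent $\beta$ and then H\"older to split $\bb E[|Z|^\beta]$, whereas the paper writes the probability as $p^\theta\, p^{1-\theta}$ and applies Chebyshev at exponents $1$ and $\alpha$ to the two factors; both yield the identical bound $\lambda^{-\beta}\, \bb E_{\mu_n}[|Z|]^\theta\, \bb E_{\mu_n}[|Z|^\alpha]^{1-\theta}$.
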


\begin{proof}
Write the probability appearing in the statement of the lemma, denoted
by $p$, as $p^\theta \, p^{1-\theta}$. Apply Chebyshev inequality to
both terms to obtain that the previous integral is less than or equal
to
\begin{equation}
\label{44c}
\begin{aligned}
\frac{1}{\lambda^{\beta}} \,
\bb E_{\mu_n} \Big[ \,
\Big | \int_s^t \, B^{A,n}_u (F(u)) \; du \, \Big |
\, \Big]^\theta \;
\bb E_{\mu_n} \Big[ \,
\Big | \int_s^t \, B^{A,n}_u (F(u)) \; du \, \Big |^\alpha
\, \Big]^{1-\theta} \;,
\end{aligned}
\end{equation}
where $\beta = [\, \theta + (1-\theta) \alpha]$.

By Corollary \ref{l30}, the first expectation is bounded by
\begin{equation*}
C_1(A, \rho)  \, \frac{1}{(a_n\, n^d)^{\theta/2}}\, \sup_{s\le u\le t}
\Vert F(u) \Vert^\theta_\infty \, 
\Big\{ \Big( \frac{1 + H_n(f^n_s)}{a_n} \;+\;  \bb H_n(s,t)\Big) \,
\bb H_n(s,t)  \Big\}^{\theta/2}\;,
\end{equation*}
where $\bb H_n(s,t) = \int_s^t H_n(f^n_r)\, dr + (t-s) (n/\ell_n)^d$.
Fix $\delta>0$. Since $a_n \le \sqrt{\log n}$, there exists
$n(\delta, T) \ge 1$ such that, by \eqref{01} and \eqref{19},
$\bb H_n(s,t) \le n^\delta\, (t-s) \, \{ a_n + R_d(n)\}$ for all
$n\ge n(\delta, T)$, $0\le s \le t\le T$. We estimate
$[1 + H_n(f^n_s)]/a_n + \bb H_n(s,t)$ by
$3 n^\delta \, (1+T) \, \{ a_n + R_d(n)\}$ and the second
$\bb H_n(s,t)$ by $n^\delta\, (t-s) \, \{ a_n + R_d(n)\}$. Hence, the
expression appearing in the previous displayed equation is bounded by
\begin{equation*}
C_1(A, \rho) \,(1+T) \, \sup_{0\le u\le t}
\Vert F(u) \Vert^\theta_\infty \,
\frac{n^{\delta \theta} \,
\{a_n + R_d(n)\}^{\theta}}{(a_n\, n^d)^{\theta/2}} \, |\,t-s\,|^{\theta/2} 
\end{equation*}
for all $0\le s, t\le T$, $n\ge n(\delta,T)$. We estimated
$(1+T)^{\theta/2}$ by $1+T$, as $\theta<1$.

By Corollary \ref{l29}, the second expectation in \eqref{44c} is
bounded by 
\begin{equation*}
C_1(\rho, A)  \, |\,t-s\,|^{\alpha (1-\theta)} \,
\sup_{0\le u\le t} \Vert F(u) \Vert^{\alpha   (1-\theta)}_\infty\, 
\sup_{0\le u\le T} \big( \, 1 \,+\,
H_n(f^n_u)\,\big)^{\alpha (1-\theta) /2} \;.
\end{equation*}
Thus, by the bound on $H_n(f^n_u)$ presented in the previous
paragraph, this expression is bounded by
\begin{equation*}
C_1(\rho, A, T) \, |\,t-s\,|^{\alpha (1-\theta)} \,
\sup_{0\le u\le t} \Vert F(u) \Vert^{\alpha (1-\theta)}_\infty
\, n^{\delta \alpha  (1-\theta)/2} \,
\{a_n + R_d(n)\}^{\alpha (1-\theta) /2}
\end{equation*}
for all $0\le s, t\le T$, $n\ge n(\delta,T)$.

Putting together the previous estimates yields that \eqref{44c} is
bounded above by
\begin{equation*}
C_1(\rho, A, T) \,
\frac{1}{\lambda^{\beta}}  \;
\frac{n^{\delta \gamma} \, \{a_n + R_d(n)\}^{\gamma}}
{(a_n\, n^d)^{\theta/2}}\,
|\,t-s\,|^{(\theta/2) + \alpha (1-\theta)} \, 
\sup_{0\le u\le t} \Vert F(u) \Vert^{\beta}_\infty
\end{equation*}
for all $n\ge n(\delta,T)$, where
$\gamma = \theta + \alpha (1-\theta) /2$ and, recall,
$\beta = \theta + \alpha (1-\theta)$. This is the assertion of the
lemma.
\end{proof}

The previous bound provides two estimates. The first one will be used
later in the proof of the tightness of the process $X^n_t$ in
dimension $d=3$. The second one is needed in the proof of the
tightness of the process $\mb B^n_t$ in dimensions $d=1$, $2$.
Recall the definition of $\Omega^n_{s,t}$ introduced in the previous
lemma. 

\begin{corollary}
\label{l43}
Fix a finite subset $A$ of $\bb Z^d$ with at least two elements and
$\delta>0$.  Fix also $\upsilon>1$, $0<\theta<1$ and $1<\alpha \le 2$
such that
\begin{equation*}
\beta\;:=\; \theta \,+\, \alpha \, (1-\theta) \;>\; \upsilon\;.
\end{equation*}
Let $(\mu_n : n\ge 1)$ be a sequence of probability measures such that
$H_n(\mu_n \,|\, \nu^n_\rho) \le a_n + R_d(n)$.  Then, there exist a
finite constant $C_1 = C_1(\rho, A, T)$ and an integer
$n_0=n_0(\delta, T)$ such that
\begin{equation*}
\bb E_{\mu_n} \Big[ \, \Big | \int_s^t
\, B^{A,n}_u (F(u)) \; du \, \Big |^\upsilon \, \Big]^{\beta/\upsilon} \;\le \;
C_1\, \Big(\frac{\beta}{\upsilon}\Big)^{\beta/\upsilon} \, \Omega^n_{s,t} \, 
\sup_{0\le u\le T} \Vert F(u) \Vert^{\beta}_\infty
\end{equation*}
for all $0\le s<t\le T$, $n\ge n(\delta,T)$ and
$F:[0,T]\times \bb T^d_n \to \bb R$. 
\end{corollary}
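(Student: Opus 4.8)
The plan is to deduce the corollary directly from the weak-type tail bound already established in Lemma \ref{l44}, using the standard passage from a distributional (weak-$L^\beta$) estimate to a strong $L^\upsilon$ moment bound. The key point is that this passage is legitimate precisely when $\upsilon < \beta$, which is exactly the hypothesis $\beta := \theta + \alpha(1-\theta) > \upsilon$ imposed in the statement, so no new dynamical input is needed.

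First I would abbreviate $Z := \int_s^t B^{A,n}_u (F(u))\, du$ and collect the $n$-, time- and $F$-dependent factors into $K := C_1\, \Omega^n_{s,t}\, \sup_{0\le u\le T}\Vert F(u)\Vert^{\beta}_\infty$, so that Lemma \ref{l44}, applied with the same parameters $\theta$ and $\alpha$ (hence the same $\beta$), reads $\bb P_{\mu_n}[\, |Z| > \lambda \,] \le K\, \lambda^{-\beta}$ for all $\lambda>0$ and all $n \ge n(\delta,T)$. Next I would invoke the layer-cake identity $\bb E_{\mu_n}[\, |Z|^\upsilon \,] = \upsilon \int_0^\infty \lambda^{\upsilon-1}\, \bb P_{\mu_n}[\, |Z| > \lambda \,]\, d\lambda$ and split the integral at the crossover value $\lambda_0 := K^{1/\beta}$, i.e. the point where the two available bounds $\bb P_{\mu_n}[\,|Z|>\lambda\,] \le 1$ and $\bb P_{\mu_n}[\,|Z|>\lambda\,] \le K\lambda^{-\beta}$ agree. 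On $[0,\lambda_0]$ I would bound the tail by $1$, which contributes $\lambda_0^\upsilon = K^{\upsilon/\beta}$; on $[\lambda_0,\infty)$ I would bound it by $K\lambda^{-\beta}$, and here the integral $\upsilon K \int_{\lambda_0}^\infty \lambda^{\upsilon-1-\beta}\, d\lambda$ converges \emph{because} $\upsilon - \beta < 0$, contributing $\frac{\upsilon}{\beta-\upsilon}\, K^{\upsilon/\beta}$.

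Collecting the two pieces gives $\bb E_{\mu_n}[\, |Z|^\upsilon \,] \le c(\upsilon,\beta)\, K^{\upsilon/\beta}$, where $c(\upsilon,\beta) = 1 + \upsilon/(\beta-\upsilon)$ is an explicit constant depending only on $\upsilon$ and $\beta$. Raising to the power $\beta/\upsilon$ then yields $\bb E_{\mu_n}[\, |Z|^\upsilon \,]^{\beta/\upsilon} \le c(\upsilon,\beta)^{\beta/\upsilon}\, K$, which, after recalling the definition of $K$ and absorbing the purely combinatorial factor $c(\upsilon,\beta)^{\beta/\upsilon}$ into the stated constant $C_1\,(\beta/\upsilon)^{\beta/\upsilon}$, is exactly the assertion of the corollary for all $0\le s<t\le T$ and $n\ge n(\delta,T)$.

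There is essentially no hard step here: the entire analytic and probabilistic content sits in Lemma \ref{l44}, and what remains is the classical tail-to-moment conversion. The only genuine point of care—and the sole reason the strict inequality $\beta>\upsilon$ is assumed—is the convergence of the tail integral $\int_{\lambda_0}^\infty \lambda^{\upsilon-1-\beta}\, d\lambda$ at infinity, which degenerates as $\upsilon \uparrow \beta$; I would also make sure that the resulting multiplicative constant depends only on $\upsilon$ and $\beta$, so that all dependence on $n$, on $s,t$, and on $F$ stays packaged inside $\Omega^n_{s,t}$ and $\sup_{0\le u\le T}\Vert F(u)\Vert^{\beta}_\infty$, as required for the later application to tightness.
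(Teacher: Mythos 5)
Your proof is correct and follows essentially the same route as the paper: a layer-cake representation of the $\upsilon$-th moment, the tail bound of Lemma \ref{l44}, a split of the integral into a bounded piece and a tail piece, and convergence of the tail integral precisely because $\beta>\upsilon$. The only cosmetic difference is that you split at the crossover level $K^{1/\beta}$ directly, whereas the paper splits at a generic level $\mf a$ and optimizes over $\mf a$ at the end; both yield the constant $\bigl(\beta/(\beta-\upsilon)\bigr)^{\beta/\upsilon}$, which is absorbed into the stated constant exactly as you indicate.
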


\begin{proof}
Fix $0\le s \le t \le T$, and write
\begin{equation*}
\bb E_{\mu_n} \Big[ \, \Big | \int_s^t
\, B^{A,n}_u (F(u)) \; du \, \Big |^\upsilon \, \Big] \;=\;
\int_0^\infty \bb P_{\mu_n}
\Big[ \, \Big | \int_s^t \, B^{A,n}_u (F(u)) \; du \, \Big |
\,>\, \kappa^{1/\upsilon} \, \Big] \, d\kappa\;.
\end{equation*}
Fix $\mf a>0$. We estimate the above probability by $1$ on the
interval $0\le \kappa \le \mf a$. On the other hand, by Lemma
\ref{l44} with $\lambda = \kappa^{1/\upsilon}$, the previous integral
restricted to the interval $[\mf a,\infty)$, is bounded above by
\begin{equation*}
C_1(\rho, A, T) \, \Omega^n_{s,t}\, 
\sup_{0\le u\le t} \Vert F(u) \Vert^{\beta}_\infty \,
\int_{\mf a}^\infty \frac{1}{\kappa^{\beta/\upsilon}} \; d\kappa 
\end{equation*}
for all $n\ge n(\delta,T)$, where, recall,
$\beta = \theta + \alpha (1-\theta)$. As $\beta > \upsilon$, this
expression is equal to
$C_1\, (b-1) \, \mf a^{1-b} \, \Omega^n_{s,t} \, \sup_{0\le u\le t}
\Vert F(u) \Vert^{\beta}_\infty$, where $b=\beta/\upsilon>1$.

Up to this point, we proved that the expectation appearing in the
statement of the corollary is bounded by
$\mf a + C_1\, (b-1) \, \mf a^{1-b} \, \Omega^n_{s,t} \, \sup_{0\le
  u\le t} \Vert F(u) \Vert^{\beta}_\infty$ for all $\mf a>0$. It
remains to optimize over $\mf a$ to complete the proof.
\end{proof}

Recall that $1\le i,j \le d$ are fixed and that $\mb B^{A,n}_t$ stand
for $\mb B^{i,j,A,n}_t$. 

\begin{corollary}
\label{l31}
Fix a finite subset $A$ of $\bb Z^d$ with at least two elements and
$\delta>0$. Fix also $\upsilon>1$, $0<\theta<1$ and $1<\alpha \le 2$
such that
\begin{equation*}
\beta\;:=\; \theta \,+\, \alpha \, (1-\theta) \;>\; \upsilon\;.
\end{equation*}
Finally, fix $r>0$ such that
\begin{equation*}
r\;>\; \frac{d+5}{2} \;+\; \frac{d} {\beta}\;\cdot
\end{equation*}
Let $(\mu_n : n\ge 1)$ be a sequence of probability measures such that
$H_n(\mu_n \,|\, \nu^n_\rho) \le a_n + R_d(n)$.  Then, there exist a
finite constant $C_1 = C_1(\rho, A, T, \upsilon, \theta, \alpha, r)$
and an integer $n_0=n_0(\delta, T)$ such that
\begin{equation*}
\bb E_{\mu_n} \Big[ \, \Vert \, \mb B^{A,n}_t \,-\, \mb B^{A,n}_s \,
\Vert^\upsilon_{-r} \, \Big]^{\beta/\upsilon} \;\le\; C_1\, \Omega_{n,s,t} \,
\end{equation*}
for all $0\le s < t \le T$, $n\ge n_0$.
\end{corollary}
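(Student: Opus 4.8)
The plan is to reduce the statement to a mode-by-mode application of Corollary \ref{l43}, after expanding the $\mc H_{-r}$-norm in the Fourier basis. First I would write, by definition of the dual norm,
\[
\Vert \mb B^{A,n}_t - \mb B^{A,n}_s\Vert^2_{-r} \;=\; \sum_{m\in\bb Z^d}\gamma^{-r}_m\,\big|\,(\mb B^{A,n}_t - \mb B^{A,n}_s)(\phi_m)\,\big|^2\;.
\]
By definition of $\mb B^{A,n}_t = \mb B^{i,j,A,n}_t$ we have
\[
(\mb B^{A,n}_t - \mb B^{A,n}_s)(\phi_m) \;=\; \int_s^t B^{A,n}_u(\partial^2_{x_i,x_j}\phi_m)\, du\;,
\]
and since $\partial^2_{x_i,x_j}\phi_m = -4\pi^2 m_i m_j\,\phi_m$, the sup-norm is controlled by $\Vert\partial^2_{x_i,x_j}\phi_m\Vert_\infty \le 4\pi^2\Vert m\Vert^2 \le 4\pi^2\gamma_m$. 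As $\phi_m$ is complex-valued, I would run the argument on the real and imaginary parts $\varphi_m$ separately, both of which satisfy the same bound and reassemble via $|\phi_m|^2 = (\varphi^{\cos}_m)^2+(\varphi^{\sin}_m)^2$.

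The key structural observation is that the constraints $1<\upsilon$, $1<\alpha\le 2$, $0<\theta<1$ force $\upsilon<\beta = \alpha-\theta(\alpha-1) < 2$; in particular $0<\upsilon/2<1$, so the map $x\mapsto x^{\upsilon/2}$ is subadditive on $[0,\infty)$. Applying this inside the expectation (Minkowski's inequality being unavailable in this direction when $\upsilon<2$) yields
\[
\bb E_{\mu_n}\big[\Vert \mb B^{A,n}_t - \mb B^{A,n}_s\Vert^\upsilon_{-r}\big] \;\le\; \sum_{m\in\bb Z^d}\gamma^{-r\upsilon/2}_m\,\bb E_{\mu_n}\big[\,|(\mb B^{A,n}_t - \mb B^{A,n}_s)(\phi_m)|^\upsilon\,\big]\;.
\]
To each summand I would apply Corollary \ref{l43} with the time-independent function $F = \partial^2_{x_i,x_j}\varphi_m$, whose sup-norm is at most $C\gamma_m$; this gives $\bb E_{\mu_n}[|(\cdots)(\phi_m)|^\upsilon]^{\beta/\upsilon}\le C\,\Omega^n_{s,t}\,\gamma^\beta_m$, hence $\bb E_{\mu_n}[|(\cdots)(\phi_m)|^\upsilon]\le C\,(\Omega^n_{s,t})^{\upsilon/\beta}\,\gamma^\upsilon_m$, uniformly for $n\ge n_0(\delta,T)$.

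Substituting and collecting constants,
\[
\bb E_{\mu_n}\big[\Vert \mb B^{A,n}_t - \mb B^{A,n}_s\Vert^\upsilon_{-r}\big] \;\le\; C\,(\Omega^n_{s,t})^{\upsilon/\beta}\sum_{m\in\bb Z^d}\gamma^{\upsilon(1 - r/2)}_m\;,
\]
and raising to the power $\beta/\upsilon$ delivers the claimed bound $C_1\,\Omega^n_{s,t}$ as soon as the series $\sum_m \gamma^{\upsilon(1-r/2)}_m$ converges. Since $\gamma_m\asymp 1+\Vert m\Vert^2$, convergence holds precisely when $\upsilon(r/2 - 1) > d/2$, i.e. $r > 2 + d/\upsilon$; the hypothesis $r > (d+5)/2 + d/\beta$, together with $\upsilon<\beta<2$, comfortably implies this, since $(d+5)/2 + d/\beta - (2+d/\upsilon) = (d+1)/2 - d(1/\upsilon-1/\beta) > (d+1)/2 - d/2 > 0$. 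The only genuinely delicate point is the exponent bookkeeping — establishing $\upsilon<2$ so that subadditivity can be used, and verifying the summability threshold on $r$ — while the substantive probabilistic content is entirely supplied by Corollary \ref{l43}.
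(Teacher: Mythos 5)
Your proof is correct, but it takes a genuinely different route from the paper's. The paper proves the corollary by redoing a layer-cake decomposition at the level of the full $\mc H_{-r}$-norm: it writes $\bb E_{\mu_n}[\Vert\cdot\Vert^\upsilon_{-r}]$ as $\int_0^\infty \bb P_{\mu_n}[\Vert\cdot\Vert_{-r}>\kappa^{1/\upsilon}]\,d\kappa$, truncates at a level $\mf a$, bounds the tail probability by a union bound over Fourier modes weighted by a summable sequence $\mf w_m = \mf w(1+\Vert m\Vert)^{-(d+1)}$, applies the tail estimate of Lemma \ref{l44} to each mode, and finally optimizes over $\mf a$; the threshold $r>(d+5)/2+d/\beta$ is exactly what makes $\sum_m \Vert m\Vert^{2\beta}(\mf w_m\gamma_m^r)^{-\beta/2}$ converge. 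You instead exploit the elementary but decisive observation that $\upsilon<\beta<2$, so $x\mapsto x^{\upsilon/2}$ is subadditive, which lets you pull the $\upsilon$-th power inside the sum over modes and then invoke Corollary \ref{l43} (which already packages the layer-cake-plus-optimization step for a single mode) with the time-independent test functions $\partial^2_{x_i,x_j}\varphi_m$. This buys two things: you avoid both the weight sequence $\mf w_m$ and the optimization over $\mf a$, and your summability requirement $r>2+d/\upsilon$ is strictly weaker than the paper's hypothesis, as your inequality $(d+1)/2 - d(1/\upsilon-1/\beta) > 1/2$ (valid since $\upsilon>1$ and $\beta<2$) correctly shows; so the stated hypothesis on $r$ comfortably suffices. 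Your handling of the complex-valued $\phi_m$ by splitting into $\cos$ and $\sin$ parts, and the exponent bookkeeping when converting the $\beta/\upsilon$-power form of Corollary \ref{l43} into a bound on $\bb E[|\cdot|^\upsilon]$ itself, are both sound. Both arguments rest ultimately on Lemma \ref{l44}; yours simply enters through its corollary rather than through the raw tail bound.
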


\begin{proof}
Fix $0\le s \le t \le T$, and write
\begin{equation*}
\bb E_{\mu_n} \Big[ \, \Vert \, \mb B^{A,n}_t \,-\, \mb B^{A,n}_s \,
\Vert^\upsilon_{-r} \, \Big] \;=\;
\int_0^\infty \bb P_{\mu_n} \Big[ \, \Vert \, \mb B^{A,n}_t \,-\, \mb B^{A,n}_s \,
\Vert_{-r} \,>\, \kappa^{1/\upsilon} \, \Big] \, d\kappa\;.
\end{equation*}
Fix $\mf a>0$ and estimate the above probability by $1$ on the
interval $[0,\mf a]$. 

We turn to the interval $[\mf a, \infty)$.  Let
$\{\mf w_m : m\in \bb Z^d\}$ be the sequence given by
$\color{bblue} \mf w_m = \mf w \, (1 + \Vert m\Vert)^{-(d+1)}$, where
$\mf w$ is the normalizing constant chosen so that
\begin{equation}
\label{66}
\sum_{m\in \bb Z^d} \mf w_m \;=\; 1\;.
\end{equation}
The probability on the right-hand side of the first displayed equation
of the proof is equal to
\begin{align*}
& \bb P_{\mu_n} \Big[ \, \sum_{m\in \bb Z^d} \gamma^{-r}_m\,
\Vert \, \mb B^{A,n}_t(\phi_m) \,-\, \mb B^{A,n}_s (\phi_m) \,
\Vert^2 \,>\, \kappa^{2/\upsilon} \, \Big] \\
& \qquad \le\; \sum_{m\in \bb Z^d} 
\bb P_{\mu_n} \Big[ \,
\Vert \, \mb B^{A,n}_t(\phi_m) \,-\, \mb B^{A,n}_s (\phi_m) \,
\Vert \,>\, \kappa^{1/\upsilon} \,
\{\mf w_m \, \gamma^{r}_m\}^{1/2} \; \Big] \;.
\end{align*}

By Lemma \ref{l44} with
$\lambda = \kappa^{1/\upsilon} \, \{\mf w_m \, \gamma^{r}_m\}^{1/2}$,
$F(u) = \partial^2_{x_i, x_j} \phi_m$, since
$\Vert \partial^2_{x_i, x_j} \phi_m \Vert_\infty \le C_0 \, \Vert
m\Vert^2$, the previous expression is bounded above by
\begin{equation*}
C_1(\rho, A, T) \, \Omega_{n,s,t} \,
\frac{1}{\kappa^{\beta/\upsilon}}\, \sum_{m\in \bb Z^d} 
\frac{1}{(\mf w_m \, \gamma^{r}_m)^{\beta/2}} \,
\,\Vert m \Vert^{2\beta}
\end{equation*}
for all $n\ge n(\delta,T)$. As $r> (d+5)/2 + (d/\beta)$, the sum over
$m$ is finite. 

Up to this point, we proved that the expectation appearing in the
statement of the corollary is bounded by
$\mf a + C_2(b-1) \mf a^{1-b} \Omega_{n,s,t}$ for all $\mf a>0$. Here,
$b= \beta/\upsilon>1$ and $C_2$ is a constant which depends on $\rho$,
$A$, $T$ and also on $\theta$, $\alpha$, $\upsilon$, $r$ [through
$\beta$].  It remains to optimize over $\mf a$ to complete the proof.
\end{proof}

Fix a finite subset $A$ of $\bb Z^d$ with at least two elements and
$r>0$.  Let $(\mu_n : n\ge 1)$ be a sequence of probability measures
such that $H_n(\mu_n \,|\, \nu^n_\rho) \le a_n + R_d(n)$.  Denote by
$\bb Q^{A,\mb B}_n$ the measure on $D([0,T], \mc H_{-r})$ induced by
the process $\mb B^{A,n}_t$ and the measure $\bb P^n_{\mu_n}$.


\begin{corollary}
\label{l32}
In dimension $1$ and $2$, the sequence of probability measures
$\bb Q^{A,\mb B}_n$ is tight in $C([0,T], \mc H_{-r})$ for
$r>(3d+5)/2$.
\end{corollary}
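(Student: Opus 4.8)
The plan is to deduce tightness from the Kolmogorov--\v{C}entsov criterion, Theorem \ref{l46}, fed by the moment bound of Corollary \ref{l31}. Condition (i) of Theorem \ref{l46} is immediate: by definition $\mb B^{A,n}_0 = 0$ (the time integral over $[0,0]$ vanishes), so $\Vert \mb B^{A,n}_0\Vert_{-r}\equiv 0$ and the requirement $\sup_n \bb E_{\mu_n}[\Vert \mb B^{A,n}_0\Vert^a_{-r}] < \infty$ holds for every $a>0$. The whole matter thus reduces to verifying the increment condition (ii): producing exponents $b,c>0$ with $c/b>\vartheta$ for some $\vartheta\in(0,1)$ and a finite constant $C_0$ \emph{independent of $n$} such that $\bb E_{\mu_n}[\Vert \mb B^{A,n}_t - \mb B^{A,n}_s\Vert^b_{-r}] \le C_0\,|t-s|^{1+c}$ for all $0\le s,t\le T$.

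To this end I would fix the free parameters of Corollary \ref{l31} by taking $\alpha = 2$ and $\theta\in(0,2/3)$, so that $\beta = \theta + \alpha(1-\theta) = 2-\theta\in(1,2)$ and $\gamma = \theta + \alpha(1-\theta)/2 = 1$, and then choosing $\upsilon\in(1,\beta)$ close enough to $\beta$. Raising the inequality of Corollary \ref{l31} to the power $\upsilon/\beta$ gives
\begin{equation*}
\bb E_{\mu_n}\big[\, \Vert \mb B^{A,n}_t - \mb B^{A,n}_s \Vert^\upsilon_{-r}\,\big]
\;\le\; C_1^{\upsilon/\beta}\,\big(\Omega^n_{s,t}\big)^{\upsilon/\beta}\;,
\end{equation*}
whose time factor equals $|t-s|^{[(\theta/2)+\alpha(1-\theta)]\upsilon/\beta}$. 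With these choices $(\theta/2)+\alpha(1-\theta) = 2 - \tfrac{3}{2}\theta > 1$, and taking $\upsilon/\beta$ sufficiently close to $1$ keeps the exponent strictly above $1$. This identifies $b=\upsilon$ and $c = [(\theta/2)+\alpha(1-\theta)]\,\upsilon/\beta - 1 > 0$, after which any $\vartheta\in(0,c/\upsilon)\cap(0,1)$ is admissible.

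The step where the restriction $d\le 2$ enters, and which I expect to be the main point to check, is the uniform boundedness in $n$ of the prefactor $(\Omega^n_{s,t})^{\upsilon/\beta}$ stripped of its time factor, i.e.\ of $\big[\,n^{\delta\gamma}\{a_n+R_d(n)\}^\gamma/(a_n n^d)^{\theta/2}\,\big]^{\upsilon/\beta}$. Since $\gamma = 1$, for $d=1$ one has $a_n+R_1(n) = a_n+\sqrt{a_n}\le 2a_n$, so this quantity is of order $n^{\delta-\theta/2}\,a_n^{1-\theta/2}$; for $d=2$, $a_n+R_2(n)\le 2a_n\log n$, giving order $n^{\delta-\theta}\,a_n^{1-\theta/2}(\log n)$. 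Because $a_n\le\sqrt{\log n}$, the $a_n$- and $\log n$-factors are subpolynomial in $n$, so choosing $\delta$ small enough that $\delta<\theta/2$ (for $d=1$) or $\delta<\theta$ (for $d=2$) forces the power of $n$ to be strictly negative; the prefactor then tends to $0$ and is in particular bounded uniformly in $n$. In dimension $d\ge 3$ the factor $R_d(n)=a_n n^{d-2}$ contributes an extra $n^{(d-2)\gamma}$, which destroys this decay, and this is exactly why the corollary is limited to $d\le 2$.

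Combining the two preceding steps yields condition (ii) with an $n$-uniform constant $C_0$. It remains only to note that the hypothesis $r>(3d+5)/2 = (d+5)/2 + d$ implies the constraint $r>(d+5)/2 + d/\beta$ required by Corollary \ref{l31}, because $\beta>1$ gives $d/\beta<d$. Theorem \ref{l46} then provides tightness of $\bb Q^{A,\mb B}_n$ in $C^\vartheta([0,T],\mc H_{-r})$, and since that topology is stronger than the uniform one, tightness in $C([0,T],\mc H_{-r})$ follows at once. The only delicate bookkeeping is ensuring simultaneously $c>0$, $\upsilon<\beta$, and boundedness of the $n$-prefactor, and all three have slack for $d\le 2$ once $\theta$ and $\delta$ are taken small.
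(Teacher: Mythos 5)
Your proposal is correct and follows essentially the same route as the paper: both verify condition (i) of Theorem \ref{l46} via $\mb B^{A,n}_0=0$ and condition (ii) via Corollary \ref{l31} with $\alpha=2$ and $\theta$ small (so $\beta=2-\theta$, $\gamma=1$), the only difference being cosmetic parameter choices ($\upsilon$ close to $\beta$ and $\delta<\theta/2$, resp.\ $\delta<\theta$, in your version versus $\upsilon=2(1-\theta)$, $\delta=d\theta/4$ in the paper's). Your identification of where $d\le 2$ enters — the boundedness, indeed vanishing, of $n^{\delta\gamma}\{a_n+R_d(n)\}/(a_n n^d)^{\theta/2}$, which fails for $d\ge 3$ because $R_d(n)=a_n n^{d-2}$ — is exactly the paper's argument.
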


\begin{proof}
The proof relies on Theorem \ref{l46}.  The first condition of this
theorem is satisfied because $\mb B^{A,n}_0=0$.

We turn to condition (ii). In Corollary \ref{l31}, set $\alpha =2$, fix
$\theta>0$ small, let $\upsilon = 2(1-\theta)$, $\delta =
d\theta/4$. With these choices, $\beta = 2-\theta > \upsilon$,
$\gamma=1$, and, taking $r>(3d+5)/2$,
\begin{equation*}
\bb E_{\mu_n} \Big[ \, \Vert \, \mb B^{A,n}_t \,-\, \mb B^{A,n}_s \,
\Vert^{2(1-\theta)}_{-r} \, \Big] \;\le\; C_1\,
\Big( \frac{a_n + R_d(n)}{n^{d\theta/4}} \Big)^{2(1-\theta)/(2-\theta)}
\, |\,t-s\,|^{1+c(\theta)} 
\end{equation*}
for all $0\le s < t \le T$, $n\ge n_0$. Here,
$c(\theta)= [2-6\theta+3\theta^2]/(2-\theta) >0$. By definition of the
sequences $a_n$, $R_d(n)$, in dimensions $1$ and $2$,
$[a_n + R_d(n)]/n^{d\theta/4} \to 0$.  The ratio is therefore
bounded. Thus, condition (ii) holds [for $n\ge n_0$] with
$b=2(1-\theta)>0$ and $c= c(\theta)>0$.
\end{proof}

Note that we actually proved that the sequence $\bb Q^{A,\mb B}_n$ is
tight in $C^\vartheta([0,T], \mc H_{-r})$ for $\vartheta <
c(\theta)/[2(1-\theta)] = (1/2) + O(\theta)$. 

\begin{remark}
\label{l33}
Unfortunately, the previous argument does not apply to the dimension
$d=3$. The reason is the term $1/a_n$ in the estimate stated in
Proposition \ref{l09}, which does not depend on time. In particular,
for $|t-s|$ very small, the estimate is bad.

This term $1/a_n$, independent of time, gives rise to the power $1/2$
in the expression $|t-s|^{\theta/2}$ when we estimate the first
expectation in equation \eqref{44c}. In Proposition \ref{l09}, if, we
had, instead of $1/a_n$, a term which decreases linearly in time, as
$t\to 0$, we would get $|t-s|^{\theta}$ in equation \eqref{44c},
instead of $|t-s|^{\theta/2}$, and the proof of the tightness could be
carried out in dimension $3$ taking $\theta = 2/3 + \epsilon$ in
Corollary \ref{l31}.
\end{remark}

\subsection{Tightness of the martingale $M^n_t$}
\label{ssec5.3}

By \cite[Lemma A.5.1]{kl},
\begin{equation*}
M^n_t(F)^2 \;-\; \int_0^t \big\{
L_n X^n_s(F)^2 \,-\, 2\, X^n_s(F)  \, L_n X^n_s(F) \,\big\}\; ds
\end{equation*}
is a martingale. Denote by $\Gamma^n(F)$ the expression inside braces.
A straightforward computation yields that
\begin{equation}
\label{47}
\begin{aligned}
\Gamma^n(F) \;&=\; \frac{1}{a_n\, n^d} \sum_{j=1}^d \sum_{x\in \bb T^d_n}
c_j(\tau_x \eta) \, [\eta_{x+e_j} - \eta_x]^2\,
[(\Delta_{n,j} F)(x/n)]^2 \\
& +\; \frac{1}{n^d} \sum_{x\in \bb T^d_n} \sum_y
F(x/n)^2 \, [\eta_{y} - \eta_x]^2 \;,
\end{aligned}
\end{equation}
where $\nabla_{n,j} F$ stands for the discrete partial derivative
given by
$\color{bblue} (\nabla_{n,j} F)(x) = n[F((x+e_j)/n) - F(x/n)]$.  Thus,
$|\,\Gamma^n(F)\,|$ is bounded by
$C_0 \,\{ \Vert F \Vert^2_\infty + \Vert \nabla F \Vert^2_\infty\,\}$
for some finite constant $C_0$, and
\begin{equation}
\label{35}
\bb E^n_{\eta} \big[\, M^n_t(F)^2\,\big] \;\le\; C_0\, t\, 
\,\{ \, \Vert F \Vert^2_\infty + \Vert \nabla F \Vert^2_\infty\,\}
\end{equation}
for all $t>0$.

\begin{lemma}
\label{l22}
Fix $d\ge 1$ and $r>1+(d/2)$. Then, there exists and finite constant
$C_0$ such that
\begin{equation*}
\lim_{n\to \infty} \, \sup_{\eta\in \Omega_n}\,
\bb E^n_{\eta} \big[\, \sup_{0\le t\le T}
\Vert\, M^n_{t} \,\Vert^2_{-r} \,\big] \;\le\; C_0\, T \;.
\end{equation*}
Moreover,
\begin{equation*}
\limsup_{p\to \infty} \limsup_{n\to \infty} \,
\sup_{\eta\in \Omega_n}\,
\bb E^n_{\eta} \big[\, \sup_{0\le t\le T}
\sum_{\Vert m\Vert \ge p} \gamma_m^{-r}\;
\Vert\, M^n_{t} (\phi_m) \,\Vert^2 \,\big] \;=\; 0\;.
\end{equation*}
\end{lemma}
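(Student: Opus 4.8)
The plan is to reduce the statement to a mode-by-mode application of Doob's $L^2$ maximal inequality, feeding in the second-moment bound \eqref{35}. By the definition of the Sobolev norm and the elementary inequality $\sup_t \sum_m \le \sum_m \sup_t$,
\begin{equation*}
\sup_{0\le t\le T} \Vert M^n_t\Vert^2_{-r}
\;=\; \sup_{0\le t\le T} \sum_{m\in\bb Z^d} \gamma_m^{-r}\,
|M^n_t(\phi_m)|^2
\;\le\; \sum_{m\in\bb Z^d} \gamma_m^{-r}\,
\sup_{0\le t\le T} |M^n_t(\phi_m)|^2\;,
\end{equation*}
so after taking expectations it is enough to estimate $\bb E^n_\eta[\sup_{0\le t\le T}|M^n_t(\phi_m)|^2]$ for each $m$. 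Since $F\mapsto M^n_t(F)$ is linear, splitting $\phi_m = \varphi^c_m + i\,\varphi^s_m$ into its real and imaginary parts, $\varphi^c_m(x)=\cos(2\pi x\cdot m)$ and $\varphi^s_m(x)=\sin(2\pi x\cdot m)$, gives $|M^n_t(\phi_m)|^2 = M^n_t(\varphi^c_m)^2 + M^n_t(\varphi^s_m)^2$, where both $M^n_t(\varphi^c_m)$ and $M^n_t(\varphi^s_m)$ are genuine real-valued martingales.

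Next I would apply Doob's inequality to each of these real martingales, $\bb E^n_\eta[\sup_{0\le t\le T} M^n_t(\varphi^c_m)^2] \le 4\,\bb E^n_\eta[M^n_T(\varphi^c_m)^2]$, and then invoke \eqref{35} to bound the right-hand side by $4\,C_0\,T\,\{\Vert\varphi^c_m\Vert^2_\infty + \Vert\nabla\varphi^c_m\Vert^2_\infty\}$, and likewise for $\varphi^s_m$. Because $\Vert\varphi^c_m\Vert_\infty\le 1$ and the discrete gradient obeys $\Vert\nabla_{n,j}\varphi^c_m\Vert_\infty\le C_0\,\Vert m\Vert$ uniformly in $n$ (a direct computation, since $(\nabla_{n,j}\phi_m)(x)=n\,\phi_m(x/n)\,[e^{2\pi i m_j/n}-1]$ has modulus at most $2\pi\Vert m\Vert$), each mode contributes at most $C_1\,T\,(1+\Vert m\Vert^2)=C_1\,T\,\gamma_m$, uniformly in $n$ and $\eta$. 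Summing,
\begin{equation*}
\sup_{\eta\in\Omega_n}
\bb E^n_\eta\big[\,\sup_{0\le t\le T}\Vert M^n_t\Vert^2_{-r}\,\big]
\;\le\; C_1\,T\,\sum_{m\in\bb Z^d}\gamma_m^{\,1-r}\;,
\end{equation*}
and the series converges precisely when $r-1>d/2$, which yields the first assertion.

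For the second assertion I would run the identical mode-by-mode estimate but restrict the summation to $\Vert m\Vert\ge p$, obtaining
\begin{equation*}
\sup_{\eta\in\Omega_n}
\bb E^n_\eta\Big[\,\sup_{0\le t\le T}
\sum_{\Vert m\Vert\ge p}\gamma_m^{-r}\,|M^n_t(\phi_m)|^2\,\Big]
\;\le\; C_1\,T\,\sum_{\Vert m\Vert\ge p}\gamma_m^{\,1-r}\;,
\end{equation*}
a bound uniform in $n$ and $\eta$. Since the full series $\sum_m\gamma_m^{1-r}$ converges for $r>1+d/2$, its tail vanishes as $p\to\infty$, and taking $\limsup_n$ followed by $\limsup_p$ gives the claimed limit $0$. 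I do not anticipate a genuine obstacle here: the only points requiring care are the exchange of supremum and sum (handled by $\sup\sum\le\sum\sup$) and the verification that the gradient of $\phi_m$ costs exactly one factor of $\gamma_m$ uniformly in $n$, which is what pins down the threshold $r>1+d/2$.
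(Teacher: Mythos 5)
Your proof is correct and follows essentially the same route as the paper's: exchange $\sup_t$ with the sum over modes, apply Doob's $L^2$ inequality mode by mode, bound $\bb E^n_\eta[M^n_T(\phi_m)^2]$ via \eqref{35} with $\Vert\nabla\phi_m\Vert_\infty \lesssim \Vert m\Vert$, and sum the resulting series $\sum_m \gamma_m^{1-r}$, which converges exactly when $r>1+(d/2)$; the tail estimate then gives the second assertion. The only difference is that you make explicit the splitting of $\phi_m$ into real and imaginary parts and the discrete-gradient computation, both of which the paper leaves implicit.
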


\begin{proof}
By the formula for the $\mc H_{-r}$-norm and since
$\sup_t \sum_m a_m(t) \le \sum_m \sup_t a_m(t)$, the first expectation
is bounded by
\begin{equation*}
\sum_{m\in \bb Z^d}
\gamma_m^{-r}\;
\bb E^n_{\eta} \big[\, \sup_{0\le t\le T}  \,
\Vert\, M^n_{t} (\phi_m)\,\Vert^2 \,\big]\;.
\end{equation*}
Since $M^n_{t}(\phi_m)$ is a martingale for each $m\in \bb Z^d$, by
Doob's inequality, this sum is bounded by
\begin{equation*}
4\, \sup_{\eta\in \Omega_n} \sum_{m\in \bb Z^d}
\gamma_m^{-r}  \, 
\bb E^n_{\eta} \big[\,  \Vert\, M^n_{T}(\phi_m) \,\Vert^2 \,\big]\;.
\end{equation*}
By \eqref{35} and by definition of $\phi_m$, this expression is less
than or equal to
\begin{equation*}
C_0\, T\, \sum_{m\in \bb Z^d}
\gamma_m^{-r}  \, ( 1 + \Vert m \Vert^2)\;.
\end{equation*}
This proves the first assertion of the lemma since $r>1+(d/2)$. The
proof of the second one is similar.
\end{proof}

Fix $r>0$, a sequence of probability measures $\mu_n$ on
$\Omega_n$, and denote by $\bb Q^M_n$ the measure on
$D([0,T], \mc H_{-r})$ induced by the martingale $M^n_t$ and the
measure $\bb P^n_{\mu_n}$.

\begin{lemma}
\label{l23}
Fix $d\ge 1$ and $r> 1+(d/2)$. The sequence of measures $\bb Q^M_n$ is
tight in $D([0,T], \mc H_{-r})$. Moreover, all limit points are
concentrated on continuous trajectories.
\end{lemma}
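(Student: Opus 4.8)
The plan is to deduce the tightness of the $\mc H_{-r}$-valued martingales $M^n$ from two ingredients: the tightness of the scalar coordinate processes $M^n_\cdot(\phi_m)$ in $D([0,T],\bb R)$, obtained from the quadratic-variation bound, and the uniform control of the high-frequency modes already recorded in the second assertion of Lemma \ref{l22}. Concretely, I would appeal to the standard tightness criterion for càdlàg processes with values in the separable Hilbert space $\mc H_{-r}$ (in the spirit of the treatment in \cite{kl}), which asserts that $\bb Q^M_n$ is tight provided: (a) for every $m\in\bb Z^d$ the real-valued processes $\{M^n_\cdot(\phi_m):n\ge1\}$ are tight in $D([0,T],\bb R)$; and (b) for every $\epsilon>0$,
\begin{equation*}
\lim_{p\to\infty}\;\limsup_{n\to\infty}\;
\bb P_{\mu_n}\Big[\,\sup_{0\le t\le T}\sum_{\Vert m\Vert\ge p}\gamma_m^{-r}\,
\big|M^n_t(\phi_m)\big|^2\,>\,\epsilon\,\Big]\;=\;0\;.
\end{equation*}

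For (a), fix $m$ and recall from \cite[Lemma A.5.1]{kl} that the predictable quadratic variation of the martingale $M^n_t(\phi_m)$ equals $\int_0^t\Gamma^n_s(\phi_m)\,ds$, with $\Gamma^n$ given by \eqref{47}. The uniform bound $|\Gamma^n(\phi_m)|\le C_0\{\Vert\phi_m\Vert^2_\infty+\Vert\nabla\phi_m\Vert^2_\infty\}\le C_0(1+\Vert m\Vert^2)$ stated below \eqref{47} yields, by optional stopping, for every stopping time $\tau$ with $\tau+\theta\le T$,
\begin{equation*}
\bb E_{\mu_n}\big[\,(M^n_{\tau+\theta}(\phi_m)-M^n_\tau(\phi_m))^2\,\big]
\;=\;\bb E_{\mu_n}\Big[\int_\tau^{\tau+\theta}\Gamma^n_s(\phi_m)\,ds\Big]
\;\le\;C_0\,(1+\Vert m\Vert^2)\,\theta\;.
\end{equation*}
Combined with the marginal bound $\bb E_{\mu_n}[M^n_t(\phi_m)^2]\le C_0(1+\Vert m\Vert^2)\,t$ coming from \eqref{35}, Markov's inequality verifies Aldous's criterion, giving the tightness of each scalar process.

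Step (b) is immediate: by Markov's inequality and the second assertion of Lemma \ref{l22}, the probability in (b) is at most $\epsilon^{-1}\sup_{\eta\in\Omega_n}\bb E^n_\eta[\sup_{0\le t\le T}\sum_{\Vert m\Vert\ge p}\gamma_m^{-r}|M^n_t(\phi_m)|^2]$, which vanishes upon letting $n\to\infty$ and then $p\to\infty$. This establishes the tightness of $\bb Q^M_n$ in $D([0,T],\mc H_{-r})$. To conclude that every limit point concentrates on continuous paths, I would bound the jumps of $M^n$ in the $\mc H_{-r}$ norm. The integral term in \eqref{30} is continuous in time, so the jumps of $M^n_t$ coincide with those of $X^n_t$, and a single voter flip or a single exchange changes $X^n_t(\phi_m)$ by at most $C_0(1+\Vert m\Vert)/\sqrt{a_n n^d}$, uniformly in $x$ and $m$ (using $|\phi_m|\equiv1$). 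Hence
\begin{equation*}
\sup_{0\le t\le T}\big\Vert M^n_t-M^n_{t^-}\big\Vert^2_{-r}
\;\le\;\frac{C_0}{a_n\,n^d}\sum_{m\in\bb Z^d}\gamma_m^{-r}\,(1+\Vert m\Vert)^2\;,
\end{equation*}
which is finite since $r>1+(d/2)$ and tends to $0$ as $n\to\infty$. A vanishing maximal jump forces all weak limits onto $C([0,T],\mc H_{-r})$. The only genuinely delicate point is the justification of the Hilbert-space tightness criterion combining (a) and (b); the remaining steps are routine consequences of the quadratic-variation identity \eqref{47} and of Lemma \ref{l22}, so I anticipate no serious obstacle beyond careful bookkeeping.
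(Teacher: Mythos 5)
Your proposal is correct in substance and shares the paper's backbone: both arguments lean on Lemma \ref{l22} --- its first assertion for stochastic boundedness, its second to reduce the infinite-dimensional problem to finitely many Fourier modes --- and then settle the scalar martingales $M^n_\cdot(\phi_m)$ by standard martingale estimates. The differences are in the packaging. The paper invokes the Hilbert-space criterion of \cite[Lemma 11.3.2]{kl} (a sup bound in a weaker norm $\mc H_{-r'}$, $r'<r$, plus a uniform modulus condition in $\mc H_{-r}$), and disposes of each scalar mode by the argument of \cite[Lemma 11.3.7]{kl}; you instead prove coordinate tightness via Aldous's criterion (your optional-stopping identity for the predictable quadratic variation is correct), keep the tail estimate from Lemma \ref{l22}, and obtain continuity of limit points from a direct bound on the maximal jump of $M^n$ in $\mc H_{-r}$, of order $(a_n n^d)^{-1/2}$; your jump computation and the summability requirement $r>1+(d/2)$ are right.

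The one point that needs care is the one you yourself flagged: the criterion ``(a) coordinate tightness in $D([0,T],\bb R)$ plus (b) uniform tail control'' is not valid verbatim for the $J_1$ topology on $D([0,T],\mc H_{-r})$. Tightness of each coordinate in $D([0,T],\bb R)$ does not imply tightness of the finite-dimensional projections in $D([0,T],\bb R^k)$, because jumps of different coordinates need not be alignable by a single time change (the standard counterexample is a pair of deterministic indicator processes with jumps at times $1/2$ and $1/2+1/n$). The repair is exactly the jump bound you prove at the end: since the maximal jump of $M^n$, hence of every coordinate, vanishes as $n\to\infty$, each scalar process is C-tight, and C-tightness of the coordinates together with (b) does yield tightness in $D([0,T],\mc H_{-r})$ with all limit points on continuous paths (approximate $M^n$ uniformly in time by its finite-dimensional projections). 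So the logical order should be: jump bound and Aldous first, conclude C-tightness of each mode, then assemble with the tail control. With that reordering your argument is complete, and it amounts to a self-contained substitute for the paper's references to \cite[Lemma 11.3.2]{kl} and \cite[Lemma 11.3.7]{kl}.
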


\begin{proof}
According to \cite[Lemma 11.3.2]{kl}, we have to show that
\begin{equation*}
\lim_{A\to \infty} \, \limsup_{n\to \infty}\,
\bb P^n_{\mu_n} \big[\, \sup_{0\le t\le T}
\Vert\, M^n_{t} \,\Vert_{-r'} \, >\, A \,\big] \;=\; 0 
\end{equation*}
for some $r'<r$ and that for every $\epsilon>0$,
\begin{equation*}
\lim_{\delta\to 0} \, \limsup_{n\to \infty}\,
\bb P^n_{\mu_n} \big[\, \sup_{s,t}
\Vert\, M^n_{t} \,-\, M^n_s\,\Vert_{-r}
\,>\,\epsilon \, \big] \;=\; 0 
\end{equation*}
where the supremum is carried over all $0\le s, t \le R$ such that
$|t-s|\le \delta$.

The first condition follows from the first assertion of Lemma
\ref{l22}. By the second assertion of this lemma, to prove the second
condition, it is enough to show that for all $m\in \bb Z^d$,
$\epsilon>0$, 
\begin{equation*}
\lim_{\delta\to 0} \, \limsup_{n\to \infty}\,
\bb P^n_{\mu_n} \big[\, \sup_{s,t}
\Vert\, M^n_{t}(\phi_m) \,-\, M^n_s(\phi_m)\,\Vert
\,>\,\epsilon \, \big] \;=\; 0 \;.
\end{equation*}
The proof of this statement is similar to the one of \cite[Lemma
11.3.7]{kl} and left to the reader.
\end{proof}

\subsection{Tightness of the process $\mb I^n_t$ in dimension $1$ and $2$}

Denote by $\mb I^n_t$ the process defined by
\begin{equation*}
\mb I^n_t (F) \;=\; \int_0^t X^n_s(\mc A F)\; ds\;, \quad F\,\in\,
C^\infty(\bb T^d)\;.
\end{equation*}
The main result of this section asserts that the process $\mb I^n_t$
is tight in $D([0,T], \mc H_{-r})$ for $r> (3d+5)/2$. The proof is
based on a representation of $\mb I^n_t$.

Recall that we denote by $(P_t:t\ge 0)$ the semigroup associated to
the operator $\mc A$. Fix $0<t\le T$. For $F$ in $C^\infty(\bb T^d)$,
let $\color{bblue} F_{s,t} = P_{t-s} F$, $0\le s\le t$. Denote by
$M^{n,t}_s (F)$, $0\le s\le t$, the martingale defined by
\begin{equation}
\label{51}
M^{n,t}_s(F) \;:=\; X^n_s(F_{s,t}) \;-\; X^n_0(F_{0,t}) \;-\; \int_0^s
(\partial_u + L_n) X^n_u(F_{u,t})\, du\;, 
\end{equation}
for $F\,\in\, C^\infty(\bb T^d)$.  Taking $s=t$ yields a
representation for $X^n_t(F)$:
\begin{equation}
\label{52}
X^n_t(F) \;=\; X^n_t(F_{t,t}) \;=\; X^n_0(F_{0,t})
\;+\; M^{n,t}_t(F) \;+\; \int_0^t
(\partial_u + L_n) X^n_u(F_{u,t})\, du\;.
\end{equation}

Since $\partial_u X^n_u(F_{u,t}) \,=\, -\, X^n_u(\mc A F_{u,t})$, by
\eqref{42},
\begin{equation}
\label{57}
(\partial_u + L_n) X^n_u(F_{u,t}) \;=\;
R^n_u(F_{u,t}) \;+\; B^n_u(F_{u,t})\;, 
\end{equation}
where $R^n$ and $B^n$ have been introduced just below \eqref{42}. In
particular, the tightness of the process $\mb I^n_t$ follows from the
tightness of the processes $\mb I^{p,n}_t$, $1\le p\le 4$, where
\begin{gather*}
\mb I^{1,n}_t (F) \;:=\; \int_0^t X^n_0(P_s \mc A  F) \; ds\;, \quad
\mb I^{2,n}_t (F) \;:=\;  \int_0^t M^{n,s}_s(\mc A F) \; ds\;, \\
\mb I^{3,n}_t (F)\;:=\; \int_0^t  \int_0^s R^n_u((\mc A F) _{u,s})
 \; du \, ds\;,
\quad
\mb I^{4,n}_t (F) \;:=\;  \int_0^t  \int_0^s
B^n_u((\mc A F) _{u,s})  \; du \, ds\;.
\end{gather*}

Consider the process $\mb I^{1,n}_t$. Recall from \eqref{46} that
$\phi_m$ is an eigenvector of $\mc A$ so that
$P_s \mc A \phi_m = -\, \lambda(m)\, e^{-\lambda(m) s} \, \phi_m$ and 
$X^n_0(P_s \mc A \phi_m) = -\, \lambda(m)\, e^{-\lambda(m) s} \,
X^n_0(\phi_m)$. In particular, by definition, of the
$\mc H_{-r}$-norm, for $0\le s \le t$,
\begin{equation*}
\big\Vert\, \mb I^{1,n}_t \,-\, \mb I^{1,n}_s\, \big\Vert^{2}_{-r}
\;=\; \sum_{m\in\bb Z^d} \gamma^{-r}_m \, 
\Big| \, \lambda(m)\, \int_s^t e^{-\lambda(m) u} \, du \,\Big|^2\,
\big\Vert\, X^n_0(\phi_m) \, \big\Vert^{2} \;,
\end{equation*}
so that
$\Vert\, \mb I^{1,n}_t \,-\, \mb I^{1,n}_s\, \Vert^{2}_{-r} \,\le\,
C_0 (t-s)^2 \, \Vert\, X^n_0 \,\Vert^{2}_{-r+1}$ for some finite
constant $C_0$. Therefore, by \eqref{68}, for $r> (d/2)+1$, 
\begin{equation*}
\bb E_{\mu_n} \big[\, \sup_{|t-s|\le \delta}
\Vert\, \mb I^{1,n}_t \,-\, \mb I^{1,n}_s\, \Vert^{2}_{-r}\,
\big] \;\le\; C_0(\rho) \, \delta^2 \, \frac{1}{a_n}\,
\big( H_n(\mu_n\,|\, \nu^n_\rho) \, +\, C_0\,\big) \;.
\end{equation*}
The tightness of the process $\mb I^{1,n}$ in $D([0,T], \mc H_{-r})$
for $r> 1+(d/2)$ follows from this estimate and the fact that
$\mb I^{1,n}_0=0$.

\smallskip We turn to the process $\mb I^{3,n}_t$. Since $\phi_m$ is
an eigenvector for $\mc A$, a change of the order of summations yields
that
\begin{equation*}
\mb I^{3,n}_t (\phi_m)\;=\; -\, \int_0^t  R^n_u(\phi_m)
\big\{\, 1 \,-\, e^{-\lambda(m)\, [t-u]} \,\big\} \; du \;.
\end{equation*}
Hence, by definition of the norm in $\mc H_{-r}$ and Schwarz
inequality,
\begin{equation*}
\Vert\, \mb I^{3,n}_t \, \Vert^{2}_{-r} \;\le\; t\, 
\int_0^t \Vert\, R^{n}_s \, \Vert^{2}_{-r}\; ds
\end{equation*}
for all $t>0$, $r>0$. In particular, the tightness of the process
$\mb I^{3,n}$ in $D([0,T], \mc H_{-r})$, for $r>3 + (d/2)$, follows
from Lemma \ref{l26}.

\smallskip We turn to the process $\mb I^{4,n}_t$. As for $\mb
I^{3,n}_t$, we have that
\begin{equation*}
\mb I^{4,n}_t (\phi_m)\;=\; -\, \int_0^t  B^n_u(\phi_m)
\big\{\, 1 \,-\, e^{-\lambda(m)\, [t-u]} \,\big\} \; du \;.
\end{equation*}
We may therefore repeat the arguments presented in Subsection
\ref{ss5.2} to prove that, in dimension $1$ and $2$, the process
$\mb I^{4,n}$ is tight in $D([0,T], \mc H_{-r})$ provided
$r> (3d+5)/2$.

\smallskip
Finally, we consider the process $\mb I^{2,n}_t$. A computation,
similar to the one performed at the beginning of Subsection
\ref{ssec5.3}, yields that the predictable quadratic variation of the
martingale $M^{n,t}(F)$, denoted by $\<M^{n,t}(F)\>$, is given by
\begin{equation}
\label{59}
\<M^{n,t}(F)\>_s \;=\; \int_0^s \Gamma^n_u(F_{u,t})\; du\;,
\end{equation}
where $\Gamma^n(F)$ has been introduced in \eqref{47}.

By Schwarz inequality and since $\phi_m$ is an eigenvector of $\mc A$
associated to the eigenvalue $\lambda(m)$, for  $m\in \bb Z^d$,
\begin{equation*}
\Vert\, \mb I^{2,n}_t (\phi_m) \, \Vert^{2} \;\le\;
\lambda(m)^2\, t\, \int_0^t \Vert\,  M^{n,u}_u (\phi_m) \, 
\Vert^{2}\; du\;.
\end{equation*}

It follows from the two previous displayed equations that for every
$m\in\bb Z^d$, $T>0$,
\begin{equation*}
\sup_{\eta\in \Omega_n}\,
\bb E^n_{\eta} \big[\, \sup_{0\le t\le T}
\Vert\,  \mb I^{2,n}_t (\phi_m) \,\Vert^2 \,\big]
\;\le\; C_0\, T \, \lambda(m)^2 \int_0^T
\bb E^n_{\eta} \Big[\, 
\int_0^u \,  \Gamma^n_u ([\phi_m]_{v,u}) \, dv  \,\Big] \; du
\;.
\end{equation*}

This bound is the key estimate in the proofs of Lemmata \ref{l22} and
\ref{l23}. We just obtained an additional factor $\lambda(m)^2$. We may,
therefore, repeat the arguments presented in Subsection \ref{ssec5.3}
to deduce that the process $\mb I^{2,n}_t$ is tight in $D([0,T], \mc
H_{-r})$ for $r> 3 + (d/2)$.

We have proved the following result.  Let $\mu_n$ be a sequence of
probability measures on $\Omega_n$. Denote by $\bb Q^I_n$ the measure
on $D([0,T], \mc H_{-r})$ induced by the process $\mb I^n_t$ and the
measure $\bb P^n_{\mu_n}$.

\begin{lemma}
\label{l25}
Assume that $d=1$ or $2$. Let $\mu_n$ be a sequence of probability
measures on $\Omega_n$ such
$\lim_{n\to \infty} a^{-1}_n H_n(\mu_n \,|\, \nu^n_\rho) =0$.  The
sequence of probability measures $\bb Q^I_n$ is tight in
$D([0,T], \mc H_{-r})$ for $r> (3d+5)/2$.
\end{lemma}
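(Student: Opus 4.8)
The plan is to exploit the representation \eqref{52} of $X^n_t$, which after integrating $X^n_s(\mc A F)$ in time produces the decomposition of $\mb I^n_t$ into the four processes $\mb I^{p,n}_t$, $1\le p\le 4$, displayed just above the statement. Tightness of a finite sum of $\mc H_{-r}$-valued processes follows from the joint tightness of the components (a family of laws on a product space is tight iff each marginal family is, and the addition map is continuous), so it suffices to establish tightness of each $\mb I^{p,n}_t$ separately on $D([0,T],\mc H_{-r})$ and then collect the constraints on $r$ that each piece imposes, taking the largest.

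For the first three pieces the argument is essentially routine given the estimates already at hand. The process $\mb I^{1,n}_t$ is governed by the increment bound $\Vert \mb I^{1,n}_t - \mb I^{1,n}_s\Vert^2_{-r} \le C_0\,(t-s)^2\,\Vert X^n_0\Vert^2_{-r+1}$, which comes from the eigenvalue identity \eqref{46}; combined with the bound \eqref{68} on $\bb E_{\mu_n}[\Vert X^n_0\Vert^2_{-r+1}]$ (finite and vanishing since $a_n^{-1}H_n(\mu_n\,|\,\nu^n_\rho)\to 0$), this supplies a $(t-s)^2$ modulus of continuity for the Kolmogorov--\v{C}entsov criterion, Theorem \ref{l46}, valid for $r>1+(d/2)$. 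The process $\mb I^{3,n}_t$ satisfies $\Vert \mb I^{3,n}_t\Vert^2_{-r}\le t\int_0^t \Vert R^n_s\Vert^2_{-r}\,ds$ by Schwarz, so its tightness follows directly from Lemma \ref{l26}, requiring $r>3+(d/2)$. For $\mb I^{2,n}_t$ one uses the quadratic-variation formula \eqref{59} together with the martingale bounds of Subsection \ref{ssec5.3}; the only new feature is the factor $\lambda(m)^2\sim\Vert m\Vert^4$, which merely shifts the summability threshold in $m$ but still yields tightness for $r>3+(d/2)$.

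The genuinely delicate term is $\mb I^{4,n}_t$, which carries the contribution of $B^n$, i.e.\ of the projection $\Pi^{+2}_\rho h_{j,k}$ onto cylinder functions of degree two or larger. Here I would use the representation $\mb I^{4,n}_t(\phi_m)=-\int_0^t B^n_u(\phi_m)\{1-e^{-\lambda(m)[t-u]}\}\,du$, in which the kernel $1-e^{-\lambda(m)[t-u]}$ is bounded uniformly and is therefore harmless, and then invoke verbatim the machinery of Subsection \ref{ss5.2}: the fractional-moment increment estimate of Lemma \ref{l44} and Corollary \ref{l31}, and the resulting tightness of $\bb Q^{A,\mb B}_n$ obtained in Corollary \ref{l32}. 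This is precisely where the restriction $d\le 2$ and the strongest requirement $r>(3d+5)/2$ enter, since the bound on the moments of $\mb B^{A,n}$ degrades through the entropy growth \eqref{01}, and only in dimensions $1$ and $2$ does the factor $[a_n+R_d(n)]/n^{d\theta/4}$ remain bounded. The main obstacle is thus not the assembly of the four pieces but transporting the Subsection \ref{ss5.2} estimates cleanly through the extra time-kernel; once this is done, the binding threshold among the four constraints is $r>(3d+5)/2$, which completes the proof.
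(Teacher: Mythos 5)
Your proposal is correct and follows essentially the same route as the paper: the same decomposition of $\mb I^n_t$ into the four processes $\mb I^{p,n}_t$ via \eqref{52} and \eqref{57}, the same eigenfunction/Schwarz estimates for $\mb I^{1,n}$, $\mb I^{2,n}$, $\mb I^{3,n}$ (with the same thresholds on $r$), and the same reduction of $\mb I^{4,n}$ to the Subsection \ref{ss5.2} machinery, where the restriction $d\le 2$ and the binding constraint $r>(3d+5)/2$ enter. The paper's own treatment of the bounded kernel $1-e^{-\lambda(m)[t-u]}$ is exactly as brief as yours, so there is nothing to add.
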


\subsection{Proof of Proposition \ref{l34}}

Fix $r> (3d+5)/2$ and assume that $d=1$ or $2$. The tightness of the
measure $\bb Q_n$ in $D([0,T], \mc H_{-r})$ follows from the
decomposition \eqref{e2}, and Lemma \ref{l26}, Corollary \ref{l32} and
Lemmata \ref{l23}, \ref{l25}.

The fact that any limit point is concentrated on continuous
trajectories follows from Lemma \ref{l23} and the fact that $M^n_t$ is
the unique process which is not continuous. \qed

\begin{remark}
\label{rm6}
The proof does not hold in dimension $3$ only because we are not able
to prove that the $\mc H_{-r}$-valued process $\mb B^n_t$ is tight in
$C([0,T], \mc H_{-r})$.
\end{remark}

\subsection{Tightness in dimension $3$}
\label{sec5b}

In dimension 3, we prove that the time integral of $X^n_t$ is tight
under the measure $\bb P_{\mu_n}$.  We start with a bound in $L^p$ for
$p>1$. Throughout this subsection, $\mu_n$ is a sequence of
probability measures on $\Omega_n$ such
$\lim_{n\to \infty} a^{-1}_n H_n(\mu_n \,|\, \nu^n_\rho) =0$.

\begin{lemma}
\label{l40}
For every $p<4/3$, there exists a finite constant
$C_0=C_0(\rho, A, T, p)$ such that
\begin{equation*}
\bb E_{\mu_n} \big[ \, \big|\,  X^n_t(F) \,\big|^{p}\, \big] \;\le\;
C_0 \, \Vert F \Vert^p_{C^3(\bb T^d)}
\end{equation*}
for all $0\le t\le T$, $F\in C^\infty(\bb T^d)$, $n\ge 1$.
\end{lemma}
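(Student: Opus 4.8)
The plan is to avoid a Gronwall argument by working with the mild representation \eqref{52}. Combining \eqref{52} with the cancellation \eqref{57} one obtains
\begin{equation*}
X^n_t(F) \;=\; X^n_0(P_t F) \;+\; M^{n,t}_t(F) \;+\; \int_0^t R^n_u(F_{u,t})\, du \;+\; \int_0^t B^n_u(F_{u,t})\, du\;,
\end{equation*}
with $F_{u,t} = P_{t-u}F$, a decomposition whose right-hand side no longer contains the drift $X^n_u(\mc A\,\cdot\,)$. Since the statement for $p\le 1$ follows from the case $p\in(1,4/3)$ by Jensen's inequality, I would fix $1<p<4/3$ and use $|a+b+c+d|^p\le 4^{p-1}(|a|^p+|b|^p+|c|^p+|d|^p)$ to reduce the problem to an $L^p$ bound for each of the four terms. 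The bound will first be proved for $n$ large; the finitely many remaining values are absorbed through the crude deterministic estimate $|X^n_t(F)|\le (n^d/a_n)^{1/2}\,\Vert F\Vert_\infty$. Throughout, I would exploit that $\mc A$ has constant coefficients, so $P_s$ is an $L^\infty$-contraction commuting with spatial derivatives; hence $\Vert F_{u,t}\Vert_{C^k(\bb T^d)}\le\Vert F\Vert_{C^k(\bb T^d)}$ for every $k$ and $0\le u\le t$, which reduces all the norms below to norms of $F$.

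The first three terms I would control in $L^2$ and then pass to $L^p$ by Jensen, since $p<2$. For $X^n_0(P_t F)$, the entropy inequality together with Corollary \ref{l27} (exactly as in Lemma \ref{l48} and \eqref{68}) and the hypothesis $a_n^{-1}H_n(\mu_n\,|\,\nu^n_\rho)\to0$ give $\bb E_{\mu_n}[X^n_0(P_t F)^2]\le C\,\Vert F\Vert^2_\infty$. For the martingale, \eqref{59} and the pointwise bound $|\Gamma^n(G)|\le C_0(\Vert G\Vert^2_\infty+\Vert \nabla G\Vert^2_\infty)$ read off from \eqref{47} yield $\bb E_{\mu_n}[M^{n,t}_t(F)^2]=\bb E_{\mu_n}[\int_0^t\Gamma^n_u(F_{u,t})\,du]\le C_0\,T\,\Vert F\Vert^2_{C^1(\bb T^d)}$. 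For the $R$-term, Cauchy--Schwarz in time and \eqref{64} give a bound by $\tfrac{C\,t}{a_n n^2}\{\int_0^t H_n(f^n_s)\,ds+t\}\,\Vert F\Vert^2_{C^3(\bb T^d)}$; inserting \eqref{01} and Remark \ref{rm5} with a small exponent, and recalling $R_3(n)=a_n n$ in $d=3$, the prefactor is $O(n^{\kappa-1})$ and hence bounded. This is the term that forces the norm $C^3$ in the statement.

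The genuinely delicate term is the last one, and this is where $d=3$ and the restriction $p<4/3$ enter. Expanding $B^n=\sum_{j,k}B^{j,k,n}$ and $\Pi^{+2}_\rho h_{j,k}=\sum_D c_{j,k,D}(\rho)\,\omega_D$, finite sums over sets $D$ with at least two elements, reduces the $B$-integral to finitely many terms of the form $\int_0^t B^{D,n}_u(\partial^2_{x_j,x_k}F_{u,t})\,du$. To each I would apply Corollary \ref{l43} with $s=0$, $\upsilon=p$, $\alpha=2$ and $\theta\in(2/3,\,2-p)$, an interval that is nonempty precisely because $p<4/3$. These choices give $\beta=2-\theta\in(p,4/3)$ and $\gamma=1$; using $R_3(n)=a_n n$ one finds that the exponent of $n$ in $\Omega^n_{0,t}$ equals $\delta+1-(3/2)\theta$, which is made strictly negative by taking $\delta<(3/2)\theta-1$, while $a_n\le\sqrt{\log n}$ contributes only a sub-polynomial factor. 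Hence $\Omega^n_{0,t}$ is bounded uniformly on $[0,T]$, and Corollary \ref{l43} yields $\bb E_{\mu_n}[\,|\int_0^t B^{D,n}_u(\partial^2_{x_j,x_k}F_{u,t})\,du|^p\,]^{\beta/p}\le C\,\Omega^n_{0,t}\,\sup_u\Vert \partial^2_{x_j,x_k}F_{u,t}\Vert^\beta_\infty\le C_T\,\Vert F\Vert^\beta_{C^2(\bb T^d)}$, whence the desired $L^p$ bound after raising to the power $p/\beta$.

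I expect this last step to be the main obstacle, for the reason flagged in Remark \ref{l33}: the time-independent term $1/a_n$ in Proposition \ref{l09} degrades the small-time behaviour and produces $|t-s|^{\theta/2}$ rather than $|t-s|^\theta$ in Lemma \ref{l44}. This loss is exactly what caps the attainable moment at $p<4/3$ in dimension three and blocks the value $p=2$ that would be needed to run the $C([0,T],\mc H_{-r})$-tightness argument for $X^n_t$ itself rather than for its time integral.
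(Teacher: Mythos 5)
Your proposal follows essentially the same route as the paper's proof: the mild decomposition \eqref{52}--\eqref{57}, $L^2$ estimates for the initial field, the martingale and the $R$-term (via \eqref{69}, \eqref{59} together with \eqref{47}, and \eqref{64} together with \eqref{01}), and Corollary \ref{l43} with $\alpha=2$, $\gamma=1$ and $\theta$ slightly above $2/3$ for the $B$-term, your choices $\upsilon=p$, $\theta\in(2/3,\,2-p)$ being an equivalent parameterization of the paper's $\theta=2/3+\epsilon$, $\upsilon=4/3-2\epsilon$. The extra bookkeeping you supply (reduction to $1<p<4/3$, the crude bound absorbing finitely many $n$, the expansion of $B^n$ into the pieces $B^{D,n}$) is correct and only makes explicit steps the paper leaves implicit.
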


\begin{proof}
In view of \eqref{52} and \eqref{57}, we have to estimate four terms.
The first one is easy. By the proof of Lemma \ref{l48}, and since
$\Vert \, P_t F \, \Vert_\infty \le \Vert \, F \, \Vert_\infty$,
\begin{equation}
\label{69}
\bb E_{\mu_n} \big[ \, X^n_0(P_t F)^2\, \big]
\;\le\; \frac{C_0}{a_n} \, 
\big(\, H_n(\mu_n \,|\, \nu^n_\rho)
\,+\, 1\,\big) \, \Vert \, F \, \Vert^2_\infty\;.
\end{equation}
for some finite constant $C_0$, depending only on $\rho$. By
hypothesis, this expression vanishes as $n\to\infty$. Note that this
estimate holds also in dimension $1$ and $2$.

The martingale term is also simple to estimate. By \eqref{59} and
\eqref{47}, 
\begin{equation*}
\bb E_{\mu_n} \big[ \, M^{n,t}_0(F)^2\, \big]
\;\le\; C_0 \,\Big\{\,  \frac{1}{a_n} \,
\sup_{0\le s\le t} \Vert \, \nabla P_s F \, \Vert^2_\infty \;+\; 
\Vert \, F \, \Vert^2_\infty \, \Big\}
\end{equation*}
for some finite constant $C_0$ depending only on the dimension. As
$P_s$ commutes with the gradient, the right-hand side is bounded by
$ C_0 \, \Vert \, F \, \Vert^2_{C^1(\bb T^d)}$.

We turn to the term $R^n$. By Schwarz inequality and \eqref{64},
\begin{equation*}
\bb E_{\mu_n} \Big[ \, \Big( \int_0^t R^{n}_s(P_{t-s} F)\; ds
\, \Big)^2\, \Big]
\;\le\; \frac{C_0\, T}{a_n n^2} \, \Big\{  \int_0^t H_n(f^n_s) \, ds
\;+\; \, t\, \Big\} \, \sup_{0\le s\le t} \Vert P_{s} F \Vert^2_{C^3(\bb T^d)}
\end{equation*}
for a finite constant which depends only on the cylinder functions
$h_{j,k}$. As $P_s$ commutes with the spatial derivatives, $\sup_{0\le
  s\le t} \Vert P_{s} F \Vert^2_{C^3(\bb T^d)} \le \Vert  F
\Vert^2_{C^3(\bb T^d)}$. Hence, by \eqref{01}, for every $\delta>0$, 
\begin{equation*}
\bb E_{\mu_n} \Big[ \, \Big( \int_0^t R^{n}_s(P_{t-s} F)\; ds
\, \Big)^2\, \Big]
\;\le\; \frac{C_0(T)}{n^{4-d-\delta}} \, \Vert F \Vert^2_{C^3(\bb T^d)}
\end{equation*}
for all $n$ large enough.

It remains to consider the term $B^n$. In Corollary \ref{l43}, fix
$\epsilon>0$ small, and let $s=0$, $F(u) = P_{t-u}F$, $\alpha =2$,
$\delta = \epsilon /2$, $\theta = (2/3) + \epsilon$,
$\upsilon = (4/3) - 2 \epsilon$. With this choice,
$\beta = (4/3) - \epsilon > \upsilon$, and there exists a constant
$C_1=C_1(\rho, A, T)$ such that
\begin{equation*}
\bb E_{\mu_n} \Big[ \, \Big | \int_0^t
\, B^{A,n}_s P_{t-s}(F) \; ds \, \Big |^\upsilon \, \Big]
\;\le \;
C_1\, \frac{\beta}{\upsilon} \,
(\Omega^n_{T})^{\upsilon/\beta} \, 
\sup_{0\le u\le T} \Vert P_sF \Vert^{\upsilon}_\infty
\end{equation*}
where
\begin{equation*}
\Omega^n_{T} \;=\; \frac{n^{\delta \gamma} \, R_d(n)^{\gamma}}
{(a_n\, n^d)^{\theta/2}}\, T^{2} \;.
\end{equation*}
As $d=3$ and $\gamma = (\theta/2) + (\beta/2)$, by definition of
$R_d(n)$, the constant on front of $n$ is equal to
$a^{\beta/2}_n \, n^{\delta \gamma} n^{(\beta/2) - \theta}$. Since
$\gamma\le 2$, by definition of $\delta$, $\beta$ and $\theta$, this
quantity is bounded by $a^{2/3}_n n^{-\epsilon/2}$, which vanishes as
$n\to\infty$.

To complete the proof of the lemma, it remains to apply H\"older's
inequality to the first three terms to derive $L^p$ estimates from
$L^2$ ones.
\end{proof}

Recall that $\bb X^n_t$ is the $\mc H_{-r}$-valued process defined by
\begin{equation*}
\bb X^n_t (F) \;=\; \int_0^t X^n_s (F)\; ds \;, \quad
F\,\in\, C^\infty(\bb T^d)\;.
\end{equation*}

\begin{corollary}
\label{l45}
Fix $T>0$, $r>(3d+7)/2$, $0<\vartheta<1/4$.  Let $\mu_n$ be a sequence
of probability measures on $\Omega_n$ such
$\lim_{n\to \infty} a^{-1}_n H_n(\mu_n \,|\, \nu^n_\rho) =0$.  The
sequence of probability measures
$\bb P^n_{\mu_n} \circ (\bb X^n)^{-1}$ on
$C^\vartheta([0,T], \mc H_{-r})$ is tight.
\end{corollary}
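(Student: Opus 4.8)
The plan is to prove tightness of $\bb P^n_{\mu_n} \circ (\bb X^n)^{-1}$ in the H\"older space $C^\vartheta([0,T], \mc H_{-r})$ by invoking the Kolmogorov--\v{C}entsov criterion (Theorem \ref{l46}). The first condition of that theorem is immediate since $\bb X^n_0 = 0$. The real content is condition (ii): I must produce constants $b>0$, $c>0$ with $c/b > \vartheta$ and a bound of the form
\begin{equation*}
\sup_{n\ge n_0} \bb E_{\mu_n} \big[ \, \Vert \, \bb X^n_t - \bb X^n_s \, \Vert^b_{-r} \, \big] \;\le\; C_0 \, |t-s|^{1+c}
\end{equation*}
for all $0\le s,t\le T$. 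The natural choice is $b=p$ for some $p<4/3$, so that I can feed on the $L^p$ bound already established in Lemma \ref{l40}.

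First I would use the definition of the process to write $\bb X^n_t(F) - \bb X^n_s(F) = \int_s^t X^n_u(F)\, du$, and then express the $\mc H_{-r}$-norm spectrally: $\Vert \bb X^n_t - \bb X^n_s \Vert^2_{-r} = \sum_{m\in \bb Z^d} \gamma^{-r}_m \, | \int_s^t X^n_u(\phi_m)\, du |^2$. The idea is to control each Fourier mode by moving the time-integral outside: by Jensen's (or H\"older's) inequality, $| \int_s^t X^n_u(\phi_m)\, du |^p \le |t-s|^{p-1} \int_s^t |X^n_u(\phi_m)|^p\, du$. Taking expectations and applying Lemma \ref{l40} with $F=\phi_m$ yields a bound proportional to $|t-s|^{p-1} \cdot |t-s| \cdot \Vert \phi_m \Vert^p_{C^3(\bb T^d)} = |t-s|^{p} \Vert \phi_m \Vert^p_{C^3}$. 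Since $\Vert \phi_m \Vert_{C^3(\bb T^d)} \le C_0 (1+\Vert m\Vert^3)$, this contributes a factor growing like $\Vert m \Vert^{3p}$ in each mode.

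The technical point is that Lemma \ref{l40} gives an $L^p$ bound on the scalar $X^n_t(\phi_m)$, whereas condition (ii) asks for an $L^p$ bound on the $\mc H_{-r}$-norm of the vector increment. I would bridge this by taking $b=p$ and estimating $\Vert \bb X^n_t - \bb X^n_s \Vert^p_{-r}$ directly: either apply the triangle/Minkowski inequality in $\mc H_{-r}$ to reduce to $\int_s^t \Vert X^n_u \Vert_{-r}^p\, du$ after a Jensen step in time, and then control $\bb E_{\mu_n}[\Vert X^n_u\Vert^p_{-r}]$ by summing the modewise Lemma \ref{l40} bounds against $\sum_m \gamma^{-r}_m \Vert m\Vert^{3p}$. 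This last sum is finite precisely when $r > (3p + d)/2$; since $p$ can be taken just below $4/3$, the borderline condition is $r > (3 \cdot (4/3) + d)/2 = (4+d)/2$, and the hypothesis $r>(3d+7)/2$ comfortably covers this (it is the stronger requirement inherited from keeping room for the increment estimate, where an extra power of $\Vert m\Vert$ appears relative to the static bound used in Lemma \ref{l25}). This produces $\bb E_{\mu_n}[\Vert \bb X^n_t - \bb X^n_s\Vert^p_{-r}] \le C_0 |t-s|^p$.

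With this bound in hand, condition (ii) of Theorem \ref{l46} holds with $b=p$ and $c=p-1$, and the requirement $c/b>\vartheta$ becomes $(p-1)/p > \vartheta$; as $p \uparrow 4/3$ the ratio $(p-1)/p \uparrow 1/4$, so any $\vartheta<1/4$ is admissible, matching the statement. The main obstacle I anticipate is the clean passage from the scalar $L^p$ estimate of Lemma \ref{l40} to the $L^p$-norm of the $\mc H_{-r}$-valued increment, since $p<2$ prevents a naive use of orthogonality; I would resolve this by applying Jensen's inequality in the time variable first to linearize the time-integral, then interchanging expectation and the (convergent) sum over modes via Tonelli, and finally invoking Lemma \ref{l40} modewise. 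The convergence of $\sum_m \gamma^{-r}_m \Vert m\Vert^{3p}$ under $r>(3d+7)/2$ must be checked but is routine.
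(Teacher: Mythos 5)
Your skeleton is the paper's: both proofs go through the Kolmogorov--\v{C}entsov criterion (Theorem \ref{l46}), both note that condition (i) is trivial since $\bb X^n_0=0$, and both reduce condition (ii) to the modewise $L^p$ bound of Lemma \ref{l40} applied to $F=\phi_m$, using $\Vert\phi_m\Vert_{C^3(\bb T^d)}\le C_0\Vert m\Vert^3$ and a Jensen step in time to produce $|t-s|^p$. Where you diverge is the bridge from the scalar modewise bounds to the moment of the $\mc H_{-r}$-valued increment. The paper does not attempt a direct moment estimate: it fixes two exponents $1<q<p<4/3$ with $(q-1)/q=\vartheta$, writes $\bb E_{\mu_n}\big[\Vert\bb X^n_t-\bb X^n_s\Vert^q_{-r}\big]$ by the layer-cake formula, splits the $\kappa$-integral at a level $\mf a$, applies a union bound over modes with the summable weights $\mf w_m$ of \eqref{66}, then Chebyshev at the higher exponent $p$ together with Lemma \ref{l40}, and finally optimizes over $\mf a$; this yields \eqref{67} with exponent $q$ under the condition $r>(d+7)/2+d/p$. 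Your route --- Minkowski and Jensen in time, then summing modewise bounds --- is more direct and, if completed, avoids both the weights and the two-exponent optimization.

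However, there is a gap at exactly the point you flag. Tonelli lets you interchange expectation with a sum, but $\Vert X\Vert^p_{-r}=\big(\sum_m\gamma^{-r}_m|X(\phi_m)|^2\big)^{p/2}$ is a fractional power of a sum, not a sum, so ``Jensen in time, then Tonelli, then the modewise lemma'' does not parse as written. The missing ingredient is the elementary subadditivity $\big(\sum_m a_m\big)^{p/2}\le\sum_m a_m^{p/2}$, valid for nonnegative $a_m$ because $p/2<1$; it gives $\Vert X\Vert^p_{-r}\le\sum_m\gamma^{-rp/2}_m|X(\phi_m)|^p$, and only then can expectation, time-integral and the sum over modes be interchanged and Lemma \ref{l40} applied term by term. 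Note also that the series which then actually appears is $\sum_m\gamma^{-rp/2}_m\Vert m\Vert^{3p}$, convergent iff $r>3+d/p$, and not the series $\sum_m\gamma^{-r}_m\Vert m\Vert^{3p}$ (condition $r>(3p+d)/2$) that you wrote; your exponent bookkeeping is off at this point. Both conditions are comfortably implied by the hypothesis $r>(3d+7)/2$, and your choice of $p$ with $(p-1)/p>\vartheta$ strictly is correct (indeed slightly cleaner than the paper, which takes $(q-1)/q=\vartheta$ with equality), so once the subadditivity step is inserted your argument closes.
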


\begin{proof}
The proof is based on Theorem \ref{l46}. The first condition of this
result is in force because $\bb X^n_0=0$. The second one follows from
\eqref{67} below. Fix $q$ so that $(q-1)/q =\vartheta$ and note that
$1<q<4/3$. We claim that there exists a finite constant $C_0$,
depending on $\rho$, $T$, $q$ and $r$ such that
\begin{equation}
\label{67}
\bb E_{\mu_n} \Big[ \, \Vert \, \bb X^{n}_t \,-\, \bb X^{n}_s \,
\Vert^q_{-r} \, \Big] \;\le \; C_0 \, |\, t-s\,|^q 
\end{equation}
for all $0\le s<t\le T$.

The proof of this claim is similar to the one of Corollary \ref{l31}.
Fix $0\le s \le t \le T$, $1<q<p<4/3$, and write
\begin{equation*}
\bb E_{\mu_n} \Big[ \, \Vert \, \bb X^{n}_t \,-\, \bb X^{n}_s \,
\Vert^q_{-r} \, \Big] \;=\;
\int_0^\infty \bb P_{\mu_n} \Big[ \, \Vert \, \bb X^n_t \,-\, \bb X^n_s \,
\Vert_{-r} \,>\, \kappa^{1/q} \, \Big] \, d\kappa\;.
\end{equation*}
Fix $\mf a>0$ and estimate the above probability by $1$ on the
interval $[0,\mf a]$. 

We turn to the interval $[\mf a, \infty)$.  Recall the definition of
the sequence $\{\mf w_m : m\in \bb Z^d\}$ introduced in \eqref{66}.
The probability on the right-hand side of the previous displayed
equation is equal to
\begin{align*}
& \bb P_{\mu_n} \Big[ \, \sum_{m\in \bb Z^d} \gamma^{-r}_m\,
\Vert \, \bb X^n_t(\phi_m) \,-\, \bb X^n_s (\phi_m) \,
\Vert^2 \,>\, \kappa^{2/q} \, \Big] \\
& \qquad \le\; \sum_{m\in \bb Z^d} 
\bb P_{\mu_n} \Big[ \,
\Vert \, \bb X^n_t(\phi_m) \,-\, \bb X^n_s (\phi_m) \,
\Vert^p \,>\, \kappa^{p/q} \,
\{\mf w_m \, \gamma^{r}_m\}^{p/2} \; \Big] \;.
\end{align*}
By Chebyshev, followed by H\"older inequality and by Lemma \ref{l40},
as $\Vert \phi_m \Vert_{C^3(\bb T^d)} \le C_0 \, \Vert m\Vert^3$,
this sum is bounded by
\begin{equation*}
C_1(\rho, A, T,p) \, \frac{1}{\kappa^{p/q}}\, 
\, \sum_{m\in \bb Z^d} 
\frac{\Vert m\Vert^{3p}}{(\mf w_m \, \gamma^{r}_m)^{p/2}}
\, |t-s|^p\;.
\end{equation*}
As $r>(3d+7)/2 > (d+7)/2 + (d/p)$, the sum over $m$ is finite.

Up to this point, we proved that the expectation appearing in
\eqref{67} is bounded by $\mf a + C_2(b-1) \mf a^{1-b} |t-s|^p$ for
all $\mf a>0$. Here, $b= p/q>1$ and $C_2$ is a constant which depends
on $\rho$, $T$, $p$, $q$ and $r$.  It remains to optimize over $\mf a$
to derive the bound \eqref{67}.
\end{proof}

\section{Proof of Theorems \ref{t1} and \ref{t1b}}
\label{sec2}

The proof is divided in two parts. In the previous section, we proved
that the sequence $X^n$, $\bb X^n$ are tight in dimension $d\le 2$ and
$d=3$, respectively. Proposition \ref{p03b} below characterizes the
limit points of the sequence $\bb P_{\mu_n} \circ (X^n)^{-1}$ in
dimension $d\le 2$, and Lemma \ref{l47} the ones of the sequence
$\bb P_{\mu_n} \circ (\bb X^n)^{-1}$ in $d=3$. Throughout this section
$T>0$ and $0<\rho<1$ are fixed, and, \emph{unless otherwise stated,
  all results hold for $d\le 3$}.

We start with a simple consequence of the entropy inequality. The
proof of this result is similar to the one of Lemma \ref{l05} and left
to the reader. At the end of the argument, one has to optimize over
the parameter $\gamma$ introduced in the proof of Lemma \ref{l05}.

\begin{lemma}
\label{l35}
Fix a cylinder function $f$. Then, there exists a finite constant $C_0
= C_0(f, \rho)$ such that
\begin{align*}
& \bb E_{\mu_n} \Big[\,
\int_0^t \,\Big|\, \sum_{x\in \bb T^d_n} J_x \,
[\, f(\tau_x \eta^n(s)) - \tilde f(\rho) \,]  \,\Big| \,  \, ds \, \Big] \\
&\qquad \le\;
\int_0^t \Big\{ \frac{1}{\gamma}
\big[\, H_n(f^n_s) \,+\, \log 2 \,\big] \;+\;
C_0 \, \gamma\, n^d\, \Vert  J \Vert^2_\infty\,
e^{C_0 \, \gamma\,  \Vert  J \Vert_\infty} \,\Big\} \; ds
\end{align*}
for every function $J:\bb T^d_n \to \bb R$, $t>0$, $n\ge 1$,
$\gamma>0$.
\end{lemma}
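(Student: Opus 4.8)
The plan is to mimic the proof of Lemma \ref{l05}, the only genuinely new ingredient being that the shifts $f(\tau_x\eta)$ are not single-site functions, so the factorization under the product measure $\nu^n_\rho$ must be preceded by a partition into independent subfamilies, exactly as in the proof of Corollary \ref{l27}. Set $W(\eta)=\sum_{x\in\bb T^d_n} J_x\,[\,f(\tau_x\eta)-\tilde f(\rho)\,]$, which is a fixed function on $\Omega_n$ (it does not depend on time, since $J$ does not). Since the law of $\eta^n(s)$ has density $f^n_s$ with respect to $\nu^n_\rho$, the quantity to be estimated equals $\int_0^t\big(\int |W|\,f^n_s\,d\nu^n_\rho\big)\,ds$. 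First I would apply the entropy inequality with parameter $\gamma$ to each time slice, bounding $\int|W|\,f^n_s\,d\nu^n_\rho$ by $\tfrac1\gamma H_n(f^n_s)+\tfrac1\gamma\log\int e^{\gamma|W|}\,d\nu^n_\rho$. Then, using $e^{|a|}\le e^a+e^{-a}$ and $e^b+e^{-b}\le 2\max\{e^b,e^{-b}\}$ as in Lemma \ref{l05}, I would peel off the term $\tfrac{\log 2}{\gamma}$ and reduce to bounding $\tfrac1\gamma\log\int e^{b\gamma W}\,d\nu^n_\rho$ for $b=\pm1$; the case $b=-1$ follows from $b=1$ upon replacing $J$ by $-J$, which changes none of the norms.

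To bound $\tfrac1\gamma\log\int e^{\gamma W}\,d\nu^n_\rho$, let $p$ be the smallest integer with $\supp f\subset\{-p,\dots,p\}^d$, set $q=2p+1$, and partition $\{0,\dots,n-1\}^d$ into the $N=q^d$ residue classes modulo $q$. Writing $W=\sum_z W_z$, where $W_z$ is the subsum over the sites in the class of a representative $z$, any two distinct sites in the same class differ by a nonzero multiple of $q=2p+1$ in the $\ell^\infty$ norm, so their $\Xi_p$-supports are disjoint and the variables $f(\tau_\cdot\eta)$ appearing in a given $W_z$ are $\nu^n_\rho$-independent, exactly as in Corollary \ref{l27}. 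H\"older's inequality then gives $\tfrac1\gamma\log\int e^{\gamma W}\,d\nu^n_\rho\le \tfrac{1}{N\gamma}\sum_z\log\int e^{N\gamma W_z}\,d\nu^n_\rho$, and the logarithm of each factor factorizes over sites.

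For a single site, with $a=N\gamma J_\bullet$ and $g=f(\tau_\bullet\eta)-\tilde f(\rho)$, I would use that $g$ has mean zero under $\nu^n_\rho$ and $|g|\le 2\Vert f\Vert_\infty$, together with $e^a\le 1+a+a^2e^{|a|}$ and $\log(1+u)\le u$, to obtain $\log\int e^{a g}\,d\nu^n_\rho\le C\,\gamma^2\Vert J\Vert_\infty^2\,e^{C\gamma\Vert J\Vert_\infty}$ for a constant $C$ depending only on $p$, $\Vert f\Vert_\infty$ and the dimension. Summing over the at most $n^d/N$ sites in each class and over the $N$ classes, and collecting all constants into a single $C_0=C_0(f,\rho)$, yields $\tfrac1\gamma\log\int e^{\gamma W}\,d\nu^n_\rho\le C_0\,\gamma\,n^d\,\Vert J\Vert_\infty^2\,e^{C_0\gamma\Vert J\Vert_\infty}$. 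Adding back $\tfrac{\log 2}{\gamma}$, inserting into the entropy bound and integrating over $s\in[0,t]$ gives the claim; note that, in contrast with the way Lemma \ref{l05} is later applied, no optimization over $\gamma$ is carried out, since the statement keeps $\gamma$ free.

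The main obstacle is precisely the loss of the single-site structure present in Lemma \ref{l05}: because $f$ is a genuine cylinder function, the exponential moment does not factorize directly under $\nu^n_\rho$, and one must interpose the partition-plus-H\"older device of Corollary \ref{l27}. A secondary point is that, since we do not assume $\gamma\Vert J\Vert_\infty$ to be small, the subgaussian estimate \eqref{07} is not directly available and one must retain the full factor $e^{C_0\gamma\Vert J\Vert_\infty}$ produced by the crude inequality $e^a\le 1+a+a^2e^{|a|}$; this is exactly the origin of the exponential term in the statement.
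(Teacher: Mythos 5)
Your proof is correct and is essentially the argument the paper intends: the paper leaves this lemma to the reader as "similar to Lemma \ref{l05}", and your proposal is exactly that argument (entropy inequality, the $e^{|a|}\le e^a+e^{-a}$ reduction, and the single-site bound $e^a\le 1+a+a^2e^{|a|}$ with $\log(1+u)\le u$, keeping the factor $e^{C_0\gamma\Vert J\Vert_\infty}$ so that no restriction on $\gamma$ is needed), supplemented by the residue-class partition plus H\"older device of Corollary \ref{l27}, which is precisely the paper's standard way of restoring independence for translates of a cylinder function under $\nu^n_\rho$. No gap; the only caveat (wrap-around pairs on the torus spoiling exact independence within a residue class) is present to the same degree in the paper's own Corollary \ref{l27} and is handled by the same routine fix.
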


Fix a function $F$ in $C^\infty(\bb T^d)$, and recall the definition
of the process $R^n(F)$, introduced in \eqref{42}. It is expressed as
the sum of two terms.  To estimate the first one, in the previous
lemma, set
$J_x = \{a_n n^d\}^{-1/2} \big\{ \, (\Delta^n_{j,k} F) -
(\partial^2_{x_j,x_k} F) \,\big\}(x/n)$, $f=h_{j,k}$,
$\gamma = \sqrt{R_n(d) a_n} n$. To estimate the second one, let
$J_x = (a_n/n^2)$ $\{a_n n^d\}^{-1/2} (\Delta_n F) (x/n)$,
$f(\eta) = \eta_0$, $\gamma = \sqrt{R_n(d) /a_n} n^2$. Putting
together the two estimates yield the following result.

\begin{corollary}
\label{l36}
Consider a sequence of measures $\mu_n$ on $\Omega_n$ such that
$H_n(\mu_n \,|\, \nu^n_\rho) \le R_d(n)$.  For every function $F$ in
$C^\infty(\bb T^d)$, $t>0$,
\begin{equation*}
\lim_{n\to\infty} \bb E_{\mu_n} \Big[\,
\int_0^t \,\big|\,  R^n_s(F)  \,\big| \,  \, ds \, \Big]
\;=\;0\;.
\end{equation*}
\end{corollary}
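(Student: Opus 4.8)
The plan is to estimate the time integral of each summand of $R^n(F)$ separately by means of Lemma~\ref{l35}, and to check that every contribution vanishes. Recall from the definition below \eqref{42} that $R^n(F)$ is a finite sum, over $1\le j,k\le d$, of the three families $R^{j,k,n}(F)$, $R^{\Delta,n}(F)$ and $\Pi^{j,k,n}(F)$. By the triangle inequality it suffices to bound the time-integrated expectation of one representative of each family, and each representative has the form $\sum_{x\in\bb T^d_n} J_x\,[\,f(\tau_x\eta)-\tilde f(\rho)\,]$ for a suitable cylinder function $f$ and weight $J\colon\bb T^d_n\to\bb R$, which is precisely the object controlled by Lemma~\ref{l35}.

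For $R^{j,k,n}$ I would take $f=h_{j,k}$ and $J_x=\{a_n n^d\}^{-1/2}\{(\Delta^n_{j,k}F)-(\partial^2_{x_j,x_k}F)\}(x/n)$. The gain here is the consistency of the discrete second difference with the continuous second derivative: since $F\in C^\infty(\bb T^d)$, a Taylor expansion gives $\Vert(\Delta^n_{j,k}F)-(\partial^2_{x_j,x_k}F)\Vert_\infty\le C_0\,n^{-1}\,\Vert F\Vert_{C^3(\bb T^d)}$, so $\Vert J\Vert_\infty$ is of order $\{n\sqrt{a_n n^d}\}^{-1}$. For $R^{\Delta,n}$ I would take $f(\eta)=\eta_0$, whence $\tilde f(\rho)=\rho$, and $J_x=(a_n/n^2)\{a_n n^d\}^{-1/2}(\Delta_n F)(x/n)$; now the gain is the explicit prefactor $a_n/n^2$, and $\Vert J\Vert_\infty$ is of order $\sqrt{a_n}\,n^{-2-d/2}$. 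Finally, the summation by parts recorded in \eqref{63} rewrites $\Pi^{j,k,n}(F)$ in the form $\sum_x J_x\,[\eta_x-\rho]$ with an extra factor $1/n$, so it again carries the order-$\{n\sqrt{a_n n^d}\}^{-1}$ weight of the first family and is treated identically with $f(\eta)=\eta_0$.

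It remains to select $\gamma$ and to control $\int_0^t H_n(f^n_s)\,ds$. Since $H_n(\mu_n\,|\,\nu^n_\rho)\le R_d(n)$, the Gronwall bound \eqref{01} together with Remark~\ref{rm5} (which uses $a_n\le\sqrt{\log n}$) gives $H_n(f^n_s)\le 2\,R_d(n)\,n^\kappa$ uniformly in $0\le s\le t$, for any $\kappa>0$ and all large $n$. Following the recipe, I would take $\gamma=\sqrt{R_d(n)\,a_n}\,n$ for the $R^{j,k,n}$ and $\Pi^{j,k,n}$ families and $\gamma=\sqrt{R_d(n)/a_n}\,n^2$ for $R^{\Delta,n}$. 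Inserting these into Lemma~\ref{l35}, the factor $\gamma\Vert J\Vert_\infty$ is of order $\sqrt{R_d(n)}\,n^{-d/2}\to0$ in every case, so $e^{C_0\gamma\Vert J\Vert_\infty}$ stays bounded; the entropy term $\gamma^{-1}\int_0^t H_n(f^n_s)\,ds$ and the fluctuation term $C_0\,\gamma\,n^d\Vert J\Vert^2_\infty$ then reduce, after the powers of $a_n$ cancel, to order $n^{(d-4)/2}$ (up to the sub-polynomial factor $n^\kappa$) for the $R^{j,k,n}$ and $\Pi^{j,k,n}$ families, and to the smaller order $a_n\,n^{(d-6)/2}$ for $R^{\Delta,n}$.

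The decisive point is this final balancing: the binding family is $R^{j,k,n}$ (equivalently $\Pi^{j,k,n}$), whose bound $n^{(d-4)/2+\kappa}$ tends to $0$ precisely when $d<4$, that is, $d\le3$ for $\kappa$ small. This is where the restriction to low dimension enters, and it is also where $a_n\le\sqrt{\log n}$ is used, through Remark~\ref{rm5}, so that the entropy growth $e^{C_0a_nt}\le n^\kappa$ does not swamp the polynomial gain supplied by the coefficients ($n^{-1}$ from finite-difference consistency, respectively $a_n/n^2$ from the voter prefactor). Choosing $\kappa$ small and summing the finitely many $(j,k)$-contributions then yields $\lim_{n\to\infty}\bb E_{\mu_n}\big[\int_0^t|R^n_s(F)|\,ds\big]=0$.
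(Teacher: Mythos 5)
Your proposal is correct and follows essentially the same route as the paper: the paper's proof is exactly the recipe of applying Lemma~\ref{l35} term by term with $f=h_{j,k}$, $\gamma=\sqrt{R_d(n)\,a_n}\,n$ for the finite-difference term and $f(\eta)=\eta_0$, $\gamma=\sqrt{R_d(n)/a_n}\,n^2$ for the voter term, combined with the entropy bound \eqref{01} and Remark~\ref{rm5}. Your treatment is in fact slightly more complete, since you also handle explicitly the third summand of $R^n$ (the $\Pi^1_\rho h_{j,k}$ term) via the summation by parts \eqref{63}, which the paper's two-term phrasing passes over in silence.
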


Recall the definition of the martingale $M^n_t(F)$,
$F\in C^\infty(\bb T^d)$, introduced in \eqref{30}. Let
$H:\bb R_+ \times \bb T^d \to \bb R$ be a smooth function, and denote
by $H_t: \bb T^d_n \to \bb R$ the function given by $H_t(x) = H(t,x)$,
and by $M^n_t(H_s)$ the value of the martingale $M^n_t(F)$ for
$F=H_s$.

\begin{lemma}
\label{l37}
For every $t>0$,
\begin{equation*}
M^n_t(H_t) \;=\; X^n_t(H_t) \,-\, X^n_0(H_0) \;-\;
\int_0^t \big\{ L_n X^n_s(H_s) \,+\, X^n_s(\partial_s H_s)\,\big\}\; ds
\;+\; \int_0^t M^n_s(\partial_s H_s)\; ds\;.
\end{equation*}
\end{lemma}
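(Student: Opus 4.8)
The plan is to treat the claimed identity as a pathwise algebraic relation, valid trajectory by trajectory, obtained by expanding the time-integral of martingales on the right-hand side through the definition \eqref{30} and then reorganizing the resulting terms via the fundamental theorem of calculus in the time variable together with Fubini's theorem. The only structural inputs are that, for each fixed realization and each $u$, the maps $F \mapsto X^n_u(F)$ and hence $F \mapsto L_n X^n_u(F)$ are \emph{linear} functionals of the test function, and that $s \mapsto H_s$ is smooth, so that $\int_u^t \partial_s H_s\, ds = H_t - H_u$ as smooth functions on $\bb T^d$.

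First I would write out the last term using \eqref{30} with the \emph{fixed} test function $\partial_s H_s$,
\[
M^n_s(\partial_s H_s) \;=\; X^n_s(\partial_s H_s) \,-\, X^n_0(\partial_s H_s) \,-\, \int_0^s L_n X^n_u(\partial_s H_s)\, du\;,
\]
and integrate over $s\in[0,t]$. The contribution $\int_0^t X^n_s(\partial_s H_s)\, ds$ then cancels exactly against the term $-\int_0^t X^n_s(\partial_s H_s)\, ds$ sitting inside the braces on the right-hand side of the claim.

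Next I would handle $\int_0^t X^n_0(\partial_s H_s)\, ds$: by linearity of $X^n_0$ and the fundamental theorem of calculus it equals $X^n_0(H_t - H_0) = X^n_0(H_t) - X^n_0(H_0)$, and combining this with the isolated term $-X^n_0(H_0)$ leaves $-X^n_0(H_t)$. For the double integral $\int_0^t \int_0^s L_n X^n_u(\partial_s H_s)\, du\, ds$, I would apply Fubini over the triangle $\{0\le u\le s\le t\}$ to rewrite it as $\int_0^t\big(\int_u^t L_n X^n_u(\partial_s H_s)\, ds\big)\, du$, then pull the linear functional $L_n X^n_u(\cdot)$ through the inner integral to obtain $\int_0^t L_n X^n_u(H_t - H_u)\, du = \int_0^t L_n X^n_u(H_t)\, du - \int_0^t L_n X^n_u(H_u)\, du$. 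The second of these cancels the term $-\int_0^t L_n X^n_s(H_s)\, ds$ already present in the claim.

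Collecting the surviving terms leaves exactly $X^n_t(H_t) - X^n_0(H_t) - \int_0^t L_n X^n_u(H_t)\, du$, which is, by the definition \eqref{30} read with the fixed test function $H_t$, precisely $M^n_t(H_t)$. The only step requiring a word of care, and the closest thing to an obstacle here, is the interchange of the linear functional $L_n X^n_u(\cdot)$ with the time integral $\int_u^t(\cdot)\, ds$. This is legitimate because, for each fixed configuration, $L_n X^n_u(G)$ is a finite linear combination of values of discrete differential operators applied to $G$ at the sites $x/n$, as made explicit in Assertion \ref{as1}; it is therefore a bounded linear functional of $G$ that commutes with integration in $s$. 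The smoothness of $H$ yields $\int_u^t \partial_s H_s\, ds = H_t - H_u$ in $C^\infty(\bb T^d)$, and all integrands are bounded on $[0,T]$, so Fubini applies and every manipulation above is justified.
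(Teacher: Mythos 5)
Your proof is correct and is essentially the paper's argument run in reverse: the paper starts from the definition \eqref{30} with $F=H_t$, expands $H_t = H_u + \int_u^t \partial_v H_v\, dv$ (for $u=0$ and $u=s$), swaps the order of integration, and recognizes the definition of $M^n_s(\partial_s H_s)$, while you start from the claimed right-hand side, expand $M^n_s(\partial_s H_s)$ via \eqref{30}, and collapse back to the definition of $M^n_t(H_t)$ using the same three ingredients (linearity in the test function, the fundamental theorem of calculus in time, and Fubini on the triangle $\{0\le u\le s\le t\}$). No gap; the justification you give for exchanging the linear functional $L_n X^n_u(\cdot)$ with the time integral is exactly what makes both directions legitimate.
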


\begin{proof}
Fix $t>0$ and let $F = H_t$. By definition of the martingale
$M^n_t(F)$,
\begin{equation*}
M^n_t(H_t) \;=\;  X^n_t(H_t) \,-\, X^n_0(H_t) \;-\;
\int_0^t  L_n X^n_s(H_t) \; ds\;.
\end{equation*}
Writing $H_t$ as $H_u + \int_{[u,t]} \partial_v H_v\, dv$, for $u=0$
and $u=s$, yields that the right-hand side is equal to
\begin{equation*}
X^n_t(H_t) \,-\, X^n_0(H_0) \;-\; \int_0^t  X^n_0(\partial_s H_s) \; ds
\;-\; \int_0^t  L_n X^n_s(H_s) \; ds \;-\; \int_0^t  ds\int_{s}^t du\,
L_n X^n_s(\partial_u H_u) \;.
\end{equation*}
Change the order of the integrals in the last term. The sum of this
integral with the third term is equal to
\begin{equation*}
-\, \int_0^t  \Big\{ X^n_0(\partial_s H_s)  \;+\;
\int_0^s L_n X^n_u(\partial_s H_s) \, du \Big\} \, ds
\;=\; \int_0^t  \Big\{ M^n_s(\partial_s H_s)
\,-\, X^n_s(\partial_s H_s)  \Big\} \, ds \;.
\end{equation*}
This completes the proof of the lemma.
\end{proof}

This lemma provides a representation of the martingale $M^{n,t}$,
introduced in \eqref{51}, in terms of the martingales $M^n$. Fix
$0<t\le T$, and a function $F$ in $C^\infty(\bb T^d)$. Let $H(s,x) =
(P_{t-s} F)(x)$, $0\le s\le t$. By \eqref{51} and Lemma \ref{l37},
since $H_t=F$ and $\partial_s H_s = - \mc A \, P_{t-s}  F = -\mc A
H_s$, 
\begin{equation}
\label{56}
M^{n,t}_t(F) \;=\; M^n_t(F) \;+\;  \int_0^t M^n_s(P_{t-s} \mc A F)\;.
\end{equation}

Denote by $\<\, \cdot \,,\, \cdot \,\>$ the scalar product in
$L^2(\bb T^d)$:
\begin{equation*}
\<\, F \,,\, G \,\> \;=\; \int_{\bb T^d} F(x) \, G(x)\; dx\;.
\end{equation*}

\begin{lemma}
\label{l38}
Fix $r> 1+(d/2)$ and a sequence of probability measures $\mu_n$ on
$\Omega_n$ such that $H_n(\mu_n\,|\, \nu^n_\rho) \le R_n(d)$. Then,
under the sequence of measures
$\bb Q^M_n = \bb P_{\mu_n} \circ (M^n)^{-1}$ on $D([0,T], \mc H_{-r})$
converges to the centered Gaussian random field whose covariances are
given by
\begin{equation*}
\bb Q^M \big[\, M_t(F) \, M_s(G)\,\big]\;=\;
4\,d\, \chi(\rho)\, (s\wedge t)\, \<\, F \,,\, G \,\> \;,
\quad F\;, \; G \;\in\; C^\infty(\bb T^d)\;.
\end{equation*}
\end{lemma}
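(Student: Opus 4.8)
The plan is to combine the tightness already established in Lemma \ref{l23} with the martingale central limit theorem. Since $F\mapsto X^n_t(F)$, and hence $F\mapsto M^n_t(F)$, is linear, for any finite family $F_1,\dots,F_k\in C^\infty(\bb T^d)$ and reals $a_1,\dots,a_k$ one has $\sum_i a_i M^n_t(F_i)=M^n_t(\sum_i a_i F_i)$, again a martingale of the same form. By the Cram\'er--Wold device it therefore suffices to prove that for each fixed $F$ the real martingale $M^n_\cdot(F)$ converges in $D([0,T],\bb R)$ to a Brownian motion with variance $4d\,\chi(\rho)\,t\,\langle F,F\rangle$. The martingale CLT reduces this to two points: (i) the predictable quadratic variation $\langle M^n(F)\rangle_t$ converges in probability to the deterministic limit $4d\,\chi(\rho)\,t\,\langle F,F\rangle$ for each $t$; and (ii) the maximal jump of $M^n_\cdot(F)$ vanishes. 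Tightness then upgrades finite-dimensional convergence, obtained by applying the CLT to the $M^n(\phi_m)$ and their linear combinations, to convergence in $D([0,T],\mc H_{-r})$.

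Condition (ii) is immediate and deterministic: a single flip or exchange changes $X^n_t(F)$, and thus $M^n_t(F)$ (the drift term being continuous), by at most $2\Vert F\Vert_\infty/\sqrt{a_n n^d}$, so $\sup_{t\le T}|\Delta M^n_t(F)|\le 2\Vert F\Vert_\infty/\sqrt{a_n n^d}\to 0$. For (i), recall from \eqref{47} that $\langle M^n(F)\rangle_t=\int_0^t\Gamma^n_s(F)\,ds$. The exclusion contribution to $\Gamma^n(F)$ carries the prefactor $1/(a_n n^d)$ against a sum of $n^d$ terms each $O(\Vert\nabla F\Vert^2_\infty)$, hence is $O(1/a_n)$ uniformly and drops out upon integration. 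The surviving voter contribution is $n^{-d}\sum_x F(x/n)^2\,\tau_x f$, where $f(\eta)=\sum_{y:\Vert y\Vert=1}(\eta_y-\eta_0)^2$ is a cylinder function with $\tilde f(\rho)=4d\,\chi(\rho)$.

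The heart of the argument, and the step I expect to be the main obstacle, is replacing $\tau_x f$ by its average $\tilde f(\rho)$. I would use the decomposition $\tau_x f-\tilde f(\rho)=\tilde f'(\rho)\,(\eta_x-\rho)+(\Pi_\rho f)(\tau_x\eta)$. The constant part gives $\tilde f(\rho)\,n^{-d}\sum_x F(x/n)^2$, a Riemann sum converging to $4d\,\chi(\rho)\langle F,F\rangle$, whose time integral is exactly the claimed limit. The time integral of the linear part equals $\tilde f'(\rho)\sqrt{a_n/n^d}\int_0^t X^n_s(F^2)\,ds$; by the uniform bound $\bb E_{\mu_n}[\,|X^n_s(F^2)|\,]\le C\Vert F^2\Vert_{C^3(\bb T^d)}$ of Lemma \ref{l40} (valid through $d=3$), multiplying by the vanishing prefactor $\sqrt{a_n/n^d}$ shows this term tends to $0$ in $L^1$. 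Finally the nonlinear part, written as $\sqrt{a_n/n^d}\int_0^t (a_n n^d)^{-1/2}\sum_x F(x/n)^2(\Pi_\rho f)(\tau_x\eta^n_s)\,ds$, vanishes in $L^1$ by the Boltzmann--Gibbs principle (Theorem \ref{p01}, applicable since $H_n(\mu_n\,|\,\nu^n_\rho)\le R_d(n)$) multiplied by the bounded factor $\sqrt{a_n/n^d}$. This establishes (i); it is precisely here that the dimension restriction $d\le 3$ enters, through Theorem \ref{p01}.

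Having verified (i) and (ii), the martingale CLT identifies each limiting $M_\cdot(F)$ as a Brownian motion with variance $4d\,\chi(\rho)\,t\,\langle F,F\rangle$. Applying the same computation to $F+G$ and $F-G$ and polarizing, $\langle M^n(F),M^n(G)\rangle_t$ converges to $4d\,\chi(\rho)\,t\,\langle F,G\rangle$, so the limit is the centered Gaussian field with $\bb Q^M[M_t(F)M_t(G)]=4d\,\chi(\rho)\,t\,\langle F,G\rangle$. For $s\le t$ the martingale property then yields $\bb Q^M[M_t(F)M_s(G)]=\bb Q^M[M_s(F)M_s(G)]=4d\,\chi(\rho)\,(s\wedge t)\,\langle F,G\rangle$, as claimed. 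Together with the tightness of $\bb Q^M_n$ from Lemma \ref{l23}, this determines the limit uniquely and completes the proof.
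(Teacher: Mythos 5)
Your overall architecture is sound and matches the paper's skeleton: everything reduces to showing $\int_0^t \Gamma^n_s(F)\,ds \to 4\,d\,\chi(\rho)\, t\, \< F, F\>$ in $L^1(\bb P_{\mu_n})$, after which the limit is identified as a Gaussian martingale field. Where the paper takes limit points of the tight sequence, uses the uniform $L^2$ bounds \eqref{35} and \eqref{36} to pass the martingale properties of $M^n_t(F)$ and of $M^n_t(F)^2 - \int_0^t\Gamma^n_s(F)\,ds$ to the limit, and concludes by L\'evy's characterization plus polarization, you invoke the functional martingale CLT plus Cram\'er--Wold; these are equivalent routes and both are fine. The genuine difference is how the quadratic variation is replaced by its mean. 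The paper does this with a single soft entropy estimate: Lemma \ref{l35} applied with $J_x = n^{-d}F(x/n)^2$, $f(\eta)=\sum_y[\eta_y-\eta_0]^2$, $\gamma=\sqrt{R_d(n)\,n^d}$, combined with Theorem \ref{t2} and Remark \ref{rm5}. Because the voter part of $\Gamma^n(F)$ carries the normalization $n^{-d}$ rather than $(a_n n^d)^{-1/2}$, this replacement is a law-of-large-numbers statement, not a fluctuation statement: it needs no Boltzmann--Gibbs principle and holds in every dimension. Your route through Theorem \ref{p01} is legitimate under the hypothesis $H_n(\mu_n\,|\,\nu^n_\rho)\le R_d(n)$ (which is exactly the hypothesis of that theorem), but it deploys the deepest result of the paper where a one-line entropy bound suffices, and your assertion that the restriction $d\le 3$ ``enters precisely here'' is misleading --- it enters only because you chose to use Boltzmann--Gibbs; the paper's proof of this lemma is dimension-free.

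There is one concrete flaw: the degree-one term is handled by citing Lemma \ref{l40}, which is proved under the standing hypothesis of Subsection \ref{sec5b}, namely $a_n^{-1}H_n(\mu_n\,|\,\nu^n_\rho)\to 0$, and whose proof of the $B^n$ contribution is specific to $d=3$. Under the present hypothesis $H_n(\mu_n\,|\,\nu^n_\rho)\le R_d(n)$ that condition fails for $d=2,3$ (there $R_d(n)/a_n$ equals $\log n$, resp.\ $n^{d-2}$, which diverge), so Lemma \ref{l40} is not available as stated. The gap is easily repaired because your term carries the strongly vanishing prefactor $\sqrt{a_n/n^d}$: for instance, Lemma \ref{l28} with $A=\{0\}$ together with Remark \ref{rm5} gives
\begin{equation*}
\bb E_{\mu_n}\big[\,|X^n_s(F^2)|\,\big] \;\le\; C\, a_n^{-1/2}\,
\big(R_d(n)\, n^{\kappa}\big)^{1/2}\,\Vert F^2\Vert_\infty\;,
\end{equation*}
so that $\sqrt{a_n/n^d}\;\bb E_{\mu_n}[\,|X^n_s(F^2)|\,] \le C\,(R_d(n)\, n^{\kappa-d})^{1/2}\,\Vert F^2\Vert_\infty \to 0$ for all $d\le 3$. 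Alternatively, the paper's Lemma \ref{l35} disposes of the linear and the higher-degree parts in one stroke, with no case distinction at all.
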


\begin{proof}
We proved in Subsection \ref{ssec5.3} that the sequence
$\bb Q^{M}_{\mu_n}$ is tight. It remains to check the uniqueness of
limit points. Denote by $\bb Q^M$ one of them and assume, without loss
of generality, that the sequence $\bb Q^M_{\mu_n}$ converges to
$\bb Q^M$.

Fix a function $F$ in $C^\infty(\bb T^d)$. By \eqref{35}, the
martingale $M^n_t(F)$ is uniformly bounded in $L^2(\bb P^n_{\mu_n})$.
Therefore, under the measure $\bb Q^M$, $M_t(F)$ is a martingale.

Recall the definition of $\Gamma^n (F)$, introduced in \eqref{47}. We
have seen in Subsection \ref{ssec5.3} that
\begin{equation*}
M^{(2),n}_t(F) \;:=\; M^n_t(F)^2 \;-\; \int_0^t \Gamma^n_s(F) \; ds
\end{equation*}
is a martingale. On the one hand, by Lemma \ref{l35} [with, for the
main term, $J_x = n^{-d} F(x/n)^2$,
$f(\eta) = \sum_y [\eta_y-\eta_0]^2$, $\gamma = \sqrt{R_d(n) n^d}$]
and Theorem \ref{t2}, under the hypotheses of the lemma,
\begin{equation*}
\lim_{n\to\infty} \bb E_{\mu_n} \Big[\,
\int_0^t \,\Big|\, \Gamma^n_s(F)
\,-\, 4\, d\, \chi(\rho) \, \Vert F\Vert^2  \,\Big| \,  \, ds \, \Big]
\;=\;0\;.
\end{equation*}
On the other hand, by \eqref{36}, the martingale $M^{(2),n}_t(F)$ is
uniformly bounded in $L^2(\bb P^n_{\mu_n})$.  Therefore, under the
measure $\bb Q^M$,
$M_t(F)^2 \,-\, 4\, d\, \chi(\rho) \, \Vert F\Vert^2\, t$ is a
martingale. In particular, for each $F\in C^\infty(\bb T^d)$, $M_t(F)$
is a time-change of Brownian motion, and $M_t(F)$ a Gaussian random
variable.

To complete the proof of the lemma, it remains to compute the
covariance of $M_t(F)$ and $M_s(G)$ through polarization.
\end{proof}

Note that the process $M_t$ can be represented in terms of a
space-time white noise, denoted by $\{\xi(t,x) : t\in \bb R \,,\,
x\in \bb T^d\}$, as
\begin{equation}
\label{54}
M_t(F) \;=\; \sqrt{4\, d\, \chi(\rho)}\, \int_0^t ds \int_{\bb T^d}
dx\, F(x) \, \xi(s,x)\;.
\end{equation}

\begin{corollary}
\label{l39}
Fix $0< t\le T$ and a real-valued function $F$ in
$C^{0,\infty}([0,t]\times \bb T^d)$. Under the hypotheses of the
lemma, the sequence of continuous processes
\begin{equation*}
\bb M^n_s \;=\; \int_0^s M^n_u(F_u)\, du\;, \quad 0\le s\le t\;,
\end{equation*}
converges in $D([0,t], \bb R)$ to the centered Gaussian process
$\int_0^s M_u(F_u)\, du$.
\end{corollary}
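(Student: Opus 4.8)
The plan is to deduce the corollary from Lemma \ref{l38} by a continuous-mapping argument, so that the real work (tightness and the identification of the Gaussian limit) is inherited from what has already been established. By Lemma \ref{l38} the sequence $\bb Q^M_n = \bb P_{\mu_n}\circ (M^n)^{-1}$ converges weakly on $D([0,T], \mc H_{-r})$ to the law $\bb Q^M$ of the centered Gaussian field $M$, and by Lemma \ref{l23} this limit is concentrated on continuous trajectories. I would introduce the map $\Phi \colon D([0,T], \mc H_{-r}) \to C([0,t], \bb R)$ defined by
\begin{equation*}
\Phi(X)_s \;=\; \int_0^s X_u(F_u)\, du\;, \quad 0\le s\le t\;.
\end{equation*}
Since $X$ is c\`adl\`ag and $F_u \in C^\infty(\bb T^d)\subset \mc H_r$ for every $u$, the integrand $u\mapsto X_u(F_u)$ is bounded and measurable, so $\Phi(X)$ is well defined, continuous, and satisfies $\bb M^n = \Phi(M^n)$. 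The statement will follow from the continuous mapping theorem once $\Phi$ is shown to be continuous at every continuous path.

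First I would record that, because $F\in C^{0,\infty}([0,t]\times\bb T^d)$, the map $u\mapsto F_u$ is continuous from the compact interval $[0,t]$ into $C^\infty(\bb T^d)$, hence into $\mc H_r$, so that $\Vert F\Vert_\star := \sup_{0\le u\le t}\Vert F_u\Vert_r <\infty$. Next I would fix a continuous path $X\in C([0,T], \mc H_{-r})$ and a sequence $X^n \to X$ in the Skorohod topology; since the limit is continuous, Skorohod convergence upgrades to uniform convergence, $\sup_{0\le u\le T}\Vert X^n_u - X_u\Vert_{-r}\to 0$. Using the duality bound $|X_u(F_u)|\le \Vert X_u\Vert_{-r}\,\Vert F_u\Vert_r$,
\begin{equation*}
\sup_{0\le u\le t} \big| X^n_u(F_u) - X_u(F_u)\big|
\;\le\; \Vert F\Vert_\star\, \sup_{0\le u\le t}\Vert X^n_u - X_u\Vert_{-r}\;\longrightarrow\; 0\;,
\end{equation*}
whence $\sup_{0\le s\le t}|\Phi(X^n)_s - \Phi(X)_s|\le t\,\Vert F\Vert_\star\,\sup_u\Vert X^n_u - X_u\Vert_{-r}\to 0$. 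Thus $\Phi$ is continuous at every continuous path, so its set of discontinuities is contained in the complement of $C([0,T], \mc H_{-r})$, which is $\bb Q^M$-null.

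Applying the continuous mapping theorem then yields $\bb M^n = \Phi(M^n)\Rightarrow \Phi(M) = \int_0^\cdot M_u(F_u)\, du$ in $C([0,t], \bb R)$, and a fortiori in $D([0,t], \bb R)$. Linearity of $\Phi$ together with the fact that $M$ is a centered Gaussian field (Lemma \ref{l38}) identifies the limit as a centered Gaussian process, as claimed. I expect no genuine obstacle here: the only points requiring care are the measurability of $\Phi$ and the passage from Skorohod to uniform convergence against the \emph{time-dependent} functional $F_u$, both of which are routine once the uniform bound $\Vert F\Vert_\star<\infty$ is in hand. The substantive analytic content has already been absorbed into Lemma \ref{l38}.
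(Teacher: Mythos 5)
Your proposal is correct and follows essentially the same route as the paper: the paper's proof likewise invokes Lemma \ref{l38} together with the continuity of the map $\Psi(M)_s = \int_0^s M_u(F_u)\, du$ from $D([0,t],\mc H_{-r})$ to $D([0,t],\bb R)$, i.e.\ a continuous-mapping argument. Your write-up merely fills in the details (the duality bound, the uniform bound on $\Vert F_u\Vert_r$, and continuity of the map at continuous paths) that the paper leaves implicit.
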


\begin{proof}
This result follows from the lemma and the continuity of the function
$\Psi : D([0,t], \mc H_{-r}) \to D([0,t], \bb R)$ given by
$\Psi (M)_s = \int_{[0,s]} M_u(F_u)\, du$.
\end{proof}

Fix $0<t\le T$ and recall from \eqref{51} the definition of the
$\mc H_{-r}$-valued process $M^{n,t}_s$, $0\le s\le t$. By \eqref{56},
$M^{n,t}_t$ can be expressed in terms of the processes $M^n_t$ and
$\bb M^n_t$, where the last one has been introduced in Corollary
\ref{l39}.  The continuity argument used in the proof of this
corollary yields the following result.

Fix $p\ge 1$, $0\le t_1<t_2 < \cdots <t_p\le T$ and functions
$F_j\in C^\infty(\bb T^d)$, $1\le j\le p$.  The random vector
$(M^{n,t_1}_{t_1}(F_1), \dots, M^{n,t_p}_{t_p}(F_p))$ converges in
distribution to a centered Gaussian random vector, characterized by
its covariances which are given by
\begin{equation}
\label{53}
\text{\rm Cov\,}  \big(\, M^{t_j}_{t_j}(F_j)
\,,\, M^{t_k}_{t_k}(F_k)\,\big)\;=\;
4\,d\, \chi(\rho)\, \, \int_0^{t_j}
\<\, P_{t_j-s}F_j \,,\, P_{t_k-s}F_k \,\> \;ds
\end{equation}
for $t_j<t_k$. A similar result holds for the process $X^n$.

\begin{proposition}
\label{p03b}
Let $\mu_n$ be a sequence of probability measures on $\Omega_n$ such
$\lim_{n\to \infty} a^{-1}_n H_n(\mu_n \,|\, \nu^n_\rho) =0$.  Fix
$p\ge 1$, $0\le t_1<t_2 < \cdots <t_p\le T$ and functions
$F_j\in C^\infty(\bb T^d)$, $1\le j\le p$. Under the measure
$\bb P_{\mu_n}$, the random vector
$(X^{n}_{t_1}(F_1), \dots, X^{n}_{t_p}(F_p))$ converges in
distribution to a centered Gaussian random vector whose covariances
are given by
\begin{equation*}
\text{\rm Cov\,} \big(\, X_{t_1}(F) \,,\, X_{t_2}(G)\,\big)\;=\;
4\,d\, \chi(\rho)\, \, \int_0^{t_1}
\<\,  P_{t_1-s}F \,,\, P_{t_2-s}G \,\> \;ds
\end{equation*}
for every $0\le t_1\le t_2\le T$, $F$, $G\in C^\infty(\bb T^d)$.
\end{proposition}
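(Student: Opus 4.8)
The plan is to combine the pathwise representation \eqref{52} with the identity \eqref{57}, which together express, for each $1\le i\le p$,
\[
X^n_{t_i}(F_i) \;=\; X^n_0(P_{t_i}F_i) \;+\; M^{n,t_i}_{t_i}(F_i)
\;+\; \int_0^{t_i} R^n_u(P_{t_i-u}F_i)\, du
\;+\; \int_0^{t_i} B^n_u(P_{t_i-u}F_i)\, du\;.
\]
The idea is that the three non-martingale terms on the right-hand side vanish in $L^1(\bb P_{\mu_n})$, so that the vector $(X^n_{t_1}(F_1), \dots, X^n_{t_p}(F_p))$ has the same distributional limit as the martingale vector $(M^{n,t_1}_{t_1}(F_1), \dots, M^{n,t_p}_{t_p}(F_p))$, whose centered Gaussian limit, with covariances \eqref{53}, has already been established in the paragraph preceding the statement. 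Since \eqref{53} reads $4\,d\,\chi(\rho)\int_0^{t_j}\langle P_{t_j-s}F_j, P_{t_k-s}F_k\rangle\, ds$ for $t_j<t_k$, which is precisely the covariance claimed here (after relabeling $t_j,t_k,F_j,F_k$ as $t_1,t_2,F,G$), the proposition follows once the three error terms are controlled.

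First I would dispose of the initial term: by the estimate \eqref{69},
\[
\bb E_{\mu_n}\big[\, X^n_0(P_{t_i}F_i)^2 \,\big]
\;\le\; \frac{C_0}{a_n}\,\big(\, H_n(\mu_n\,|\,\nu^n_\rho) + 1\,\big)\,
\Vert F_i\Vert^2_\infty\;,
\]
which tends to $0$ under the hypothesis $a_n^{-1}H_n(\mu_n\,|\,\nu^n_\rho)\to 0$. For the gradient term, the quantitative bound \eqref{64} is already stated for time-dependent test functions, so with $\sup_{0\le u\le t_i}\Vert P_{t_i-u}F_i\Vert_{C^3(\bb T^d)}\le \Vert F_i\Vert_{C^3(\bb T^d)}$ and Cauchy--Schwarz in time,
\[
\bb E_{\mu_n}\Big[\Big(\int_0^{t_i} R^n_u(P_{t_i-u}F_i)\, du\Big)^2\Big]
\;\le\; \frac{C_0\, t_i}{a_n\, n^2}\,
\Big\{\int_0^{t_i} H_n(f^n_u)\, du + t_i\Big\}\,
\Vert F_i\Vert^2_{C^3(\bb T^d)}\,,
\]
which, after bounding $\int_0^{t_i}H_n(f^n_u)\, du$ via \eqref{01}, vanishes for $d\le 3$. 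Finally, the term $B^n$ is exactly the object controlled by the Boltzmann--Gibbs principle: recalling that $B^n_u(G)=\{a_n n^d\}^{-1/2}\sum_{j,k}\sum_x (\Pi^{+2}_\rho h_{j,k})(\tau_x\eta)(\partial^2_{x_j,x_k}G)(x/n)$ and decomposing each $\Pi^{+2}_\rho h_{j,k}$ into cylinder functions, Theorem \ref{p01} with $\Xi_\rho=\Pi^{+2}_\rho$ and the time-dependent test function $(s,x)\mapsto(\partial^2_{x_j,x_k}P_{t_i-s}F_i)(x)$ yields $\bb E_{\mu_n}[\,|\int_0^{t_i}B^n_u(P_{t_i-u}F_i)\, du|\,]\to 0$.

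With the three limits in hand, $X^n_{t_i}(F_i)-M^{n,t_i}_{t_i}(F_i)\to 0$ in probability for each $i$, so by Slutsky's theorem the vector $(X^n_{t_1}(F_1),\dots,X^n_{t_p}(F_p))$ converges in distribution to the same centered Gaussian vector as $(M^{n,t_1}_{t_1}(F_1),\dots,M^{n,t_p}_{t_p}(F_p))$, completing the proof. The points requiring verification, rather than genuine obstacles, are of a technical bookkeeping nature: first, that the test functions feeding the error estimates are admissible, which holds because $(P_t)$ is the semigroup generated by the constant-coefficient elliptic operator $\mc A$, for which the $\phi_m$ are eigenfunctions \eqref{46}, so each $P_{t_i-s}F_i$ is smooth in $x$ and jointly of class $C^{0,\infty}$, whence its second spatial derivatives lie in $C^{0,1}([0,t_i]\times\bb T^d)$ as required by Theorem \ref{p01}; and second, that the entropy conditions of the cited results are met for $n$ large. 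The latter is immediate when $d\ge 2$, where $R_d(n)$ grows faster than $a_n$ and hence dominates the $o(a_n)$ quantity $H_n(\mu_n\,|\,\nu^n_\rho)$, while in $d=1$ one instead observes that the relevant quantitative bounds depend only on the combination $H_n(\mu_n\,|\,\nu^n_\rho)+R_d(n)=o(a_n)$.
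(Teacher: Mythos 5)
Your proof is correct and takes essentially the same route as the paper's: the decomposition \eqref{52}--\eqref{57}, elimination of $X^n_0(F_{0,t})$ via \eqref{69}, of the $R^n$ and $B^n$ integrals via the entropy bounds and the Boltzmann--Gibbs principle (Theorem \ref{p01}), and identification of the limit with the Gaussian finite-dimensional distributions of $M^{n,t}_t$ computed in \eqref{53}. The only (harmless) deviations are that you invoke \eqref{64} directly for the $R^n$ term where the paper cites Corollary \ref{l36}, and that you are in fact more careful than the paper itself about the $d=1$ mismatch between the hypothesis $H_n(\mu_n\,|\,\nu^n_\rho)=o(a_n)$ and the condition $H_n(\mu_n\,|\,\nu^n_\rho)\le R_d(n)$ appearing in Theorem \ref{p01}.
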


\begin{proof}
Fix $F\in C^\infty(\bb T^d)$, $0<t\le T$, and recall from \eqref{52}
that
\begin{equation*}
X^n_t(F) \;=\; X^n_0(F_{0,t})
\;+\; M^{n,t}_t(F) \;+\; \int_0^t R^n_s(F_{s,t}) \; ds
\;+\; \int_0^t B^n_s(F_{s,t}) \; ds \;.
\end{equation*}
By Theorem \ref{p01} and Corollary \ref{l36}, as $n\to\infty$, the
last two terms converge to $0$ in $L^1(\bb P_{\mu_n})$. By
\eqref{69}, $X^n_0(F_{0,t})$ converges to $0$ in
$L^2(\bb P_{\mu_n})$. Hence, $X^n_t(F) \,-\, M^{n,t}_t(F)$
converges to $0$ in $L^1(\bb P_{\mu_n})$. It remains to compute
the asymptotic behavior of the finite-dimensional distributions of
$M^{n,t}_t$. This has been done in \eqref{53}, which completes the
proof of the proposition.
\end{proof}

The previous result identifies the limit points of the sequence
$\bb Q_n = \bb P_{\mu_n} \circ (X^n)^{-1}$.  A similar argument,
relying on the computation of the limit of the Fourier transform of
linear combinations of the random variables $\bb X^{n}_{t_j} (F_j)$
[based on the observation that $X^n_t(F) \,-\, M^{n,t}_t(F)$ converges
to $0$ in $L^1(\bb P_{\mu_n})$, that $M^{n,t}_t(F)$ can be
expressed in terms of the martingale $M^n_s$, and that this later
process converges], yields the following result.

\begin{lemma}
\label{l47}
Let $\mu_n$ be a sequence of probability measures on $\Omega_n$ such
that $\lim_{n\to \infty} a^{-1}_n H_n(\mu_n \,|\, \nu^n_\rho) =0$.
Fix $p\ge 1$, $0\le t_1<t_2 < \cdots <t_p\le T$ and functions
$F_j\in C^\infty(\bb T^d)$, $1\le j\le p$. Under the measure
$\bb P_{\mu_n}$, the random vector
$(\bb X^{n}_{t_1} (F_1), \dots, \bb X^{n}_{t_p}(F_p))$ converges in
distribution to a centered Gaussian random vector. The covariances are
the ones obtained by integrating the covariances \eqref{53}.
\end{lemma}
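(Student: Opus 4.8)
The plan is to reduce the time-integrated field $\bb X^n$ to a continuous linear functional of the martingale process $M^n$, whose convergence has already been established in Lemma \ref{l38}, and then to invoke the continuous mapping theorem (equivalently, to compute the limiting Fourier transform of linear combinations of the $\bb X^n_{t_j}(F_j)$). Since the target is only the convergence of finite-dimensional distributions, no further tightness input is needed here; the tightness in $C^\vartheta([0,T], \mc H_{-r})$ provided by Corollary \ref{l45} is what upgrades this lemma to the weak-convergence statement of Theorem \ref{t1b}.

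First I would prove the $L^1$ replacement
\[
\bb X^n_{t_j}(F_j) \;-\; \int_0^{t_j} M^{n,s}_s(F_j)\; ds \;\longrightarrow\; 0
\quad\text{in } L^1(\bb P_{\mu_n})\;.
\]
Indeed, from \eqref{52} together with \eqref{57}, for each fixed $s$ one has $X^n_s(F_j) - M^{n,s}_s(F_j) = X^n_0((F_j)_{0,s}) + \int_0^s R^n_u((F_j)_{u,s})\, du + \int_0^s B^n_u((F_j)_{u,s})\, du$, and each of these three terms tends to $0$ in $L^1(\bb P_{\mu_n})$ by \eqref{69}, Corollary \ref{l36} and the Boltzmann--Gibbs principle (Theorem \ref{p01}), exactly as in the proof of Proposition \ref{p03b}. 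Bounding $\bb E_{\mu_n}\big[\, |\int_0^{t_j}(X^n_s(F_j)-M^{n,s}_s(F_j))\, ds| \,\big]$ by $\int_0^{t_j}\bb E_{\mu_n}\big[\, |X^n_s(F_j)-M^{n,s}_s(F_j)| \,\big]\, ds$, the integrand converges to $0$ for each $s$ and is bounded uniformly in $s\in[0,T]$ and in $n$ by the $L^2$ estimate of Lemma \ref{l40} (for $X^n$) together with \eqref{35} and \eqref{56} (for $M^{n,s}_s$); dominated convergence then yields the claim.

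Next I would rewrite the surviving term as a continuous functional of $M^n$. By \eqref{56}, $M^{n,s}_s(F_j) = M^n_s(F_j) + \int_0^s M^n_u(P_{s-u}\mc A F_j)\, du$, so that $\int_0^{t_j} M^{n,s}_s(F_j)\, ds = \Psi_j(M^n)$, where $\Psi_j : D([0,T], \mc H_{-r}) \to \bb R$ is the linear map $\Psi_j(M) = \int_0^{t_j} \big[\, M_s(F_j) + \int_0^s M_u(P_{s-u}\mc A F_j)\, du \,\big]\, ds$. This map is continuous on the set of continuous paths, exactly as the functional in Corollary \ref{l39}, since it only involves pairing $M_u$ against the smooth test functions $F_j$ and $P_{s-u}\mc A F_j$ and integrating in time. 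As $M^n$ converges in law to the Gaussian field $M$ by Lemma \ref{l38}, and the limit law $\bb Q^M$ is concentrated on continuous trajectories, the continuous mapping theorem gives that $(\Psi_1(M^n), \dots, \Psi_p(M^n))$ converges in distribution to $(\Psi_1(M), \dots, \Psi_p(M))$, a centered Gaussian vector because $M$ is a Gaussian field and each $\Psi_j$ is linear. Combined with the $L^1$ replacement of the previous paragraph (Slutsky), this establishes the asserted Gaussian limit for $(\bb X^n_{t_1}(F_1), \dots, \bb X^n_{t_p}(F_p))$.

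Finally, to identify the covariance I would note that $\Psi_j(M) = \int_0^{t_j} M^s_s(F_j)\, ds$ equals $\int_0^{t_j} X_u(F_j)\, du$ in distribution, where $X_u(F) := M^u_u(F)$ has the covariance computed in \eqref{53} (this is the identification already made in Proposition \ref{p03b}). Hence $\mathrm{Cov}(\Psi_j(M), \Psi_k(M)) = \int_0^{t_j}\!\int_0^{t_k} \mathrm{Cov}(X_u(F_j), X_v(F_k))\, du\, dv$, which is precisely the covariance \eqref{53} integrated in both time variables, as claimed. I expect the main obstacle to be the first step: the $L^1$ convergences borrowed from Proposition \ref{p03b} are stated pointwise in the terminal time, so the real work is to integrate them, which requires the uniform-in-$s$ $L^2$ control of Lemma \ref{l40} and \eqref{35} in order to license dominated convergence; the continuity of $\Psi_j$ on Skorohod space and the linearity yielding Gaussianity are then routine.
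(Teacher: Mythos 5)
Your proposal is correct and follows essentially the same route as the paper: the paper's (sketched) proof rests on exactly the three observations you use --- that $X^n_t(F) - M^{n,t}_t(F) \to 0$ in $L^1(\bb P_{\mu_n})$, that $M^{n,t}_t$ is expressed through the martingale $M^n$ via \eqref{56}, and that $M^n$ converges by Lemma \ref{l38} --- the only cosmetic difference being that the paper phrases the final passage to the limit via Fourier transforms of linear combinations of the $\bb X^n_{t_j}(F_j)$, whereas you invoke the continuous mapping theorem exactly as in Corollary \ref{l39}. One small correction: Lemma \ref{l40} provides $L^p$ bounds only for $p<4/3$, not an $L^2$ estimate, but this is harmless since your dominated-convergence step needs only a bound uniform in $s$ and $n$ on the $L^1$ norm.
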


The process $X_t$ can also be represented in terms of the space-time
white noise $\{\xi(t,x) : t\in \bb R \,,\, x\in \bb T^d\}$ introduced
in \eqref{54}:
\begin{equation*}
X_t(F) \;=\; 2\, \sqrt{d\, \chi(\rho)}\, \int_0^t ds \int_{\bb T^d}
dx\, (P_{t-s}F) (x) \, \xi(s,x)\;.
\end{equation*}

The last result of this section states that the process $X_t$ solves
the stochastic differential equation \eqref{58}.  Define the
$\mc H_{-r}$-valued process $\mc M_t$ by
\begin{equation*}
\mc M_t(F) \;:=\; X_t(F) \;-\; \int_0^t X_s( \mc A\, F)\, ds\;,
\end{equation*}
$0\le t\le T$, $F\in C^\infty(\bb T^d)$.

\begin{lemma}
\label{l21}
The processes $\mc M$ is equal to $\sqrt{4\,d\, \chi(\rho)}\, \xi$,
where $\xi$ is a space-time white noise.
\end{lemma}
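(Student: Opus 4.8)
The plan is to compute $\mc M_t(F)$ directly from the white-noise representation of $X_t$ recorded immediately before the statement,
\[
X_t(F) \;=\; 2\,\sqrt{d\,\chi(\rho)}\,\int_0^t ds \int_{\bb T^d} dx\,
(P_{t-s}F)(x)\,\xi(s,x)\;,
\]
and to show, by a stochastic Fubini argument combined with the Kolmogorov equation $\partial_u P_u F = \mc A P_u F$, that the whole expression $X_t(F)-\int_0^t X_s(\mc A F)\,ds$ collapses to the white noise tested against $F$. Since $\mc M$ is linear in $F$, it suffices to carry out the computation for each fixed $F\in C^\infty(\bb T^d)$ and then pass to the $\mc H_{-r}$-valued statement.

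First I would expand the integral term using the representation and the commutation $P_{s-r}\mc A F = \mc A P_{s-r}F$, obtaining
\[
\int_0^t X_s(\mc A F)\,ds \;=\;
2\,\sqrt{d\,\chi(\rho)}\,\int_0^t ds \int_0^s dr \int_{\bb T^d} dx\,
(\mc A P_{s-r}F)(x)\,\xi(r,x)\;.
\]
A stochastic Fubini theorem (in Walsh's sense) then allows me to move the deterministic $s$-integral past the stochastic integral in $(r,x)$ and to swap the order $\int_0^t ds\int_0^s dr = \int_0^t dr\int_r^t ds$. The inner deterministic integral is evaluated by the fundamental theorem of calculus: since $\partial_s P_{s-r}F = \mc A P_{s-r}F$, one has $\int_r^t (\mc A P_{s-r}F)\,ds = P_{t-r}F - F$, so that
\[
\int_0^t X_s(\mc A F)\,ds \;=\;
2\,\sqrt{d\,\chi(\rho)}\,\int_0^t dr \int_{\bb T^d} dx\,
\big[\,(P_{t-r}F)(x) - F(x)\,\big]\,\xi(r,x)\;.
\]

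Subtracting this from $X_t(F)$, rewritten with the integration variable $s$ renamed $r$, the terms involving $P_{t-r}F$ cancel exactly, leaving
\[
\mc M_t(F) \;=\;
2\,\sqrt{d\,\chi(\rho)}\,\int_0^t dr \int_{\bb T^d} dx\,
F(x)\,\xi(r,x)
\;=\; \sqrt{4\,d\,\chi(\rho)}\,\int_0^t\!\!\int_{\bb T^d} F\; d\xi\;.
\]
This is precisely $\sqrt{4\,d\,\chi(\rho)}\,\xi$ tested against $F$ over $[0,t]\times\bb T^d$. To conclude the identification I would verify the covariance
\[
\bb E\big[\,\mc M_t(F)\,\mc M_s(G)\,\big]
\;=\; 4\,d\,\chi(\rho)\,(s\wedge t)\,\<\,F\,,\,G\,\>\;,
\]
which is the defining covariance of the white noise and matches the limiting martingale covariance of Lemma \ref{l38}, and note that since the computation holds for the dense family $\{\phi_m\}$ it extends by linearity and continuity to all $F\in \mc H_r$, so $\mc M = \sqrt{4\,d\,\chi(\rho)}\,\xi$ as an $\mc H_{-r}$-valued process.

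The main obstacle will be the rigorous justification of the stochastic Fubini exchange, which requires joint measurability of the integrand and the $L^2$ bound $\int_0^t\!\int_0^t\!\int_{\bb T^d} |(\mc A P_{s-r}F)(x)|^2\,dx\,dr\,ds<\infty$. This bound holds because $F$ is smooth and $\mc A$ is a constant-coefficient elliptic operator whose semigroup $(P_u)$ is a contraction on every Sobolev space on $\bb T^d$, so $\mc A P_u F$ stays uniformly bounded in $L^2(\bb T^d)$ for $u\in[0,t]$; this makes the triple integral finite and legitimizes both the stochastic Fubini step and the termwise manipulations above.
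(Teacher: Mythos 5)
Your proof is correct, but it follows a genuinely different route from the paper's. The paper argues entirely at the level of covariances: since $\mc M$ is a linear functional of the Gaussian field $X$, it is itself a centered Gaussian field, and the proof consists of a direct (and somewhat lengthy) semigroup computation showing, from the covariance formula of Proposition \ref{p03b}, that $\bb Q[\mc M_t(F)\,\mc M_s(G)] = 4d\chi(\rho)\,(s\wedge t)\,\<F,G\>$; the identification with white noise then follows from Gaussianity, with no stochastic integration machinery at all. You instead take as input the stochastic-convolution representation $X_t(F)=2\sqrt{d\chi(\rho)}\int_0^t\!\int (P_{t-s}F)\,\xi$ stated just before the lemma (itself a consequence of \eqref{52}, \eqref{56} and \eqref{54}, though the paper does not prove it in detail), and undo it by stochastic Fubini together with $\partial_u P_u F=\mc A P_u F$, obtaining the exact cancellation $\mc M_t(F)=\sqrt{4d\chi(\rho)}\int_0^t\!\int F\,\xi$. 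Your route is shorter and yields slightly more — a pathwise identity with respect to the specific noise $\xi$ driving the limit martingale $M$, rather than an equality of Gaussian laws — and your justification of the Fubini step is sound, since $\Vert \mc A P_u F\Vert_{L^2}=\Vert P_u\mc A F\Vert_{L^2}\le\Vert \mc A F\Vert_{L^2}$ for smooth $F$. The trade-off is that it leans on the convolution representation and on Walsh's stochastic Fubini theorem as external ingredients, whereas the paper's covariance computation is self-contained given Proposition \ref{p03b}; if you wanted your argument to stand fully on its own, you would need to first justify the representation of $X$ (via the limit of \eqref{56} and Lemma \ref{l38}), which is essentially the same Fubini manipulation run in the opposite direction.
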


\begin{proof}
Since $\mc M$ is a centered Gaussian random field, it is enough to
compute its covariances. Fix $0\le s\le t\le T$ and $F$,
$G \in C^\infty(\bb T^d)$. By definition of $\mc M$ and Proposition
\ref{p03b},
$\{4\,d\, \chi(\rho)\}^{-1} \bb Q \big[\, \mc M_{t}(F) \, \mc
M_{s}(G)\,\big]$ is equal to
\begin{align*}
& \int_0^{s} du\, \<\,  P_{t-u} F \,,\, P_{s-u}G \,\> 
\;-\; \int_0^{t} du \int_0^{u\wedge s} dv\, \<\,  P_{u-v} \mc A F \,,\,
P_{s-v}G \,\> \\
&\quad \;-\; \int_0^{s} du \int_0^{u} dv\, \<\,  P_{t-v} F \,,\,
P_{u-v} \mc A G \,\> \\
&\qquad \;+\; \int_0^{t} du_1 \int_0^{s} du_2
\int_0^{u_1\wedge u_2} dv\, \<\,  P_{u_1-v} \mc A F \,,\,
P_{u_2-v} \mc A G \,\>\;.
\end{align*}
Denote by $I_j$, $1\le j\le 4$, the $j$-th term in this expression.
Recall that $\mc A$ and $P_t$ are symmetric operators in
$L^2(\bb T^d)$. Let
$c(r) = \<\, P_{r} F \,,\, G \,\> = \<\, F \,,\, P_{r}G \,\>$,
$r\ge 0$.  As $\mc A P_{t-s} = - (d/ds) P_{t-s}$, a straightforward
computation yields that
\begin{equation*}
I_1 \;=\; \frac{1}{2} \,\int_{t-s}^{t+s} c(r)\, dr\;, \quad
I_2 \;=\; \frac{1}{2} \, \Big\{ \, \int_{0}^{s} c(r)\, dr
\,+\, \int_{0}^{t-s} c(r)\, dr \,-\, \int_{s}^{t+s} c(r)\, dr
\,\Big\}  \;,
\end{equation*}
\begin{equation*}
I_3 \;=\; \frac{1}{2} \, \Big\{ \, \int_{t-s}^{t} c(r)\, dr
\;-\; \int_{t}^{t+s} c(r)\, dr \,\Big\}  \;,
\end{equation*}
\begin{align*}
I_4 \; &=\; s\, \<\, F \,,\, G \,\> \;+\; 
\frac{1}{2} \, \Big\{ \, -\, \int_{0}^{s} c(r)\, dr
\,+\, \int_{s}^{2s} c(r)\, dr \,\Big\} \\
& +\; 
\frac{1}{2} \, \Big\{ \, -\, \int_{0}^{2s} c(r)\, dr
\,-\, \int_{t-s}^{t} c(r)\, dr
\,+\, \int_{t}^{t+s} c(r)\, dr \,\Big\} \;.
\end{align*}
Denote by $I_{j,p}$ the $p$-th term of $I_j$. Observe that
$I_{2,1} + I_{4,2}=0$, $I_{3,1} + I_{4,5}=0$, $I_{3,2} + I_{4,6}=0$
and $[\, I_1 + I_{2,2}\,] \,+\, [ I_{2,3} + I_{4,3} +
I_{4,4}\,]=0$. Hence,
\begin{equation*}
\bb Q \big[\, \mc M_{t}(F) \, \mc M_{s}(G)\,\big] \;=\;
4\,d\, \chi(\rho)\, s \, \<\, F \,,\, G \,\> \;,
\end{equation*}
as claimed.
\end{proof}

\begin{proof}[Proof of Theorems \ref{t1} and \ref{t1b}]
In dimensions $d=1$ and $2$, the convergence of the process $X^n_t$
follows from the tightness proved in Section \ref{sec5} and from the
characterization of the limit points established in Proposition
\ref{p03b}. By Lemma \ref{l21}, $X$ solves the stochastic differential
equation \eqref{58}.

In dimensions $d=3$, the asymptotic behavior of the finite-dimensional
distributions of $X^n$ have been computed in Proposition
\ref{p03b}. The tightness of the process $\bb X^n$ has been
established in Corollary \ref{l45}, and uniqueness of limit points in
Lemma \ref{l47}.
\end{proof}

\section{Entropy estimates}
\label{sec9}

We present in this section some estimates, based on the entropy
inequality, used repeatedly in the article.

\begin{lemma}
\label{l41b}
Fix a cylinder function $h$. There exists a finite constant $C_0$,
depending only on $h$, such that
\begin{align*}
& \bb E_{\mu_n} \Big[ \, \int_0^t\, \Big( 
\frac{1}{\sqrt{n^d}}\sum_{x\in \bb T^d_n} J_x(s) \, \big[\,(\tau_x h)(\eta^n_s)
\,-\, \tilde h(\rho)\,\big] \Big)^2 \; ds \, \Big] \\
& \quad \;\le\; 
C_0\, \Big\{  \int_0^t H_n(f^n_s) \, ds
\;+\; \, t\, \Big\} \, \sup_{0\le s\le t} \Vert J(s) \Vert^2_\infty\;,
\end{align*}
for all $t>0$, smooth function
$J:[0,t] \times \bb T^d_n \to \bb R$, probability measure $\mu_n$ and
$n\ge 1$.
\end{lemma}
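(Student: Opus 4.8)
The plan is to reduce the statement to the subgaussian concentration bound of Corollary~\ref{l27} through the entropy inequality, applied slice by slice in time. Writing $f^n_s = d(\mu_n S^n(s))/d\nu^n_\rho$ and using $\bb E_{\mu_n}[\, g(\eta^n_s)\,] = \int g\, f^n_s\; d\nu^n_\rho$ together with Fubini's theorem, the left-hand side equals $\int_0^t \big( \int W_s\, f^n_s\; d\nu^n_\rho\big)\, ds$, where
\begin{equation*}
W_s \;:=\; \Big( \frac{1}{\sqrt{n^d}} \sum_{x\in\bb T^d_n} J_x(s)\, \big[\,(\tau_x h)(\eta) - \tilde h(\rho)\,\big] \Big)^2 \;.
\end{equation*}
For each fixed $s$, I would then apply the entropy inequality
\begin{equation*}
\int W_s\, f^n_s\; d\nu^n_\rho \;\le\; \frac{1}{\gamma}\, H_n(f^n_s) \;+\; \frac{1}{\gamma}\, \log \int e^{\gamma W_s}\; d\nu^n_\rho \;,
\end{equation*}
valid for every $\gamma>0$.

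The key point is that $W_s$ is precisely the square of a normalized space-average of the cylinder function $h$ weighted by $F_x = J_x(s)$, so Corollary~\ref{l27} applies verbatim. With $a=\gamma$ it provides constants $0<c_0<C_0<\infty$, depending only on $h$, such that $\log \int e^{\gamma W_s}\, d\nu^n_\rho \le C_0\, \gamma\, \Vert J(s)\Vert^2_\infty$ provided $\gamma < c_0/\Vert J(s)\Vert^2_\infty$. Choosing $\gamma = c_0/(2\,\Vert J(s)\Vert^2_\infty)$ (the case $\Vert J(s)\Vert_\infty = 0$ being trivial, as then $W_s\equiv 0$) satisfies this constraint and yields the slicewise bound
\begin{equation*}
\int W_s\, f^n_s\; d\nu^n_\rho \;\le\; \frac{2}{c_0}\, \Vert J(s)\Vert^2_\infty\, H_n(f^n_s) \;+\; C_0\, \Vert J(s)\Vert^2_\infty \;.
\end{equation*}

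Finally, I would bound $\Vert J(s)\Vert^2_\infty$ by $\sup_{0\le s\le t}\Vert J(s)\Vert^2_\infty$, integrate the previous inequality over $s\in[0,t]$, and take the constant in the statement to be $\max\{2/c_0,\, C_0\}$, which depends only on $h$. This gives exactly the asserted bound. There is no genuine obstacle here: the entire analytic content is packaged in Corollary~\ref{l27}, and the only thing to verify is that the exponential-moment estimate there is uniform in the weights $J(s)$ (which it is, since $c_0$ and $C_0$ depend only on $h$), so that the slice-dependent choice of $\gamma$ can be made without affecting the final constant.
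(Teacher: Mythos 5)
Your proof is correct and follows essentially the same route as the paper's: both reduce the statement, via Fubini and the entropy inequality applied slice by slice in time, to the subgaussian exponential-moment bound of Corollary~\ref{l27}, with the same slice-dependent choice $\gamma_s = c_0/(2\Vert J(s)\Vert^2_\infty)$. Your write-up is in fact slightly more careful, noting the degenerate case $\Vert J(s)\Vert_\infty=0$ and the uniformity of the constants in the weights.
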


\begin{proof}
By the entropy inequality, the expectation appearing in the statement
of the lemma is bounded by
\begin{equation*}
\int_0^t \frac{1}{\gamma_s}  \Big\{\, H_n(f^n_s) 
\;+\; \log  \, \bb E_{\nu^n_\rho} \Big[ \,
\exp \gamma_s \Big\{\frac{1}{\sqrt{n^d}} \,
\sum_{x\in \bb T^d_n} J_x(s) \, \big[\,(\tau_x h)(\eta^n)
\,-\, \tilde h(\rho)\,\big] \,\Big\}^2  \, \Big]\, \Big\}\; ds
\end{equation*}
for every $\gamma_s>0$. Apply Corollary \ref{l27} with $f= h$,
$F_x = J_x (s)$, $a=\gamma_s = c_0/2 \Vert J(s) \Vert^2_\infty$ to
conclude that the previous expression is bounded by
\begin{equation*}
C_0\, \Big\{  \int_0^t H_n(f^n_s) \, ds
\;+\; \, t\, \Big\} \, \sup_{0\le s\le t} \Vert J(s) \Vert^2_\infty\;,
\end{equation*}
for some finite constant $C_0$ which depends only on $h$.  This
complete the proof.
\end{proof}

It follows from this result and Schwarz inequality that there exists a
finite constant $C_0$, depending only on $h$, such that
\begin{equation}
\label{62}
\begin{aligned}
& \bb E_{\mu_n} \Big[ \, \Big(  \int_0^t\, 
\frac{1}{\sqrt{n^d}}\sum_{x\in \bb T^d_n} J_x(s) \, \big[\,(\tau_x h)(\eta^n_s)
\,-\, \tilde h(\rho)\,\big] \; ds \Big)^2\, \Big] \\
& \quad \;\le\; 
C_0\, t \, \Big\{  \int_0^t H_n(f^n_s) \, ds
\;+\; \, t\, \Big\} \, \sup_{0\le s\le t} \Vert J(s) \Vert^2_\infty\;,
\end{aligned}
\end{equation}
for all $t>0$, smooth function
$J:[0,t] \times \bb T^d_n \to \bb R$, probability measure $\mu_n$ and
$n\ge 1$.

\section{Finite state Markov chains}
\label{sec7}

We present in this section some general results on continuous-time
Markov chains, which we could not find in the literature.

Let $E$ be a finite state-space and $X(t)$ an $E$-valued,
continuous-time Markov chain. Denote its generator by $L$:
\begin{equation*}
(L h)(x) \;=\; \sum_{y\in E} r(x,y)\, [\, h(y) \,-\, h(x)\,]\;.
\end{equation*}

We start with a Feynman-Kac formula.  For a probability measure $\mu$
on $E$, let $\Gamma_\mu (h,h)$ be the functional given by
\begin{equation*}
\Gamma_\mu (h,h) \;=\; \frac{1}{2} \sum_{x,y\in E} \mu(x) \, r(x,y)\,
[\, h(y) - h(x)\, ]^2
\end{equation*}
for $h:E \to \bb R$.  This functional is sometimes called the ``carr\'e du
champs''. In the case where the process is reversible with respect to
$\mu$, $\Gamma_\mu$ coincides with the Dirichlet form: $\Gamma_\mu
(h,h) = -\, \int h\, (Lh)\, d\mu$.

Next result is an extension of \cite[Lemma A.7.2]{kl}, as it does not
require the measure $\mu$ to be stationary for the process $X$. This
result appears as Lemma 3.5 in \cite{jm1}. We provide a slightly
different proof, based on the one of \cite[Lemma A.7.2]{kl}.

\begin{lemma}
\label{l10}
For every function $W: \bb R_+ \times E \to \bb R$,
probability measure $\mu$ on $E$ and $t>0$,
\begin{equation*}
\log\, \bb E_{\mu} \Big[ e^{\int_0^t W(s,X(s))\, ds}\, \Big] \;\le\;
\int_0^t\, \sup_{f} \Big\{ \int W(s) \, f\, d\mu \;+\; \frac{1}{2} \, \int L f\, d\mu
\;-\; \Gamma_\mu (\sqrt{f},\sqrt{f})\,\Big\}\, ds \;,
\end{equation*}
where the supremum is carried over all densities $f$ with respect to
$\mu$
\end{lemma}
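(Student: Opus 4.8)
The plan is to derive the bound from the Feynman--Kac representation together with a differential inequality for the $L^2(\mu)$-norm of the associated backward solution. First I would fix $t>0$ and introduce, for $0\le s\le t$, the function
\[
v(s,x) \;=\; \bb E_x\Big[\, \exp\Big\{\int_s^t W(r,X(r))\, dr\Big\}\,\Big],
\]
where $\bb E_x$ denotes expectation for the chain started at $x$. On the finite state space $E$ this is well defined, strictly positive, and smooth in $s$, and by the Markov property it solves the backward equation $\partial_s v(s,\cdot) = -\,(L+W(s))\,v(s,\cdot)$ with terminal condition $v(t,\cdot)\equiv 1$. Since $\bb E_\mu[\exp\{\int_0^t W(s,X(s))\,ds\}] = \int v(0,x)\,\mu(dx) = \< v(0), \mb 1\>_\mu$, where I write $\<f,g\>_\mu := \int fg\,d\mu$, the Cauchy--Schwarz inequality together with $\|\mb 1\|_{L^2(\mu)}=1$ gives
\[
\bb E_\mu\Big[\exp\Big\{\int_0^t W(s,X(s))\,ds\Big\}\Big]\;\le\;\|v(0)\|_{L^2(\mu)}.
\]
Thus it suffices to bound $\tfrac12\log N(0)$, where $N(s) := \< v(s), v(s)\>_\mu$.

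Next I would set up the differential inequality for $N$. Differentiating and using the backward equation,
\[
N'(s) \;=\; 2\,\< v(s), \partial_s v(s)\>_\mu \;=\; -\,2\,\Big\{\, \< v(s), L v(s)\>_\mu \;+\; \int W(s)\, v(s)^2\, d\mu \,\Big\}.
\]
Writing $g_s := v(s)/\|v(s)\|_{L^2(\mu)}$, which is strictly positive and normalized in $L^2(\mu)$, this becomes
\[
\frac{d}{ds}\log N(s) \;=\; -\,2\,\Big\{\, \int W(s)\, g_s^2\, d\mu \;+\; \< g_s, L g_s\>_\mu \,\Big\},
\]
and the quantity in braces is a Rayleigh-type expression that I want to recognize as one of the test terms defining the supremum in the statement.

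The key algebraic step, which is exactly where the non-stationarity of $\mu$ enters, is the identity
\[
\< g, L g\>_\mu \;=\; \tfrac12\int L(g^2)\, d\mu \;-\; \Gamma_\mu(g,g),
\]
valid for every $g:E\to\bb R$; it follows from the pointwise relation $g(x)[g(y)-g(x)] = \tfrac12[g(y)^2-g(x)^2] - \tfrac12[g(y)-g(x)]^2$ after multiplying by $\mu(x)\,r(x,y)$ and summing over $x,y$. When $\mu$ is stationary the term $\int L(g^2)\,d\mu$ vanishes and one recovers the usual Dirichlet form; retaining it is precisely what removes the stationarity hypothesis of \cite[Lemma A.7.2]{kl}. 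Applying this with $g=g_s$ and setting $f:=g_s^2$, which is a density with respect to $\mu$ because $\< g_s,g_s\>_\mu=1$ and $g_s\ge 0$, the braced term equals $\int W(s)\, f\, d\mu + \tfrac12\int Lf\, d\mu - \Gamma_\mu(\sqrt f,\sqrt f)$, which is at most the supremum $\lambda(s)$ over densities appearing in the statement. Hence $\tfrac{d}{ds}\log N(s)\ge -2\lambda(s)$ on $[0,t]$; integrating and using $N(t)=\|\mb 1\|_{L^2(\mu)}^2=1$ yields $\log N(0)\le 2\int_0^t\lambda(s)\,ds$, and combining with the Cauchy--Schwarz bound gives $\log\bb E_\mu[\exp\{\int_0^t W\,ds\}]\le \tfrac12\log N(0)\le \int_0^t\lambda(s)\,ds$, the factor $2$ from the $L^2$-computation cancelling the $\tfrac12$ from the square root. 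The only genuine subtlety — the main obstacle — is verifying this algebraic identity so that the correction for $\mu$ failing to be invariant is captured correctly; all the remaining ingredients (existence, smoothness, and positivity of $v$, and differentiation under the expectation) are automatic on a finite state space.
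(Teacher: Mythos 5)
Your proof is correct and follows essentially the same route as the paper's: both reduce the exponential moment to the time integral of the supremum of the quadratic form $\int W(s)\,h^2\,d\mu + \int (Lh)\,h\,d\mu$ over $L^2(\mu)$-normalized $h$, and then use the identity $\int (Lh)\,h\,d\mu = \tfrac{1}{2}\int Lh^2\,d\mu - \Gamma_\mu(h,h)$ to rewrite everything in terms of a density $f=h^2$. The only difference is that the paper imports the first reduction from the proof of \cite[Lemma A.7.2]{kl}, whereas you re-derive it in a self-contained way via the backward Feynman--Kac equation and a differential inequality for $\|v(s)\|_{L^2(\mu)}^2$; a small dividend of your version is that the normalized function $g_s$ is automatically positive, so you do not need the paper's extra step $\Gamma_\mu(|h|,|h|)\le\Gamma_\mu(h,h)$ to restrict the supremum to nonnegative test functions.
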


\begin{proof}
By the proof of \cite[Lemma A.7.2]{kl}, the left-hand side of the
inequality appearing in the statement of the lemma is bounded by
\begin{equation*}
\int_0^t\, \sup_{h} \Big\{ \int W(s) \, h^2\, d\mu
\;+\; \int (L h) \, h\,  d\mu \,\Big\} \, ds \;,
\end{equation*}
where the supremum is carried over all functions $h: E \to \bb R$ such
that $\int h^2 \, d\mu=1$.

A straightforward computation yields that
\begin{equation*}
\frac{1}{2} \int L h^2 \, d\mu \;-\; \int (L h) \, h\,  d\mu \;=\;
\Gamma_\mu (h,h)\;.
\end{equation*}
Therefore, the previous integral is equal to
\begin{equation*}
\int_0^ t\, \sup_{h} \Big\{ \int W(s) \, h^2\, d\mu \;+\;
\frac{1}{2} \, \int L h^2 \,  d\mu
\;-\; \Gamma_\mu (h,h) \,\Big\} \, ds \;,
\end{equation*}
Since $\Gamma_\mu (|h|\,,\, |h|) \le \Gamma_\mu (h,h) $, we may
restrict the supremum to non-negative functions $h$. At this point, to
complete the proof of the lemma it remains to replace $h$ by $\sqrt{f}$.
\end{proof}

In the case of the voter model with stirring, the previous result
provides the following bound.

\begin{corollary}
\label{l14}
For every function $W: \bb R_+\times \Omega_n \to \bb R$, $0<\rho<1$
and $t>0$,
\begin{equation*}
\log\, \bb E_{\nu^n_\rho} \Big[ e^{\int_0^t W(s,\eta^n(s))\, ds}\, \Big] \;\le\;
\int_0^t\, \sup_{f} \Big\{ \int W(s) \, f\, d\nu^n_\rho \;+\; \frac{a_n}{2} \,
\int V\, f\, d\nu^n_\rho \;-\; n^2\, I_n(f) \,\Big\}\, ds\;,
\end{equation*}
where the supremum is carried over all densities $f$ with respect to
$\nu^n_\rho$ and $V$ is the function introduced in \eqref{05}. 
\end{corollary}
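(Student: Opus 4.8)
The plan is to obtain this estimate as a direct specialization of Lemma~\ref{l10} to the voter model with stirring. First I would apply that lemma with state space $E = \Omega_n$, chain $X(\cdot) = \eta^n(\cdot)$, generator $L = L_n = n^2\, L_n^S + a_n\, L_n^V$, and reference measure $\mu = \nu^n_\rho$. This immediately bounds the left-hand side by the time integral of
\begin{equation*}
\sup_f \Big\{ \int W(s)\, f\, d\nu^n_\rho \;+\; \tfrac12 \int L_n f\, d\nu^n_\rho \;-\; \Gamma_{\nu^n_\rho}(\sqrt f, \sqrt f) \Big\},
\end{equation*}
the supremum being over densities $f$ with respect to $\nu^n_\rho$, and the whole argument then reduces to simplifying the last two terms using the structure of $L_n$ and the properties of $\nu^n_\rho$.

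For the carr\'e du champs I would first observe that an exclusion jump $\sigma^{x,x+e_j}\eta$ modifies two sites while a voter flip $\sigma^x\eta$ modifies a single one; the two families of transitions are therefore disjoint, and the rate attached to $L_n$ is, for each ordered pair of distinct configurations, either the accelerated exclusion rate or the accelerated voter rate, never a sum. Consequently $\Gamma_{\nu^n_\rho}$ splits additively as $n^2\, \Gamma^S_{\nu^n_\rho} + a_n\, \Gamma^V_{\nu^n_\rho}$, the carr\'e du champs of the two elementary dynamics. Since $\nu^n_\rho$ satisfies detailed balance for the exclusion generator, $\Gamma^S$ coincides with the Dirichlet form, so that $n^2\, \Gamma^S_{\nu^n_\rho}(\sqrt f, \sqrt f) = -\, n^2 \int \sqrt f\, (L_n^S \sqrt f)\, d\nu^n_\rho = n^2\, I_n(f)$ by the definition \eqref{13} of $I_n$; this accounts for the term $-\, n^2 I_n(f)$.

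Next I would split the drift term as $\tfrac{n^2}{2}\int L_n^S f\, d\nu^n_\rho + \tfrac{a_n}{2}\int L_n^V f\, d\nu^n_\rho$. The exclusion contribution vanishes, because reversibility of $\nu^n_\rho$ under $L_n^S$ gives $\int L_n^S f\, d\nu^n_\rho = 0$. For the voter contribution I would use the adjoint identity $\int L_n^V f\, d\nu^n_\rho = \int f\, (L_n^{V,*}\mb 1)\, d\nu^n_\rho = \int V\, f\, d\nu^n_\rho$, with $V$ as in \eqref{05}, producing exactly the term $\tfrac{a_n}{2}\int V\, f\, d\nu^n_\rho$. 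Collecting everything, the expression inside the supremum equals the target supremand minus the remaining term $a_n\, \Gamma^V_{\nu^n_\rho}(\sqrt f, \sqrt f)$, which is nonnegative; discarding it yields the stated upper bound after integration in time.

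There is no genuine obstacle here: the statement is a clean specialization of Lemma~\ref{l10}, and every ingredient---the additive decomposition of $\Gamma$, the reversibility and stationarity of $\nu^n_\rho$ for $L_n^S$, and the identity $L_n^{V,*}\mb 1 = V$---is already available. The only points requiring minor care are verifying that exclusion and voter transitions never overlap, so that $\Gamma$ decomposes additively, and keeping straight which term is retained with equality (the exclusion Dirichlet form) versus merely bounded from above by dropping it (the voter carr\'e du champs).
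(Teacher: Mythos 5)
Your proposal is correct and follows essentially the same route as the paper: apply Lemma \ref{l10} with $L=L_n$ and $\mu=\nu^n_\rho$, identify $\tfrac12\int L_n f\, d\nu^n_\rho$ with $\tfrac{a_n}{2}\int V f\, d\nu^n_\rho$ via stationarity of $\nu^n_\rho$ for $L_n^S$ and the adjoint identity $L_n^{V,*}\mb 1 = V$, and bound $\Gamma_{\nu^n_\rho}(\sqrt f,\sqrt f)\ge n^2 I_n(f)$ by discarding the nonnegative voter contribution. The only superfluous step is your verification that exclusion and voter transitions never overlap: the additive splitting of the carr\'e du champs needs no such disjointness, since $\Gamma_\mu$ is linear in the jump rates.
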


\begin{proof}
We have to estimate the right-hand side of the formula appearing in
the statement of Lemma \ref{l10} with $L=L_n$. On the one hand, as the
measure $\nu^n_\rho$ is invariant for the exclusion dynamics,
\begin{equation*}
\int L_n f\, d\nu^n_\rho \;=\; a_n \int L^V_n f\, d\nu^n_\rho
\;=\; a_n \int ( L^{V,*}_n \mb 1) f\, d\nu^n_\rho \;=\; a_n \int  V\,
f\, d\nu^n_\rho\;, 
\end{equation*}
by definition of $V$

On the other hand, since all terms of $\Gamma_{\nu^n_\rho}$ are
non-negative, disregarding the ones associated to the voter dynamics
yields that
\begin{equation*}
\Gamma_{\nu^n_\rho} (\sqrt{f},\sqrt{f}) \;\ge\; n^2 I_n(f)\;.
\end{equation*}
in view of the explicit formulae for $I_n$ and $\Gamma_{\nu^n_\rho}$.
This completes the proof of the corollary.
\end{proof}

\smallskip\noindent{\bf Martingales.} Denote by $\Gamma_k$, $k=2,3,4$,
the operators defined by
\begin{gather*}
\Gamma_2(h) \;=\; Lh^2 \;-\; 2\, h\, Lh \;, \quad
\Gamma_3(h) \;=\; Lh^3 \;-\; 3\, h\, Lh^2 \;+\; 3\, h^2\, Lh \;, \\
\Gamma_4(h) \;=\; Lh^4 \;-\; 4\, h\, Lh^3 \;+\; 6\, h^2\, Lh^2
\;-\; 4\, h^3\, Lh\;.
\end{gather*}
A straightforward computation yields that
\begin{gather*}
\Gamma_k(h) \;=\; \sum_{x,y\in E} r(x,y)\, \big[\, h(y) \,-\,
h(x)\,\big]^k \;. 
\end{gather*}

Fix a function $h: E\to \bb R$.  It is well known that
\begin{equation}
\label{55}
M_t(h) \;:=\; h(X(t)) \,-\, h(X(0)) \;-\; \int_0^t (Lh)(X(s))\; ds\;,
\end{equation}
\begin{gather*}
M^{(2)}_t(h) \;:=\; M_t(h)^2 \;-\; \int_0^t (\Gamma_2h)(X(s))\; ds
\end{gather*}
are martingales. The next lemma provides a formula for the quadratic
variation of $M^{(2)}_t(h) $.

\begin{lemma}
\label{l24}
Fix a function $h:E\to \bb R$. Let
\begin{equation*}
A(s) \;=\; \Gamma_4(h(X(s))) \;+\; 4\, M_s(h) \, \Gamma_3(h(X(s)))
\;+\; 6\, M_s(h)^2 \, \Gamma_2(h(X(s)))\;.
\end{equation*}
Then,
\begin{equation*}
M_t(h)^4 \;-\; \int_0^t A (s) \; ds
\end{equation*}
is a martingale which vanishes at $t=0$. In particular, the
compensator of the martingale $M^{(2)}_t(h)$, denoted by
$\<M^{(2)}(h)\>_t$, is given by
\begin{align*}
\<M^{(2)}(h)\>_t \; &=\; \int_0^t \big\{ \, 
\Gamma_4(h(X(s))) \;+\; 4\, \Gamma_2(h(X(s)))\,\big\}\; ds \\
&+\; 4\, M_t(h)\, \int_0^t \Gamma_3(h(X(s))) \; ds
+\; 4\, M^{(2)}_t(h)\, \int_0^t \Gamma_2(h(X(s))) \; ds \;.
\end{align*}
\end{lemma}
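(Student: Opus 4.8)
The plan is to read both statements off Itô's formula for the pure-jump semimartingale $M_t(h)$. By \eqref{55}, $M_t(h)$ has continuous bounded-variation part $-\int_0^t (Lh)(X(s))\,ds$, no continuous martingale part, and jumps $\Delta M_s(h)=h(X(s))-h(X(s-))$ equal to those of $h(X(t))$. The only tool needed is that the compensator of a jump functional $\sum_{s\le t} g(M_{s-}(h),\Delta M_s(h))$ equals $\int_0^t \sum_y r(X(s),y)\,g(M_s(h),h(y)-h(X(s)))\,ds$ (the distinction between $M_{s-}$ and $M_s$ being immaterial under the $ds$-integral), so that the $k$-th power of a jump produces $\sum_y r(X(s),y)[h(y)-h(X(s))]^k=\Gamma_k(h(X(s)))$. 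The second assertion will then follow from the first, as the wording ``in particular'' suggests.

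For the first claim I would apply Itô to $m\mapsto m^4$ along $M_t(h)$. Since $M_0(h)=0$ and there is no continuous martingale part, this gives $M_t(h)^4=-\int_0^t 4\,M_s(h)^3(Lh)(X(s))\,ds+\sum_{s\le t}\{M_s(h)^4-M_{s-}(h)^4\}$. Expanding $M_s(h)^4-M_{s-}(h)^4$ binomially in $\Delta M_s(h)$ yields four terms; the compensator of the first-order term $4\,M_{s-}(h)^3\Delta M_s(h)$ is exactly $\int_0^t 4\,M_s(h)^3(Lh)(X(s))\,ds$ and cancels the drift above. The compensators of the remaining three terms are $\int_0^t\{6\,M_s(h)^2\,\Gamma_2+4\,M_s(h)\,\Gamma_3+\Gamma_4\}(h(X(s)))\,ds=\int_0^t A(s)\,ds$, and the compensated process vanishes at $t=0$; this is the first statement.

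For the compensator of $M^{(2)}_t(h)$ I would start from $(M^{(2)}_t(h))^2=M_t(h)^4-2\,M_t(h)^2 B_t+B_t^2$, where $B_t=\int_0^t\Gamma_2(h(X(s)))\,ds$ is continuous of bounded variation. The compensator of $M_t(h)^4$ is $\int_0^t A(s)\,ds$ by the first part; those of $M_t(h)^2 B_t$ and $B_t^2$ are obtained from the product rule, using that $B$ has no jumps (so there is no cross-bracket) and that $M_t(h)^2=M^{(2)}_t(h)+B_t$ with $M^{(2)}$ a martingale. Assembling the three pieces and then integrating by parts to move the factors $M_t(h)$ and $M^{(2)}_t(h)$ outside the time-integrals — legitimate because each $\int_0^t\Gamma_k\,ds$ is continuous of bounded variation, so the remainders are genuine martingales — brings the expression into the form displayed in the statement.

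The step I expect to be delicate is precisely this reorganisation. The product rule applied to $M_t(h)^2 B_t$ produces a finite-variation term quadratic in $B_t$, namely $\int_0^t\big(\int_0^s\Gamma_2\,du\big)\Gamma_2\,ds=\tfrac12 B_t^2$, and tracking which of the resulting pieces are martingales and which survive as part of the compensator is where a sign or a coefficient can easily be misplaced. To guard against this I would cross-check by the independent route of computing $\langle M^{(2)}(h)\rangle_t$ directly as the compensator of $\sum_{s\le t}(\Delta M^{(2)}_s)^2$, using $\Delta M^{(2)}_s=2\,M_{s-}(h)\,\Delta M_s(h)+(\Delta M_s(h))^2$, and verifying that the two computations agree before committing to the final constants.
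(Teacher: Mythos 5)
Your treatment of the first claim is correct, and it is in substance the computation the paper itself invokes and omits: the pathwise chain rule for the finite-variation process $M_t(h)$ together with the compensation formula for the jump measure of the chain are exactly the ``integration by parts'' and ``integrals of predictable processes with respect to martingales are martingales'' alluded to there, and the cancellation of the drift term $4M_s(h)^3(Lh)(X(s))$ against the compensator of the first-order jump term is the heart of the identity.

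The genuine gap is in the second claim, precisely at the step you flagged as delicate and then asserted would work out. Write $\Gamma_k(s)$ for $\Gamma_k(h(X(s)))$ and $B_t=\int_0^t\Gamma_2(s)\,ds$. Carrying out your assembly --- or, faster, performing the cross-check you propose, i.e.\ compensating $\sum_{s\le t}(\Delta M^{(2)}_s(h))^2$ with $\Delta M^{(2)}_s(h)=2M_{s-}(h)\,\Delta M_s(h)+(\Delta M_s(h))^2$ --- yields
\begin{equation*}
\langle M^{(2)}(h)\rangle_t \;=\; \int_0^t \big\{\,\Gamma_4(s)\;+\;4\,M_s(h)\,\Gamma_3(s)\;+\;4\,M_s(h)^2\,\Gamma_2(s)\,\big\}\;ds\;,
\end{equation*}
which, after pulling $M(h)$ and $M^{(2)}(h)$ out of the time integrals (legitimate only modulo honest martingales), reads
\begin{equation*}
\int_0^t \Gamma_4(s)\,ds\;+\;4\,M_t(h)\int_0^t\Gamma_3(s)\,ds\;+\;4\,M^{(2)}_t(h)\,B_t\;+\;2\,B_t^2\;+\;\text{martingale}\;.
\end{equation*}
The lemma's display has $4B_t$ where this computation produces $2B_t^2$, and the two cannot be reconciled: their difference $4B_t-2B_t^2$ is a continuous, adapted, finite-variation process vanishing at $0$, hence a martingale only if it vanishes identically, which fails whenever $\Gamma_2\not\equiv 0$. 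A two-state check makes this concrete: with both jump rates equal to $1$ and $h$ the indicator of one of the two states, $\bb E\big[(M^{(2)}_t(h))^2\big]=t+o(t)$ as $t\to 0$, while the displayed right-hand side has expectation $5t+o(t)$, the excess being exactly the term $4\int_0^t\Gamma_2(s)\,ds$. So your plan, honestly executed, does not ``bring the expression into the form displayed in the statement''; it proves the corrected identity in which $4\int_0^t\Gamma_2(s)\,ds$ is replaced by $2\big(\int_0^t\Gamma_2(s)\,ds\big)^2$ (equivalently, the compensator in the first display above). The cross-check you mention is the right instinct, but it has to be carried out: its outcome is that the mismatch lies in the printed formula rather than in your method, a conclusion your write-up, by asserting agreement with the display, never reaches.
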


\begin{proof}
The proof of this result relies on a long computation. As in the proof
of \cite[Lemma A.5.1]{kl}, the unique ingredients are integration by
parts and the fact that the integral of a predictable process with
respect to a martingale is a martingale. Since it is an identity,
details are left to the reader.
\end{proof}

We apply Lemma \ref{l24} to the voter model with stirring. Fix a
function $F$ in $C^\infty(\bb T^d)$. Let $h:\Omega_n \to \bb R$ be
given by $h(\eta) = X^n(F)$. Then, there exists a finite constant
$C_0$ such that
$|\, \Gamma_k(h)\, |\le C_0 \{\, \Vert F\Vert^k_\infty + \Vert \nabla
F\Vert^k_\infty \,\}$ for $2\le k\le 4$, $n\ge 1$. In particular, by
Lemma \ref{l24} and by definition of the martingale $M^n_t(F)$,
introduced in \eqref{30},
\begin{equation*}
\bb E^n_\eta\big[\, M^n_t(F)^4 \,\big] \;\le\;
C_0\, \int_0^t \bb E^n_\eta\big[\, c_4 \,+\, c_3\, |\, M^n_t(F)\,|
\,+\, c_2\, M^n_t(F)^2 \,\big]\; ds\;,
\end{equation*}
where $c_k = \Vert F\Vert^k_\infty + \Vert \nabla F\Vert^k_\infty$.
Hence, by \eqref{35} and Young's inequality, there exists a finite
constant $C_0$ such that
\begin{equation}
\label{36}
\bb E^n_\eta\big[\, M^n_t(F)^4 \,\big] \;\le\;
C_0\, T^2 \, \{\, \Vert F\Vert^4_\infty + \Vert \nabla
F\Vert^4_\infty \,\} 
\end{equation}
all $0\le t\le T$, $\eta\in\Omega_n$, $n\ge 1$.

\section{Decomposition of cylinder functions}
\label{sec8}

Throughout this section, $0<\alpha<1$ is fixed.  Consider a cylinder
function $f\colon \{0,1\}^{\bb Z^d} \to \bb R$. Denote by
$A\subset \bb Z^d$ its support: $f(\eta) = f(\eta_z : z\in A)$. In
particular, there exist constants $c_B$, $B\subset A$, such that.
\begin{equation*}
f(\eta) \;=\; \sum_{B \subset A} c_B \, \eta_B\;,
\end{equation*}
where $\eta_\varnothing = 1$, $\eta_B = \prod_{x\in A} \eta_x$ and the
sum is performed over all subsets $B$ of $A$. Note that the constants
$c_B$'s may depend on $\alpha$: $f(\eta) = \eta_0 - \alpha$ is an
admissible cylinder function.  With this notation, for
$0\le \rho \le 1$,
\begin{equation}
\label{61}
\tilde f (\rho) \;=\;  \sum_{B \subset A} c_B \, \rho^{|B|}
\quad\text{and}\quad
\tilde f' (\rho) \;=\; \sum_{B \subset A \,,\, B\not = \varnothing}
c_B \, |B|\, \rho^{|B|-1}\;.
\end{equation}

Let $\color{bblue} \xi^\rho_\varnothing \,=\, 1$,
$\color{bblue} \xi^\rho_D \,=\, \prod_{x\in D} (\eta_x-\rho)$, $D$ a
finite subset of $\bb Z^d$. Since
\begin{equation*}
\eta_B \;=\; \sum_{D\subset B} \rho^{|B|-|D|}\, \xi^\rho_D\;, 
\end{equation*}
we may rewrite $f(\eta)$ as
\begin{equation}
\label{60}
f(\eta) \;=\; \sum_{D \subset A} \xi^\rho_D \,
\sum_{B: D\subset B \subset A} c_B\,  \rho^{|B|-|D|}\;.
\end{equation}
The cylinder function $f$ is said to have {\it degree $n\ge 0$ in
  $L^2(\nu_\alpha)$} if there exists a finite collection of subsets $D$
of $\bb Z^d$ with cardinality $n$ and real numbers $c'_D$ such that
\begin{equation*}
f(\eta) \;=\; \sum_{D } c'_D\, \xi^\alpha_D \;.
\end{equation*}

Denote by $\Pi_\rho$ the operator given by 
\begin{equation}
\label{49}
(\Pi_\rho f)(\eta) \;:=\; f(\eta) \;-\; \tilde f(\rho) \;-\; \tilde
f'(\rho)\, (\eta_0\,-\, \rho)\;.
\end{equation}

\begin{asser}
\label{l42}
Fix a cylinder function $f(\eta) = \sum_{B\subset A} c_B\, \eta_B$
whose support is contained in a finite subset $A$ of $\bb Z^d$. Then,
for every $0<\rho<1$,
\begin{equation*}
(\Pi_\rho f) (\eta) \;=\; (\Pi^1_\rho f) (\eta)
\;+\; (\Pi^{+2}_\rho f) (\eta) \;,
\end{equation*}
where  
\begin{gather*}
(\Pi^1_\rho f) (\eta) \; =\; \sum_{z\in A} c_{\{z\}} \, (\eta_z - \eta_0)\,
\sum_{B\subset A \,,\, B\ni z} c_B\, \rho^{|B|-1} \;, \\
(\Pi^{+2}_\rho f) (\eta) \; =\; \sum_{D\subset A \,,\, |D|\ge 2} \xi^\rho_D  
\sum_{B: D\subset B \subset A} c_B\, \rho^{|B|-|D|}\;.
\end{gather*}
\end{asser}

Note that $\Pi^1_\rho f$ corresponds to the terms of degree $1$ of
$\Pi_\rho f$ and $\Pi^{+2}_\rho f$ to the ones of degree greater
than or equal to $2$ in $L^2(\nu_\rho)$.

\begin{proof}[Proof of Assertion \ref{l42}]
By \eqref{61} and \eqref{60} $f(\eta) \,-\, \tilde f (\rho)$ is equal
to
\begin{equation*}
\sum_{z\in A} c_{\{z\}} \, \xi^\rho_z
\;+\; \sum_{B \subset A \,,\, |B| \ge 2} c_B\, \Big\{ \sum_{z\in B}
\rho^{|B|-1}\, \xi^\rho_z \;+\;
\sum_{D\subset B \,,\, |D|\ge 2} \rho^{|B|-|D|}\, \xi^\rho_D \,\Big\} \;.
\end{equation*}
and
\begin{equation*}
\tilde f' (\rho) \, (\eta_0 -\rho)
\;=\; \tilde f' (\rho) \, \xi^\rho_0
\;=\; \sum_{z \in  A } c_{\{z\}} \, \, \xi^\rho_0
\;+\; \sum_{B \subset A \,,\, |B|\ge 2} c_B \, |B|\, \rho^{|B|-1} \,\xi^\rho_0\;.
\end{equation*}
Thus,
\begin{align*}
(\Pi_\rho f) (\eta) \; &=\; \sum_{z\in A} c_{\{z\}} \, (\eta_z - \eta_0)
\;+\; \sum_{B \subset A \,,\, |B| \ge 2} c_B\, \rho^{|B|-1}\,  \sum_{z\in B}
(\eta_z - \eta_0) \\
\;& +\; \sum_{B \subset A \,,\, |B| \ge 2} c_B\, 
\sum_{D\subset B \,,\, |D|\ge 2} \rho^{|B|-|D|}\, \xi^\rho_D  \;.
\end{align*}
To complete the proof, it remains to change the order of summations.
\end{proof}

\begin{remark}
\label{rm2}
The above computation shows that the term $\tilde f (\rho)$ removes
the contants from $f(\eta)$, while the expression
$\tilde f' (\rho)\, (\eta_0-\rho)$ transforms the terms of degree $1$
of $f(\eta)$ [that is, the expressions $c_{\{z\}} \xi^\rho_z$\,] in
gradients of the form $c_{\{z\}} \, (\eta_z - \eta_0)$. 
\end{remark}

\smallskip\noindent{\bf Acknowledgments.}  M. J. was funded by the ERC
Horizon 2020 grant 715734, the CNPq grant 305075/2017-9 and the FAPERJ
grant E-29/203.012/201.  C. L. has been partially supported by FAPERJ
CNE E-26/201.207/2014, by CNPq Bolsa de Produtividade em Pesquisa PQ
303538/2014-7, by ANR-15-CE40-0020-01 LSD of the French National
Research Agency.

\end{document}